\documentclass{amsart}
\usepackage{amsmath,amssymb}
\usepackage[left=1.5in,right=1.5in,bottom=1in,top=1in]{geometry}
\usepackage[all]{xy}
\usepackage[color=cyan]{todonotes}
\usepackage{mathpazo,euler,eucal}
\usepackage{multicol}

\makeatletter
\providecommand\@dotsep{5}
\renewcommand{\listoftodos}[1][\@todonotes@todolistname]{%
  \@starttoc{tdo}{#1}}
\makeatother

\newcommand{\ord}{\mathrm{ord}}
\newcommand{\A}{\mathbb{A}}
\newcommand{\F}{\mathbb{F}}
\newcommand{\N}{\mathbb{N}}
\newcommand{\M}{\mathbb{M}}
\newcommand{\Q}{\mathbb{Q}}
\newcommand{\Z}{\mathbb{Z}}
\newcommand{\C}{\mathbb{C}}

\newcommand{\Aut}{\mathrm{Aut}}
\newcommand{\Hom}{\mathrm{Hom}}
\newcommand{\End}{\mathrm{End}}

\newcommand{\SL}{\mathrm{SL}}
\newcommand{\SO}{\mathrm{SO}}
\newcommand{\GL}{\mathrm{GL}}

\newcommand{\GSp}{\mathrm{GSp}}
\newcommand{\GU}{\mathrm{GU}}

\newcommand{\Spec}{\mathrm{Spec}}

\newcommand{\G}{\mathbb{G}}

\newcommand{\Nilp}{\mathrm{Nilp}}

\begin{document}

\newtheorem*{theoremA*}{Theorem A}
\newtheorem*{theoremB*}{Theorem B}
\newtheorem*{theoremC*}{Theorem C}
\newtheorem{definition}{Definition}[section]
\newtheorem{theorem}[definition]{Theorem}
\newtheorem{claim}[definition]{Claim}
\newtheorem{example}[definition]{Example}
\newtheorem{lemma}[definition]{Lemma}
\newtheorem{corollary}[definition]{Corollary}
\newtheorem{proposition}[definition]{Proposition}
\newtheorem{notation}[definition]{Notation}

\setcounter{tocdepth}{1}

\vspace{4cm}

\begin{center}
{\huge\textbf{The $\GL_4$ Rapoport-Zink Space} }\\
\end{center}

\vspace{1cm}

\begin{center}
{\Large{Maria Fox} } \\
\end{center}

\vspace{1cm}

\begin{minipage}[c]{5in}
ABSTRACT: We give a description of the $\GL_4$ Rapoport-Zink space, including the connected components, irreducible components, intersection behavior of the irreducible components, and Ekedahl-Oort stratification.  As an application of this, we also give a description of the supersingular locus of the Shimura variety for the group $\GU(2,2)$ over a prime split in the relevant imaginary quadratic field.
\end{minipage}

\tableofcontents

\section{Introduction}

This paper contributes to the theory of Rapoport-Zink formal schemes by giving an explicit description of the  $\GL_4$ Rapoport-Zink space. As an application of the main result, this paper also provides a description of the supersingular locus of the $\GU(2,2)$ Shimura variety at a prime split in the relevant quadratic imaginary field.

The $\GL_4$ Rapoport-Zink space is a moduli space of pairs $(G, \rho)$, where $G$ is a supersingular $p$-divisible group of height 4 and dimension 2, and $\rho$ is a quasi-isogeny from $G$ to a fixed basepoint $p$-divisible group $\mathbb{G}$ (see Section ~\ref{defRZ} for a precise definition). By Rapoport and Zink \cite{RZ}, this moduli problem is representable by a formal scheme. By a result of Viehmann ~\cite{Viehmann}, the reduced $\overline{\F}_p$-scheme underlying the $\GL_4$ Rapoport-Zink space is 1-dimensional,  with connected components given by the height of the quasi-isogeny. Further, a reader familiar with the literature might expect (based on work of Katsura-Oort ~\cite{KatO} and Kudla-Rapoport ~\cite{KR} on the  $\GSp_4$ Rapoport-Zink space, for example) some of the irreducible components of the $\GL_4$ Rapoport-Zink space to be projective lines. However, this is far from a complete description: there are similarly defined Rapoport-Zink spaces for other groups, and a typical analysis  will at minimum describe the irreducible components, the intersection behavior of the irreducible components, and the connected components (see  ~\cite{VW}, ~\cite{GU22}, and \cite{RTW} for some examples, among many others). The main result of this paper is the following description of the $\GL_4$ Rapoport-Zink space:

\begin{theoremA*}
Let $p$ be an odd prime. The $\GL_4$ Rapoport-Zink space $\mathcal{N}$ decomposes as a disjoint union:
$$\mathcal{N} = \bigsqcup_{i \in \Z} \mathcal{N}_i$$
where $\mathcal{N}_i$ is the locus of points $(G, \rho)$ where the quasi-isogeny $\rho$ is of height $i$. The components $\mathcal{N}_i$ are all isomorphic, as formal schemes over $\mathrm{Spf}(W(\overline{\F}_p) )$.

Let $\mathcal{N}_{0,red}$ be the reduced $\overline{\F}_p$-scheme underlying $\mathcal{N}_0$. Then $\mathcal{N}_{0,red}$ is connected, and decomposes as:
$$\mathcal{N}_{0,red} = \bigcup_{\Lambda} \mathcal{N}_{\Lambda}$$
where the index set is the collection of all vertex lattices $\Lambda$ in $V^{\Phi}$ of type 4. These $\mathcal{N}_{\Lambda}$ are precisely the irreducible components of $\mathcal{N}_{0,red}$, and each is isomorphic, over $\overline{\F}_p$, to $\mathbb{P}^1$.

Two irreducible components are either disjoint or intersect in a single point.  Each  irreducible component contains $p^2 + 1$ intersection points, and each intersection point is the intersection of $p^2 + 1$ irreducible components. The intersection points are exactly the superspecial points of $\mathcal{N}_{0,red}$, and they are parametrized by the vertex lattices in $V^{\Phi}$ of type 2.

Further, $\mathcal{N}_{red}$ (the reduced $\overline{\F}_p$-scheme underlying $\mathcal{N}$) has two Ekedahl-Oort strata: one consisting of the superspecial points, and the other the complement of the superspecial points.
 \end{theoremA*}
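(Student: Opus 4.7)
The plan is to work throughout via covariant Dieudonné theory over $W = W(\overline{\F}_p)$, so that $\overline{\F}_p$-points of $\mathcal{N}$ correspond to $W$-lattices $M$ in the rational Dieudonné module $N$ of $\mathbb{G}$ satisfying $pM \subseteq FM \subseteq M$ with $\dim_{\overline{\F}_p}(M/FM) = 2$. The isocrystal of $\mathbb{G}$ is isoclinic of slope $1/2$, and $V^{\Phi}$ will be the $\Q_p$-vector space arising as the $\Phi$-fixed points of the descent of $N$ to $\Q_{p^2}$, equipped with its natural Hermitian form; this is the space in which the vertex lattices of the theorem live. The decomposition $\mathcal{N} = \bigsqcup_i \mathcal{N}_i$ by height is immediate from the discreteness of the height of $\rho$, and each isomorphism $\mathcal{N}_i \cong \mathcal{N}_0$ is produced by the action on the moduli problem of any height-$i$ element of the Jacobi-Rapoport-Zink group $J$, which modifies the basepoint quasi-isogeny to one of compensating height.

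For the structural description of $\mathcal{N}_{0,red}$, I would, for each vertex lattice $\Lambda$ of type $4$ in $V^{\Phi}$, define $\mathcal{N}_{\Lambda}$ to be the reduced closed subscheme parametrizing Dieudonné lattices $M$ sandwiched as $\Lambda_W \subseteq M \subseteq \Lambda_W^{\vee}$ (after identifying $V^{\Phi}$ with its dual via the form), where $\Lambda_W = \Lambda \otimes_{\Z_p} W$. The central computation is that this locus is cut out by the Dieudonné-compatibility condition inside the Grassmannian of appropriate-rank subspaces of the quotient $\Lambda_W^{\vee}/\Lambda_W$, and the resulting variety should identify with a closed Deligne-Lusztig variety for a rank-one group over $\F_{p^2}$, and hence with $\mathbb{P}^1_{\overline{\F}_p}$. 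A type-$2$ vertex lattice $\Lambda'$, by contrast, pins down a unique admissible Dieudonné lattice $M = \Lambda' \otimes_{\Z_p} W$ (or the analogous canonical construction), giving a single superspecial point, and conversely every superspecial point arises this way.

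The intersection combinatorics are then driven by the Bruhat-Tits theory of the relevant $p$-adic group. Two components $\mathcal{N}_{\Lambda_1}$ and $\mathcal{N}_{\Lambda_2}$ meet if and only if $\Lambda_1 + \Lambda_2$ is a type-$2$ vertex lattice, in which case they meet in the unique superspecial point indexed by that lattice. Given $\Lambda$ of type $4$, the type-$2$ over-lattices correspond bijectively to isotropic lines in the four-dimensional $\F_{p^2}$-Hermitian space $\Lambda^{\vee}/\Lambda$, whose direct count gives the stated $p^2 + 1$; the dual count at a type-$2$ lattice yields the same number. Connectedness of $\mathcal{N}_{0,red}$ follows from connectedness of the incidence graph of these vertex lattices (a fact about the building of the relevant group), combined with connectedness of each $\mathcal{N}_{\Lambda}$.

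For the Ekedahl-Oort claim, the stratification has only two strata here because $G$ has height $4$ and dimension $2$: I would enumerate the possible isomorphism classes of $G[p]$ and identify them with the superspecial locus and its open complement. The main obstacle I anticipate is the exhaustion step in the structural description: to show the $\mathcal{N}_{\Lambda}$ are \emph{all} the irreducible components, I must verify that every Dieudonné lattice $M$ arises from some $\Lambda$. I would attempt this by associating to $M$ a canonical vertex lattice $\Lambda(M) \subseteq V^{\Phi}$ built from the $F$-orbit of $M$, appropriately normalized, and checking it has type $4$. This exhaustion --- rather than the $\mathbb{P}^1$ identification or the combinatorial counts --- is the deepest input, directly analogous to the main structural theorem in the Vollaard-Wedhorn analysis of the $\GU(n,1)$ Rapoport-Zink space.
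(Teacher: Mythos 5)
The proposal goes astray at the identification of $V^{\Phi}$. In the paper, $V^{\Phi}$ is \emph{not} built from a descent of the isocrystal $N$ itself; rather one first forms $V = \bigwedge^2_{W_\Q} N$, which inherits a symmetric $W_\Q$-bilinear pairing (via a choice of top exterior power) and a slope-zero $\sigma$-semilinear operator $\Phi$, and then $V^{\Phi}$ is the resulting six-dimensional $\Q_p$-quadratic space, of Hasse invariant $-1$. There is no natural Hermitian form or polarization on the $\GL_4$ isocrystal $N$ — that is precisely what makes this case different from $\GU(n,1)$ — so the ``natural Hermitian form'' you invoke does not exist, and the sandwiching condition $\Lambda_W \subseteq M \subseteq \Lambda_W^{\vee}$ is not well-posed: in the paper $\Lambda$ lives inside $\bigwedge^2 N$ while the Dieudonn\'e lattice $M$ lives inside $N$, so they are not comparable. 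What the paper does instead is pass to $\underline{\G} = \G \times \G^*$, regard elements of $V$ as ``special endomorphisms'' of the associated isocrystal via a Hodge star construction, and characterize $\mathcal{N}_{\Lambda,0}$ by the condition that the quasi-endomorphisms of $G \times G^*$ coming from $\Lambda^{\vee}$ be genuine integral endomorphisms.

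This misidentification propagates into your intersection count. For a type-$4$ vertex lattice $\Lambda$, the relevant space is $\Lambda/\Lambda^{\vee}$, a four-dimensional nondegenerate \emph{nonsplit quadratic space over} $\F_p$, and the $p^2+1$ is the point count of the associated elliptic quadric in $\mathbb{P}^3$. The count you propose — isotropic lines in a four-dimensional $\F_{p^2}$-Hermitian space — gives $(p^2+1)(p^3+1)$, not $p^2+1$; and in any case there is no Hermitian structure on $\Lambda/\Lambda^{\vee}$ here. Your outline of the height decomposition, the exhaustion step (Vollaard's chain $L \subsetneq L^{(1)} \subsetneq \cdots$ giving a canonical vertex lattice $\Lambda_L$), the Deligne--Lusztig / $\mathbb{P}^1$ identification of the type-$4$ strata, and the two-stratum Ekedahl--Oort picture are all in the right spirit and parallel the paper, but the central construction of the ambient quadratic space — and with it the entire mechanism relating Dieudonn\'e lattices in $N$ to very special lattices in $V$, via the exceptional isomorphism $\GU^0(\underline{\N}) \cong \mathrm{GSpin}(V)$ — is missing.
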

 
  (In the description above, $V^{\Phi}$ is a certain quadratic space of dimension 6 over $\Q_p$. See Section ~\ref{Vertexsection} for the definitions of $V^{\Phi}$ and of a vertex lattice.) It is worth mentioning that, in terms of Rapoport-Zink spaces with supersingular basepoints, after the $\GL_2$ Rapoport-Zink space, which is zero-dimensional, and the $\GL_3$ Rapoport-Zink space, which is empty, the $\GL_4$ Rapoport-Zink space has the most fundamental moduli description, yet relatively little of its specific geometry had been explored so far. Further, the parametrization given in this paper of the $\GL_4$ Rapoport-Zink space by a union of projective lines is quite explicit.

The methods of this paper were inspired by the analysis by Howard-Pappas in ~\cite{GU22} of the  $\GU(2,2)$ Rapoport-Zink space (a moduli space of supersingular $p$-divisible groups of height 8 and dimension 4, with principal polarization and an action of an imaginary quadratic field, subject to some requirements) when the prime $p$ is inert in the relevant imaginary quadratic field. Both the results of Howard-Pappas and the results in this paper rely upon a form of the exceptional isomorphism $\mathrm{SU}(2,2) \cong \mathrm{Spin}(4,2)$, which is why a quadratic space appears in Theorem A above.  The work of Vollaard ~\cite{V} and Vollaard-Wedhorn ~\cite{VW} on  the $\GU(n,1)$ Rapoport-Zink spaces when the prime $p$ is inert was also very influential, as was the description given by Rapoport-Terstiege-Wilson \cite{RTW} of the $\GU(n,1)$ Rapoport-Zink spaces in the case that the prime $p$ is ramified, and the description by Howard-Pappas ~\cite{GSpin} of Rapoport-Zink spaces associated to spinor groups. The motivation for the approach taken in this paper is that, if the prime $p$ is split in the imaginary quadratic field, the groups $\GU(2,2)$ and $\GL_4$ over $\Q_p$ are closely related (see Section ~\ref{excepsect} for the precise statement). And the Rapoport-Zink space for $\GU(2,2)$ when the prime $p$ is split will clearly be related to the  Rapoport-Zink space for $\GU(2,2)$ when the prime $p$ is inert. Combining these, it is reasonable to attempt to apply the techniques of Howard-Pappas to study the $\GL_4$ Rapoport-Zink space. However, the relatively small differences in these groups can cause large differences in the geometry of the Rapoport-Zink spaces, and the methods of ~\cite{GU22} required considerable adaptation to compensate for this. 

In this paper we observe that when the prime $p$ is split in the relevant imaginary quadratic field, the $\GU(2,2)$ Rapoport-Zink space decomposes into infinitely many copies of the $\GL_4$ Rapoport-Zink space. So, as a corollary to our main result, we also produce a description of the $\GU(2,2)$ Rapoport-Zink space over a split prime:

\begin{theoremB*}
Let $\mathcal{N}_{GU(2,2)}$ be the $\GU(2,2)$ Rapoport Zink space, and assume that the odd prime $p$ is split in the imaginary quadratic field $E$. The underlying reduced $\overline{\F}_p$-scheme $\mathcal{N}_{GU(2,2), red}$ has connected components naturally indexed by $\Z \times \Z$, and the connected components are all isomorphic. 

Every connected component of $\mathcal{N}_{GU(2,2), red}$  is isomorphic, over $\overline{\F}_p$, to a union of projective lines. Every pair of projective lines is either disjoint or intersects in a single point. Each projective line contains $p^2 + 1$ intersection points, and each intersection point is the intersection of $p^2 + 1$ projective lines. These intersection points are precisely the superspecial points.

Further, $\mathcal{N}_{GU(2,2),red}$ has two Ekedahl-Oort strata: one consisting of the superspecial points, and the other the complement of the superspecial points.

\end{theoremB*}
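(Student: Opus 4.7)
My plan is to reduce Theorem B to Theorem A by exploiting the splitting of $p$ in $E$. When $p$ splits, $\CO_E \otimes \Z_p \cong \Z_p \times \Z_p$, and the two orthogonal idempotents decompose any $p$-divisible group with $\CO_E$-action as $G = G_1 \times G_2$. Since the Rosati involution attached to the polarization restricts to the nontrivial conjugation on $\CO_E$ (which swaps the idempotents), the principal polarization $\lambda$ induces a canonical identification $G_2 \cong G_1^\vee$. The $(2,2)$ signature condition then pins down $G_1$ as a height-$4$, dimension-$2$ $p$-divisible group, and the basepoint $\G$ takes the form $\G_1 \times \G_1^\vee$ with $\G_1$ the basepoint of the $\GL_4$ Rapoport-Zink space $\mathcal{N}$.

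Next I would decompose the $\CO_E$-linear quasi-isogeny as $\rho = (\rho_1, \rho_2)$ and translate the $\GU$ similitude condition $\rho^\vee \circ \lambda_\G \circ \rho = c \cdot \lambda_G$ (for some $c \in \Q_p^\times$). Under the identifications $G_2 \cong G_1^\vee$ and $\G_2 \cong \G_1^\vee$, this reduces to $\rho_2 = c \cdot (\rho_1^\vee)^{-1}$, so $\rho_2$ is completely determined by $(\rho_1, c)$. Hence a $\GU(2,2)$ datum is equivalent to a triple $(G_1, \rho_1, c)$ with $(G_1, \rho_1)$ a point of $\mathcal{N}$ and $c \in \Q_p^\times$. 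Passing to the underlying reduced $\overline{\F}_p$-scheme, the unit part of $c$ is absorbed into the framing and only the valuation $v_p(c) \in \Z$ contributes a discrete parameter, producing an isomorphism
\[
\mathcal{N}_{GU(2,2), red} \;\cong\; \mathcal{N}_{red} \times \Z.
\]

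Combined with Theorem A's decomposition $\mathcal{N}_{red} = \bigsqcup_{i \in \Z} \mathcal{N}_{i, red}$, in which all $\mathcal{N}_{i, red}$ are mutually isomorphic, this shows that the connected components of $\mathcal{N}_{GU(2,2), red}$ are naturally indexed by $\Z \times \Z$ via the pair $(\mathrm{ht}(\rho_1), v_p(c))$, and are all mutually isomorphic. The remaining assertions --- realization of each irreducible component as $\mathbb{P}^1$, the intersection pattern with $p^2 + 1$ points per line and $p^2 + 1$ lines per point, identification of intersection points with superspecial points, and the two-stratum Ekedahl-Oort decomposition --- then follow directly from Theorem A, because the isomorphism above identifies each connected component with $\mathcal{N}_{0, red}$ and transfers all of this structure intact.

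The main obstacle will be the bookkeeping in the middle step: carefully verifying that the translation between $\GU(2,2)$-data and triples $(G_1, \rho_1, c)$ is a genuine equivalence of moduli functors (not merely a bijection on points), including the subtle interplay between the Rosati involution and the split idempotents, and confirming that the parametrization of connected components by $\Z \times \Z$ arises correctly at the level of the reduced underlying scheme. A related technical issue is to check that the Ekedahl-Oort stratification, defined intrinsically in terms of the Dieudonn\'e module with its full $\CO_E$-action and polarization, corresponds under the decomposition to the Ekedahl-Oort stratification of $\mathcal{N}$ described in Theorem A.
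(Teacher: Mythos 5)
Your proposal is essentially the paper's own argument (Proposition 7.1 and Theorem 7.3): use the idempotents of $\mathcal{O}_E \otimes \Z_p \cong \Z_p \times \Z_p$ to split $G$ as $\epsilon_0 G \times \epsilon_1 G$, use the polarization plus Rosati condition to identify $\epsilon_1 G \cong (\epsilon_0 G)^*$, and observe that the $\mathcal{O}_E$-linear quasi-isogeny is determined by its restriction $\rho_0$ to $\epsilon_0 G$ together with the scalar $c$, so that $\mathcal{N}^i_{GU(2,2)} \cong \mathcal{N}$ with $i = \ord_p(c)$ (the unit part of $c$ is immaterial because isomorphisms of quadruples are only required to preserve $\lambda$ up to $\Z_p^\times$); combining this with Theorem A's decomposition of $\mathcal{N}_{red}$ by $\mathrm{ht}(\rho_0)$ gives the $\Z \times \Z$ indexing, and the remaining geometric and Ekedahl-Oort assertions transfer componentwise. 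The one refinement worth noting is that the paper establishes $\mathcal{N}^i_{GU(2,2)} \cong \mathcal{N}$ already at the level of formal schemes (not only on underlying reduced schemes), and it makes the Ekedahl-Oort transfer explicit by exhibiting the two relevant polarized $BT_1$ group schemes with $\mathcal{O}_E$-action, $X_1 = (X_{FV})^2 \times ((X_{FV})^2)^*$ and $X_2 = X_{FFVV} \times X_{FFVV}^*$, so the stratification on the $\GU$ side matches the $\GL_4$ one under the idempotent decomposition.
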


Finally, as an application of our main result, we make the following contribution to the theory of supersinguar loci of the reduction modulo p of canonical integral models of Shimura varieties: let $E$ continue to be an imaginary quadratic field, let $p$ continue to be an odd prime split in $E$, and let $\mathcal{O}$ be the integral closure of $\mathbb{Z}_{(p)}$ in $E$. Fix a free $\mathcal{O}$-module $V$ of rank 4 with a perfect $\mathcal{O}$-valued Hermitian form of signature $(2,2)$. Let $G = \GU(V)$, fix a compact open subgroup $K^p$ of $\G(\A_f^p)$, and let $K_p = G(\Z_p)$ and $K = K_pK^p$. For sufficiently small $K^p$, there is a scheme $\mathcal{M}_{K}$ over $\Z_{(p)}$ which is a moduli space of four-dimensional abelian varieties with a principal polarization, an action of $\mathcal{O}$, and with level structure of a type determined by $K$,  subject to some constraints (see Section ~\ref{SV} for details). This is the $\GU(2,2)$ Shimura variety.

 The reduced locus of the geometric special fiber of  $\mathcal{M}_{K}$ where the corresponding abelian varieties are supersingular is called the supersingular locus. We use the Rapoport-Zink uniformization theorem to give a description of the supersingular locus of this Shimura variety when the prime $p$ is split in the imaginary quadratic field $E$:

 \begin{theoremC*}
 Let $\mathcal{M}_K^{ss}$ be the supersingular locus of the $\GU(2,2)$ Shimura variety at an odd prime $p$ split in the relevant imaginary quadratic field, with level structure given by $K = K_pK^p$. The $\overline{\F}_p$-scheme $\mathcal{M}_K^{ss}$ is of pure dimension 1. 
 
 For $K^p$ sufficiently small, all irreducible components of $\mathcal{M}_K^{ss}$ are isomorphic, over $\overline{\F}_p$, to $\mathbb{P}^1$. Any two irreducible components either intersect trivially or intersect in a single point.
  
Each irreducible component contains $p^2 + 1$ intersection points, and each intersection point is the intersection of $p^2 + 1$ irreducible components. These intersection points are precisely the superspecial points.

Further, $\mathcal{M}_K^{ss}$ has two Ekedahl-Oort strata: one consisting of the superspecial points, and the other the complement of the superspecial points.
 \end{theoremC*}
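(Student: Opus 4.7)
The plan is to deduce Theorem C from Theorem B by means of the Rapoport--Zink uniformization theorem. That theorem provides a formal scheme isomorphism
$$\mathcal{M}_K^{ss} \; \cong \; I(\Q) \backslash \bigl( \mathcal{N}_{GU(2,2)} \times G(\A_f^p)/K^p \bigr),$$
where $I$ is the $\Q$-algebraic group of polarization-and-$\CO$-preserving quasi-self-isogenies of the basepoint $p$-divisible group; this is an inner form of $G$ with $I(\A_f^p) = G(\A_f^p)$, and $I(\Q_p)$ equal to the automorphism group of the basepoint in the isogeny category. Passing to underlying reduced $\overline{\F}_p$-schemes, the task becomes to understand this double quotient using the description of $\mathcal{N}_{GU(2,2),red}$ in Theorem B.

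First I would break the double quotient into a finite disjoint union of single quotients $\Gamma_j \backslash \mathcal{C}_j$, where each $\mathcal{C}_j$ is a connected component of $\mathcal{N}_{GU(2,2),red}$ (Theorem B identifies the set of components with $\Z \times \Z$ and shows they are all isomorphic), and $\Gamma_j \subset I(\Q)$ is the stabilizer of the pair consisting of $\mathcal{C}_j$ and a coset representative $g_j K^p$. Finiteness of the index set follows from the finiteness of the class number of $I$ modulo $K_p K^p$, standard in the PEL setup. Next I would observe that each $\Gamma_j$ acts on $\mathcal{C}_j$ by automorphisms respecting all the structure from Theorem B: the $\mathbb{P}^1$-irreducible components, the superspecial intersection points, and the Ekedahl--Oort stratification, since the action is induced by quasi-isogenies of $p$-divisible groups.

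The main obstacle and key step is to verify that for $K^p$ sufficiently small, each $\Gamma_j$ acts \emph{neatly}, that is, the stabilizer in $\Gamma_j$ of any irreducible component or any superspecial point is trivial. The standard mechanism is that the groups $\Gamma_j$ are arithmetic subgroups of $I(\Q)$, commensurable with a fixed lattice, and by intersecting $K^p$ with a sufficiently deep principal congruence subgroup at an auxiliary prime one arranges that each $\Gamma_j$ contains no nontrivial torsion and no nontrivial element fixing an irreducible component setwise. Granted this, each quotient $\Gamma_j \backslash \mathcal{C}_j$ is itself a union of $\mathbb{P}^1$'s with $p^2 + 1$ intersection points per component, $p^2 + 1$ components through each intersection point, and the two Ekedahl--Oort strata inherited from $\mathcal{C}_j$. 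Purity of dimension 1 holds without the smallness assumption, since $\mathcal{N}_{GU(2,2),red}$ is of pure dimension 1. Assembling these quotients over all $j$ yields Theorem C.
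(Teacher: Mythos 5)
Your proposal takes essentially the same route as the paper: Rapoport--Zink uniformization reduces the question to Theorem B on $\mathcal{N}_{GU(2,2),red}$, and a class-set decomposition together with a neatness argument for $K^p$ sufficiently small shows that the quotient inherits the local structure of the Rapoport--Zink space; the paper compresses this entire second step into a citation of Theorem 6.5 of Vollaard (noting it ``continues to hold''), and your sketch is a spelling-out of what that citation supplies. One small simplification worth recording: your second neatness condition (no nontrivial element of $\Gamma_j$ fixing an irreducible component setwise) is not independent of the first, since the stabilizer in $I(\Q_p)$ of a vertex lattice is compact and therefore meets the discrete subgroup $\Gamma_j$ in a finite group, so torsion-freeness alone already kills it.
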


\subsection*{Acknowledgements} I would like to thank  Ben Howard for suggesting this problem, for answering numerous questions, and for his support. I would also like to thank  Brian Lehmann, Keerthi Madapusi Pera, Mark Reeder, Ari Shnidman, and Cihan Soylu for very helpful conversations.

\subsection{Notation}

Throughout this paper, $p$ is an odd prime number, $k$ is an algebraically closed field of characteristic $p$, and $k'$ is an arbitrary field extension of $k$. The absolute Frobenius on $k$ will be denoted by $\sigma$.

\section{From the  Rapoport-Zink Space to Dieudonn\'e Lattices}

Over an algebraically closed field, one can study $p$-divisible groups using the theory of Dieudonn\'e modules.  Unforuntately, to understand the structure of the $\GL_4$ Rapoport-Zink space we will need to study $p$-divisible groups over any field extension $k'$ over $k$. Similar to ~\cite{GU22} and ~\cite{GSpin}, this may be accomplished by using Zink's theory of windows. The objective of this section is to introduce the $\GL_4$ Rapoport-Zink space and to use Zink's theory of windows to convert the $k'$-points of our Rapoport-Zink space to a collection of lattices in a fixed vector space.

\subsection{Rapoport-Zink Spaces}\label{defRZ}

Recall that $k$ is an algebraically closed field of characteristic $p$. Let $W = W(k)$ be the ring of Witt vectors of $k$, which is a complete discrete valuation ring with maximal ideal generated by $p$ and residue field $k$. There is a unique lift of the Frobenius to a ring automorphism of $W$, which will also be denoted $\sigma$.  Let $W_{\Q}$ be the field of fractions of $W$, and let $\Nilp_W$ be the category of $W$-schemes on which $p$ is locally nilpotent. 

Fix a basepoint supersingular $p$-divisible group $\mathbb{G}$ over $k$ of height 4 and dimension 2. Our objective is to study the functor:
$$\mathcal{N}: \Nilp_W \rightarrow \mathrm{Sets}$$
that sends  a $W$-scheme $S$ on which $p$ is locally nilpotent to the set $\mathcal{N}(S)$ of pairs $(G, \rho)$, where $G$ is a supersingular $p$-divisible group over $S$ of height 4 and dimension 2, and 
$$\rho: G_{\mathcal{S}_0} \rightarrow \mathbb{G}_{\mathcal{S}_0}$$
is a quasi-isogeny. Here, $\mathcal{S}_0 = S \times_{\Spec(W) } \Spec(k)$.

Two pairs $(G_1, \rho_1)$ and $(G_2, \rho_2)$ over $S$ are isomorphic if there is an isomorphism $\varphi$ from $G_1$ to $G_2$ carrying $\rho_2$ to $\rho_1$: $\rho_1 = \rho_2 \circ \varphi_{S_0}$.  By ~\cite{RZ} Theorem 2.16, $\mathcal{N}$ is represented by a formal scheme over $W$ which is locally formally of finite type over $W$.

\subsection{Dieudonn\'e Lattices}

We will now convert the field-valued points of the $\GL_4$ Rapoport-Zink space to a collection of lattices in a certain vector space. Over $k$, this vector space will be the isocrystal attached to the basepoint $p$-divisible group:

Let $\mathbb{M}$ be the Dieudonn\'e module associated to $\G$, so $\M$ is a free $W$-module of rank 4, and let $\N = \M \otimes_{W} W_{\Q}$ be the corresponding isocrystal. Because $\G$ is supersingular, by the structure theorems for isocrystals (see for example ~\cite{Demazure}), there is a basis $\{e_i\}_1^4$ of $\N$ such that:
  $$Fe_1 = e_2 \quad Fe_2 = pe_1 \quad F e_3 = e_4 \quad F e_4 = p e_3 .$$

For a field $k'$ over $k$, let $W'$ be the Cohen ring of $k'$ (in particular, if $k'=k$, $W' = W$, the Witt vectors of $k$.) The injection $k \rightarrow k'$ gives an injection $W \rightarrow W'$. Note that $W'$ is a complete discrete valuation ring with uniformizer $p$, and $W'/pW' \cong  k'$. Let $W'_{\Q}$ be the fraction field of $W'$, and let $\mathbb{M}' = \mathbb{M} \otimes_{W} W'$ and $\mathbb{N}' = \mathbb{N} \otimes_{W_{\Q} } W'_{\Q}$. 

There is a unique continuous ring homomorphism $W' \rightarrow W'$ reducing to the Frobenius on $k'$, which will also be denoted $\sigma$, and the operator $F$ on $\mathbb{N}$ has a unique $\sigma$-semilinear extension to $\mathbb{N}'$. Note that $F$ is not necessarily surjective on $\mathbb{N}'$, and if $M \subset \mathbb{N}'$ is a $W'$-module, $F^{-1}(M)$ has the structure of a  $W'$-module, but $F(M)$ will not necessarily have the structure of a $W'$-module. So, given a $W'$-module $M \subset \mathbb{N}'$, we will use the notation $\overline{F}(M)$ to denote the $W'$-module generated by $F(M)$.

\begin{definition}\label{DDLattice}
A \emph{Dieudonn\'e lattice} in $\mathbb{N}'$ is a $W'$-lattice $A \subset \mathbb{N}'$ such that:
\begin{enumerate}
\item{ $pA \subset F^{-1}(pA) \subset A$}
\item{ $A = \overline{F}(F^{-1}(A) )$}
\end{enumerate}
If $A \subset \N'$ is a $W'$-lattice, we will use the notation:
$$A_1 = F^{-1}(pA).$$
\end{definition}

Note that, as in ~\cite{GU22}, we may replace condition (2) above with the condition:
$$\mathrm{dim}_{k'}(A_1/pA) = 2.$$

\begin{definition}
For any field $k'$ over $k$, let $\mathcal{M}(k')$ denote the collection of all Dieudonn\'e lattices in~$\mathbb{N}'$.
\end{definition}

\begin{proposition}\label{dd1}
There is a bijection:
$$\mathcal{N}(k') \longleftrightarrow  \mathcal{M}(k').$$
\end{proposition}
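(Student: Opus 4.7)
My plan is to apply Zink's theory of Dieudonné windows over the Cohen ring $W'$, following the template established in \cite{GU22} and \cite{GSpin}, to translate the moduli problem $\mathcal{N}(k')$ into a problem about $W'$-lattices in the fixed isocrystal $\N'$. In the classical case $k' = k$ this is just the covariant Dieudonné functor; the new content is to extend the dictionary to an arbitrary field extension $k'/k$, for which $W'$ and the windows framework replace the Witt vectors and classical Dieudonné modules.

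Given a pair $(G, \rho) \in \mathcal{N}(k')$, I would first attach the Dieudonné window $M(G)$ over $W'$: a free $W'$-module of rank $4$ equipped with the $\sigma$-semilinear Frobenius $F$. The quasi-isogeny $\rho$ rationalizes to an $F$-equivariant isomorphism $M(G) \otimes_{W'} W'_{\Q} \cong \N'$, and through this identification $M(G)$ becomes a $W'$-lattice $A \subset \N'$. I would then verify that the display axioms translate exactly into the conditions of Definition~\ref{DDLattice}: the existence of a Verschiebung $V$ with $FV = VF = p$ yields $pA \subset F^{-1}(pA) \subset A$, and the requirement that $G$ have dimension $2$ translates to $\dim_{k'}(A_1/pA) = 2$, which (as noted in the text) is equivalent to $A = \overline{F}(F^{-1}(A))$.

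To construct the inverse, I would start with a Dieudonné lattice $A$, regard it as a Dieudonné window over $W'$ with $F$ inherited from $\N'$, and apply Zink's equivalence to produce a supersingular $p$-divisible group $G_A$ over $k'$ of height $4$ and dimension $2$. The rational identification $A \otimes_{W'} W'_\Q = \N'$ then supplies a quasi-isogeny $\rho_A : G_A \to \G_{k'}$, giving a point of $\mathcal{N}(k')$; the two constructions are mutually inverse essentially by construction, since forming the window of $G_A$ recovers $A$ and conversely.

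The main obstacle is confirming that Zink's equivalence applies in the generality needed here (arbitrary field extensions $k'$ of the algebraically closed $k$, not just perfect ones) and that the axioms of a Dieudonné window over $W'$ match conditions (1) and (2) of Definition~\ref{DDLattice} precisely. The supersingular hypothesis, which makes $F$ act bijectively on $\N'$, is what gives meaning to the formal inverses $F^{-1}(\cdot)$ appearing in the definition; this parallels the analogous point in \cite{GU22}, and the bookkeeping here is strictly easier since there is no polarization or $\CO$-action to track.
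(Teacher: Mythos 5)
Your approach matches the paper's: use the covariant Dieudonn\'e functor for $k'=k$, and Zink's Dieudonn\'e $W'$-windows for a general field extension $k'$, to identify $\mathcal{N}(k')$ with the set of Dieudonn\'e lattices inside $\N'$ via the quasi-isogeny $\rho$. That part is exactly right.

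However, your final paragraph contains a conceptual error worth correcting. You claim that the supersingular hypothesis makes $F$ act \emph{bijectively} on $\N'$, and that this is what gives meaning to $F^{-1}$. This is false for $k'$ that are not perfect: the $\sigma$-semilinear extension of $F$ to $\N' = \N \otimes_{W_\Q} W'_\Q$ need not be surjective, precisely because the lift of Frobenius $\sigma$ on the Cohen ring $W'$ is not surjective when $k'$ is imperfect. This is exactly why the paper is careful to introduce the notation $\overline{F}(M)$ for the $W'$-module \emph{generated} by $F(M)$, and why condition (2) of Definition~\ref{DDLattice} reads $A = \overline{F}(F^{-1}(A))$ rather than simply $FF^{-1}(A) = A$. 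What \emph{does} hold unconditionally is that $F^{-1}(M)$ (the set-theoretic preimage) is a $W'$-module for any $W'$-submodule $M \subset \N'$, because $F$ is $\sigma$-semilinear and injective; no bijectivity is needed to give this meaning. The supersingular hypothesis is relevant elsewhere (it determines the slope structure, and hence the existence of a basis with $Fe_1 = e_2$, $Fe_2 = pe_1$, etc.), but it does not rescue surjectivity over imperfect base fields. Your proof would go through once you replace ``formal inverse'' reasoning with the preimage/$\overline{F}$ bookkeeping that the paper uses.
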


The proof of this will require Zink's theory of windows. First, note that $W'$, together with $\sigma$ and the reduction map $W' \mapsto W'/pW' = k'$ is a \emph{frame} for $k'$, in the terminology ~\cite{Windows}. Following ~\cite{GSpin}, the definition given of a Dieudonn\'e window in ~\cite{Windows} may be simplified to:

\begin{definition}
A \emph{Dieudonn\'e $W'$-window over $k'$} consists of the following data:
\begin{enumerate}
\item{A finitely generated, free $W'$-module $M$}
\item{A submodule $M_1 \subset M$ such that $pM \subset M_1 \subset M$}
\item{A $\sigma$-semi-linear map $\Psi: M \rightarrow M$ such that $\Psi(M_1) \subset pM$ and $\overline{\Psi}(p^{-1}M_1) = M$ (here  $\overline{\Psi}(p^{-1}M_1)$ is the $W'$ module generated by  $\Psi(p^{-1}M_1) )$ . }

A \emph{morphism of windows} $(M, M_1, \Psi_M) \rightarrow (K, K_1, \Psi_K)$  is a $W'$ linear map $f$ from $M$ to $K$, such that $f(M_1) \subset K_1$ and $f \circ \Psi_M = \Psi_k \circ f$.
\end{enumerate}
\end{definition}

Note that, as in ~\cite{GSpin}, if $(M, M_1, \Psi)$ is a Dieudonn\'e $W'$-window over $k'$, then $M_1 = \Psi^{-1}(pM)$. By ~\cite{Windows}, the category of Dieudonn\'e $W'$-windows over $k'$ is equivalent to the category of $p$-divisible groups over $k'$.

\begin{proof} (of Proposition ~\ref{dd1}) If $k = k'$, we will use the (covariant) theory of Dieudonn\'e modules: given a $k$-point $\rho: G \rightarrow \mathbb{G}$ of $\mathcal{N}$, there is some $n \in \Z$ such that $p^n \rho: G \rightarrow \mathbb{G}$ is an isogeny. If $M$ is the Dieudonn\'e module of $G$ and $\mathbb{M}$ is the Dieudonn\'e module of $\G$, we have an induced morphism of Dieudonn\'e modules:
$$\widetilde{p^n \rho}: M \rightarrow \mathbb{M}.$$
We'll use the notation $\rho(M)$ throughout this paper for $p^{-n}  \widetilde{p^n \rho} (  M) \subset \mathbb{N}$. This defines a map:
$$\mathcal{N}(k) \rightarrow  \mathcal{M}(k)$$
$$(\rho: G \rightarrow \mathbb{G} ) \mapsto  \rho(M).$$

This is a bijection using the equivalence of categories between Dieudonn\'e modules over $W$ and $p$-divisible groups over $k$.

For a general field $k'$, we will use Zink's theory of windows: because our basepoint p-dvisibile group $\G$ is defined over $k$, it has a Dieudonn\'e module $\M$ with Frobenius $F$ and isocrystal $\N$. The  Dieudonn\'e  $W'$-window associated to $\G_{k'}$ is simply the free $W$'-module $\M'$, with submodule $\M_1' = F^{-1}(p \M')$ and $\sigma$-semilinear map $F: \M' \rightarrow \M'$. 

Consider any $k'$-point $\rho: G \rightarrow \mathbb{G}_{k'}$ of $\mathcal{N}$, with associated window $(M, M_1, \Psi)$. Just as before, there is some $n \in \Z$ such that $p^n \rho$ is an isogeny, which will induce a morphism of windows $\widetilde{p^n \rho}: (M, M_1, \Psi) \rightarrow (\M', \M'_1, F)$. Then $\rho(M) = p^{-n} \widetilde{p^n \rho} (M)$ will be a Dieudonn\'e lattice containing $p^{-n} \widetilde{p^n \rho} (M_1) = (\rho(M))_1$.  Using the equivalence of categories between  $p$-divisible groups over $k'$ and  Dieudonn\'e $W'$-windows over $k'$, the map sending $(M, M_1, \Psi)$ to the Dieudonn\'e lattice $\rho(M)$ is a bijection.


\end{proof}

\subsection{Reduction to Height 0} The Rapoport-Zink space $\mathcal{N}$ decomposes into an infinite disjoint union based on the height of the quasi-isogeny, and the whole space may be reconstructed from the height-zero component. We will now explain this decomposition.

\begin{definition}
Let $f: G \rightarrow G'$ be an isogeny of $p$-divisible groups over a base scheme $S$. Then the kernel of $f$ is a finite group scheme of rank a power of $p$. If the rank is $p^i$ for some constant $i \in \Z$, then $i$ is called \emph{the height of the isogeny} f, and we write:
$$\mathrm{ht}(f) = i.$$
Let $\rho: G \rightarrow G'$ be a quasi-isogeny of $p$-divisible groups. Given $n \in \Z$ such that $p^n \rho: G \rightarrow G'$ is an isogeny, we define \emph{the height of the quasi-isogeny} $\rho$ to be:
$$\mathrm{ht}(\rho) = \mathrm{ht}(p^n \rho) - \mathrm{ht}(p^n).$$
\end{definition}

We will use the following notation: for any integer $i \in \Z$, let $\mathcal{N}_i$ be  the functor from $\Nilp_W$ to Sets given by:
$$\mathcal{N}_{i}(S) = \{ (G, \rho) \in \mathcal{N}(S)  \ | \  \rho \text{ is of height } i \}.$$

\begin{proposition} Let $\mathcal{N}_{red}$ and $\mathcal{N}_{i,red}$ be the underlying reduced $k$-schemes of $\mathcal{N}$ and $\mathcal{N}_i$, respectively. Then, 
$$\mathcal{N}_{red} = \bigsqcup_{i \in \Z} \mathcal{N}_{i,red}.$$
And the $ \mathcal{N}_{i,red}$ are precisely the connected components of $\mathcal{N}_{red}$
\end{proposition}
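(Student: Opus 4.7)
The plan is to break the statement into two assertions: first, that $\mathcal{N}_{red}$ decomposes as a disjoint union of the $\mathcal{N}_{i,red}$; second, that each $\mathcal{N}_{i,red}$ is connected. These together identify the $\mathcal{N}_{i,red}$ as the connected components.

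For the disjoint union, I would show that the height of a quasi-isogeny is locally constant on the base. Given a family $(G,\rho) \in \mathcal{N}(S)$, one can locally find an integer $n$ such that $\psi := p^n \rho$ is an honest isogeny. Its kernel is a finite locally free group scheme over $S_0 = S \times_{\Spec(W)} \Spec(k)$, so its rank (a power of $p$) is locally constant. Since $\mathrm{ht}(\rho) = \mathrm{ht}(\psi) - 4n$, this proves local constancy. Hence each subfunctor $\mathcal{N}_i$ is open and closed in $\mathcal{N}$, and passing to underlying reduced schemes we obtain $\mathcal{N}_{red} = \bigsqcup_{i \in \Z} \mathcal{N}_{i,red}$.

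For the connected components claim, I would first reduce to connectedness of $\mathcal{N}_{0,red}$ by producing isomorphisms $\mathcal{N}_0 \xrightarrow{\sim} \mathcal{N}_i$ for every $i \in \Z$. Since $\mathbb{G}$ is supersingular and so isoclinic of slope $1/2$, the algebra $\End^0(\mathbb{G}) = \End(\mathbb{G}) \otimes_{\Z_p} \Q_p$ is a central division algebra over $\Q_p$ whose reduced norm takes every integer value; in particular, there exists a self-quasi-isogeny $\alpha : \mathbb{G} \to \mathbb{G}$ of height $i$. Post-composition $(G,\rho) \mapsto (G, \alpha \circ \rho)$ then defines an isomorphism of functors $\mathcal{N}_0 \to \mathcal{N}_i$, with inverse given by composition with $\alpha^{-1}$. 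This reduces the task to showing $\mathcal{N}_{0,red}$ is connected.

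The main obstacle is the connectedness of $\mathcal{N}_{0,red}$ itself, and I do not expect to prove it from the definitions alone. Instead, I would defer it to the explicit description developed later in the paper: once the irreducible components of $\mathcal{N}_{0,red}$ are shown to be parametrized by type-$4$ vertex lattices in $V^{\Phi}$, with two components meeting precisely when the corresponding type-$4$ lattices share a type-$2$ vertex lattice, connectedness of $\mathcal{N}_{0,red}$ becomes equivalent to connectedness of the incidence graph of these vertex lattices. The latter should follow from transitivity of the natural action of the $p$-adic group $J(\Q_p)$ (the automorphism group of the isocrystal $\mathbb{N}$, acting through $\mathrm{SO}(V^\Phi)$) on the set of type-$4$ vertex lattices, together with the fact that every type-$4$ vertex lattice contains a type-$2$ vertex lattice. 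Thus the substantive content of the proposition is the disjoint union and the shift isomorphisms, with the connectedness of $\mathcal{N}_{0,red}$ to be harvested from the combinatorics of vertex lattices established in the body of the paper.
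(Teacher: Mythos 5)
Your approach is genuinely different from the paper's: the paper proves this in one line by citing Viehmann's Theorem 3.1 (a general structural result about connected components of Rapoport--Zink spaces), whereas you propose a bottom-up argument assembled from the disjoint-union decomposition, shift isomorphisms, and a connectedness claim to be harvested from the vertex-lattice combinatorics developed later. Your disjoint-union argument (local constancy of the height of a quasi-isogeny) is correct and standard, and deferring connectedness of $\mathcal{N}_{0,red}$ to the explicit description is logically sound since the later sections do not rely on this proposition. The trade-off is that Viehmann's theorem delivers the full statement at once, whereas your route has to carry a deferred obligation through the rest of the paper.

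That said, there are two concrete problems with the details. First, the claim that $\End^0(\mathbb{G})$ is a central division algebra over $\Q_p$ is false here: since $\mathbb{G}$ is supersingular of height $4$ and dimension $2$, its isocrystal is a direct sum of two copies of the simple slope-$\tfrac12$ isocrystal, so $\End^0(\mathbb{G}) \cong M_2(D)$ with $D$ the quaternion division algebra over $\Q_p$. This does not affect the conclusion (a height-$1$ quasi-isogeny certainly exists; the paper's Proposition \ref{allheights} exhibits one by hand), but the stated justification is wrong and should be replaced either by an explicit isogeny as in the paper or by an argument about the determinant map on $M_2(D)^\times$.

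Second, the connectedness sketch is incomplete. Transitivity of $J(\Q_p)$ on the set of type-$4$ vertex lattices, together with the fact that every type-$4$ lattice contains a type-$2$ lattice, does \emph{not} by itself imply that the incidence graph is connected; a bipartite graph can have a vertex-transitive group action on each side and still be disconnected. What is actually needed is a genuine connectivity input, for instance the connectedness of the Bruhat--Tits building of $\mathrm{SO}(V^\Phi)$ (whose vertices are, up to normalization, the vertex lattices), or a direct argument that the stabilizers of an incident type-$4$/type-$2$ pair generate $J(\Q_p)$. As written, this step is a gap, not a proof.
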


\begin{proof}

This follows from ~\cite{Viehmann}, Theorem 3.1.

\end{proof}

There is also a natural notion of the height of a Dieudonn\'e lattice: 
 \begin{definition}
Let $A \subset \mathbb{N}'$ be a $W'$-lattice. As $A$ is a free module over $W'$ of rank 4, $\bigwedge_{W'}^4 A$ is a free module over $W'$ of rank 1. Define the \emph{height of } $A$ to be the integer $i$ such that:
$$\bigwedge_{W'}^4 A = p^i \bigwedge_{W'}^4 \mathbb{M}'$$
as free $W'$-modules of rank 1. Such an integer $i$ exists, as $W'$ is a discrete valuation ring with uniformizer $p$.
 \end{definition}
 
 For any field $k'$ over $k$, we will use the notation: 
$$\mathcal{M}_i(k') = \{  A \in \mathcal{M}(k') \ | \ A   \text{ is of height } i \}.$$

These definitions are compatible in the following sense: if $(G, \rho) \in \mathcal{N}(k')$ and $\rho$ is a quasi-isogeny of height $i$, then the  corresponding Dieudonn\'e lattice $\rho(M) \subset \N'$ is a $W'$-lattice of height $i$. (See, for example, ~\cite{Viehmann}.) With this observation, the bijection in Proposition ~\ref{dd1} respects height, and so restricts to a bijection:
$$\mathcal{N}_{i,red}(k') \rightarrow  \mathcal{M}_{i}(k')$$
for any $i \in \Z$.

\begin{proposition}\label{allheights}
 For any $i \in \Z$,
$$\mathcal{N}_i \cong \mathcal{N}_0.$$
\end{proposition}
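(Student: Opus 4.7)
The strategy is to exhibit a self-quasi-isogeny of $\G$ of height $1$ and use composition with it to translate between the height strata. Suppose for the moment that we have such a $\phi : \G \to \G$. Then for each $i \in \Z$ I would define a morphism of functors
$$\Phi_i : \mathcal{N}_i \to \mathcal{N}_{i+1}, \qquad (G, \rho) \mapsto (G,\, \phi_{\mathcal{S}_0} \circ \rho),$$
where $\phi_{\mathcal{S}_0}$ denotes the pullback of $\phi$ to $\mathcal{S}_0 = S \times_{\Spec W} \Spec k$. Since $\phi$ has height $1$ and $\rho$ has height $i$, the composition has height $i+1$, so this does land in $\mathcal{N}_{i+1}(S)$; functoriality in $S$ is immediate. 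Composition with the quasi-isogeny $\phi^{-1}$ (of height $-1$) yields an inverse natural transformation, so each $\Phi_i$ is an isomorphism of functors, and hence of the representing formal schemes. Iterating $\Phi_i$ and its inverse then gives $\mathcal{N}_0 \cong \mathcal{N}_i$ for every $i \in \Z$.

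The construction of $\phi$ is carried out on the Dieudonn\'e side. Using the basis $\{e_1, e_2, e_3, e_4\}$ of $\N$ from the isocrystal decomposition given at the start of the section, I define a $W_\Q$-linear map $\phi : \N \to \N$ by
$$\phi(e_1) = e_2, \quad \phi(e_2) = pe_1, \quad \phi(e_3) = e_3, \quad \phi(e_4) = e_4.$$
A short check on each basis vector, using $Fe_1 = e_2$, $Fe_2 = pe_1$, $Fe_3 = e_4$, $Fe_4 = pe_3$, verifies $F \circ \phi = \phi \circ F$, so $\phi$ preserves the isocrystal structure. Its matrix in this basis has determinant $-p$, so $\phi$ has height $1$; moreover $\phi(\M) \subset \M$ with $W$-colength one, so $\phi$ is actually an isogeny, not merely a quasi-isogeny. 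By the covariant Dieudonn\'e equivalence over the algebraically closed field $k$, this $\phi$ comes from a genuine isogeny $\G \to \G$ of height $1$.

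There is no serious obstacle here: the only real content is producing $\phi$ and verifying its $F$-equivariance, both of which are immediate from the explicit basis. Conceptually, what is being used is that the quasi-isogeny group $\mathrm{Aut}(\N, F) \cong \mathrm{GL}_2(D)$ (with $D$ the quaternion division algebra over $\Q_p$) admits elements of every $p$-adic height via the reduced-norm character, and composition with such an element permutes the $\mathcal{N}_i$ freely and transitively.
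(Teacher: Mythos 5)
Your proof is correct and takes essentially the same approach as the paper: both define the isogeny sending $e_1 \mapsto e_2,\ e_2 \mapsto pe_1,\ e_3 \mapsto e_3,\ e_4 \mapsto e_4$, check $F$-equivariance, and translate by composition. The paper writes the inverse transformation as $(G,\rho) \mapsto (G, p^{-1}\psi_{S_0} \circ \rho)$ where $\psi$ is the honest isogeny $e_1 \mapsto e_2,\ e_2 \mapsto pe_1,\ e_3 \mapsto pe_3,\ e_4 \mapsto pe_4$, but $p^{-1}\psi$ is exactly your $\phi^{-1}$, so the two proofs coincide.
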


\begin{proof}
Define a automorphisms of the basepoint isocrystal $\N$ by: 
$$\varphi: e_1\mapsto e_2 \quad e_2 \mapsto pe_1 \quad e_3 \mapsto e_3 \quad e_4 \mapsto e_4$$
$$\psi: e_1 \mapsto e_2  \quad e_2 \mapsto pe_1 \quad e_3 \mapsto pe_3 \quad e_4 \mapsto pe_4$$
These commute with the Frobenius operator $F$ and stabilize $\mathbb{M}$, and so induce isogenies $\varphi$ and $\psi$ of $\mathbb{G}$. Note that $\varphi$ is of height 1.

Then for any $i \in \Z$ we have a morphism of functors on $\Nilp_W$, given by:
$$\mathcal{N}_i (S) \rightarrow \mathcal{N}_{i+1} (S) $$
$$(G, \rho) \mapsto (G, \varphi_{S_0} \circ \rho) $$
with inverse:
$$\mathcal{N}_{i+1} (S) \rightarrow \mathcal{N}_{i} (S) $$
$$(G, \rho) \mapsto (G, p^{-1} \psi_{S_0} \circ \rho) $$

This defines an isomorphism $\mathcal{N}_i \cong \mathcal{N}_{i+1}$, so every $\mathcal{N}_i$ is isomorphic to $\mathcal{N}_0$.

\end{proof}


 \section{From Dieudonn\'e Lattices to Very Special Lattices}
 
 In the previous section, we reduced the study of the $\GL_4$ Rapoport-Zink space $\mathcal{N}$ to the study of its height-zero component $\mathcal{N}_0$, and we showed that the $k'$-points of $\mathcal{N}_{0,red}$ can be considered as the collection $\mathcal{M}_0(k')$ of Dieudonn\'e lattices of height 0 inside our basepoint isocrystal $\N'$. Unfortunately, this does not obviously have the structure of an algebraic variety. The eventual goal is to realize a  subset of $\mathcal{M}_0(k')$ as the $k'$-points of a certain subvariety of an orthogonal Grassmanian. In this section we will begin work towards that goal by introducing a quadratic space and converting our Dieudonn\'e lattices to a collection of very special lattices in that quadratic space.

 \subsection{A Quadratic Space}

 Define:
$$V : = \bigwedge^2_{W_{\Q} } \mathbb{N}.$$

Choose some $\omega \in \bigwedge_{W}^4 \M$ such that $\bigwedge_{W}^4 \M = W \omega$ as free $W$-modules of rank 1. This choice allows us to define a $W_{\Q}$-valued symmetric bilinear form $[ \cdot  , \cdot  ]$ on $V$ by the equation:
$$[ x, y ] \omega = x \wedge y$$
for any $x,y \in V$. Note that $(V, [\cdot,\cdot])$ a nondegenerate quadratic space of dimension 6 over $W_{\Q}$.

To simplify later computations, we will study $V$ with respect to a particular basis. We have already chosen a basis $\{e_i\}_1^4$ for $\N$ as a vector space over $W_{\Q}$. Then, the vectors:
$$x_1 = e_1 \wedge e_2  \quad \quad x_2 = e_3 \wedge e_4 $$
$$x_3 = e_1 \wedge e_3 \quad \quad x_4 = e_2 \wedge e_4  $$
$$x_5 = e_1 \wedge e_4  \quad \quad x_6 = e_2 \wedge e_3 $$
 form a basis of $V$.

Note that $\omega = \alpha p^r e_1 \wedge e_2 \wedge e_3 \wedge e_4$ for some $r \in \Z$ and $\alpha \in W^\times$. Then, with respect to the basis above, we have:

  $$(a_{j,k}) = ([x_j, x_k]) = \alpha^{-1}p^{-r}
 \begin{pmatrix}
  0 & 1 &  & &  & \\
  1 & 0 & & & &\\
  & & 0 & -1 & &  \\
  & & -1 & 0 & & \\
 & & & & 0 & 1 \\
 & & & & 1 & 0 \\
 \end{pmatrix}.$$

 But $V$ has additional structure beyond being a $W_{\Q}$-vector space. Using the Frobenius operator $F$ on $\N$, we can define an operator $\Phi$ on  $V$ by:
$$\Phi(a \wedge b) = p^{-1} (F a) \wedge (Fb)$$
for any $a,b \in V$.

We will also record the action of $\Phi$ with respect to our basis chosen above. Note that $\Phi$ is $\sigma$-semilinear, so $\Phi \circ \sigma^{-1}$ is a $W_{\Q}$-linear map. Then:

$$\Phi \circ \sigma^{-1} = 
 \begin{pmatrix}
  -1 & 0 & & &  & \\
  0 & -1 & & & &\\
  & & 0 & p & &  \\
  & & p^{-1} & 0 & & \\
 & & & & 0 & 1 \\
 & & & & 1 & 0 \\
 \end{pmatrix}.$$
 
 \begin{proposition}
 $(V, \Phi)$ is a slope-zero isocrystal.
 \end{proposition}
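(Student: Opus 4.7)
The plan is to verify the two defining properties of a slope-zero isocrystal: that $\Phi$ is a bijective $\sigma$-semilinear operator on the finite-dimensional $W_{\Q}$-vector space $V$, and that all Newton slopes of $\Phi$ are zero. The first is essentially free from the matrix computation already on the page: the matrix of $\Phi \circ \sigma^{-1}$ displayed above has nonzero determinant (one can read off that it is $\pm 1$), so $\Phi \circ \sigma^{-1}$ is a $W_{\Q}$-linear isomorphism, and since $\sigma$ is an automorphism of $W_{\Q}$, the map $\Phi$ is a $\sigma$-semilinear bijection.

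For the slope-zero claim I would use the Dieudonn\'e--Manin characterization: an isocrystal is slope zero iff its space of $\Phi$-fixed vectors spans it over $W_{\Q}$, equivalently iff $V$ admits a $W_{\Q}$-basis of $\Phi$-invariant vectors. The matrix of $\Phi \circ \sigma^{-1}$ is block diagonal with three $2 \times 2$ blocks on the pairs $(x_1,x_2)$, $(x_3,x_4)$, $(x_5,x_6)$, so it suffices to produce a $\Q_p$-basis of $\Phi$-invariants in each block. On the first block $\Phi$ acts by $-\sigma$, so fixed vectors are spanned by $\alpha x_1, \alpha x_2$ with $\alpha \in W_{\Q}$ satisfying $\sigma(\alpha) = -\alpha$; such $\alpha$ exist and span a one-dimensional $\Q_p$-subspace of $W_{\Q}$ because $k$ is algebraically closed (for odd $p$, any root of $X^{p-1}+1$ in $k$ lifts to such an $\alpha$). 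On the second and third blocks a short calculation shows the fixed-vector equations force the coefficients into $W_{\Q}^{\sigma^2}$, the unramified quadratic extension of $\Q_p$, so each block again contributes a $2$-dimensional $\Q_p$-space of invariants. Summing gives $\dim_{\Q_p}V^{\Phi} = 6 = \dim_{W_{\Q}} V$, which is precisely the condition for $V$ to be of slope zero.

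I expect no serious obstacle here; the main content is just the block decomposition and the remark that $W$ is strict Henselian so all needed roots of unity / unramified extensions are present inside $W_{\Q}$. An equally short alternative proof would compute $\Phi^2$ directly from the matrix: one checks $\Phi^2$ sends each basis vector $x_i$ back to itself (noting that $p \cdot p^{-1} = 1$ in the second block), so $\Phi^2$ is the $\sigma^2$-semilinear ``identity'' and the Newton polygon of the $\sigma$-semilinear operator $\Phi$ therefore has all slopes equal to $0$. Either route yields the proposition with only linear-algebra input, and the explicit construction of $V^{\Phi}$ will be useful anyway for the next section, where vertex lattices inside $V^{\Phi}$ enter the picture.
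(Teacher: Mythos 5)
Your proof is correct and is essentially the same as the paper's: the paper simply writes down an explicit basis of $\Phi$-fixed vectors, namely $\{u(x_1+x_2),\, u(x_1-x_2),\, u(px_3-x_4),\, px_3+x_4,\, x_5+x_6,\, u(x_5-x_6)\}$ where $u\in W^\times$ satisfies $u^\sigma=-u$, and your block-by-block computation that $\dim_{\Q_p}V^\Phi = 6$ is just the same construction organized as a dimension count. Your alternative observation that $\Phi^2$ equals $\sigma^2$ on the basis (hence all slopes vanish) is a cleaner shortcut, but the explicit $\Phi$-fixed basis is what the paper records, and it is reused later when analyzing vertex lattices in $V^\Phi$.
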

 
 \begin{proof}
 It is enough to produce a basis on which $\Phi$ acts trivially. 
 Let $\Delta \in \Z_p^\times$ be a nonsquare and let $u \in W$ be a square root of $\Delta$, so $u^\sigma = - u$. Then, 
  $$\{ u (x_1 + x_2), u (x_1 -  x_2), u(p x_3 - x_4), px_3 + x_4, x_5 + x_6, u(x_5 - x_6) \}$$
  is a basis on which $\Phi$ acts trivially.

 \end{proof}

Let $V'$ be $V$ extended $W_{\Q}'$-linearly to a quadratic space over $W'_{\Q}$.  Just as the Frobenius $F$ may be extended to an operator on $\N'$, also denoted $F$, that is not necessarily surjective, the operator $\Phi$  has a unique $\sigma$-semilinear extension to $V'$, which will also be denoted $\Phi$. This operator is also not necessarily surjective. If $L \subset V$ is a $W'$-lattice, we will use the notation $\overline{\Phi}(L)$ for the $W'$-module generated by $\Phi(L)$.

\subsection{Hodge Star Operators and Special Endomorphisms} 
Our current objective is to relate (a subvariety of) the $\GL_4$ Rapoport-Zink space to (a subvariety of) an orthogonal Grassmanian. To show that our morphism (to be defined in Section ~\ref{Section5}) is algebraically defined, it will be necessary to realize our vector space $V$ as a collection of special endomorphisms of a certain isocrystal. To show that this morphism is an isomorphism, it will be necessary to relate $\GL(\N)$ to $\mathrm{GSpin}(V)$. Both of these results will rely on the \emph{Hodge Star Operators}, defined in this section.


Recall that $V = \bigwedge_{W_{\Q}}^2 \N$, and we have defined a pairing on $V$ by:
$$[\cdot, \cdot] : \bigwedge_{W_{\Q}}^2 \N \times \bigwedge_{W_{\Q}}^2 \N\rightarrow W_{\Q}$$
$$[x,y] \omega = x \wedge y.$$

We will use the notation $\N^* = \Hom_{W_{\Q}}(\N, W_{\Q})$. The evaluation map $\N \times \N^* \rightarrow W_{\Q}$ given by $\mathrm{ev}(a,f) = f(a)$ induces a map on every exterior power:
$$\bigwedge_{W_{\Q}}^l \N \times \bigwedge_{W_{\Q}}^l \N^{*} \rightarrow W_{\Q}$$
$$\{ a_1 \wedge a_2 \cdots \wedge a_l, f_1 \wedge f_2 \cdots \wedge f_l \} = \sum_{\pi \in S_{l} } \mathrm{sgn}(\pi) \prod_{i=1}^l \mathrm{ev}(a_i, f_{\pi(i)} ).$$
We will use this especially in the case that $l = 2$, in which case:
$$\{ a \wedge b, f \wedge g \} = f(a)g(b) - f(b)g(a).$$

 There is a unique element $\omega_1 \in \bigwedge_{W_{\Q}}^4 \N^*$ such that $\{ \omega, \omega_1 \} = 1$. Using this, define a pairing $[\cdot, \cdot]_1$ on $\bigwedge_{W_{\Q}}^2 \N^*$ by:
$$[ \cdot, \cdot]_1 : \bigwedge_{W_{\Q}}^2 \N^* \times \bigwedge_{W_{\Q}}^2 \N^* \rightarrow W_{\Q}$$
$$[t,s]_1 \omega_1 = t \wedge s.$$

\begin{proposition}
For every $x \in \bigwedge_{W_{\Q}}^2 \N$, there exists a unique $x^\star \in \bigwedge_{W_{\Q}}^2 \N^*$ such that, for all $t \in \bigwedge_{W_{\Q}}^2 \N^*$:
$$\{x, t \} = [x^\star, t]_1.$$

For every $t \in \bigwedge_{W_{\Q}}^2 \N^*$, there exists a unique $t^\star \in \bigwedge_{W_{\Q}}^2 \N$ such that, for all $x \in \bigwedge_{W_{\Q}}^2 \N$:
$$\{x, t \} = [x, t^\star].$$

The maps $\bigwedge_{W_{\Q}}^2 \N \rightarrow \bigwedge_{W_{\Q}}^2 \N^*$, $x \mapsto x^\star$ and $\bigwedge_{W_{\Q}}^2 \N^* \rightarrow \bigwedge_{W_{\Q}}^2 \N$, $t \mapsto t^\star$ will both be referred to as the \emph{Hodge star operator}.
\end{proposition}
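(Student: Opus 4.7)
The plan is to deduce both parts purely from nondegeneracy: the bilinear form $[\cdot,\cdot]$ on $\bigwedge^2_{W_\Q}\N$, the bilinear form $[\cdot,\cdot]_1$ on $\bigwedge^2_{W_\Q}\N^*$, and the evaluation-type pairing $\{\cdot,\cdot\}$ between these two spaces are all nondegenerate, and since each space has dimension $\binom{4}{2}=6$, each gives an isomorphism of the space (or its partner) with its own linear dual. The desired elements $x^\star$ and $t^\star$ are then the images of the linear functionals $t\mapsto\{x,t\}$ and $x\mapsto\{x,t\}$ under these isomorphisms, which automatically gives both existence and uniqueness.

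The first step is to verify that $[\cdot,\cdot]_1$ really is nondegenerate. Since $\omega_1$ was chosen so that $\{\omega,\omega_1\}=1$, in particular $\omega_1\neq 0$, so it is a basis of the rank-one module $\bigwedge^4_{W_\Q}\N^*$; hence $[\cdot,\cdot]_1$ is nondegenerate if and only if the wedge-product pairing $\bigwedge^2_{W_\Q}\N^*\times\bigwedge^2_{W_\Q}\N^*\to\bigwedge^4_{W_\Q}\N^*$ is a perfect pairing, which is the standard fact that for an $n$-dimensional vector space $U$, $\bigwedge^k U\times\bigwedge^{n-k}U\to\bigwedge^n U$ is perfect (applied with $U=\N^*$, $n=4$, $k=2$). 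The same argument with $\N$ in place of $\N^*$ shows $[\cdot,\cdot]$ is nondegenerate, consistent with what has already been asserted for $V$.

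With nondegeneracy in hand, the rest is formal. For the first statement, fix $x\in\bigwedge^2_{W_\Q}\N$ and consider the linear functional $\phi_x:\bigwedge^2_{W_\Q}\N^*\to W_\Q$ given by $\phi_x(t)=\{x,t\}$. Nondegeneracy of $[\cdot,\cdot]_1$ means the map $y\mapsto [y,\cdot]_1$ from $\bigwedge^2_{W_\Q}\N^*$ to its dual is an isomorphism, so there is a unique $x^\star\in\bigwedge^2_{W_\Q}\N^*$ with $[x^\star,t]_1=\phi_x(t)$ for all $t$. The second statement is symmetric: for fixed $t\in\bigwedge^2_{W_\Q}\N^*$, the functional $x\mapsto\{x,t\}$ on $\bigwedge^2_{W_\Q}\N$ is realized uniquely as $x\mapsto[x,t^\star]$ by nondegeneracy of $[\cdot,\cdot]$. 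There is no real obstacle here — the work of the section lies in the subsequent computation of these Hodge star operators on the explicit basis $x_1,\ldots,x_6$, not in this existence-uniqueness statement.
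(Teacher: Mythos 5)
Your argument is correct and follows the same route as the paper: both proofs reduce existence and uniqueness of $x^\star$ and $t^\star$ to the nondegeneracy of the pairings $[\cdot,\cdot]_1$ and $[\cdot,\cdot]$, which induce isomorphisms of $\bigwedge^2_{W_\Q}\N^*$ and $\bigwedge^2_{W_\Q}\N$ with their respective duals. Your added remark verifying that $[\cdot,\cdot]_1$ is nondegenerate (via perfectness of the wedge pairing into the top exterior power) is a small elaboration the paper leaves implicit, but it does not change the approach.
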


\begin{proof}

The key point is that, because the pairings $\{ \cdot, \cdot\}$, $[\cdot, \cdot]$, and $[\cdot, \cdot]_1$ described above are nondegenerate, we have isomorphisms:
$$\bigwedge_{W_{\Q}}^2 \N \cong ( \bigwedge_{W_{\Q}}^2 \N^* )^* \quad \text{and} \quad  \bigwedge_{W_{\Q}}^2 \N^* \cong ( \bigwedge_{W_{\Q}}^2 \N^* )^*$$
$$x \mapsto \{x, \cdot \} \quad \quad \quad\quad \quad \quad\quad \quad \quad s \mapsto [s, \cdot ]_1 $$

So, for every $x \in \bigwedge_{W_{\Q}}^2 \N$, there exists a unique $x^\star \in \bigwedge_{W_{\Q}}^2 \N^*$ such that:
$$\{x, t \} = [x^\star, t]_1$$
for all $t \in \bigwedge_{W_{\Q}}^2 \N^*$.

Similarly, we have isomorphisms:
$$\bigwedge_{W_{\Q}}^2 \N^* \cong ( \bigwedge_{W_{\Q}}^2 \N)^* \quad \text{and} \quad  \bigwedge_{W_{\Q}}^2 \N\cong ( \bigwedge_{W_{\Q}}^2 \N^* )^*$$
$$t \mapsto \{\cdot , t \} \quad \quad \quad\quad \quad \quad\quad \quad \quad y \mapsto [\cdot , y ] $$

So, for every $t \in \bigwedge_{W_{\Q}}^2 \N^*$, there exists a unique $t^\star \in \bigwedge_{W_{\Q}}^2 \N$ such that:
$$\{x, t \} = [x, t^\star]$$
for all $x \in \bigwedge_{W_{\Q}}^2 \N$.

\end{proof}

\begin{proposition}\label{Pairingiscomp}
The following equalities hold by construction:
$$[x^\star, t]_1 = \{x, t\} = [x, t^\star] \quad \text{for all } x \in \bigwedge_{W_{\Q}}^2 \N, \ \ t \in \bigwedge_{W_{\Q}}^2 \N^*.$$
The  following additional identities also hold:
$$(x^\star)^\star = x \quad \text{for all } x \in \bigwedge_{W_{\Q}}^2 \N\quad \quad {and} \quad \quad (t^\star)^\star = t \quad  \text{for all } t \in \bigwedge_{W_{\Q}}^2 \N^*.$$
Further, for any field $k'$ over $k$, if the pairings $[\cdot, \cdot]$, $[\cdot, \cdot]_1$, and $\{\cdot, \cdot\}$ are all extended $W'$-linearly, and the Hodge star operator is extended $W'$-linearly, the above equalities and identities continue to hold. 
\end{proposition}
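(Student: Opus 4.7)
My plan for the proof is as follows. The three pairing identities in the first half of the proposition are nothing more than the defining relations of $x^\star$ and $t^\star$ written side by side, so there is nothing to prove there. The main content is the involution property $(x^\star)^\star = x$ (and the symmetric statement $(t^\star)^\star = t$).

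First I would reduce the involution property to an isometry statement. Pick any $y \in \bigwedge_{W_{\Q}}^2 \N$. The defining relation for $(x^\star)^\star$ (i.e.\ the second Hodge star applied to $x^\star \in \bigwedge_{W_{\Q}}^2 \N^*$) gives $[y, (x^\star)^\star] = \{y, x^\star\}$, while the defining relation for $y^\star$, applied at the element $x^\star$, gives $\{y, x^\star\} = [y^\star, x^\star]_1$. Combining these,
\[
[y, (x^\star)^\star] = [y^\star, x^\star]_1 \quad \text{for all } y \in \bigwedge\nolimits_{W_{\Q}}^2 \N.
\]
By nondegeneracy of $[\cdot,\cdot]$, proving $(x^\star)^\star = x$ reduces to showing $[y^\star, x^\star]_1 = [y, x]$ for all $x, y$, i.e.\ that the Hodge star $x \mapsto x^\star$ is an isometry from $(\bigwedge_{W_{\Q}}^2 \N, [\cdot,\cdot])$ onto $(\bigwedge_{W_{\Q}}^2 \N^*, [\cdot,\cdot]_1)$. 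The identity $(t^\star)^\star = t$ then follows by the symmetric argument with the roles of $\N$ and $\N^*$ exchanged, or equivalently from the observation that the two Hodge stars are then forced to be inverse isomorphisms.

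I would then verify the isometry by a direct basis computation. Let $\{f_i\}_{i=1}^{4}$ be the basis of $\N^*$ dual to $\{e_i\}$, so $\{f_i \wedge f_j\}_{i<j}$ is a basis of $\bigwedge_{W_{\Q}}^2 \N^*$. Writing $\omega = c \cdot e_1 \wedge e_2 \wedge e_3 \wedge e_4$ with $c \in W_{\Q}^\times$, one checks $\omega_1 = c^{-1}\, f_1 \wedge f_2 \wedge f_3 \wedge f_4$. Using $\{e_i \wedge e_j, f_m \wedge f_n\} = \delta_{im}\delta_{jn}$ for $i<j$, $m<n$, a short computation yields
\[
(e_i \wedge e_j)^\star = c^{-1}\,\epsilon_{ij}\, f_k \wedge f_l, \qquad (f_i \wedge f_j)^\star = c\,\epsilon_{ij}\, e_k \wedge e_l,
\]
where $\{k,l\} = \{1,2,3,4\} \setminus \{i,j\}$ with $k<l$ and $\epsilon_{ij} = \mathrm{sgn}(i,j,k,l) \in \{\pm 1\}$. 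From these explicit formulas one reads off the isometry property directly, and also $((e_i \wedge e_j)^\star)^\star = \epsilon_{ij}\,\epsilon_{kl}\, e_i \wedge e_j = e_i \wedge e_j$, since the cyclic shift $(i,j,k,l) \mapsto (k,l,i,j)$ is an even permutation, so $\epsilon_{ij} = \epsilon_{kl}$ and the product is $+1$.

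Finally, the $W'$-linear extension statement is automatic: all structures — the bilinear forms $[\cdot,\cdot]$, $[\cdot,\cdot]_1$, the pairing $\{\cdot,\cdot\}$, and the Hodge star operators — are $W_{\Q}$-linear, so their $W'_{\Q}$-linear extensions satisfy the same identities on a basis and hence everywhere. The only real obstacle is the sign bookkeeping in the basis calculation; there is no deep obstruction, and a coordinate-free formulation of ``the Hodge star is an isometry'' would only obscure what is a very short direct check.
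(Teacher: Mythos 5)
Your proof is correct and takes essentially the same approach as the paper: both verify the claim by writing $\omega = c\, e_1\wedge e_2\wedge e_3\wedge e_4$, computing the Hodge star explicitly on the basis $\{e_i\wedge e_j\}$ and the dual basis $\{f_i\wedge f_j\}$, and reading off the involution from the explicit formulas. Your intermediate reduction to an isometry statement is valid but not actually used, since you end up composing the explicit formulas directly to get $((e_i\wedge e_j)^\star)^\star = e_i\wedge e_j$; the paper skips that framing and just records the twelve star values and observes the double star is the identity.
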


\begin{proof}
The proof is computational. Note that $\omega = w e_1 \wedge e_2 \wedge e_3 \wedge e_4$ for some $w \in W_\Q$, and if $\{f_i\}_1^4$ is the dual basis to $\{e_i\}_1^4$, $\omega_1 = w^{-1} f_1 \wedge f_2 \wedge f_3 \wedge f_4$. We have the following basis of $\bigwedge_{W_{\Q}}^2 \N$:
$$x_1 = e_1 \wedge e_2  \quad \quad x_3 = e_1 \wedge e_3 \quad \quad   x_5 = e_1 \wedge e_4 $$
$$   x_2 = e_3 \wedge e_4 \quad \quad x_4 = e_2 \wedge e_4  \quad \quad x_6 = e_2 \wedge e_3. $$

And the following basis for $\bigwedge_{W_{\Q}}^2 \N^*$:
$$t_1 = f_1 \wedge f_2  \quad \quad   t_3 = f_1 \wedge f_3 \quad \quad   t_5 = f_1 \wedge f_4$$
$$t_2 = f_3 \wedge f_4 \quad \quad t_4 = f_2 \wedge f_4 \quad \quad t_6 = f_2 \wedge f_3.  $$

One may check by hand that with the pairings defined in terms of this choice of $\omega$ and $\omega_1$, and in terms of the bases defined above, that the following holds:
\begin{equation*}
  \begin{split}
 x_1^\star &= w^{-1} t_2 \\
 x_2^\star &= w^{-1} t_1 \\
 x_3^\star &= -w^{-1} t_4 \\
  \end{split}
\quad  \quad
  \begin{split}
 t_2^\star &= w x_1 \\
 t_1^\star &= w x_2 \\
 t_4^\star &= - w x_3 \\
  \end{split}
  \quad \quad
  \begin{split}
 x_4^\star &= -w^{-1} t_3 \\
 x_5^\star &= w^{-1} t_6 \\
 x_6^\star &=w^{-1}  t_5 \\
  \end{split}
  \quad \quad
  \begin{split}
   t_3^\star &= -w x_4 \\
 t_6^\star &= w x_5 \\
 t_5^\star &= w x_6 \\
  \end{split}
\end{equation*}
From which we have that applying the Hodge star operator twice is the identity.

Finally, note that as all the definitions and arguments above were linear, for any field $k'$ over $k$, the equalities and identities in the proposition continue to hold after extending $W'_{\Q}$-linearly.

\end{proof}

We will now show that $V = \bigwedge_{W_{\Q}}^2 \N$ can be considered as a collection of endomorphisms of a certain isocrystal, using the Hodge star operators defined above. There are injections:
$$\bigwedge_{W_{\Q}}^2 \N \rightarrow \Hom(\N^*, \N) \quad \quad \quad \quad \quad \quad \quad \bigwedge_{W_{\Q}}^2 \N^* \rightarrow \Hom (\N, \N^*)$$
$$a \wedge b \mapsto a \wedge b: (f \mapsto f(a)b - f(b)a) \quad \quad f \wedge g \mapsto f \wedge g: (c \mapsto g(c)f - f(c)g).$$

 Note that $\N^*$ is also an isocrystal, with $F$ and $V$ operators induced by those on $\N$: for any $\varphi \in \N^*$ and $n \in \N$,
$$(F \varphi)(n) = \varphi(V n)^\sigma \quad \quad \text{and} \quad \quad (V \varphi)(n) = \varphi(Fn)^{\sigma^{-1} }.$$
Let $\underline{\N} = \N \times \N^*$. Then $\underline{\N}$ is also an isocrystal, with the $F$ and $V$ operators defined on the two factors.

Finally, we can define:
$$V = \bigwedge_{W_{\Q}}^2 \N\rightarrow \End(\underline{\N} )$$
$$x \mapsto \tilde{x}$$
$$\tilde{x}: \underline{\N}  =  \N \times \N^* \xrightarrow{(x^\star, x)} \N^* \times \N \cong \underline{\N} .$$

\begin{definition}
Define the \emph{special endomorphisms} of $\underline{\N}$ to be the image of $\bigwedge_{W_{\Q}}^2 \N$ in $\End(\underline{\N} )$ under the map above. This collection will be denoted $\End_{\star}(\underline{\N})$.
\end{definition}

We will not use the following observation, but it is worth noting that the special endomorphisms of $\underline{\N}$ have a simple description. An element $f \in \End(\underline{\N})$ is a special endomorphism if and only if the following conditions hold:
\begin{enumerate}
\item{The endomorphism $f$ takes $\N$ to $\N^*$ and $\N^*$ to $\N$ , so can be considered as a pair:
$$(f_1, f_2): \N \times \N^* \rightarrow \N^* \times \N$$}
\item{Under the canonical identification $\N \cong (\N^*)^*$, $f_1^* = -f_1$ and $f_2^* = -f_2$}
\item{The endomorphism $f_2$ is naturally induced by $f_1$ via the Hodge star operator.}
\end{enumerate}
To understand the third condition, note that the second condition implies that $f_1$ is the endomorphism induced by some $t \in \bigwedge_{W_{\Q}}^2 \N^*$. The third condition is that $f_2$ must be induced by $t^\star \in \bigwedge_{W_{\Q}}^2 \N$.

\begin{proposition}\label{compositionform}
For any $x,y \in V$,
$$[x,y] =  \widetilde{x} \circ \widetilde{y} + \widetilde{y} \circ \widetilde{x}.$$

So, the map:
$$V = \bigwedge_{W_{\Q}}^2 \N\rightarrow \End_{\star}(\underline{\N} )$$
$$x \mapsto \tilde{x}$$
defines an isomorphism between the quadratic spaces $(V, [\cdot, \cdot])$ and the collection of special endomorphisms $\End_{\star}(\underline{\N})$ with the natural composition form $[f,g] = f \circ g + g \circ f$. Further, for any field $k'$ over $k$, these properties continue to hold after extending $V$ and $\End_{\star}(\underline{\N})$ both $W_{\Q}'$-linearly.
\end{proposition}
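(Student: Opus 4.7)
The plan is to establish the composition formula $[x,y]\cdot\mathrm{id}_{\underline{\N}} = \widetilde{x}\circ\widetilde{y} + \widetilde{y}\circ\widetilde{x}$ first, and then to deduce the claimed isomorphism of quadratic spaces as a formal consequence. Since both sides of the formula are symmetric and $W_{\Q}$-bilinear in $x,y$, and since $V$ is spanned by decomposable bivectors (the basis $\{x_1,\ldots,x_6\}$ already consists of such), it suffices to verify the identity when $x = a_1\wedge a_2$ and $y = b_1\wedge b_2$ are decomposable.

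To compute $\widetilde{x}\circ\widetilde{y}$ on the $\N$-factor, I need an explicit formula for $y^{\star}(n)\in\N^{*}$ applied to a vector $a\in\N$. Writing $y^{\star}=\sum f_i\wedge g_i$ and comparing with the pairing $\{\cdot,\cdot\}$, one reads off $y^{\star}(n)(a)=\{a\wedge n,\,y^{\star}\}$; Proposition \ref{Pairingiscomp} then gives $\{a\wedge n,y^{\star}\}=[a\wedge n,(y^{\star})^{\star}]=[a\wedge n,\,y]$. Substituting into $x(f)=f(a_1)a_2-f(a_2)a_1$ yields
\[ \widetilde{x}\widetilde{y}(n) = [a_1\wedge n,\,y]\,a_2 - [a_2\wedge n,\,y]\,a_1, \]
and symmetrically for $\widetilde{y}\widetilde{x}(n)$. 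The main step is to show that these expressions sum to $[x,y]\cdot n$. After unwinding $[u\wedge v,\,s\wedge t]\,\omega=u\wedge v\wedge s\wedge t$, the required identity reduces to
\[ \det(a_1,b_1,b_2,n)\,a_2 - \det(a_2,b_1,b_2,n)\,a_1 + \det(a_1,a_2,b_1,n)\,b_2 - \det(a_1,a_2,b_2,n)\,b_1 = \det(a_1,a_2,b_1,b_2)\,n, \]
where $\det$ denotes the coordinate with respect to $\omega$. This is precisely the Cramer/Plücker relation $\sum_{i=0}^{4}(-1)^i\det(v_0,\ldots,\widehat{v_i},\ldots,v_4)\,v_i = 0$ for the five vectors $a_1,a_2,b_1,b_2,n$ in the four-dimensional space $\N$, and follows from the vanishing of $\bigwedge^{5}\N$ after matching signs. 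A parallel computation on the $\N^{*}$-factor, using the companion identity $\{x,t\}=[x^{\star},t]_1$ and the involution $(\cdot)^{\star\star}=\mathrm{id}$, yields the same scalar $[x,y]$ there, so $\widetilde{x}\widetilde{y}+\widetilde{y}\widetilde{x}=[x,y]\cdot\mathrm{id}_{\underline{\N}}$.

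With the composition formula in hand, the remaining statements are formal. The map $x\mapsto\widetilde{x}$ is $W_{\Q}$-linear and surjects onto $\End_{\star}(\underline{\N})$ by definition; injectivity is immediate from the already-injective map $\bigwedge^2\N \to \Hom(\N^{*},\N)$ (checked on the basis $\{x_i\}$, or derived from the formula $\widetilde{x}^{2}=\tfrac{1}{2}[x,x]\cdot\mathrm{id}$ together with the non-degeneracy of $[\cdot,\cdot]$). Hence $V\to\End_{\star}(\underline{\N})$ is an isomorphism of $W_{\Q}$-vector spaces carrying $[\cdot,\cdot]$ to the composition form $f\circ g+g\circ f$. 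Every step used only $W_{\Q}$-linearity and identities in exterior algebra, so after extending all the data $W'_{\Q}$-linearly each identity is preserved verbatim, giving the same conclusion over any field $k'$ over $k$.

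The principal obstacle is the 4-term Cramer/Plücker identity; once it is identified, everything else is bookkeeping. Morally, this is the identity that encodes the exceptional isomorphism relating $\mathrm{SL}(\N)=\mathrm{SL}_4$ to the spin group of the six-dimensional quadratic space $\bigwedge^{2}\N$ underlying this whole construction.
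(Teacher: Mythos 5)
Your proof is correct, and it takes a genuinely different route from the one in the paper. The paper's proof simply checks the identity $[x_i,x_j]=\widetilde{x_i}\circ\widetilde{x_j}+\widetilde{x_j}\circ\widetilde{x_i}$ by hand for all $21$ pairs of basis vectors, relying on the explicit Hodge star table ($x_1^\star = w^{-1}t_2$, etc.) established in the proof of Proposition~\ref{Pairingiscomp}; it does not reduce to decomposable bivectors or invoke any structural identity. You instead reduce to decomposable $x=a_1\wedge a_2$, $y=b_1\wedge b_2$ by bilinearity, derive the clean formula $y^\star(n)(a)=[a\wedge n,\,y]$ from Proposition~\ref{Pairingiscomp} (which checks out: both sides equal $\sum_i f_i(a)g_i(n)-f_i(n)g_i(a)$ when $y^\star=\sum f_i\wedge g_i$), and then recognize the resulting four-term expression as the Cramer/Pl\"ucker syzygy coming from $\bigwedge^5\N=0$. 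I verified the sign bookkeeping: with $v_0=a_1,v_1=a_2,v_2=b_1,v_3=b_2,v_4=n$, the alternating sum $\sum_i(-1)^i\det(v_0,\ldots,\widehat{v_i},\ldots,v_4)v_i=0$ rearranges exactly to your displayed identity. What your approach buys is a coordinate-free proof (modulo the choice of $\omega$) that makes transparent \emph{why} the exterior square of a 4-dimensional space carries a quadratic form realized by composition of endomorphisms, which is precisely the $\SL_4$/$\mathrm{Spin}_6$ coincidence the paper is exploiting; the paper's basis check is shorter to state but opaque. Two small quibbles: the parenthetical suggestion that injectivity follows from $\widetilde{x}^2=\tfrac{1}{2}[x,x]\cdot\mathrm{id}$ alone is not quite right (that only gives $[x,x]=0$; you need the full polarized identity $[x,y]=\widetilde{x}\widetilde{y}+\widetilde{y}\widetilde{x}$ for all $y$ plus nondegeneracy), though the other route you offer, injectivity of $\bigwedge^2\N\to\Hom(\N^*,\N)$, is fine and is what the paper uses. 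And the parallel $\N^*$-factor computation is asserted rather than carried out; it does go through by the same mechanism, but a sentence giving the analogous formula $x^\star(m)(g)=[x^\star,\,g\wedge m]_1$ or similar would close that loop.
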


\begin{proof}
As in the proof of Proposition ~\ref{Pairingiscomp}, note that $\omega = w e_1 \wedge e_2 \wedge e_3 \wedge e_4$ for some $w \in W_{\Q}$. In that proof, we observed that, for the bases $\{x_i\}_1^6$ of $ \bigwedge_{W_{\Q}}^2 \N$ and $\{t_i\}_1^6$ of $\bigwedge_{W_{\Q}}^2 \N^*$ :

\begin{equation*}
  \begin{split}
 x_1^\star &= w^{-1} t_2 \\
 x_2^\star &= w^{-1} t_1 \\
 x_3^\star &= -w^{-1} t_4 \\
  \end{split}
\quad  \quad
  \begin{split}
 t_2^\star &= w x_1 \\
 t_1^\star &= w x_2 \\
 t_4^\star &= - w x_3 \\
  \end{split}
  \quad \quad
  \begin{split}
 x_4^\star &= -w^{-1} t_3 \\
 x_5^\star &= w^{-1} t_6 \\
 x_6^\star &=w^{-1}  t_5 \\
  \end{split}
  \quad \quad
  \begin{split}
   t_3^\star &= -w x_4 \\
 t_6^\star &= w x_5 \\
 t_5^\star &= w x_6 \\
  \end{split}
\end{equation*}

From which one may  check by hand that:
$$[x_i, x_j] = \widetilde{x_i} \circ \widetilde{x_j} + \widetilde{x_j} \circ \widetilde{x_i}$$
for any $1 \leq i,j \leq 6$, which shows that $[x,y] =  \widetilde{x} \circ \widetilde{y} + \widetilde{y} \circ \widetilde{x}.$

Note that the map  
$$V = \bigwedge_{W_{\Q}}^2 \N\rightarrow \End_{\star}(\underline{\N} )$$
$$x \mapsto \tilde{x}$$
is clearly injective, and is surjective by definition, so defines an isomorphism between the quadratic space $(V, [\cdot, \cdot])$ and the collection of special endomorphisms of $\underline{\N}$ with the natural composition form. As all of the above arguments and definitions were linear, these properties continue to hold after extending $V$ and $\End_{\star}(\underline{\N})$ both $W_{\Q}'$-linearly.
\end{proof}

\subsection{Exceptional Isomorphisms}\label{excepsect}

Our objective is to convert our collection of Dieudonn\'e lattices in $\N'$ to a collection of ``very special" lattices in the quadratic space $V'$. As discussed previously, it is enough to study the Dieudonn\'e lattices of height 0. These in particular are height-0 $W'$ lattices in $\N'$, which naturally have an action of $\SL_4$. The very special lattices we will be studying in $V'$ will in particular be self-dual lattices, which naturally have an action of $\SO(V)$. In order to convert our Dieudonn\'e lattices into very special lattices, we will have to understand the relation between these groups. This will be accomplished by viewing $ \GL(\N) \times W_{\Q}^\times $ as a unitary group, and by using an exceptional isomorphism between a unitary group and a spin group.

Recall that $\underline{\N} = \N \times \N^*$, which is an isocrystal with the $F$ and $V$ operators acting diagonally on the two factors. We will define a polarization of $\underline{\N}$. If $\N \rightarrow (\N^*)^*$, $a \mapsto e_a$ is the canonical evaluation map, define:
$$\boldsymbol{\lambda}: \underline{\N} = \N \times \N^* \rightarrow \N^* \times (\N^*)^* \cong \underline{\N}^* $$
$$\boldsymbol{\lambda}(a,f) = (f, -e_a).$$

This can be considered instead as an alternating form $[ \cdot, \cdot]_{\boldsymbol{\lambda}}$ on $\underline{\N}$, where:
$$[  n_1 , n_2 ]_{\boldsymbol{\lambda}} = \boldsymbol{\lambda}(n_2)(n_1).$$

When considering an element $n_i \in \underline{\N}$ as a pair $(a_i, f_i) \in \N \times \N^*$, this alternating pairing has a particularly nice description:
$$[ (a_1, f_1), (a_2, f_2) ] _{\boldsymbol{\lambda} } = f_2(a_1) - f_1(a_2).$$

We can also define an action of  $W_{\Q} \times W_{\Q}$ on $\underline{\N} = \N \times \N^*$ by acting componentwise. Now, we may define the unitary similitude group:
$$\GU(\underline{\N}) = \{ g \in \Aut_{W_{\Q} \times W_{\Q} }(\underline{\N}) \ | \ g^*(\boldsymbol{\lambda} ) = \nu(g) \boldsymbol{\lambda} \ \text{ for some } \ \nu(g) \in W_{\Q}^\times \}.$$


Where the condition $g^*(\boldsymbol{\lambda} ) = \nu(g) \boldsymbol{\lambda}$ may be thought of in terms of either the polarization $\boldsymbol{\lambda}$ or the corresponding alternating form $[ \cdot, \cdot ] _{\boldsymbol{\lambda} }$.  This group may be defined functorially in such a way  that the group described above would be the $W_{\Q}$-points.  One would expect there to be an imaginary  quadratic field $E$ in the description of a unitary group: taking $E$ over $\Q$ to be any imaginary quadratic field in which the prime $p$ is split will produce this definition.

This group has an important subgroup: following the notation of ~\cite{GU22}, set:
$$\GU^0(\underline{\N}) = \{ g \in \GU(\underline{\N})  \ | \ (\nu(g))^2 = \det(g) \}.$$
In this definition, note that $g \in  \GU(\underline{\N})$ is $W_{\Q} \times W_{\Q}$-linear, so $\det(g) \in W_{\Q} \times W_{\Q}$. The equality $ (\nu(g))^2 = \det(g)$ takes place by identifying $\nu(g)$ with $(\nu(g), \nu(g) )$ in $W_{\Q} \times W_{\Q}$.

The key point in what follows is that much of an element of $\GU(\underline{\N})$ is determined simply by its action on one of the two factors. Precisely:

\begin{lemma}\label{GUislikeGL}
There is an isomorphism:
$$\GU(\underline{\N}) \xrightarrow{\sim} \GL(\N) \times W^\times_{\Q}$$
$$g \mapsto (g|_{\N}, \nu(g) ).$$

This isomorphism identifies $\GU^0(\underline{\N})$ with the subgroup 
$$H = \{ (h, c) \in \GL(\N) \times W^{\times}_{\Q} \ | \ \det(h) = c^2 \}$$
of $\GL(\N) \times W^\times_{\Q}$.
\end{lemma}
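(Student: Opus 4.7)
The plan is to show that a unitary similitude of $\underline{\N}$ is rigidly determined by its restriction to the first factor together with its similitude factor, so that the map $g \mapsto (g|_{\N}, \nu(g))$ has a manifest inverse.

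First, since $\GU(\underline{\N})$ sits inside the $W_{\Q} \times W_{\Q}$-linear automorphisms of $\underline{\N}$, any element $g$ must commute with the two orthogonal idempotents $(1,0)$ and $(0,1)$, whose images are $\N$ and $\N^*$ respectively. Hence $g$ preserves the direct sum decomposition and splits as $g = (g_1, g_2)$ with $g_1 \in \GL(\N)$ and $g_2 \in \GL(\N^*)$. Next, I would unpack the similitude condition using the explicit formula $[(a_1,f_1),(a_2,f_2)]_{\boldsymbol{\lambda}} = f_2(a_1) - f_1(a_2)$: setting $f_2 = 0$, the identity $[g \cdot , g \cdot ]_{\boldsymbol{\lambda}} = \nu(g) [\cdot,\cdot]_{\boldsymbol{\lambda}}$ collapses to $(g_2 f_1)(g_1 a_2) = \nu(g) f_1(a_2)$ for all $a_2 \in \N$ and $f_1 \in \N^*$. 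This pins down $g_2$ uniquely in terms of $g_1$ and $\nu(g)$, namely $g_2 f = \nu(g) \cdot (f \circ g_1^{-1})$, and the symmetric case $f_1 = 0$ yields the same relation. Conversely, given any pair $(h,c) \in \GL(\N) \times W_{\Q}^\times$, defining $g_2 f := c \cdot (f \circ h^{-1})$ and $g := (h, g_2)$ produces an element of $\GU(\underline{\N})$ with $g|_{\N} = h$ and $\nu(g) = c$. This establishes the first isomorphism.

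For the identification of $\GU^0(\underline{\N})$ with $H$: the $W_{\Q} \times W_{\Q}$-linear determinant of $g$ decomposes as $\det(g) = (\det g_1, \det g_2) \in W_{\Q} \times W_{\Q}$, and the formula $g_2 f = c \cdot (f \circ g_1^{-1})$ combined with $\dim_{W_{\Q}} \N = 4$ yields $\det(g_2) = c^4 / \det(g_1)$. With $\nu(g)$ identified with $(c,c)$, the condition $\nu(g)^2 = \det(g)$ breaks into the pair $c^2 = \det(g_1)$ and $c^2 = c^4/\det(g_1)$, which are equivalent. Hence $\GU^0(\underline{\N})$ corresponds under the isomorphism precisely to $\{(h,c) : \det(h) = c^2\} = H$. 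The whole argument is essentially a bookkeeping exercise, and I do not anticipate a genuine obstacle; the only points requiring care are the correct tracking of the induced dual action and of the factor $c^4$ arising from the four-dimensionality of $\N$.
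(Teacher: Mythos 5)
Your proof is correct and follows essentially the same route as the paper: both identify the decomposition $g=(g_1,g_2)$ forced by $W_{\Q}\times W_{\Q}$-linearity, use the similitude condition on $[\cdot,\cdot]_{\boldsymbol{\lambda}}$ to pin down $g_2 = \nu(g)(g_1^{-1})^*$, and compute $\det(g) = (\det h, c^4/\det h)$ to match $\GU^0$ with $H$. You simply spell out the verification that the similitude condition determines $g_2$ a little more explicitly than the paper does.
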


\begin{proof}
The inverse of the map described above is:

$$ \GL(\N) \times W^\times_{\Q} \rightarrow \GU(\underline{\N}) = \GU(\N \times \N^*)$$
$$(h, c) \mapsto (h, c(h^{-1})^* ).$$

The key observation is that any $g \in \GU(\underline{\N})$ must take the component $\N$ to itself and the component $\N^*$ to itself, due to the $W_{\Q} \times W_{\Q}$-linearity. Futher, the condition that $g^*(\boldsymbol{\lambda}) = \nu(g) \boldsymbol{\lambda}$ implies that $g$ must be of the form $(g|_{\N}, \nu(g) (g|^{-1}_{\N})^*)$.

To see that this isomorphism identifies the subgroups $H$ and $\GU^0(\underline{\N})$, let $(h, c) \in \GL_{\N} \times W_{\Q}^\times$ map to $g \in \GU(\underline{\N})$, so that when viewed on the components $\N$ and $\N^*$ of $\underline{\N}$:
$$g = (h, c(h^{-1})^*) \quad \quad \text{and} \quad \quad \nu(g) = c.$$

Then, the determinant of $g$ as a $W_{\Q} \times W_{\Q}$ linear automorphism of $\underline{\N}$ is:
$$( \det(h) , c^4 \det(h)^{-1} ).$$

The condition that $\det(g) = \nu(g)^2$ corresponds exactly to the condition that $\det(h) = c^2$, and so the isomorphism between $\GU(\underline{\N})$ and $\GL(\N) \times W^\times_{\Q}$ identifies $\GU^0(\underline{\N})$ with $H$.

\end{proof}

The group $\GL(\N) \times W^\times_{\Q} \cong \GU(\underline{\N})$ acts on $V = \bigwedge_{W_{\Q}}^2 \N$ by:
$$(h,c) \cdot a \wedge b  = c^{-1} h(a) \wedge h(b).$$
Note that, when $(h,c) \in  H \cong \GU^0(\underline{\N})$, action by $(h,c)$ preserves the quadratic form on $V$.

The map $W_{\Q}^\times \rightarrow \GL(\underline{\N})$ has image in $\GU^0(\underline{\N})$, so defines a map $W_{\Q}^\times \rightarrow H$. Concretely,  $w \in W_{\Q}^\times$ maps to $(w\cdot I, w^2 ) \in H$. Combining these, we have a sequence (which we haven't yet shown is exact):
$$1 \rightarrow W_{\Q}^\times \rightarrow H \rightarrow \SO(V) \rightarrow 1.$$
The key point in showing this sequence is exact is the following exceptional isomorphism:

\begin{lemma}\label{GU22Gspin}
There is an isomorphism:
$$\mathrm{GU}^0(\underline{\N}) \cong \mathrm{GSpin}(V)$$
in a way compatible with the action of both groups on $V$. 
\end{lemma}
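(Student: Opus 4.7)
The plan is to invoke the Clifford algebra of $V$. Setting $y = x$ in Proposition \ref{compositionform} gives $\widetilde{x}^2 = \tfrac{1}{2}[x,x] \cdot \mathrm{id}_{\underline{\N}}$, which is precisely the Clifford relation for the quadratic form $q(x) = \tfrac{1}{2}[x,x]$ on $V$. The universal property of $C(V)$ therefore produces a unique algebra homomorphism $\rho \colon C(V) \to \End(\underline{\N})$ extending $x \mapsto \widetilde{x}$. Since $V$ is a nondegenerate 6-dimensional quadratic space over $W_\Q$, $C(V)$ is central simple of dimension $2^6 = 64$, so $\rho$ is injective; because $\dim \End(\underline{\N}) = 64$ as well, $\rho$ is an isomorphism.

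Next I would exploit the $\Z/2$-grading $\underline{\N} = \N \oplus \N^*$. By construction each $\widetilde{x}$ swaps the two summands, so $V$ sits entirely in the odd part of $\End(\underline{\N})$, and $\rho$ restricts to an isomorphism
\[ C^+(V) \xrightarrow{\sim} \End(\N) \times \End(\N^*), \]
realizing the two half-spin representations of $C^+(V)$ on $\N$ and $\N^*$. Recall that $\mathrm{GSpin}(V) \subset C^+(V)^\times$ consists of units $g$ with $gVg^{-1} \subseteq V$ and $gg^\sharp \in W_\Q^\times$, where $\sharp$ is the main anti-involution of $C(V)$ fixing $V$ pointwise; it comes equipped with the spinor norm $\nu(g) = gg^\sharp$. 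Under $\rho$, such a $g$ preserves the grading on $\underline{\N}$ and therefore corresponds to a pair $(g_1, g_2) \in \GL(\N) \times \GL(\N^*)$.

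The last step is to show that the image of $\rho|_{\mathrm{GSpin}(V)}$ is exactly $\mathrm{GU}^0(\underline{\N})$. The polarization $\boldsymbol{\lambda}$ induces an anti-involution $\tau$ on $\End(\underline{\N})$; the key identification is that via $\rho$ this $\tau$ pulls back to $\sharp$ on $C(V)$, equivalently that $\tau(\widetilde{x}) = \widetilde{x}$ for every $x \in V$. Once this is established, the defining equation $gg^\sharp = \nu(g)$ translates directly into $\rho(g)^* \boldsymbol{\lambda} = \nu(g)\boldsymbol{\lambda}$, placing $\rho(g)$ in $\GU(\underline{\N})$ with similitude factor $\nu(g)$. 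The further relation $\det(\rho(g)|_\N) = \nu(g)^2$, which via Lemma \ref{GUislikeGL} places $\rho(g)$ in $\mathrm{GU}^0(\underline{\N})$, is the classical identity comparing the spinor norm to the determinant of a half-spin representation of $\mathrm{GSpin}$ on a 6-dimensional quadratic space (it suffices to check it on $\mathrm{Spin}(V)$ and on the center $\G_m$ separately). Since both $\mathrm{GSpin}(V)$ and $\mathrm{GU}^0(\underline{\N})$ are connected reductive of dimension $16$ and $\rho$ has trivial kernel, $\rho|_{\mathrm{GSpin}(V)}$ is an isomorphism onto $\mathrm{GU}^0(\underline{\N})$. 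The main obstacle is the sign verification that $\tau$ really corresponds to $\sharp$ (and not to $\sharp$ twisted by the grading involution, which would give an isomorphism landing in a different real form); this has to be carried out by an explicit computation with the Hodge star formulas tabulated in the proof of Proposition \ref{Pairingiscomp}.
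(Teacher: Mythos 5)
Your proof is correct and follows the same Clifford-algebra route as the reference the paper defers to (Howard--Pappas, Proposition 2.7 of \cite{GU22}); the paper itself only points to that argument and to the ingredients (the Hodge star operators and Proposition~\ref{compositionform}) that make it work, which are exactly what you use. One small remark: the ``sign verification'' that the adjoint involution $\tau$ induced by $\boldsymbol{\lambda}$ restricts to the identity on $\rho(V)$ does not actually require grinding through the Hodge star tables. It follows directly from the definition of $[\cdot,\cdot]_{\boldsymbol{\lambda}}$ together with the antisymmetry of the bilinear expressions $a_1, a_2 \mapsto x^\star(a_1)(a_2)$ and $f_1, f_2 \mapsto f_2\bigl(x(f_1)\bigr)$, which is immediate from the formulas for the maps $a\wedge b\colon \N^*\to\N$ and $f\wedge g\colon \N\to\N^*$. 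Once one writes out $[\widetilde{x}(a_1,f_1),(a_2,f_2)]_{\boldsymbol{\lambda}}$ and $[(a_1,f_1),\widetilde{x}(a_2,f_2)]_{\boldsymbol{\lambda}}$ and uses these two antisymmetries, the two sides match with no case-checking, so $\tau(\widetilde{x})=\widetilde{x}$ and $\tau$ is indeed the transpose anti-involution $\sharp$ rather than its twist by the grading. The remaining steps (injectivity of $\rho$ by simplicity of $C(V)$, the norm condition $\rho(g)\tau(\rho(g))=\nu(g)$, the identity $\det(\rho(g)|_\N)=\nu(g)^2$ checked on $\mathrm{Spin}(V)$ and the center, and the dimension-and-connectedness argument to conclude surjectivity onto $\GU^0(\underline{\N})$) are all sound.
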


\begin{proof}
The argument is analogous to ~\cite{GU22} Proposition 2.7. It is worth mentioning that this argument relies on the Hodge star operators and the isomorphism of quadratic spaces between $(V, [\cdot,\cdot])$ and $\mathrm{End}_{\star}(\underline{\N})$ with its composition form.
\end{proof}

\begin{proposition}\label{Usefulsequence}
The sequence described above:
$$1 \rightarrow W_{\Q}^\times \rightarrow H \rightarrow \SO(V) \rightarrow 1.$$
 is exact.
\end{proposition}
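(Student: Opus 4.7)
The plan is to verify exactness at each of the three terms, treating surjectivity separately via the exceptional isomorphism. First, the map $w \mapsto (wI, w^2)$ is well-defined (since $\det(wI) = w^4 = (w^2)^2$) and obviously injective, and its image lies in the kernel of $H \to \SO(V)$: it sends $a \wedge b \in V$ to $w^{-2}(wa) \wedge (wb) = a \wedge b$, acting trivially.

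For exactness in the middle, suppose $(h,c) \in H$ acts trivially on $V$, so that $h(a) \wedge h(b) = c \cdot (a \wedge b)$ for all $a, b \in \N$. Fix a nonzero $v \in \N$. Since $h$ is invertible, as $w$ varies over $\N$ the element $h(w)$ ranges over all of $\N$, so $h(v) \wedge u \in v \wedge \N$ for every $u \in \N$. Wedging with $v$ gives $v \wedge h(v) \wedge u = 0$ in $\bigwedge_{W_\Q}^3 \N$ for all $u$, which forces $v \wedge h(v) = 0$ and hence $h(v) \in W_\Q \cdot v$. Applying this to $v$, $w$, and $v+w$ for linearly independent $v, w$ shows the scalar is independent of $v$, so $h = \lambda I$ for some $\lambda \in W_\Q^\times$. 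Substituting back, $\lambda^2 \cdot (a \wedge b) = c \cdot (a \wedge b)$, so $c = \lambda^2$, exhibiting $(h,c)$ as the image of $\lambda$.

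Surjectivity of $H \to \SO(V)$ is the main content, which I would import from the exceptional isomorphism. Composing the identification $\GU^0(\underline{\N}) \xrightarrow{\sim} H$ of Lemma~\ref{GUislikeGL} with the isomorphism $\GU^0(\underline{\N}) \cong \mathrm{GSpin}(V)$ of Lemma~\ref{GU22Gspin} (both compatible with the actions on $V$) identifies our sequence with the classical exact sequence of algebraic groups
$$1 \to \G_m \to \mathrm{GSpin}(V) \to \SO(V) \to 1,$$
in which $\G_m$ sits as the center of $\mathrm{GSpin}(V)$. Since $W_\Q$ is a field, Hilbert~90 gives $H^1(W_\Q, \G_m) = 0$, so this sequence remains exact on $W_\Q$-points, yielding the required surjectivity. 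The match between the $\G_m$ of $\mathrm{GSpin}(V)$ and our $W_\Q^\times \subset H$ is automatic, since both have been characterized intrinsically as the kernel of the action on $V$.

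The only nontrivial input is the exceptional isomorphism of Lemma~\ref{GU22Gspin}; everything else is a short exterior-algebra computation plus Hilbert~90 to pass from an exact sequence of algebraic groups to one on $W_\Q$-points.
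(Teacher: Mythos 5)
Your proof is correct and, at its core, takes the same route as the paper: both rely on the exceptional isomorphism $\GU^0(\underline{\N}) \cong \mathrm{GSpin}(V)$ of Lemma~\ref{GU22Gspin} (combined with Lemma~\ref{GUislikeGL}) to transport the classical exact sequence $1 \to W_\Q^\times \to \mathrm{GSpin}(V) \to \SO(V) \to 1$ to $H$. Where you diverge is in the details you make explicit: the paper simply cites the GSpin sequence at the level of $W_\Q$-points and asserts that combining it with the two lemmas yields the result, whereas you (a) invoke Hilbert~90 to pass from the exact sequence of algebraic groups to one on points, (b) observe that the two copies of $W_\Q^\times$ match because each is intrinsically the kernel of the action on $V$, and (c) give an independent, self-contained exterior-algebra proof of exactness at $H$ (the wedge-with-$v$ argument showing that the kernel of $H \to \SO(V)$ consists of scalars). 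Point (c) is a genuine, if small, alternative: it proves left- and middle-exactness without the exceptional isomorphism, confining the use of Lemma~\ref{GU22Gspin} purely to surjectivity, whereas the paper transports exactness wholesale through the isomorphism. Both arguments are valid; yours is more elementary and verifiable in that one place, at the cost of a bit more length.
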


\begin{proof}
By ~\cite{AoQF}, there is a short exact sequence:
$$1 \rightarrow W_{\Q}^\times \rightarrow \mathrm{GSpin}(V) \rightarrow \SO(V) \rightarrow 1.$$
Combining the above with the results of Lemmas ~\ref{GUislikeGL} and  ~\ref{GU22Gspin} yields the exact sequence:
$$1 \rightarrow W_{\Q}^\times \rightarrow H \rightarrow \SO(V) \rightarrow 1.$$

\end{proof}

 \subsection{Special and Very Special Lattices}

 The final objective of this section it to create a bijection between the Dieudonn\'e lattices of height $0$ in $\mathbb{N}'$  and a set of ``very special" lattices in $V'$. We now have the background necessary to accomplish this.

  \begin{definition}\label{defspecial}
A $W'$-lattice $L$ in the quadratic space $V'$ is called \emph{special} if the following two conditions hold:
\begin{enumerate}
\item{$L$ is self-dual with respect to the pairing $[\cdot, \cdot]$.}
\item{$(L + \overline{\Phi}(L) )/L$ has length 1.}
\end{enumerate}
Let $\mathcal{S}(k')$ denote the collection of all special lattices in $V'$.
  \end{definition}
  
  The point of the following lemma is that special lattices in $V'$ naturally come in two types. Let:
  $$\mathcal{S}^+(k') = \{ L \in \mathcal{S}(k)  \ | \ L = \frac{1}{p} \bigwedge_{W'}^2 A \text{ for some } W'\text{-lattice } A \subset \N' \}$$
  $$\text{and}$$
   $$\mathcal{S}^-(k') = \{ L \in \mathcal{S}(k) \ | \ L = \bigwedge_{W'}^2 A \text{ for some } W'\text{-lattice } A \subset \N' \}.$$ 
  
  \begin{lemma}\label{twotypesoflattices}
 For any field $k'$, the set of special lattices decompose into a disjoint union of the two types above:
 $$\mathcal{S}(k') = \mathcal{S}^+(k') \sqcup \mathcal{S}^-(k').$$
  \end{lemma}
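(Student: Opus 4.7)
My plan treats the two assertions separately: disjointness $\mathcal{S}^+(k')\cap\mathcal{S}^-(k')=\emptyset$ and covering $\mathcal{S}(k')\subset\mathcal{S}^+(k')\cup\mathcal{S}^-(k')$.

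Disjointness reduces to a short combinatorial argument via elementary divisors. Suppose $L=\bigwedge_{W'}^2 B=\tfrac{1}{p}\bigwedge_{W'}^2 A$ for some $A,B\subset\N'$, so that $\bigwedge^2 A=p\bigwedge^2 B$. By the elementary divisor theorem there exist a $W'$-basis $\{f_1,\dots,f_4\}$ of $B$ and integers $a_1,\dots,a_4\in\Z$ with $A=\bigoplus p^{a_i}W'f_i$. Matching generators of $\bigwedge^2 A$ and $p\bigwedge^2 B$ forces $a_i+a_j=1$ for every $1\leq i<j\leq 4$. Subtracting any two of these equations yields $a_1=a_2=a_3=a_4$, whence $2a_1=1$, impossible for $a_1\in\Z$.

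For the covering, I would leverage the exceptional isomorphism established in Section~\ref{excepsect}. Using the exact sequence
\[
1\to W_\Q^\times\to H\to\SO(V)\to 1
\]
from Proposition~\ref{Usefulsequence} together with the action $(h,c)\cdot(a\wedge b)=c^{-1}h(a)\wedge h(b)$ of $H$ on $V$, I would first exhibit one concrete special lattice of each type: $L_0^-:=\bigwedge^2\M'$ is self-dual by inspection of the pairing matrix computed earlier (the natural choice of $\omega$ gives $r=0$) and is special because $F(\M')\subset\M'$; a companion $L_0^+\in\mathcal{S}^+(k')$ is produced as $\tfrac{1}{p}\bigwedge^2 A_0$ for a suitable height-$2$ lattice $A_0\subset\N'$, e.g.\ $A_0=W'\langle e_1,e_2,pe_3,pe_4\rangle$. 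Given any $L\in\mathcal{S}(k')$, the plan is to argue that $L$ lies in the $\SO(V')$-orbit of one of the $L_0^\pm$, write $L=g\cdot L_0^{\pm}$ for some $g\in\SO(V')$, and lift $g$ to $(h,c)\in H$. Writing $c=p^ku$ with $u\in(W')^\times$, the unit $u^{-1}$ is harmless and $p^{-k}$ is absorbed into a rescaling of $h(\M')$ or $h(A_0)$, exhibiting $L$ in the form $\bigwedge^2 A$ or $\tfrac{1}{p}\bigwedge^2 A$. The parity of $k=v_p(c)$ determines which type arises and is well-defined modulo $2$, because the kernel $W_\Q^\times\hookrightarrow H$ sends $w$ to $(w,w^2)$.

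The main obstacle is the orbit claim: that $\SO(V')$ acts with exactly two orbits on $\mathcal{S}(k')$, matching the $\pm$ types. Under the exceptional isomorphism $\SL(\N)\cong\mathrm{Spin}(V)$, this amounts to comparing Bruhat--Tits buildings: the four vertex-types of the $\SL_4$-building (indexed by $\Z/4\Z$) collapse under the Pl\"ucker construction $A\mapsto\bigwedge^2 A$ to exactly two classes of self-dual lattices in $V$. The length-$1$ Frobenius condition on a special lattice then pins $L$ to one of these two orbits; determining which recovers the $\pm$ distinction. Verifying this orbit-structure rigorously is the technical heart of the covering step; the rest is bookkeeping.
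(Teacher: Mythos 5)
Your two-step plan — elementary divisors for disjointness, then the exact sequence $1 \to W_{\Q}^\times \to H \to \SO(V) \to 1$ for the covering — is exactly the paper's. The disjointness argument usefully fleshes out what the paper dispatches in one sentence, and it is correct: from $\bigwedge^2 A = p\bigwedge^2 B$ and a simultaneous elementary-divisor basis one gets $a_i + a_j = 1$ for all $i<j$, hence $2a_1 = 1$, which is impossible. The middle paragraph of your covering argument (take any special $L$, use transitivity of $\SO(V')$ on self-dual lattices to write $L = g\cdot\bigwedge_{W'}^2\M'$, lift $g$ to $(h,c)\in H$ via Proposition~\ref{Usefulsequence}, and read off the type from the parity of $\ord_p(c)$) is precisely the paper's proof.

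The ``main obstacle'' paragraph, however, is a red herring: the argument you just laid out is already complete. The only unproved input is the transitivity of $\SO(V')$ on self-dual $W'$-lattices in $V'$, which is a standard fact and is used tacitly by the paper too. You do not need, and should not try to prove, that $\SO(V')$ has two orbits on $\mathcal{S}(k')$. In fact $\SO(V')$ does not act on $\mathcal{S}(k')$ at all: the length-one condition in Definition~\ref{defspecial} involves the $\sigma$-semilinear operator $\Phi$, with which a $W'_{\Q}$-linear isometry $g$ need not commute, so $g\cdot L$ need not be special even when $L$ is. Moreover, your $L_0^+$ and $L_0^-$ lie in the \emph{same} $\SO(V')$-orbit (both are self-dual), so ``which orbit $L$ is in'' cannot be the $\pm$ invariant. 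The invariant is the parity of $\ord_p(c)$ for any lift $(h,c)$, and its well-definedness is guaranteed either by the kernel computation you made or, more cheaply, by the disjointness you already proved. Finally, you only need $\bigwedge_{W'}^2\M'$ to be self-dual (Lemma~\ref{isdual} with $i=0$), not special, so the aside about $F(\M')\subset\M'$ is unnecessary. The Bruhat--Tits picture is fine intuition but is not a gap in need of filling.
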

  
  \begin{proof}
One can use the elementary divisor theorem to see that a special lattice can be of at most one of the two types.

 Now, to show that every special lattice is contained in a least one of $\mathcal{S}^+(k')$ or $\mathcal{S}^-(k')$, let $L$ be a special lattice. Note that $\bigwedge_{W'}^2 \mathbb{M}'$ is a self-dual lattice in $V'$, and since $L$ is also self-dual there is some $g \in \SO(V')$ such that $g(\bigwedge_{W'}^2 \mathbb{M}') = L$. By Proposition ~\ref{Usefulsequence}, there is some $(h,c) \in H \subset \GL(\N') \times W'^\times_{\Q}$ mapping to $g$. So:
 $$L = g(\bigwedge_{W'}^2 \mathbb{M}') = (h,c) \cdot (\bigwedge_{W'}^2 \mathbb{M}') = \frac{1}{c} \bigwedge_{W'}^2 h\mathbb{M}'$$
 as free $W'$ modules in $V'$. Then either $\ord_p(c)$ is even, in which case $L \in \mathcal{S}^-(k')$, or $\ord_p(c)$ is odd, in which case $L \in \mathcal{S}^+(k')$.
  \end{proof}

  \begin{definition}\label{defvspecial}
  A $W'$-lattice $L$ in the quadratic space $V'$ is called \emph{very special} if it is in the component $\mathcal{S}^+(k')$ of $\mathcal{S}'(k')$. Precisely, a $W'$-lattice $L$ in the quadratic space $V'$ is called \emph{very special} if  if it is meets the following three conditions:
  \begin{enumerate}
\item{$L$ is self-dual with respect to the pairing $[\cdot, \cdot]$.}
\item{$(L + \overline{\Phi}(L) )/L$ has length 1.}
\item{ $L = \frac{1}{p} \bigwedge_{W'}^2 A$ for some $W'$-lattice $A \subset \N'$. }
\end{enumerate}
\end{definition}

Our goal for this section is the following proposition:

  \begin{proposition}\label{bijection}
  There is a bijection between the set of Dieudonn\'e lattices of height 0 in the basepoint isocrystal $\N'$ and the set of very special lattices in $V' = \bigwedge_{W'_{\Q}}^2 \N'$:
$$ \mathcal{M}_{0}(k') \longleftrightarrow \mathcal{S}^+(k')$$
Given by $A \mapsto \frac{1}{p} \bigwedge_{W'}^2 A_1$.
  \end{proposition}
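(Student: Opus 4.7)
The plan is to construct the bijection in both directions, using the identity $\overline{\Phi}(L_A) = \bigwedge^2 A$ as the guiding observation: the inverse map will send $L$ to the unique rank-$4$ $W'$-lattice $A_L$ satisfying $\bigwedge^2 A_L = \overline{\Phi}(L)$.

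For the forward direction, I would use the elementary divisor theorem to pick a basis $v_1, v_2, v_3, v_4$ of $A$ with $A_1 = \langle v_1, v_2, pv_3, pv_4\rangle_{W'}$, and use the height-$0$ condition plus a $\GL_4(W')$ adjustment to arrange $v_1 \wedge v_2 \wedge v_3 \wedge v_4 = \omega$. Condition (iii) of Definition~\ref{defvspecial} is then immediate. Condition (i) follows by computing the Gram matrix of $[\cdot,\cdot]$ in the basis $\{p^{-1}v_1 v_2,\, v_1 v_3,\, v_1 v_4,\, v_2 v_3,\, v_2 v_4,\, pv_3 v_4\}$ of $L_A$; it is block anti-diagonal with unit entries. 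For (ii), I would first establish $\overline{F}(A_1) = pA$ using axiom (2) of Definition~\ref{DDLattice} together with the identity $pF^{-1}(X) = F^{-1}(pX)$ (which holds because $\sigma$ preserves the $p$-adic valuation on $W'_{\Q}$). This yields $\overline{\Phi}(L_A) = p^{-2}\bigwedge^2 pA = \bigwedge^2 A$, and in the adapted basis the quotient $(L_A + \bigwedge^2 A)/L_A$ is cyclic of length one, generated by the class of $v_3 \wedge v_4$.

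For the inverse, write $L = \frac{1}{p}\bigwedge^2 B$ via (iii); the identity $\overline{\Phi}(L) = \bigwedge^2(p^{-1}\overline{F}(B))$ shows that $A_L := p^{-1}\overline{F}(B)$ satisfies the defining relation. Uniqueness of $A_L$ reduces to showing that the wedge-square map on rank-$4$ $W'$-lattices in $\N'$ is injective: for $A' = gA$ with $\bigwedge^2 A = \bigwedge^2 A'$, putting $g \in \GL_4(W'_{\Q})$ in Smith normal form $g = U\, \mathrm{diag}(p^{a_1}, p^{a_2}, p^{a_3}, p^{a_4})\, V$ with $a_1 \le \cdots \le a_4$, the condition $\bigwedge^2 g \in \GL_6(W')$ forces all pairwise sums $a_i + a_j$ to vanish, hence all $a_i = 0$. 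In particular $B$, and therefore $A_L$, is uniquely determined by $L$.

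The main obstacle is verifying that $A_L$ is a Dieudonn\'e lattice of height $0$. Height $0$ follows from $\bigwedge^4 A_L = \overline{\Phi}(L) \wedge \overline{\Phi}(L) = L \wedge L = \bigwedge^4 \M'$: the factor $p^{-2}$ arising from two applications of $\Phi = p^{-1}(F \wedge F)$ cancels the value $p^2$ by which $F$ acts on the one-dimensional $\bigwedge^4 \N$, so $\Phi^{\otimes 2}$ acts by a $W^\times$-unit on $\bigwedge^4 \N$, while $L \wedge L = \bigwedge^4 \M'$ comes from self-duality of $L$. For the Dieudonn\'e axioms, the key computation is $(A_L)_1 = F^{-1}(pA_L) = F^{-1}(\overline{F}(B)) = B$, using injectivity of $F$ and the identity $F^{-1}(\overline{F}(B)) = B$ (which again uses valuation-preservation of $\sigma$); this reduces condition (1) of Definition~\ref{DDLattice} to the chain $pB \subset \overline{F}(B) \subset B$. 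These containments are the delicate point; I would deduce them from (i) and (ii) by observing that $\overline{\Phi}(L) = \bigwedge^2 A_L$ is itself self-dual (from height $0$ of $A_L$), so that (ii) implies the symmetric statement $(L + \overline{\Phi}(L))/\overline{\Phi}(L) \cong W'/p$; together these yield the chain $pL \subset \overline{\Phi}(L) \subset p^{-1}L$, which translates through the wedge-square (using the Smith-normal-form analysis above to bridge from wedge-square containment to lattice containment for the specific lattices $B$ and $p^{-1}\overline{F}(B)$) to the required chain. Axiom (2) of Definition~\ref{DDLattice} is a direct computation: $F^{-1}(A_L) = p^{-1}B$, so $\overline{F}(F^{-1}(A_L)) = p^{-1}\overline{F}(B) = A_L$. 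Mutual inversion is then immediate: $\bigwedge^2 A_{L_A} = \overline{\Phi}(L_A) = \bigwedge^2 A$ gives $A_{L_A} = A$ by uniqueness, and $L_{A_L} = \frac{1}{p}\bigwedge^2(A_L)_1 = \frac{1}{p}\bigwedge^2 B = L$.
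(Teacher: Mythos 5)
Your forward map, the inverse map $L \mapsto A_L := p^{-1}\overline{F}(B)$ for $L = \tfrac{1}{p}\bigwedge^2 B$, and the guiding identity $\overline{\Phi}(L_A) = \bigwedge^2 A$ all coincide with the paper's proof. Two of your substitutions are arguably cleaner than the paper's: proving injectivity via injectivity of the wedge-square on rank-$4$ $W'$-lattices (the Smith-normal-form computation forcing all $a_i + a_j = 0$, hence all $a_i = 0$) in place of the direct contradiction of Lemma~\ref{injective}; and obtaining axiom (2) of Definition~\ref{DDLattice} immediately from $F^{-1}(\overline{F}(B)) = B$ rather than from a separate height argument as the paper does.

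However, the verification of axiom (1), the chain $pA_L \subset B \subset A_L$, has a genuine gap. You deduce $pL \subset \overline{\Phi}(L) \subset p^{-1}L$ from the two length-one conditions (this part is fine), and then assert that this ``translates through the wedge-square'' to $pA_L \subset B \subset A_L$. It does not. Writing $B = \mathrm{diag}(p^{b_1},\dots,p^{b_4})A_L$ in a common elementary-divisor basis, those two containments amount to $0 \le b_i + b_j \le 2$ for all $i<j$, and the height constraints ($A_L$ of height $0$, $B$ of height $2$) give $\sum_i b_i = 2$. But $(b_1,\dots,b_4) = (-1,1,1,1)$ satisfies all of this while $B \not\subset A_L$. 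The Smith-normal-form argument you cite shows that wedge-square \emph{equality} forces $g \in \GL_4(W')$; it does not upgrade wedge-square \emph{containments} to lattice containments. Passing through the intermediate chain $pL \subset \overline{\Phi}(L) \subset p^{-1}L$ strictly discards information: for the configuration $(-1,1,1,1)$ the quotient $(L+\overline{\Phi}(L))/L$ has length $3$, not $1$, so the length-one hypothesis rules it out, but the chain you extract from it does not. The repair is precisely the content of the paper's Lemma~\ref{isDD}: run the elementary-divisor computation directly on the $b_i$, i.e.\ with $m_{ij} := b_i + b_j - 1$, combine $\sum_{i<j}\max(0,m_{ij}) = 1$ with $\sum_i b_i = 2$ to force $(b_1,\dots,b_4) = (0,0,1,1)$ up to permutation, which yields the chain.
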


  The proof of this will require a few lemmas. We will frequently use the following notation: for $A \subset V'$ a $W'$ lattice, let $A^\vee$ denote the dual lattice with respect to $[\cdot,\cdot]$. This is  the collection of elements in $V'$ that pair integrally with all of $A$.

 \begin{lemma}\label{isdual}
  Given a $W'$-lattice $A \subset \mathbb{N}'$ of height $2i$ and an integer $k$, $p^k\bigwedge_{W'}^2 A$ is a self-dual lattice in $V'$ if and only if $k = -i$.   
  \end{lemma}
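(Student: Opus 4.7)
The plan is to translate self-duality of $L := p^k \bigwedge_{W'}^2 A$ into two numerical conditions on $k+i$ by writing down the Gram matrix of $L$ in an explicit basis. Recall that $L = L^\vee$ amounts to the conjunction of $L \subset L^\vee$ (the pairing is $W'$-integral on $L$) and $L^\vee \subset L$ (via the dual-basis argument, the Gram matrix of any $W'$-basis of $L$ must then have unit determinant). Both halves must be verified separately.

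Concretely, I would fix a $W'$-basis $\{a_1,\ldots,a_4\}$ of $A$ and let $T \in \GL_4(W'_{\Q})$ be the matrix with $a_m = \sum_n T_{mn}e_n$. The height hypothesis gives $\det T = u\, p^{2i}$ for some $u \in W'^{\times}$. Then $\{a_s \wedge a_t\}_{s<t}$ is a $W'$-basis of $\bigwedge^2_{W'} A$, related to the basis $\{x_1,\ldots,x_6\}$ of $\bigwedge^2_{W'}\M'$ by the matrix $B := \bigwedge^2 T$, whose determinant is $(\det T)^3 = u^3 p^{6i}$ by the classical identity $\det(\bigwedge^r T) = (\det T)^{\binom{n-1}{r-1}}$. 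The paper has already shown that the Gram matrix $G_0$ of the pairing on $\bigwedge^2_{W'}\M'$ in the basis $\{x_j\}$ has entries in $W'$ and unit determinant.

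The Gram matrix of $L$ in the scaled basis $\{p^k (a_s \wedge a_t)\}_{s<t}$ is then $p^{2k} B\, G_0\, B^T$. Since $a_s \wedge a_t \wedge a_u \wedge a_v \in \bigwedge^4_{W'} A = p^{2i}W'\omega$, every entry lies in $p^{2(k+i)}W'$, and some entry has valuation exactly $2(k+i)$ because the pairing on $\bigwedge^2 A$ surjects onto $p^{2i}W'$. The determinant equals $p^{12k}(\det B)^2 \det G_0 = \pm u^6 p^{12(k+i)}$. Integrality of the Gram matrix (required for $L \subset L^\vee$) forces $k+i \geq 0$, and unit determinant (required for $L^\vee \subset L$) forces $12(k+i) = 0$. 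Together these yield $k = -i$. Conversely, when $k = -i$ both conditions hold on the nose, so $L$ is self-dual.

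The main pitfall is conceptual rather than computational: one must resist concluding self-duality from the weaker assertion that $L$ and $L^\vee$ have the same discriminant. Both the integrality condition on the Gram matrix and the unit-determinant condition have to be checked, although once the Gram matrix is written down explicitly they are visible simultaneously and collapse to the single equation $k+i=0$.
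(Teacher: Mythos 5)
Your strategy—compute the Gram matrix of $L = p^k\bigwedge^2_{W'}A$ in an explicit $W'$-basis and read off $L\subset L^\vee$ from integrality and $L^\vee\subset L$ from the unit-determinant condition—is the same calculation the paper is alluding to with ``elementary divisor theorem,'' and your two-halves warning at the end is the right caution. The core computation is sound. However, the detour through $\bigwedge^2_{W'}\M'$ introduces two factual slips.

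First, $\{e_i\}_1^4$ is a basis of the isocrystal $\N$ with convenient Frobenius relations; it is \emph{not} asserted to be a $W$-basis of $\M$. Consequently $\{x_j\}$ need not be a $W'$-basis of $\bigwedge^2_{W'}\M'$, and the matrix of the change of basis from $\{a_m\}$ to $\{e_n\}$ has $\det T = u\,p^{2i+r}$, not $u\,p^{2i}$, where $\omega = \alpha p^r e_1\wedge e_2\wedge e_3\wedge e_4$. Second, and more seriously, the paper does \emph{not} show that the Gram matrix $G_0 = ([x_j,x_k])$ has entries in $W'$ and unit determinant: it computes $G_0 = \alpha^{-1}p^{-r}(\text{unimodular integer matrix})$, so $\det G_0 = -\alpha^{-6}p^{-6r}$, which is a unit only when $r=0$. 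In your determinant formula $p^{12k}(\det B)^2\det G_0$ the stray powers of $p^r$ from $\det B$ and $\det G_0$ happen to cancel, so you land on the correct exponent $12(k+i)$ anyway, but the intermediate claims do not hold as stated and should not be attributed to the paper.

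The repair makes the argument both shorter and correct: bypass $\M'$ and $G_0$ entirely. With $\{a_m\}$ any $W'$-basis of $A$, the defining relation $[x,y]\omega = x\wedge y$ together with $\bigwedge^4_{W'}A = p^{2i}\bigwedge^4_{W'}\M' = p^{2i}W'\omega$ gives $a_1\wedge a_2\wedge a_3\wedge a_4 = u\,p^{2i}\omega$ for a unit $u$, hence $[a_s\wedge a_t, a_u\wedge a_v]$ equals $\pm u\,p^{2i}$ when $\{s,t,u,v\}=\{1,2,3,4\}$ and $0$ otherwise. So the Gram matrix of $L$ in the basis $\{p^k a_s\wedge a_t\}_{s<t}$ is $u\,p^{2(k+i)}$ times a signed permutation matrix. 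Integrality of its entries forces $k+i\ge 0$; unit determinant (the Gram determinant is $\pm u^6 p^{12(k+i)}$) forces $k+i=0$; conversely $k=-i$ makes the Gram matrix visibly invertible over $W'$. This is what the elementary divisor theorem buys in the paper's version: a simultaneous diagonalizing basis makes the same structure explicit.
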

  
  \begin{proof}
  This may be checked using the elementary divisor theorem.

  \end{proof}

Now, to show that the map $ \mathcal{M}_{0}(k') \longrightarrow \mathcal{S}^+(k')$ given by $A \mapsto \frac{1}{p} \bigwedge_{W'}^2 A_1$ does in fact have image in $ \mathcal{S}^+(k')$:

  \begin{lemma}\label{isspecial}
  For any  Dieudonn\'e lattice $A \subset \mathbb{N}'$ of height $0$, the lattice $\frac{1}{p} \bigwedge_{W'}^2 A_1 \subset V'$ is very special.   
  \end{lemma}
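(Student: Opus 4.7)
The plan is to verify the three conditions of Definition \ref{defvspecial} for $L = \frac{1}{p}\bigwedge_{W'}^2 A_1$. Condition (3) holds by construction: we take the lattice ``$A$'' in the definition to be $A_1$ itself, which is a $W'$-lattice in $\N'$ since $pA\subset A_1\subset A$.

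For condition (1) (self-duality), I will invoke Lemma \ref{isdual}, which reduces the question to computing the height of $A_1$. Since $A$ has height $0$ and the Dieudonn\'e lattice condition $\dim_{k'}(A_1/pA)=2$ forces $A/A_1$ to have $k'$-length $2$ inside the $4$-dimensional $k'$-space $A/pA$, we get $\bigwedge_{W'}^4 A_1=p^2\bigwedge_{W'}^4 A = p^2\bigwedge_{W'}^4 \M'$, so $A_1$ has height $2$. Applying Lemma \ref{isdual} with $2i=2$ and $k=-1$ tells us that $\frac{1}{p}\bigwedge_{W'}^2 A_1$ is self-dual.

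For condition (2), the key preliminary step is to identify $\overline{\Phi}(L)$. Since $F$ is $\sigma$-semilinear with $\sigma(p)=p$, for any $y\in\N'$ we have $F(y)\in A\iff F(py)=pF(y)\in pA\iff py\in A_1$, giving $F^{-1}(A)=\frac{1}{p}A_1$. Combined with the Dieudonn\'e lattice condition $\overline{F}(F^{-1}(A))=A$, this yields $\overline{F}(A_1)=pA$. Because $\Phi(a\wedge b)=p^{-1}Fa\wedge Fb$ extends $\sigma$-semilinearly, we then obtain
$$\overline{\Phi}(L) = \tfrac{1}{p^2}\bigwedge_{W'}^2 \overline{F}(A_1) = \tfrac{1}{p^2}\bigwedge_{W'}^2(pA) = \bigwedge_{W'}^2 A.$$
To compute the length of $(L+\overline{\Phi}(L))/L$, I will choose a $W'$-basis $v_1,v_2,v_3,v_4$ of $A$ adapted to the filtration $pA\subset A_1\subset A$, so that $v_1,v_2,pv_3,pv_4$ is a basis of $A_1$. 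Reading off bases, $L$ is spanned by $\tfrac{1}{p}v_1\wedge v_2,\;v_1\wedge v_3,\;v_1\wedge v_4,\;v_2\wedge v_3,\;v_2\wedge v_4,\;pv_3\wedge v_4$, while $\bigwedge_{W'}^2 A$ is spanned by the same elements but with $v_3\wedge v_4$ in place of $pv_3\wedge v_4$. Hence $L+\overline{\Phi}(L)$ differs from $L$ only by enlarging $W'\cdot pv_3\wedge v_4$ to $W'\cdot v_3\wedge v_4$, so the quotient is $W'/pW'=k'$, which has length $1$.

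The only subtle point is to establish the equality $\overline{F}(A_1)=pA$ (rather than the a priori weaker inclusion $\overline{F}(A_1)\subset pA$), and this is precisely where the surjectivity clause (2) of the Dieudonn\'e lattice axioms is used; everything else is a routine bookkeeping calculation once a basis adapted to the filtration $pA\subset A_1\subset A$ is chosen.
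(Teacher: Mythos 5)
Your proof is correct and follows essentially the same approach as the paper: height computation plus Lemma~\ref{isdual} for self-duality, then elementary divisors to choose a basis adapted to $pA\subset A_1\subset A$ for the length computation. The paper leaves the identification $\overline{\Phi}\bigl(\frac{1}{p}\bigwedge_{W'}^2 A_1\bigr)=\bigwedge_{W'}^2 A$ implicit (asserting the length-1 condition is ``clear'' from the chosen basis), whereas you derive it explicitly from $\overline{F}(F^{-1}(A))=A$; this is a useful clarification but not a different argument.
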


  \begin{proof}

 Let $A \subset \N'$ be a Dieudonn\'e lattice of height $0$. Only because $A$ is a  \emph{Dieudonn\'e} lattice of height $0$, $A_1$ is of height 2. Then,   by Lemma ~\ref{isdual}, $ \frac{1}{p} \bigwedge_{W'}^2 A_1$  is self-dual.

Now we will show that $(\frac{1}{p} \bigwedge_{W'}^2 A_1 + \Phi( \frac{1}{p} \bigwedge_{W'}^2 A_1 ) )/ \frac{1}{p} \bigwedge_{W'}^2 A_1$ has length 1. Because $A$ is a Dieudonn\'e lattice, we have:

\begin{enumerate}
\item{ $pA  \subset A_1 \subset A$}
\item{ $A = \overline{F}(F^{-1}(A) )$}
\end{enumerate}

The second condition implies that $\dim_{k'}(A_1/pA) = 2$, and so $\dim_{k'}(A/A_1) = 2$.

Using the elementary divisor theorem, let $\{ a_j \}_1^4$ be a free $W'$-basis for $A$ such that $A_1$ has free $W'$-basis $\{ c_j a_j \}_1^4$ for some $c_j \in W'_{\Q}$. Because of the dimension conditions above, and by scaling by $W'^\times$, we may assume without loss of generality that $c_1 = c_2 = p$ and $c_3 = c_4 = 1$. With this choice of basis, it is clear that  $(\frac{1}{p} \bigwedge_{W'}^2 A_1 + \Phi( \frac{1}{p} \bigwedge_{W'}^2 A_1 ) )/ \frac{1}{p} \bigwedge_{W'}^2 A_1$  has length 1.
    
  
 
 The self-duality and the length one condition together show that $L$ is a special lattice. As $L$ is of the form $\frac{1}{p} \bigwedge_{W'}^2 A_1$, it is a very special lattice. 
  \end{proof}

  \begin{lemma}\label{injective} The map
 $$ \mathcal{M}_0(k') \longrightarrow \mathcal{S}^+(k')$$
given by  $A \mapsto \frac{1}{p} \bigwedge_{W'}^2 A_1$ is injective.
  \end{lemma}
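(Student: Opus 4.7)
The plan is to factor the map $A \mapsto \tfrac{1}{p}\bigwedge_{W'}^2 A_1$ as
\[
A \;\longmapsto\; A_1 \;\longmapsto\; \bigwedge_{W'}^2 A_1 \;\longmapsto\; \tfrac{1}{p}\bigwedge_{W'}^2 A_1,
\]
and verify injectivity at each stage. The last step is trivial, as it is just multiplication by $p^{-1}$ inside $V'$.

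For the first step, I would recover $A$ from $A_1$ using condition (2) in the definition of a Dieudonné lattice. Starting from $A_1 = F^{-1}(pA)$, a direct check shows that $F^{-1}(A) = p^{-1}A_1$ inside $\N'$: if $Fx \in A$, then $F(px) = pFx \in pA$, so $px \in A_1$, while conversely $px \in A_1$ gives $pFx \in pA$ and hence $Fx \in A$. Combining this with $A = \overline{F}(F^{-1}(A))$ yields $A = \overline{F}(p^{-1}A_1)$, so $A_1$ determines $A$.

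The main step is to show that $L \mapsto \bigwedge_{W'}^2 L$ is injective on rank-$4$ $W'$-lattices in $\N'$. Suppose $\bigwedge_{W'}^2 L = \bigwedge_{W'}^2 L'$. By the elementary divisor theorem, choose a $W'$-basis $\{e_1, e_2, e_3, e_4\}$ of $L$ and scalars $c_1, c_2, c_3, c_4 \in W'_{\Q}$ so that $\{c_i e_i\}$ is a $W'$-basis of $L'$. Then $\{e_i \wedge e_j\}_{i<j}$ and $\{c_i c_j \, e_i \wedge e_j\}_{i<j}$ are $W'$-bases of $\bigwedge_{W'}^2 L$ and $\bigwedge_{W'}^2 L'$ respectively, so equality of these lattices forces each of the six products $c_i c_j$ with $i < j$ to lie in $(W')^\times$. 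From $(c_1 c_2)(c_1 c_3)(c_2 c_3)^{-1} = c_1^2 \in (W')^\times$ one concludes $c_1 \in (W')^\times$, and the same reasoning gives $c_i \in (W')^\times$ for every $i$; hence $L = L'$.

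Combining these three statements, the identity $\tfrac{1}{p}\bigwedge_{W'}^2 A_1 = \tfrac{1}{p}\bigwedge_{W'}^2 A'_1$ gives first $\bigwedge_{W'}^2 A_1 = \bigwedge_{W'}^2 A'_1$, then $A_1 = A'_1$ (applying the middle step with $L = A_1$, $L' = A'_1$), and finally $A = A'$. I expect the only substantive point to be the injectivity of the second exterior power on rank-$4$ lattices; everything else is formal manipulation of the Dieudonné window axioms.
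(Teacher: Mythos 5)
Your proof is correct, and it takes a genuinely different route from the paper's. The paper argues by contradiction: starting from $\tfrac{1}{p}\bigwedge^2 A_1 = \tfrac{1}{p}\bigwedge^2 C_1$, it applies $\overline{\Phi}$ (which, via condition (2) in Definition~\ref{DDLattice}, turns $\bigwedge^2 A_1$ into $p\bigwedge^2 A$) to conclude $\bigwedge^2 A = \bigwedge^2 C$, then runs the elementary-divisor argument directly on $A$ and $C$ and closes with the height-$0$ hypothesis. You instead factor the map into three stages and prove each injective on its own. The key structural difference is in the exterior-square stage: you observe that on rank-$4$ lattices, $c_ic_j \in (W')^\times$ for all $i<j$ already forces each $c_i$ to be a unit (via the trick $(c_1c_2)(c_1c_3)(c_2c_3)^{-1}=c_1^2$), so $L\mapsto\bigwedge^2 L$ is injective with no height restriction at all. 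This isolates a cleaner linear-algebra fact (valid for any rank $\ge 3$) and uses the Dieudonné axiom only where it is really needed, namely to recover $A$ from $A_1$ via $A = \overline{F}(p^{-1}A_1)$, which you verify correctly from $F^{-1}(A)=p^{-1}A_1$. Your decomposition is more modular and slightly stronger; the paper's is shorter but entangles the algebraic input with the combinatorics of the valuations and leans on the height-$0$ assumption that your version renders unnecessary for the middle step.
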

  
  \begin{proof}
  Let $A$ and $C$ be two Dieudonn\'e lattices of height $0$ in $\mathbb{N}'$, and assume that $\frac{1}{p} \bigwedge_{W'}^2 A_1 = \frac{1}{p} \bigwedge_{W'}^2 C_1$ as special lattices in $V'$. For the sake of contradiction, assume that $A \neq C$.

  By the elementary divisor theorem, there is a free $W'$-basis $\{a_j\}_1^4$ of $A$ such that $C$ has free $W'$-basis $\{c_j a_j \}_1^4$ for some $c_j \in W'_{\Q}$. Because $A \neq C$, there must be some $c_j$ with $\ord_p(c_j) \neq 0$. Without loss of generality, we may assume $\ord_p(c_1) > 0$.
  
  Because  $\frac{1}{p} \bigwedge_{W'}^2 A_1 = \frac{1}{p} \bigwedge_{W'}^2 C_1$, we have that $\overline{\Phi}( \bigwedge_{W'}^2 F^{-1}(pA) ) = \overline{\Phi}( \bigwedge_{W'}^2 F^{-1}(pC))$, and so $\bigwedge_{W'}^2 A = \bigwedge_{W'}^2 C$.
 
 Then $\bigwedge_{W'}^2 A$ has free $W'$-basis $\{ a_j \wedge a_k \}_{j<k}$ and $\bigwedge_{W'}^2 C$ has free $W'$-basis $\{ c_j c_k a_j \wedge a_k \}_{j < k}$. Because $\bigwedge_{W'}^2 A = \bigwedge_{W'}^2 C$, $\ord_p(c_jc_k) = 0$ for any pair $j<k$. Because $\ord_p(c_1) > 0$, we must have $\ord_p(c_2)  = \ord_p(c_3)  = \ord_p(c_4) = - \ord_p(c_1) < 0$. In particular, $\sum_{j=1}^4 \ord_p(c_j) < 0$.
  
However, because $A$ and $C$ are both height $0$, we must have $\sum_{j=1}^4 \ord_p(c_j) = 0$, which is a contradiction.
  \end{proof}

\begin{lemma}\label{isDD}
Let $A \subset \mathbb{N}'$ be a lattice such that $(\frac{1}{p} \bigwedge_{W'}^2 A_1  + \overline{\Phi}( \frac{1}{p} \bigwedge_{W'}^2 A_1 ) )/  \frac{1}{p} \bigwedge_{W'}^2 A_1$ has length 1 and such that $\overline{F}(F^{-1}(A)) = A$. Then $A$ is a Dieudonn\'e lattice.
\end{lemma}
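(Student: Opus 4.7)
The plan is to show $pA \subset A_1 \subset A$ with $\dim_{k'}(A/A_1) = 2$; combined with the given identity $\overline{F}(F^{-1}(A)) = A$, this is precisely the Dieudonn\'e condition. Fix by elementary divisor theorem a $W'$-basis $\{a_j\}_{j=1}^4$ of $A$ and integers $n_1 \leq n_2 \leq n_3 \leq n_4$ with $\{p^{n_j} a_j\}$ a basis of $A_1$; the goal is to show $(n_1, n_2, n_3, n_4) = (0, 0, 1, 1)$.

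The first key observation is that $A_1 = F^{-1}(pA) = pF^{-1}(A)$, which follows from $F(p^{-1}y) = p^{-1}F(y)$. Hence $F^{-1}(A) = p^{-1}A_1$ has $W'$-basis $\{p^{n_j - 1} a_j\}$, and the hypothesis $\overline{F}(F^{-1}(A)) = A$ says that $\{p^{n_j - 1} F(a_j)\}_{j=1}^4$ generates $A$ as a $W'$-module. These four elements are $W'_\Q$-linearly independent (since the matrix of $F$ in the standard basis $\{e_i\}$ has determinant $p^2 \neq 0$), so they form a $W'$-basis of $A$. A direct computation gives $\ord_p(\det F) = 2$ in the standard basis, and this valuation is basis-independent (a change-of-basis matrix $P$ transforms the matrix of $F$ to $P^{-1}[F]P^\sigma$, whose determinant differs from $\det[F]$ by $\det(P)^\sigma/\det(P)$, of $p$-valuation zero). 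Applying this invariance in the basis $\{a_j\}$ forces $\sum_j n_j = 2$.

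The second key observation is that $\overline{\Phi}(L) = \bigwedge^2 A$, where $L = \frac{1}{p}\bigwedge^2 A_1$. Indeed, applying $\Phi(x \wedge y) = p^{-1}Fx \wedge Fy$ to the basis $\{p^{n_j + n_k - 1} a_j \wedge a_k\}_{j<k}$ of $L$ produces $\{p^{n_j - 1} F(a_j) \wedge p^{n_k - 1} F(a_k)\}_{j<k}$, which is a basis of $\bigwedge^2 A$ by the previous paragraph. Comparing $L$ and $\overline{\Phi}(L) = \bigwedge^2 A$ in the basis $\{a_j \wedge a_k\}_{j<k}$ of $V'$, the length hypothesis reads
$$\mathrm{length}_{W'}\bigl((L + \overline{\Phi}(L))/L\bigr) = \sum_{j<k} \max(n_j + n_k - 1,\, 0) = 1.$$

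The final step is combinatorial. Some pair $(j, k)$ with $j < k$ must satisfy $n_j + n_k \geq 2$; by the ordering it must be $(3, 4)$, and the contribution of that pair must equal exactly $1$, so $n_3 + n_4 = 2$ and $n_i + n_\ell \leq 1$ for every other pair. Since $\sum n_j = 2$, we get $n_1 + n_2 = 0$; combined with $n_1 \leq n_2$ and the constraints $n_2 + n_4 \leq 1$, $n_1 + n_4 \leq 1$, one deduces $n_4 = 1$, whence $(n_1, n_2, n_3, n_4) = (0, 0, 1, 1)$, as required. The main obstacle is the identification $\overline{\Phi}(L) = \bigwedge^2 A$, since this is precisely what converts the length-$1$ condition into a clean combinatorial constraint on the elementary divisors; the remaining arithmetic is short.
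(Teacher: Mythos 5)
Your proposal is correct and takes essentially the same route as the paper: fix an elementary-divisor basis $\{a_j\}$ of $A$ adapted to $A_1$, observe that $\overline{F}(F^{-1}(A)) = A$ forces $\overline{\Phi}(\tfrac{1}{p}\bigwedge^2 A_1) = \bigwedge^2 A$, translate the length-one hypothesis into an arithmetic constraint on the exponents, and combine it with a normalization on the sum of exponents to pin them down to $(0,0,1,1)$. The only cosmetic divergence is that you obtain $\sum n_j = 2$ from the basis-invariant valuation $\ord_p(\det F) = 2$ on $\N'$, while the paper obtains the equivalent constraint directly from the slope-zero property of $(V', \Phi)$; these are two phrasings of the same fact, since $\Phi = p^{-1}\bigwedge^2 F$.
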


\begin{proof}

Let $A \subset \mathbb{N}'$ be a lattice such that $(\frac{1}{p} \bigwedge_{W'}^2 A_1  + \overline{\Phi}( \frac{1}{p} \bigwedge_{W'}^2 A_1 ) )/  \frac{1}{p} \bigwedge_{W'}^2 A_1$ has length 1 and such that $\overline{F}(F^{-1}(A)) = A$.  To show that $A$ is a Dieudonn\'e lattice, we only need to show that $pA \subset A_1 \subset A$. 

By the elementary divisor theorem, there is a free $W'$-basis $\{a_i\}_1^4$ of $A$ such that $A_1$ has free $W'$-basis $\{c_ia_i\}_1^4$ for some $c_i \in W'_{\Q}$, so $\bigwedge_{W'}^2 A_1$ has free basis $\{ c_j c_k  a_j \wedge a_k \}_{j<k}$.  Because  $\overline{F}(F^{-1}(A)) = A$, we have that $\overline{\Phi}( \bigwedge_{W'}^2 A_1 ) = p\bigwedge_{W'}^2 A$, which  has free $W'$-basis $\{p a_j \wedge a_k \}_{j<k}$. The key observation is that, because $(V', \Phi)$ is a slope-zero isocrystal, $\ord_p (\prod_{j<k} \frac{c_j c_j}{p } ) = 0$.

Note that $( \frac{1}{p} \bigwedge_{W'}^2 A_1  + \overline{\Phi}( \frac{1}{p} \bigwedge_{W'}^2 A_1  ) )$ has free basis $\{ \min(1, \frac{c_jc_k}{p} ) a_j \wedge a_k \}_{j<k}$ where the minimum is taken with respect to $\ord_p$, while $\frac{1}{p} \bigwedge_{W'}^2 A_1$ has free basis $\{ \frac{c_jc_k}{p} a_j \wedge a_k\}_{j<k}.$

Because$(\frac{1}{p} \bigwedge_{W'}^2 A_1  + \overline{\Phi}( \frac{1}{p} \bigwedge_{W'}^2 A_1 ) )/  \frac{1}{p} \bigwedge_{W'}^2 A_1$ has length 1, we may assume without loss of generality that:
$$\ord_p(\frac{c_1 c_2}{p}) = 1$$
$$\ord_p(\frac{c_j c_k}{p} ) \leq 0 \quad \text{for any } j < k, (j,k) \neq (1,2).$$
Combining these with the observation that $\ord_p (\prod_{j<k} \frac{c_j c_j}{p } ) = 0$ yields:
$$\ord_p(c_1) = 1 \quad \ord_p(c_2) = 1 \quad \ord_p(c_3) = 0 \quad \ord_p(c_4) = 0$$

And so we have the chain condition $pA \subset F^{-1}(pA) \subset A,$ which shows that $A$ is a Dieudonn\'e lattice.

\end{proof}

Now we are ready to prove Proposition ~\ref{bijection}:

\begin{proof} 
By Lemma ~\ref{isspecial}, there is a well-defined map:
$$ \mathcal{M}_{0}(k') \longrightarrow \mathcal{S}^+(k')$$
Given by $A \mapsto \frac{1}{p} \bigwedge_{W'}^2 A_1$. This map is injective by Lemma ~\ref{injective}. It remains to show that every very special lattice $L$ in $V'$ is of the form $\frac{1}{p} \bigwedge_{W'}^2 A_1$ for some Dieudonn\'e lattice $A$ of height $0$. So, let $L \subset V'$ be a very special lattice. 

By definition, because $L$ is a very special lattice, $L = \frac{1}{p} \bigwedge_{W'}^2 C$ for some $W'$-lattice $C \subset \mathbb{N}'$. Let $A = \overline{F}(\frac{1}{p} C)$. Note that $C =  F^{-1}(\overline{F}(C))$ (this is true for any $W'$-lattice in $\N')$, so: 
$$L = \frac{1}{p} \bigwedge_{W'}^2 C = \frac{1}{p} \bigwedge_{W'}^2 A_1.$$

It only remains to show that $A$ is a Dieudonn\'e lattice of height 0. Because $L$ is self-dual, by Lemma ~\ref{isdual}, $C$ must be of height 2.  Since $A = \overline{F}(\frac{1}{p} C)$, $A$ is of height 0.

Because $L = \frac{1}{p} \bigwedge_{W'}^2 A_1$ is self-dual,  the operator $\Phi$ is slope-0, and $[\Phi(x), \Phi(y)] = [x,y]^\sigma$ for any $x,y \in V'$, we have that
$$\overline{\Phi}(\frac{1}{p} \bigwedge_{W'}^2 A_1) = \frac{1}{p^2}\bigwedge_{W'}^2 \overline{F}(F^{-1}(pA))$$
is also self-dual in $V'$. By applying Lemma ~\ref{isdual} again, $ \overline{F}(F^{-1}(pA))$ must be of height $4$. Since $pA$ is also of height $4$ and $ \overline{F}(F^{-1}(pA)) \subset pA$, equality must hold. Therefore, $\overline{F}(F^{-1}(A)) = A$.

Now, we have that  $A$ is a $W'$-lattice of height 0, $\overline{F}(F^{-1}(A)) = A$, and
$$ (\frac{1}{p} \bigwedge_{W'}^2 A_1  + \overline{\Phi}( \frac{1}{p} \bigwedge_{W'}^2 A_1 ) )/  \frac{1}{p} \bigwedge_{W'}^2 A_1$$
has length 1. By Proposition ~\ref{isDD}, $A$ is a Dieudonn\'e lattice. 

Therefore, our very special lattice  $L$ is of the form $\frac{1}{p} \bigwedge_{W'}^2 A_1$ for some Dieudonn\'e lattice $A$ of height $0$, and so $A \mapsto \frac{1}{p} \bigwedge_{W'}^2 A_1$ defines a bijection:

 $$ \mathcal{M}_0(k') \longleftrightarrow \mathcal{S}^+(k').$$

\end{proof}

\section{Vertex Lattices and Endomorphisms}\label{Vertexsection}

Our next goal will be study certain closed subschemes $\mathcal{N}_{\Lambda, 0} \subset \mathcal{N}_{0,red}$, where specific collections of endomorphisms are integral.  This is a key step, because the subschemes  $\mathcal{N}_{\Lambda, 0}$ will be projective varieties. The indexing set for these subschemes will be the collection of \emph{vertex lattices} in the quadratic space $V^{\Phi}$, the $\Phi$-fixed vectors in $V$. So we will begin by studying the quadratic space $V^{\Phi}$ and introducing the concept of a vertex lattice.

\subsection{A Rational Quadratic Space and Vertex Lattices}

\begin{proposition}
The quadratic space $V^{\Phi}$ over $\Q_p$ has Hasse invariant -1 and determinant -1. This does not depend on the choice of $\omega$ used to define the quadratic space $V$.
\end{proposition}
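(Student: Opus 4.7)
The plan is to compute the determinant and Hasse invariant of $V^{\Phi}$ directly from the $\Phi$-fixed basis already exhibited in the previous proposition, and then to verify that rescaling the pairing $[\cdot,\cdot]$ (which is what a change of $\omega$ amounts to) leaves both invariants unchanged.

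First, I would take as a $\Q_p$-basis of $V^{\Phi}$ the six $\Phi$-fixed vectors
$$u(x_1+x_2),\quad u(x_1-x_2),\quad u(px_3-x_4),\quad px_3+x_4,\quad x_5+x_6,\quad u(x_5-x_6)$$
produced earlier, where $u^2 = \Delta \in \Z_p^{\times}$ is a fixed non-square. Fixing the normalization $\omega = e_1 \wedge e_2 \wedge e_3 \wedge e_4$ (so the global factor $\alpha^{-1}p^{-r}$ in the Gram matrix of $V$ is $1$), a direct bilinear expansion using $[x_1,x_2] = 1$, $[x_3,x_4] = -1$, $[x_5,x_6] = 1$ and the vanishing of all other pairings yields a diagonal Gram matrix
$$\mathrm{diag}\bigl(2\Delta,\ -2\Delta,\ 2p\Delta,\ -2p,\ 2,\ -2\Delta\bigr)$$
for $V^{\Phi}$.

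From this, the determinant $(2\Delta)(-2\Delta)(2p\Delta)(-2p)(2)(-2\Delta) = -2^{6}p^{2}\Delta^{4}$ represents $-1$ in $\Q_p^{\times}/(\Q_p^{\times})^{2}$. For the Hasse invariant, I would split $V^{\Phi}$ into three orthogonal planes $Q_1 = \langle 2\Delta, -2\Delta\rangle$, $Q_2 = \langle 2p\Delta, -2p\rangle$, $Q_3 = \langle 2, -2\Delta\rangle$, with discriminant classes $-1$, $-\Delta$, $-\Delta$. Their local Hasse invariants are $\epsilon(Q_1) = (2\Delta,-2\Delta)_p = 1$ (from $(a,-a)_p = 1$), $\epsilon(Q_3) = (2,-2\Delta)_p = 1$ (pairs of units for $p$ odd), and $\epsilon(Q_2) = (2p\Delta,-2p)_p = (2p,-2p)_p(\Delta,-2p)_p = (\Delta,p)_p = -1$, the final step using that $\Delta$ is a non-square modulo $p$. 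Applying the orthogonal-sum rule $\epsilon(V_1 \perp V_2) = \epsilon(V_1)\epsilon(V_2)(d(V_1),d(V_2))_p$ twice, the three cross terms $(d(Q_i), d(Q_j))_p$ all vanish, because the $d(Q_i)$ are unit classes and $p$ is odd. So $\epsilon(V^{\Phi}) = \epsilon(Q_1)\epsilon(Q_2)\epsilon(Q_3) = -1$.

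Finally, to see the invariants are independent of $\omega$: replacing $\omega$ by $c\omega$ with $c \in W^{\times}$ rescales the pairing by $c^{-1}$, and after the implicit renormalization to a $\Q_p$-valued form on $V^{\Phi}$ this amounts to a rescaling by some $c' \in \Q_p^{\times}$. For a six-dimensional form of determinant $-1$, scaling by $c'$ multiplies the determinant by $(c')^{6}$, a square, and multiplies the Hasse invariant by
$$(c',c')_p^{\binom{6}{2}}(c', d(V))_p^{\,6-1} = (c',-1)_p^{15}(c',-1)_p^{5} = 1,$$
using $(a,a)_p = (a,-1)_p$ and $d(V^{\Phi}) = -1$. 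Hence neither invariant changes. The only non-trivial obstacle is the single Hilbert-symbol computation giving $\epsilon(Q_2) = -1$; every other step reduces either to $(a,-a)_p = 1$ or to the fact that pairs of units in $\Q_p^{\times}$ pair trivially when $p$ is odd.
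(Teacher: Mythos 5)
Your proof is correct and follows essentially the same route as the paper: you diagonalize $V^\Phi$ in the same $\Phi$-fixed basis $\{y_i\}$, obtaining the same diagonal Gram matrix up to an overall unit scalar, and you argue $\omega$-independence by observing that a change of $\omega$ only rescales the Gram matrix. The paper states the Hasse-invariant and determinant values without spelling out the Hilbert-symbol bookkeeping, which you carry out explicitly, and your scaling formula $(c,c)_p^{\binom{6}{2}}(c,d)_p^{5}=1$ is just a cleaner packaging of the paper's observation that the rescaled Gram matrix is again the same diagonal up to a constant.
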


\begin{proof}

 Let $\Delta \in \Z_p^\times$ be nonsquare, and let $u \in W^\times$ be a square root of $\Delta$. The vectors:
 
 $$y_1 =u (x_1 + x_2)  \quad \quad y_2 = u (x_1 -  x_2)$$
 $$y_3 = u(p x_3 - x_4) \quad \quad y_4 = px_3 + x_4$$
 $$y_5 =  x_5 + x_6  \quad \quad y_6 = u(x_5 - x_6) $$
 
 form an orthogonal basis of $V^{\Phi}$, with:

   $$\left(\alpha p^r \frac{[y_i, y_j] }{2 } \right) = 
 \begin{pmatrix}
  \Delta &  & & &  & \\
   & -\Delta & & & &\\
  & & \Delta p &  & &  \\
  & &  & -p & & \\
 & & & & 1 &  \\
 & & & &  & - \Delta \\
 \end{pmatrix}.$$

 This has Hasse invariant -1 and determinant -1.
 
 Recall that $\omega$ was chosen to be an element of $\bigwedge_{W_{\Q}}^4 \N$ such that $W \omega = \bigwedge_{W}^4 \M$ as free $W$-modules of rank 1. If $\omega'$ were another such element, we would have $\omega' = \beta \omega$ for some $\beta \in W^\times$. Let $[\cdot,\cdot]'$ be the pairing based on $\omega'$. Then $\{y_i\}_1^6$ would still be an orthogonal basis of $V^{\Phi}$, with:
 
    $$\left(\beta \alpha p^r \frac{[y_i, y_j'] }{2 } \right) = 
 \begin{pmatrix}
  \Delta &  & & &  & \\
   & -\Delta & & & &\\
  & & \Delta p &  & &  \\
  & &  & -p & & \\
 & & & & 1 &  \\
 & & & &  & - \Delta \\
 \end{pmatrix}.$$
 
 This still has Hasse invariant -1 and determinant -1.
 
 By ~\cite{AoQF}, there is a unique  nondegenerate anisotropic quadratic space of dimension 4 over $\Q_p$. Let $W$ be this quadratic space, and let $\mathbb{H}$ be the hyperbolic plane over $\Q_p$. Then,
 
 $$V^{\Phi} \cong  \mathbb{H} \oplus W.$$

\end{proof}

\begin{definition}\label{ignoreme}
A \emph{vertex lattice} is a $\Z_p$-lattice $\Lambda \subset V^{\Phi}$ such that:
$$p \Lambda \subset \Lambda^\vee \subset \Lambda.$$
The \emph{type} of $\Lambda$ is $t_{\Lambda} = \dim_{\F_p}(\Lambda/ \Lambda^\vee)$.
\end{definition}

\begin{proposition}
The type of a vertex lattice is either 2 or 4 (and both occur).
\end{proposition}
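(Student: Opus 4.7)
The plan is to reduce the statement to two basic facts about the quadratic space $V^{\Phi}$ over $\Q_p$: first, that $t_{\Lambda}$ must be even by a discriminant comparison, and second, that the extreme values $t_{\Lambda} = 0$ and $t_{\Lambda} = 6$ are ruled out because $V^{\Phi}$ is non-split. Existence of the remaining two types will then be established by an explicit construction inside the decomposition $V^{\Phi} \cong \mathbb{H} \oplus W$ from the previous proposition.

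For parity, I would pass to the sublattice $\Lambda^{\vee}$, which is genuinely integral because $[\Lambda^{\vee}, \Lambda^{\vee}] \subset [\Lambda^{\vee}, \Lambda] \subset \Z_p$. Its Gram matrix $H$ therefore lies in $M_6(\Z_p)$, and writing $H$ in Smith normal form identifies the cokernel of the inclusion $\Lambda^{\vee} \hookrightarrow (\Lambda^{\vee})^{\vee} = \Lambda$ and gives $\ord_p(\det H) = t_{\Lambda}$. But $\det H$ represents the class $\det(V^{\Phi}) \equiv -1$ in $\Q_p^\times/(\Q_p^\times)^2$, and since $-1$ is a $p$-adic unit, $\ord_p(\det H)$ must be even; hence $t_{\Lambda} \in \{0, 2, 4, 6\}$.

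To exclude $t_{\Lambda} = 0$ and $t_{\Lambda} = 6$, I would argue via the Witt index. If $t_{\Lambda} = 0$ then $\Lambda$ is self-dual, so $(\Lambda, [\cdot,\cdot])$ is unimodular over $\Z_p$ and its reduction mod $p$ is a nondegenerate $6$-dimensional quadratic form over $\F_p$. Since any form over $\F_p$ of dimension at least $3$ is isotropic, iterating and lifting by Hensel shows that $V^{\Phi}$ would have Witt index at least $2$; but $V^{\Phi} = \mathbb{H} \oplus W$ has Witt index exactly $1$ because $W$ is anisotropic, a contradiction. If $t_{\Lambda} = 6$, then $\Lambda^{\vee} = p\Lambda$, so $\Lambda$ is self-dual for the rescaled form $p[\cdot,\cdot]$; since rescaling preserves isotropy and hence the Witt index, the same contradiction applies to this rescaled space.

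For existence, I would use the decomposition $V^{\Phi} \cong \mathbb{H} \oplus W$. In $\mathbb{H}$, picking a hyperbolic basis $\{e,f\}$, the lattice $M_0 = \Z_p e \oplus \Z_p f$ is self-dual, while $M_1 = \Z_p e \oplus p^{-1}\Z_p f$ satisfies $M_1^{\vee} = pM_1$ and is therefore a type-$2$ vertex lattice of $\mathbb{H}$. In $W$, using the diagonal model with Gram matrix $\mathrm{diag}(1,-u,-p,up)$ (with $u$ a non-residue), one checks directly that the standard $\Z_p$-lattice $L$ satisfies $L \subsetneq L^{\vee}$ with $[L^{\vee} : L] = p^2$, so $W_0 := L^{\vee}$ is a type-$2$ vertex lattice of $W$. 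Then $M_0 \oplus W_0 \subset V^{\Phi}$ is a vertex lattice of type $2$, and $M_1 \oplus W_0 \subset V^{\Phi}$ is one of type $4$. The main obstacle is the exclusion of types $0$ and $6$: the discriminant alone is not enough, and one must invoke either the Hasse invariant of $V^{\Phi}$ or, equivalently, its Witt index, which is ultimately where the ``$-1$'' Hasse invariant computed in the preceding proposition enters.
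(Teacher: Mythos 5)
Your argument is correct but proceeds by a genuinely different route from the paper. The paper's proof is essentially a citation: it invokes Proposition 5.1.2 of Howard--Pappas (the GSpin Rapoport--Zink paper) for the bound on the type, and refers to a later proposition on special lattices for the existence of type $2$. You instead give a self-contained elementary argument reading off only the invariants $\det(V^{\Phi}) = -1$ and Hasse invariant $-1$ (equivalently $V^{\Phi} \cong \mathbb{H} \oplus W$ with $W$ anisotropic) computed in the preceding proposition. Each of your steps checks out: Smith normal form gives $\ord_p(\det H) = t_{\Lambda}$ for the integral Gram matrix $H$ of $\Lambda^{\vee}$, and since $\det(V^{\Phi})$ is a unit in $\Q_p^\times/(\Q_p^\times)^2$ this forces $t_{\Lambda}$ even; a rank-$6$ unimodular $\Z_p$-lattice would give Witt index at least $2$ (reduce mod $p$, split off two hyperbolic planes over $\F_p$, lift by Hensel), while $\mathbb{H} \oplus W$ has Witt index exactly $1$, which rules out $t_{\Lambda} = 0$ and, after rescaling by $p$, $t_{\Lambda} = 6$; and the lattices $M_0 \oplus W_0$ and $M_1 \oplus W_0$ built from the hyperbolic plane and the dual of the standard lattice in the anisotropic quaternion norm form do realize types $2$ and $4$. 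The citation is briefer and leans on a structural result valid for all GSpin Rapoport--Zink spaces; your proof is longer but lets a reader verify the statement directly from the concrete invariants of $V^{\Phi}$ without opening the reference.
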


\begin{proof}
This follows from ~\cite{GSpin} Proposition 5.1.2 (see the introduction for the definition of $t_{max}$, which in this case is 4). Proposition 5.1.2 includes a proof that vertex lattices of type 4 do occur. The fact that vertex lattices of type 2 do occur can be seen, for example, by Proposition ~\ref{vollaardprop} below.
\end{proof}

The statement of the following proposition is identical to that of Proposition 2.19 in ~\cite{GU22}. The proof follows from ~\cite{RTW}, Proposition 4.1. Additional details may be found in ~\cite{V} Lemma 2.1.

\begin{proposition}\label{vollaardprop}
 Let $L \subset V$ be a special lattice, and define:
$$L^{(r)} = L + \Phi(L) + \cdots + \Phi^r(L).$$
There is an integer $d \in \{1,2\}$ such that:
$$L = L^{(0)} \subsetneq L^{(1)} \subsetneq \cdots \subsetneq L^{(r) } = L^{(r+1)}.$$
For each $L^{(r)} \subsetneq L^{(r+1})$ with $0 \leq r < d$ the quotient $L^{(r+1)}/L^{(r)}$ is annihilated by $p$ and satisfies $\dim_k ( L^{(r+1)}/L^{(r)} ) = 1$. Moreover,
$$\Lambda_{L} = \{x \in L^{(d)} \ | \ \Phi(x) = x \}$$
is a vertex lattice of type $2d$ and satisfies $\Lambda_{L}^\vee = \{x \in L \ | \  \Phi(x) = x \}$.
\end{proposition}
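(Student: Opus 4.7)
The plan is to proceed in three stages: first build the chain and verify the shape of each successive quotient; then use stabilization and slope-zero descent to obtain $\Lambda_L$ and identify $\Lambda_L^\vee$; finally verify the vertex-lattice axioms, compute the type, and invoke the previous proposition to bound $d$. The base step $L^{(0)} \subsetneq L^{(1)}$ with $L^{(1)}/L$ of length $1$ (hence one-dimensional over $k'$ and annihilated by $p$) is exactly the second defining condition of a special lattice. For the inductive step, the identity $L^{(r+1)} = L + \overline{\Phi}(L^{(r)})$ induces a surjective $\sigma$-semilinear map $L^{(r)}/L^{(r-1)} \twoheadrightarrow L^{(r+1)}/L^{(r)}$, which forces $\dim_{k'}(L^{(r+1)}/L^{(r)}) \leq 1$ and propagates the $p$-torsion property. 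Thus the chain strictly increases by one dimension at each step until it stabilizes; since everything sits in the $6$-dimensional $V'$, stabilization occurs after finitely many steps, producing the integer $d$.

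Once $L^{(d)} = L^{(d+1)}$, we have $\overline{\Phi}(L^{(d)}) \subset L^{(d)}$, so $L^{(d)}$ is $\Phi$-stable. Slope-zero descent then gives $L^{(d)} = \Lambda_L \otimes_{\Z_p} W'$ with $\Lambda_L = L^{(d)} \cap V^{\Phi}$. Since $[\Phi(x), \Phi(y)] = [x,y]^{\sigma}$, the dual lattice $(L^{(d)})^\vee$ is also $\Phi$-stable and descends. Using self-duality $L = L^\vee$ together with pairing-compatibility, I would expand
$$(L^{(d)})^\vee = L^\vee \cap \overline{\Phi}(L)^\vee \cap \cdots \cap \overline{\Phi}^d(L)^\vee = L \cap \overline{\Phi}(L) \cap \cdots \cap \overline{\Phi}^d(L),$$
and intersect with $V^\Phi$. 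Since a $\Phi$-fixed element of $L$ automatically lies in each $\overline{\Phi}^r(L)$, this intersection collapses to $L \cap V^\Phi$, yielding the identification $\Lambda_L^\vee = \{x \in L : \Phi(x) = x\}$.

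The containment $\Lambda_L^\vee \subset \Lambda_L$ is immediate from $L \subset L^{(d)}$. The other containment $p\Lambda_L \subset \Lambda_L^\vee$ reduces to $pL^{(d)} \subset (L^{(d)})^\vee$; this I would verify inductively starting from the base case $pL^{(1)} \subset L \cap \overline{\Phi}(L) = (L^{(1)})^\vee$, which follows from an elementary-divisor analysis using that both $L$ and $\overline{\Phi}(L)$ are self-dual with quotient of length $1$, forcing their simultaneous elementary divisors to be $(p^{-1}, p, 1, 1, 1, 1)$. For the type, descent gives $\dim_{\F_p}(\Lambda_L/\Lambda_L^\vee) = \dim_{k'}(L^{(d)}/(L^{(d)})^\vee) = \dim_{k'}(L^{(d)}/L) + \dim_{k'}(L/(L^{(d)})^\vee) = 2d$, where the last equality uses self-duality of $L$ to pair the increasing chain against the decreasing chain of duals. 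Finally, the preceding proposition bounds the type of any vertex lattice in $V^\Phi$ by $4$, forcing $d \in \{1,2\}$.

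The main obstacle is the elementary-divisor bookkeeping needed to sharpen the naive conclusion $p^d L^{(d)} \subset (L^{(d)})^\vee$ (which one gets by iterating the length-$1$ quotient condition) down to the required $pL^{(d)} \subset (L^{(d)})^\vee$: this requires exploiting the rigidity imposed by self-duality at each step of the chain, rather than merely summing up lengths. The identification $\Lambda_L^\vee = L \cap V^\Phi$ is similarly subtle, resting on the interplay between $\Phi$-stability, pairing compatibility, and self-duality of $L$.
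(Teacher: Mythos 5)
The overall architecture you lay out is the right one and matches what the cited sources do: build the chain via the surjective $\sigma$-semilinear maps on successive quotients, use slope-zero descent once the chain stabilizes, dualize the sum into an intersection, and identify $\Lambda_L^\vee = L \cap V^\Phi$. That part is correct and carefully reasoned. (For reference, the paper's own ``proof'' is purely a citation to~\cite{GU22} Prop.~2.19, \cite{RTW} Prop.~4.1, and \cite{V} Lemma~2.1, so you are trying to supply details the paper itself defers.)

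The substantive issue is that the one genuinely hard step --- the containment $p\Lambda_L \subset \Lambda_L^\vee$, equivalently $p\,\Phi^r(L) \subset L$ for all $0 \le r \le d$ --- is never actually established. You correctly prove the base case $pL^{(1)} \subset (L^{(1)})^\vee$ via the $(p^{-1},p,1,1,1,1)$ elementary-divisor computation, and you correctly diagnose that naive length-counting only yields $p^d L^{(d)} \subset (L^{(d)})^\vee$, but the ``inductive'' step that would close the gap is replaced by a description of what would have to be shown. This is not a small omission: for $d=2$ one must rule out $L^{(2)}/L \cong W/p^2$, and that requires a real argument combining the self-duality of each $\Phi^r(L)$ with the length-one quotient condition (not merely iterating elementary divisors, since the two successive elementary-divisor decompositions are not simultaneously diagonalizable). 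Two downstream pieces of the argument then silently rely on this unproven fact. First, the type computation writes $\dim_{\F_p}(\Lambda_L/\Lambda_L^\vee) = \dim_{k'}(L^{(d)}/(L^{(d)})^\vee) = 2d$, but ``$\dim_{\F_p}$'' only makes sense once you know the quotient is killed by $p$; what length-counting gives unconditionally is $W$-length $2d$, not $\F_p$-dimension $2d$. Second, the bound $d \in \{1,2\}$ is obtained by appealing to the fact that vertex lattices in $V^\Phi$ have type $\le 4$, but that can only be invoked after you know $\Lambda_L$ is a vertex lattice, which is exactly the gap. Finally, the reason given for stabilization of the chain (``everything sits in the $6$-dimensional $V'$'') does not work as stated: an increasing chain of lattices in a fixed vector space need not stabilize, and in fact the cleanest proof of stabilization here is $L^{(r)} \subset p^{-1}L$ --- i.e., exactly the $p\Phi^r(L)\subset L$ bound you have not proved --- together with the finite length of $p^{-1}L/L$. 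So stabilization, the vertex condition, the type, and the bound on $d$ all hinge on the same missing lemma, and the proof is incomplete until it is supplied.
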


Define:
\begin{align*}
\mathcal{S}_{\Lambda}(k') &= \{ \text{Special lattices }  L \subset V'  \text{ such that }  \Lambda^\vee \subset L\} \\
&=  \{ \text{Special lattices }  L \subset V' \text{ such that }  \Lambda^\vee \subset \overline{\Phi}(L) \}. \\
\end{align*} 

The above two descriptions are equivalent because $\Lambda^\vee \subset V^\Phi$ and the lattices $L$ are \emph{special} lattices. Similarly define:
\begin{align*}
\mathcal{S}^+_{\Lambda}(k') &= \{ \text{Very special lattices }  L \subset V'  \text{ such that }  \Lambda^\vee \subset L\} \\
&=  \{ \text{Very special lattices }  L \subset V' \text{ such that }  \Lambda^\vee \subset \overline{\Phi}(L) \}. \\
\end{align*}

Note that, by the above Proposition ~\ref{vollaardprop}, every special lattice in $V$ is contained in some $S_{\Lambda}(k)$, and every very special lattice is contained in some $S^+_{\Lambda}(k)$.

Recall that we have bijections between the $k'$-points of the height-zero component of the $\GL_4$ Rapoport-Zink space, the set of Dieudonn\'e lattices of height 0 in $\N'$, and the set of very special lattices in $V'$:
$$\mathcal{N}_{0,red}(k') \longleftrightarrow \mathcal{M}_0(k') \longleftrightarrow \mathcal{S}^+(k').$$

We will show there is a natural way to define a closed subscheme $\mathcal{N}_{\Lambda, 0}$ of $\mathcal{N}_{0,red}$ so that the above bijection restricts to:

$$\mathcal{N}_{\Lambda, 0}(k') \longleftrightarrow \mathcal{S}^+_{\Lambda} (k').$$

\subsection{The Endomorphisms that Stabilize a Lattice}\label{Lattices Stabilized by Ends} The current objective is to define a closed subscheme $\mathcal{N}_{\Lambda,0}$ of $\mathcal{N}_{0,red}$ such that  $\mathcal{N}_{\Lambda,0}(k')$ that will be in bijection with $\mathcal{S}_{\Lambda}^+(k')$. In this section, we will start by defining a subset  $\mathcal{M}_{\Lambda,0}(k') \subset \mathcal{M}_{0}(k')$ that is in bijection with $\mathcal{S}_{\Lambda}^+(k')$. For $A \subset \N'$ a $W'$-lattice, we will use the notation:
$$A^* = \Hom(A, W')$$
or, equivalently, that $A^*$ is the lattice in $\N'^* = \Hom(\N', W'_{Q})$ of homomorphisms integrally valued on $A$. This should not be confused with the notation $L^\vee$: if $L \subset V'$ is a lattice, $L^\vee \subset V'$ is the lattice of elements of $V'$ that pair integrally (with respect to $[\cdot, \cdot]$) with all elements of $L$.

\begin{definition}
For any vertex lattice $\Lambda \subset V^{\Phi}$, and any field $k'$ over $k$, define:
$$\mathcal{M}_{\Lambda, 0} (k') = \{ \text{Dieudonn\'e lattices } A \subset \N' \text{ of height } 0 \ | \ \widetilde{x}(A \times A^*)  \subset A \times A^* \text{ for all } x \in \Lambda^\vee \}.$$
\end{definition}

Our main result for this section is:

\begin{proposition}\label{MS}
The bijection:
$$\mathcal{M}_0(k') \longleftrightarrow \mathcal{S}^+(k')$$
$$A \mapsto \frac{1}{p} \bigwedge_{W'}^2 A_1$$
restricts to a bijection:
$$\mathcal{M}_{\Lambda, 0} (k') \longleftrightarrow \mathcal{S}^+_{\Lambda}(k').$$
\end{proposition}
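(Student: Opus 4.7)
The plan is to translate the integrality condition defining $\mathcal{M}_{\Lambda,0}(k')$ into the containment condition defining $\mathcal{S}^+_\Lambda(k')$ via two $W'$-module identities: first, that $\{x \in V' : \widetilde{x}(A \times A^*) \subset A \times A^*\}$ is exactly $\bigwedge_{W'}^2 A$; and second, that $\overline{\Phi}(L) = \bigwedge_{W'}^2 A$ for $L = \tfrac{1}{p}\bigwedge_{W'}^2 A_1$. Combining these reduces the condition $A \in \mathcal{M}_{\Lambda,0}(k')$ to $\Lambda^\vee \subset \overline{\Phi}(L)$, and since $\Lambda^\vee \subset V^\Phi$, a short semilinear argument will give $\Lambda^\vee \subset \overline{\Phi}(L) \iff \Lambda^\vee \subset L$.

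For the first identity, I use that $A$ has height $0$ to choose a $W'$-basis $a_1,\ldots,a_4$ of $A$ normalized so that $a_1 \wedge \cdots \wedge a_4 = \omega$. Then $A^*$ has dual basis $a_1^*,\ldots,a_4^*$ and $\omega_1 = a_1^* \wedge \cdots \wedge a_4^*$. By the construction of $\widetilde{x}$ in Section~3.2, stabilization of $A \times A^*$ is equivalent to the pair of conditions $x(A^*) \subset A$ and $x^\star(A) \subset A^*$. Writing $x = \sum_{i<j} c_{ij}\, a_i \wedge a_j$ and evaluating on the dual basis vectors gives $x(A^*) \subset A \iff c_{ij} \in W'$ for all $i<j$. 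The defining identity $x^\star \wedge t = \{x,t\}\omega_1$ then computes $(a_i \wedge a_j)^\star = \pm\, a_m^* \wedge a_n^*$ with $\{m,n\}$ complementary to $\{i,j\}$, so the second condition cuts out the same lattice.

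The second identity is a direct computation. Since $F$ is bijective on $\N'$, the map $F\colon A_1 = F^{-1}(pA) \to pA$ is surjective, hence $\overline{F}(A_1) = pA$; applying $\Phi(a \wedge b) = p^{-1} Fa \wedge Fb$ yields
$$\overline{\Phi}(L) = \tfrac{1}{p}\,\overline{\Phi}\bigl(\bigwedge\nolimits_{W'}^2 A_1\bigr) = \tfrac{1}{p^2}\,\bigwedge\nolimits_{W'}^2 \overline{F}(A_1) = \tfrac{1}{p^2}\,\bigwedge\nolimits_{W'}^2 pA = \bigwedge\nolimits_{W'}^2 A.$$
For the final equivalence $\Lambda^\vee \subset \overline{\Phi}(L) \iff \Lambda^\vee \subset L$: the direction $(\Leftarrow)$ is immediate because any $v \in V^\Phi \cap L$ satisfies $v = \Phi(v) \in \overline{\Phi}(L)$; for $(\Rightarrow)$, write $v \in V^\Phi \cap \overline{\Phi}(L)$ as $v = \Phi(y)$ with $y \in L$, and injectivity of $\Phi$ together with $\Phi(y) = v = \Phi(v)$ forces $y = v$, so $v \in L$.

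I expect the main obstacle to be the integrality statement for the Hodge star in the first step: both conditions $x(A^*) \subset A$ and $x^\star(A) \subset A^*$ must cut out \emph{exactly} $\bigwedge_{W'}^2 A$, not a strictly larger lattice. This hinges on the height-$0$ hypothesis, which is what lets me choose the basis $\{a_i\}$ so that $\omega = a_1 \wedge \cdots \wedge a_4$; for a lattice of nonzero height the Hodge star would be twisted by a power of $p$ and the two integrality conditions would no longer agree.
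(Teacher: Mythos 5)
Your overall architecture matches the paper's: reduce the stabilization condition to a containment of lattices using the identity $\{x : \widetilde{x}(A \times A^*) \subset A \times A^*\} = \bigwedge_{W'}^2 A$ (the paper's Proposition~\ref{verystrange}, built from Lemmas~\ref{wisconsin1}--\ref{wisconsin3}), then identify $\overline{\Phi}\bigl(\tfrac{1}{p}\bigwedge_{W'}^2 A_1\bigr)$ with $\bigwedge_{W'}^2 A$. Your step 1 is essentially correct and tracks the paper's route.

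However, there are two genuine gaps, both of which stem from implicitly working over an algebraically closed field. In step 2 you justify $\overline{F}(A_1) = pA$ by appealing to ``$F$ bijective on $\N'$,'' but the paper is explicit that $F$ need not be surjective on $\N'$ when $k'$ is not perfect, and the proposition must hold for all field extensions $k'/k$ (this is essential later when descending the bijection to a morphism of schemes). The correct justification is the Dieudonn\'e lattice axiom $\overline{F}(F^{-1}(A)) = A$, which immediately gives $\overline{F}(A_1) = \overline{F}(F^{-1}(pA)) = p\,\overline{F}(F^{-1}(A)) = pA$; without this axiom the conclusion would fail. In step 3 you claim every $v \in \overline{\Phi}(L)$ can be written as $\Phi(y)$ for some $y \in L$, but $\overline{\Phi}(L)$ is the $W'$-\emph{span} of $\Phi(L)$, not $\Phi(L)$ itself, and for imperfect $k'$ the latter is a proper subset. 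The equivalence $\Lambda^\vee \subset \overline{\Phi}(L) \iff \Lambda^\vee \subset L$ is a real fact for \emph{special} lattices $L$ (it ultimately uses the length-one condition and that $\overline{\Phi}(L)$ is again special), but your argument does not invoke the specialness of $L$ and would, if correct, prove a false statement for arbitrary lattices. In fact, this step 3 is unnecessary: the paper defines $\mathcal{S}^+_\Lambda(k')$ by the condition $\Lambda^\vee \subset \overline{\Phi}(L)$ as one of its two equivalent descriptions, and the proof of the proposition works directly with that description, bypassing the need to prove the equivalence here. Fixing step 2 by citing the Dieudonn\'e lattice axiom and deleting step 3 in favor of the $\overline{\Phi}(L)$-description would bring your proof fully into line with the paper's.
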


The proof of this will require a few lemmas. The first of them is:

\begin{lemma}\label{wisconsin1}
Let $A \subset \mathbb{N}'$ be a $W'$-lattice. For any $x \in \bigwedge_{W'_{\Q}}^2 \N'$, the corresponding homomorphism $x: \N'^* \rightarrow \N'$ takes $A^*$ to $A$ if and only if $x \in \bigwedge_{W'}^2 A$: 
$$\bigwedge_{W'}^2 A = \{x \in \bigwedge_{W'_{\Q}}^2 \N' \  | \  x (A^*) \subset A \}.$$
\end{lemma}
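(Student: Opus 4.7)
The plan is a direct basis computation. Since $A$ is a free $W'$-module of rank $4$ spanning $\N'$, I would fix a $W'$-basis $\{a_1, \ldots, a_4\}$ of $A$ (which is automatically a $W'_{\Q}$-basis of $\N'$) together with its dual basis $\{f_1, \ldots, f_4\}$ of $A^*$ defined by $f_k(a_\ell) = \delta_{k\ell}$; this $\{f_k\}$ simultaneously serves as a $W'_{\Q}$-basis of $\N'^*$. Any $x \in \bigwedge^2_{W'_{\Q}} \N'$ then writes uniquely as $x = \sum_{i<j} c_{ij}\, a_i \wedge a_j$ with $c_{ij} \in W'_{\Q}$, and by construction the condition $x \in \bigwedge^2_{W'} A$ is exactly that every $c_{ij}$ lies in $W'$.

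The forward inclusion $\bigwedge^2_{W'} A \subseteq \{x \mid x(A^*) \subset A\}$ is immediate from the definition $a \wedge b \colon f \mapsto f(a) b - f(b) a$ of the map $\bigwedge^2 \N' \to \Hom(\N'^*, \N')$: if all $c_{ij} \in W'$, then for any $f \in A^*$,
$$x(f) \;=\; \sum_{i<j} c_{ij}\bigl(f(a_i)\, a_j - f(a_j)\, a_i\bigr) \;\in\; A,$$
since each $f(a_\ell) \in W'$ and each $a_\ell \in A$.

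For the reverse inclusion, I would evaluate an arbitrary $x = \sum_{i<j} c_{ij}\, a_i \wedge a_j$ satisfying $x(A^*) \subset A$ on the specific dual basis element $f_k$. The general formula above collapses, using $f_k(a_\ell) = \delta_{k\ell}$, to
$$x(f_k) \;=\; \sum_{j>k} c_{kj}\, a_j \;-\; \sum_{i<k} c_{ik}\, a_i.$$
Because $\{a_1,\ldots,a_4\}$ is a $W'$-basis of $A$, demanding $x(f_k) \in A$ forces every $c_{kj}$ (for $j>k$) and every $c_{ik}$ (for $i<k$) to lie in $W'$; letting $k$ run over $1,\ldots,4$ hits every pair $i<j$, so all $c_{ij} \in W'$ and hence $x \in \bigwedge^2_{W'} A$.

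There is no substantive obstacle to the argument; the entire content of the lemma is that, once dual bases have been chosen, the matrix coefficients of $x$ as a $2$-vector in $\bigwedge^2 \N'$ and as a homomorphism $\N'^* \to \N'$ coincide up to sign, so integrality of one is equivalent to integrality of the other.
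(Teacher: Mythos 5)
Your proof is correct and follows essentially the same path as the paper's: both establish the forward inclusion directly from the formula $a\wedge b\colon f\mapsto f(a)b - f(b)a$, and both obtain the reverse inclusion by choosing a $W'$-basis of $A$ with its dual basis of $A^*$, writing $x$ in coordinates, and evaluating $x$ on each dual basis vector to extract the coefficients $c_{ij}$. Your write-up is a bit more explicit about which coefficients each evaluation $x(f_k)$ captures, but this is the same argument.
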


\begin{proof}
For any $\sum_i a_i \wedge b_i \in \bigwedge_{W'}^2 A$ and $f \in A^*$, note that $\sum_i a_i \wedge b_i (f) = \sum_i f(a_i)b_i - f(b_i)a_i$ is in $A$, as all $f(a_i)$ and $f(b_i) \in W'$.  And so,
$$\bigwedge_{W'}^2 A \subset  \{x \in \bigwedge_{W'_{\Q}}^2 \N' \  | \  x(A^*)  \subset A \}.$$

For the other inclusion, take a basis $\{a_i\}_1^4$ of $A$ as a $W'$-module. Then $\{a_i\}_1^4$ is a basis of $\N'$ over $W'_{\Q}$, so let $\{g_i\}_1^4$ be the dual basis of $\mathbb{N}^{'*}$, which is also a basis of $A^*$ as a free $W'$-module. Let $x$ be an element of $\bigwedge_{W'_{\Q}}^2 \N'$ that takes $A^*$ to $A$. Then $x = \sum_{i<j} w_{i,j} a_i \wedge a_j$ for some $w_{i,j} \in W'_{\Q}$. To prove the remaining inclusion, we need to show the $w_{i,j}$ are all in $W'$. 

Since $x$ takes $A^*$ to $A$, consider $x(g_1)$:
$$x(g_1) = \sum_{i<j} w_{i,j}a_i \wedge a_j (g_1) = \sum_{i<j} w_{i,j}g_1(a_i )a_j - w_{i,j}g_1(a_j)a_i = w_{1,2}a_2 + w_{1,3}a_3 + w_{1,4} a_4 $$

So $w_{1,2},  \ w_{1,3}$, and  $w_{1,4} \in W'$. By considering $x(g_k)$ for $2 \leq k \leq 4$, we have all $w_{i,j}$ are contained in $W'$.

\end{proof}

\begin{lemma}\label{wisconsin2}
Let $A \subset \mathbb{N}'$ be a $W'$ lattice. For any $t \in \bigwedge_{W'_{\Q}}^2 \N'^*$, the corresponding homomorphism $t: \N' \rightarrow \N'^*$ takes $A$ to $A^*$ if and only if $t \in \bigwedge_{W'}^2 A^*$: 
$$\bigwedge_{W'}^2 A^* = \{t \in \bigwedge_{W'_{\Q}}^2 \N'^{*} \  | \  t(A)  \subset A^* \}.$$
\end{lemma}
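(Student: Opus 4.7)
The plan is to repeat the argument of Lemma \ref{wisconsin1} verbatim with the roles of $\N'$ and $\N'^*$ interchanged, since the statement is precisely the dual assertion: a lattice-dual pair $(A, A^*)$ inside $\N'$ becomes the pair $(A^*, A)$ inside $\N'^*$ under the canonical reflexivity $(A^*)^* = A$, and the wedge-to-homomorphism convention $f \wedge g : c \mapsto g(c)f - f(c)g$ is formally the same shape as $a \wedge b : f \mapsto f(a)b - f(b)a$.

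For the easy inclusion $\bigwedge_{W'}^2 A^* \subseteq \{t : t(A) \subseteq A^*\}$, I would write $t = \sum_i f_i \wedge g_i$ with $f_i, g_i \in A^*$ and note that for any $a \in A$, the output $t(a) = \sum_i \bigl(g_i(a) f_i - f_i(a) g_i\bigr)$ is a $W'$-linear combination of the $f_i, g_i$, since the scalars $f_i(a)$ and $g_i(a)$ lie in $W'$ by definition of $A^*$. For the reverse inclusion, I would fix a $W'$-basis $\{a_i\}_1^4$ of $A$ and take $\{g_i\}_1^4$ to be the dual basis, which is simultaneously a $W'$-basis of $A^*$ and a $W'_\Q$-basis of $\N'^*$. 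Any $t \in \bigwedge_{W'_\Q}^2 \N'^*$ expands uniquely as $t = \sum_{i<j} w_{i,j} g_i \wedge g_j$ with $w_{i,j} \in W'_\Q$, and evaluating $t$ on each basis vector $a_k$ reads off the $w_{i,k}$ for $i < k$ and the $w_{k,j}$ for $j > k$ as coefficients in the $\{g_i\}$-expansion of $t(a_k) \in A^* = \bigoplus_i W' g_i$; as $k$ varies over $1, \dots, 4$ we recover every $w_{i,j}$ and conclude each lies in $W'$.

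There is no real obstacle here — the proof is a mechanical mirror of the previous lemma. If one preferred to avoid repeating the calculation, an alternative is to deduce the statement formally by applying Lemma \ref{wisconsin1} to the lattice $A^* \subset \N'^*$ and its dual $(A^*)^* = A \subset (\N'^*)^* = \N'$, after checking that the map $\bigwedge^2_{W'_\Q} \N'^* \to \Hom(\N', \N'^*)$ used here agrees with the analogous map built from $\N'^*$ and $(\N'^*)^*$; I would include a one-line remark to that effect rather than redoing the bookkeeping.
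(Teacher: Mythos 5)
Your proof is correct and matches the paper exactly: the paper's own proof of this lemma consists of the single sentence that it is analogous to the proof of Lemma~\ref{wisconsin1}, which is precisely the mirror argument you carry out.
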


The proof of this is analogous to the proof of the previous lemma.

\begin{lemma}\label{wisconsin3}
Let $A \subset \N'$ be a $W'$-lattice of height 0. If $x \in \bigwedge_{W'}^2 A$, then $x^\star \in \bigwedge_{W'}^2 A^*$.
\end{lemma}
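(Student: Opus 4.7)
The plan is to choose a $W'$-basis $\{a_i\}_1^4$ of $A$ with dual basis $\{g_i\}_1^4$ of $A^*$, compute each $(a_i \wedge a_j)^\star$ explicitly in the dual basis, and then invoke $W'$-linearity. Since $\{a_i \wedge a_j\}_{i<j}$ is a $W'$-basis of $\bigwedge_{W'}^2 A$ and $\{g_k \wedge g_l\}_{k<l}$ is a $W'$-basis of $\bigwedge_{W'}^2 A^*$, it suffices to verify that each $(a_i \wedge a_j)^\star$ lies in $\bigwedge_{W'}^2 A^*$.

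The place the height-$0$ hypothesis enters is the following normalization step. Write $a_1 \wedge a_2 \wedge a_3 \wedge a_4 = \mu \cdot e_1 \wedge e_2 \wedge e_3 \wedge e_4$ for some $\mu \in (W'_{\Q})^\times$. Since $\bigwedge_{W'}^4 A = \bigwedge_{W'}^4 \M'$ and $\omega = w \cdot e_1 \wedge e_2 \wedge e_3 \wedge e_4$ generates $\bigwedge_{W}^4 \M$, the ratio $\mu/w$ is forced to lie in $W'^\times$. Dually, the identities $\{\omega, \omega_1\} = 1$ and $\{a_1 \wedge \cdots \wedge a_4, g_1 \wedge \cdots \wedge g_4\} = 1$ together force $g_1 \wedge g_2 \wedge g_3 \wedge g_4 = (w/\mu) \cdot \omega_1$.

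With these normalizations in hand, the Hodge star computation is formally identical to the one at the end of the proof of Proposition \ref{Pairingiscomp}, with the scalar $w$ there replaced by $w/\mu$. For instance, writing $(a_1 \wedge a_2)^\star = \sum_{k<l} \alpha_{kl} g_k \wedge g_l$ and evaluating the defining identity $\{a_1 \wedge a_2, t\} = [(a_1 \wedge a_2)^\star, t]_1$ against $t = g_i \wedge g_j$ for all $i < j$ forces $\alpha_{kl} = 0$ except $\alpha_{34} = \mu/w$, so $(a_1 \wedge a_2)^\star = (\mu/w) \cdot g_3 \wedge g_4$; this lies in $\bigwedge_{W'}^2 A^*$ precisely because $\mu/w \in W'^\times$ by the height-$0$ assumption. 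The five other basis bivectors $a_i \wedge a_j$ are handled by the same calculation, differing only by a sign and the choice of complementary index pair.

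No conceptual obstacle is anticipated: this is essentially a direct computation. The only nontrivial point to highlight is that the height-$0$ hypothesis is used in exactly one place — to guarantee that $\mu/w \in W'^\times$ — and without it the scalars appearing in $(a_i \wedge a_j)^\star$ would fail to be units and the conclusion would fail by a power of $p$.
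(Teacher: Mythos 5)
Your proof is correct, but it takes a genuinely different route from the paper's. The paper argues by duality: it first observes (via Lemma~\ref{isdual}, applied on both the $\N'$ and $\N'^*$ sides) that height 0 makes $\bigwedge_{W'}^2 A$ self-dual with respect to $[\cdot,\cdot]$ and $\bigwedge_{W'}^2 A^*$ self-dual with respect to $[\cdot,\cdot]_1$; then, since $\{x,y\}\in W'$ automatically for $x\in\bigwedge_{W'}^2 A$ and $y\in\bigwedge_{W'}^2 A^*$, the defining identity $[x^\star,y]_1 = \{x,y\}$ shows $x^\star$ lies in the $[\cdot,\cdot]_1$-dual of $\bigwedge_{W'}^2 A^*$, which is $\bigwedge_{W'}^2 A^*$ itself. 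Your argument instead picks an adapted basis $\{a_i\}$ of $A$ with dual basis $\{g_i\}$ of $A^*$, normalizes $a_1\wedge\cdots\wedge a_4 = \mu\, e_1\wedge\cdots\wedge e_4$ (so height 0 gives $\mu/w\in W'^\times$) and $g_1\wedge\cdots\wedge g_4=(w/\mu)\,\omega_1$, and then repeats the Hodge-star computation of Proposition~\ref{Pairingiscomp} verbatim in the new basis, getting $(a_i\wedge a_j)^\star = \pm(\mu/w)\,g_k\wedge g_l$ for the complementary pair $\{k,l\}$. Your route is more self-contained (it bypasses Lemma~\ref{isdual} entirely) and yields strictly more information — that the Hodge star carries a lattice basis bivector to a unit multiple of a complementary dual basis bivector, not merely that it stays in the lattice. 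The paper's route is shorter once Lemma~\ref{isdual} is in hand and avoids any coordinate computation, trading explicitness for economy. Both correctly isolate height 0 as the single hypothesis making the scalar a unit; the arguments are equivalent in content but not in form.
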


\begin{proof}
Let $A \subset \N'$ be a lattice of height 0. We have a similarly defined notion of height in $\N'^*$, using the element $\omega_1$, and $A^*$ is also of height 0. By Lemma ~\ref{isdual}, because $A \subset \N'$ is a lattice of height 0, the lattice $\bigwedge_{W'}^2 A \subset \bigwedge_{W'_{\Q}}^2 \N'$ is self-dual with respect to $[\cdot, \cdot]$. By exactly the same reasoning, because $A^* \subset \N'^*$ is of height 0, the lattice $\bigwedge_{W'}^2 A^* \subset \bigwedge_{W'_{\Q}}^2 \N'$ is self-dual with respect to $[\cdot, \cdot]_1$.

Let $x \in \bigwedge_{W'}^2 A$. Then, for all $y \in \bigwedge_{W'}^2 A^*$, the pairing $\{y,x\} \in W'$. So, for all $y \in \bigwedge_{W'}^2 A^*$,
$$\{x, y \} = [x^\star, y]^1 \in W'$$
and so $x^\star \in (\bigwedge_{W'}^2 A^*)^\vee = \bigwedge_{W'}^2 A^*$.
\end{proof}

The previous three lemmas combine to give the following result:

\begin{proposition}\label{verystrange}
Let $x \in \bigwedge_{W'_{\Q}}^2 \N'$ and let $A \subset \N'$ be a lattice of height 0. For any $x \in \bigwedge_{W'_{\Q}}^2 \N'$, the corresponding endomorphism $\widetilde{x}$ of $\underline{\N}' = \N' \times \N'^*$ takes $A \times A^*$ to itself if and only if $x \in \bigwedge_{W'}^2 A$:
$$\bigwedge_{W'}^2 A = \{ x \in \bigwedge_{W'_{\Q}}^2 \N' \ | \  \widetilde{x}( A \times A^*) \subset A \times A^* \}.$$
\end{proposition}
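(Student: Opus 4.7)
The plan is to unwind the definition of $\widetilde{x}$ and recognize that the joint condition $\widetilde{x}(A \times A^*) \subset A \times A^*$ decouples into two conditions, each of which is already governed by one of the three preceding lemmas. Concretely, recall that $\widetilde{x}$ is the composition $\N' \times \N'^* \xrightarrow{(x^\star, x)} \N'^* \times \N' \cong \N' \times \N'^*$, so on pairs it is given by $\widetilde{x}(a,f) = (x(f), x^\star(a))$ after the canonical identification. Hence $\widetilde{x}(A \times A^*) \subset A \times A^*$ is equivalent to the two simultaneous requirements $x(A^*) \subset A$ and $x^\star(A) \subset A^*$.

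For the containment $\{x : \widetilde{x}(A \times A^*) \subset A \times A^*\} \subset \bigwedge_{W'}^2 A$, the first of the two requirements already suffices: Lemma \ref{wisconsin1} says precisely that $x(A^*) \subset A$ forces $x \in \bigwedge_{W'}^2 A$, so I would invoke that lemma and read off one inclusion for free.

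For the reverse containment, I would assume $x \in \bigwedge_{W'}^2 A$ and verify both conditions. The first, $x(A^*) \subset A$, is immediate from Lemma \ref{wisconsin1}. For the second, I would use Lemma \ref{wisconsin3}, which (crucially using that $A$ has height $0$) produces $x^\star \in \bigwedge_{W'}^2 A^*$, and then Lemma \ref{wisconsin2} applied in the dual setup gives $x^\star(A) \subset A^*$. Combining, $\widetilde{x}(A \times A^*) \subset A \times A^*$, completing the other inclusion.

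The only nontrivial input is the height-$0$ hypothesis, which is what allows one to pass from ``$x$ lies in $\bigwedge_{W'}^2 A$'' to ``$x^\star$ lies in $\bigwedge_{W'}^2 A^*$'' via Lemma \ref{wisconsin3}; without height $0$ the self-duality of $\bigwedge_{W'}^2 A$ (guaranteed by Lemma \ref{isdual}) fails and the Hodge star no longer preserves integrality. Apart from that point, the proof is a direct assembly of Lemmas \ref{wisconsin1}, \ref{wisconsin2}, and \ref{wisconsin3}.
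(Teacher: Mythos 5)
Your proof is correct and follows exactly the argument the paper has in mind: the paper simply states that ``the previous three lemmas combine to give'' the proposition, and your decomposition of $\widetilde{x}(A \times A^*) \subset A \times A^*$ into $x(A^*) \subset A$ and $x^\star(A) \subset A^*$, handled by Lemmas~\ref{wisconsin1}, \ref{wisconsin3}, and \ref{wisconsin2}, is precisely how they combine. Your observation that the height-$0$ hypothesis enters only through Lemma~\ref{wisconsin3} is also on target.
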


Now we are ready to prove Proposition ~\ref{MS}.

\begin{proof} (of Proposition ~\ref{MS}.)
We must show, for every Dieudonn\'e lattice $A \subset \N'$ of height 0, that $\widetilde{x}(A \times A^*)  \subset A \times A^*$ for all $x \in \Lambda^\vee$  if and only if  $\Lambda^\vee \subset \overline{\Phi}( \frac{1}{p} \bigwedge_{W'}^2 A_1 )$.

But, by Proposition  \ref{verystrange} , this is the same as showing: for every Dieudonn\'e lattice $A \subset \N'$ of height 0, that $\Lambda^\vee \subset \bigwedge_{W'}^2 A$   if and only if $\Lambda^\vee \subset \overline{\Phi}( \frac{1}{p} \bigwedge_{W'}^2 A_1 )$.

And, as $\overline{\Phi}( \frac{1}{p} \bigwedge_{W'}^2 A_1 ) = \bigwedge_{W'}^2 A$, the above statement is clear.

\end{proof}

We will later need a few more results similar to Proposition ~\ref{verystrange}. First, note that just as any $W'$-lattice $A \subset \N$ has an associated lattice $A_1 = F^{-1}(pA)$, any $W'$-lattice $B \subset \N'^*$ has an associated lattice $B_1 = F^{-1}(pB)$ (for the Frobenius operator $F$ on $\N'^*$). It is very important to notice that taking the dual lattice does \emph{not} commute with taking these associated lattices. In fact, if $A \subset \N'$ is a $W'$-lattice:
$$(A^*)_1 = (F^{-1}(A) )^* \neq (A_1)^*.$$

Similarly, for any $W'$-lattice $C \subset \underline{\N}'$, there is an associated lattice $C_1 = F^{-1}(pC)$ (for the Frobenius acting diagonally on $\underline{\N}' = \N' \times \N'^*$). If $A \subset \N'$ is a $W'$-lattice:
$$(A \times A^*)_1 = A_1 \times (A^*)_1 = F^{-1}(pA) \times (F^{-1}(A))^*.$$

So, given a $W'$-lattice $C \subset \underline{\N}'$ of the form $C = A \times A^*$, we have answered in Proposition \ref{verystrange} the question: what elements of $\bigwedge_{W'_{\Q}}^2 \N'$ takes $C$ to itself inside of $\underline{\N'}$? We can also ask: what elements of  $\bigwedge_{W'_{\Q}}^2 \N'$ take $C_1$ to itself? What elements of $\bigwedge_{W'_{\Q}}^2 \N'$ take $C_1$ to $C$? These questions will be answered in Propositions ~\ref{filtsum} and ~\ref{otherfiltsum}, respectively.

\begin{proposition}\label{filtsum}
Let $A \subset \N'$ be a $W'$-lattice of height 0. For any $x \in \bigwedge_{W'_{\Q}}^2 \N'$, the corresponding endomorphism $\widetilde{x}$ of $\underline{\N}' = \N' \times \N'^*$ takes $(A \times A^*)_1$ to itself if and only if $x \in   \frac{1}{p} \bigwedge_{W'}^2 A_1$:
$$ \frac{1}{p} \bigwedge_{W'}^2 A_1 = \{x \in \bigwedge_{W'_{\Q}}^2 \N' \ | \ \widetilde{x} ( (A \times A^*)_1 ) \subset   (A \times A^*)_1 \} .$$
\end{proposition}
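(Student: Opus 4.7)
The plan is to mimic the proof of Proposition \ref{verystrange}, carefully tracking the powers of $p$ introduced by passing from a lattice $C \subset \underline{\N}'$ to $C_1 = F^{-1}(pC)$.

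First, I would identify the target lattice explicitly. Since $F$ is $\sigma$-semilinear and $p^\sigma = p$, one has $A_1 = F^{-1}(pA) = p F^{-1}(A)$. Setting $B = F^{-1}(A) = \tfrac{1}{p}A_1$, an explicit check in the basis $\{e_i\}$ (using the action $Ff_1 = pf_2$, $Ff_2 = f_1$, etc.\ of the Frobenius on $\N'^*$) shows that
\[
(A^*)_1 \;=\; (F^{-1}(A))^{*} \;=\; B^{*} \;=\; \bigl(\tfrac{1}{p}A_1\bigr)^{*} \;=\; p\, A_1^{*}.
\]
In particular $(A \times A^{*})_1 = A_1 \times p A_1^{*}$.

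Next, using the factorization $\widetilde{x} = (x^\star, x)$ from Section \ref{excepsect}, the stability condition $\widetilde{x}((A \times A^{*})_1) \subset (A \times A^{*})_1$ decouples into the pair
\[
x(p A_1^{*}) \subset A_1 \qquad\text{and}\qquad x^\star(A_1) \subset p A_1^{*}.
\]
Dividing the first inclusion by $p$ puts it in the form $(px)(A_1^{*}) \subset A_1$; by Lemma \ref{wisconsin1} applied to the lattice $A_1$ this is equivalent to $px \in \bigwedge_{W'}^{2} A_1$, i.e.\ to $x \in \tfrac{1}{p}\bigwedge_{W'}^{2} A_1$. Similarly, dividing the second inclusion by $p$ and applying Lemma \ref{wisconsin2} to $A_1$ gives the equivalent condition $x^\star \in p\bigwedge_{W'}^{2} A_1^{*}$.

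Finally, I would show these two conditions are equivalent, i.e.\ that the Hodge star restricts to a $W'$-linear bijection $\tfrac{1}{p}\bigwedge_{W'}^{2} A_1 \xrightarrow{\sim} p\bigwedge_{W'}^{2} A_1^{*}$. This is the analogue of Lemma \ref{wisconsin3} for a lattice of height $2$ in place of height $0$. Because $A$ has height $0$, the lattice $A_1$ has height $2$, so by Lemma \ref{isdual} the lattice $\tfrac{1}{p}\bigwedge_{W'}^{2} A_1 \subset V'$ is self-dual with respect to $[\cdot,\cdot]$; the analogous statement in $V'^{*}$ shows that $p\bigwedge_{W'}^{2} A_1^{*}$ is self-dual with respect to $[\cdot,\cdot]_1$. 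Combining these two self-dualities with the defining identity $\{x, t\} = [x^\star, t]_1$ and the basis-level observation (mirroring the proof of Lemma \ref{wisconsin3}) that $\{\cdot,\cdot\}$ pairs $\bigwedge_{W'}^{2} A_1$ perfectly against $\bigwedge_{W'}^{2} A_1^{*}$, one deduces the required bijection, and the proposition follows.

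The routine content is in the first two steps; the real obstacle is the third step, the height-$2$ version of Lemma \ref{wisconsin3}. What makes it go through is that the $p^{-1}$-scaling on the $V'$-side is exactly compensated by the $p$-scaling on the $V'^{*}$-side, so self-duality holds simultaneously on both sides and forces the Hodge star to identify the two scaled exterior-square lattices.
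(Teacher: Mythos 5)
Your proof follows the same approach as the paper's proof: both reduce the proposition to three observations, which are the $p$-scaled analogues of Lemmas~\ref{wisconsin1}, \ref{wisconsin2}, and \ref{wisconsin3} applied to the lattice $A_1$. These are exactly the paper's stated-but-unproved observations (1)--(3), and your derivation of the first two via the identity $(A^*)_1 = (F^{-1}(A))^* = p\,A_1^*$ is precisely the computation the paper has in mind.

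The one step to be careful about is the assertion that $A$ having height $0$ forces $A_1 = F^{-1}(pA)$ to have height $2$. This is true when $A$ is a Dieudonn\'e lattice (the paper's Lemma~\ref{isspecial} explicitly emphasizes that the Dieudonn\'e condition is what yields it), but it is not automatic for an arbitrary $W'$-lattice of height $0$ when $k'$ is not perfect. Because $\sigma$ is injective but not surjective on $W'$, the operation $F^{-1}$ can only increase height, so one obtains only that the height $h$ of $A_1$ satisfies $h \geq 2$, and strict inequality can occur: over $k' = k(T)$, the lattice $A = W'(Te_1 + p^{-1}e_2) + W'pe_1 + W'e_3 + W'e_4$ has height $0$ while $A_1 = pW'e_1 + pW'e_2 + pW'e_3 + W'e_4$ has height $3$. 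In such a case $\frac{1}{p}\bigwedge_{W'}^2 A_1$ is not self-dual and the Hodge star does not give a bijection onto $p\bigwedge_{W'}^2 A_1^*$. The proposition is nonetheless true, and the essential part of your argument survives: the only fact actually needed is the one-way inclusion $\star\bigl(\frac{1}{p}\bigwedge_{W'}^2 A_1\bigr) \subseteq p\bigwedge_{W'}^2 A_1^*$, which is exactly what the paper's observation (3) asserts, and since $\star(\bigwedge_{W'}^2 A_1) = p^{h}\bigwedge_{W'}^2 A_1^*$, this inclusion already follows from $h\geq 2$. As the proposition is applied in the paper only to Dieudonn\'e lattices, where $h=2$ and your self-duality argument holds verbatim, the gap is not material downstream, but you should either weaken the claimed bijection to an inclusion or carry the Dieudonn\'e hypothesis along.
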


\begin{proof}
The proof of this proposition relies on three observations. Let $A \subset \N'$ be a $W'$-lattices of height 0. Then:
\begin{enumerate}
\item{ $\frac{1}{p} \bigwedge_{W'}^2 A_1 = \{x \in \bigwedge_{W'_{\Q}}^2 \N' \  | \  x  ( (A^*)_1 ) \subset A_1 \} $ }
\item{ $ p \bigwedge_{W'}^2 ( A_1 )^* = \{t \in \bigwedge_{W'_{\Q}}^2 \N'^* \ | \ t ( A_1)  \subset (A^*)_1 \}$ }
\item{ Given $x \in \frac{1}{p} \bigwedge_{W'}^2 A_1$, then $x^\star  \in p \bigwedge_{W'}^2 ( A_1 )^*$.}
\end{enumerate}
These observations are analogous to the statements in Lemmas ~\ref{wisconsin1}, ~\ref{wisconsin2}, and ~\ref{wisconsin3}, respectively, and may be proven by similar methods. 

Recall that $A \subset \N'$ is a $W'$-lattice of height 0, and assume that $x \in \bigwedge_{W'_{\Q}}^2 \N'$ such that $\widetilde{x}( (A \times A^*)_1 ) \subset (A \times A^*)_1$. Then, in particular, $x( (A^*)_1 ) \subset A_1$. By the first observation, $x \in \frac{1}{p} \bigwedge_{W'}^2 A_1$.

On the other hand, assume that $x \in \frac{1}{p} \bigwedge_{W'}^2 A_1$. By the third observation, $x^\star \in  p \bigwedge_{W'}^2 ( A_1 )^*$. Then, by the first and second observations, $x( (A^*)_1 ) \subset A_1$ and $x^\star ( A_1) \subset (A^*)_1$. So, 
$$ \widetilde{x} ( (A \times A^*)_1 ) \subset   (A \times A^*)_1 .$$
\end{proof}

\begin{proposition}\label{otherfiltsum}
Let $A \subset \N'$ be a Dieudonn\'e lattice of height 0.   For any $x \in \bigwedge_{W'_{\Q}}^2 \N'$, the corresponding endomorphism $\widetilde{x}$ of $\underline{\N}' = \N' \times \N'^*$ takes $(A \times A^*)_1$ to $ A \times A^*$ if and only if $x \in \frac{1}{p} \bigwedge_{W'}^2 A_1 + \bigwedge_{W'}^2 A$:
$$ \frac{1}{p} \bigwedge_{W'}^2 A_1 + \bigwedge_{W'}^2 A = \{x \in \bigwedge_{W'_{\Q}}^2 \N' \ | \ \widetilde{x}( (A \times A^*)_1)  \subset A \times A^* \} .$$
\end{proposition}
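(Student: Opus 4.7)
The plan is to follow the three-observation structure used in the proof of Proposition~\ref{filtsum}. The condition $\widetilde{x}((A\times A^*)_1)\subset A\times A^*$ unpacks, via $\widetilde{x}=(x^\star,x)$, into the pair of conditions $x((A^*)_1)\subset A$ and $x^\star(A_1)\subset A^*$, and I will argue that each of these is equivalent to $x\in \frac{1}{p}\bigwedge_{W'}^2 A_1+\bigwedge_{W'}^2 A$.

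The three observations I plan to establish, in exact parallel with those from Proposition~\ref{filtsum}, are:
\begin{enumerate}
\item $\frac{1}{p}\bigwedge_{W'}^2 A_1+\bigwedge_{W'}^2 A=\{x\in\bigwedge_{W'_{\Q}}^2 \N' \mid x((A^*)_1)\subset A\}$;
\item $\frac{1}{p}\bigwedge_{W'}^2 (A^*)_1+\bigwedge_{W'}^2 A^*=\{t\in\bigwedge_{W'_{\Q}}^2 \N'^{*} \mid t(A_1)\subset A^*\}$;
\item if $x\in \frac{1}{p}\bigwedge_{W'}^2 A_1+\bigwedge_{W'}^2 A$, then $x^\star\in \frac{1}{p}\bigwedge_{W'}^2 (A^*)_1+\bigwedge_{W'}^2 A^*$.
\end{enumerate}
Once these are in place, the proof closes exactly as in Proposition~\ref{filtsum}: if $x$ lies in the claimed sum then (i) gives $x((A^*)_1)\subset A$ and the composition of (iii) with (ii) gives $x^\star(A_1)\subset A^*$, so $\widetilde{x}((A\times A^*)_1)\subset A\times A^*$; conversely, if $\widetilde{x}((A\times A^*)_1)\subset A\times A^*$ then already $x((A^*)_1)\subset A$, and (i) alone places $x$ in the sum.

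To prove (i), I will invoke Lemma~\ref{isDD}: since $A$ is a Dieudonn\'e lattice of height $0$, there is a $W'$-basis $\{a_i\}_1^4$ of $A$ on which $A_1$ is free on $\{pa_1,pa_2,a_3,a_4\}$, so $F^{-1}(A)=\frac{1}{p}A_1$ is free on $\{a_1,a_2,p^{-1}a_3,p^{-1}a_4\}$ and $(A^*)_1=(F^{-1}(A))^*$ is free on $\{g_1,g_2,pg_3,pg_4\}$ in the dual basis $\{g_i\}$. Writing $x=\sum_{i<j}w_{i,j}\,a_i\wedge a_j$ and evaluating $x$ on each basis vector of $(A^*)_1$ just as in the proof of Lemma~\ref{wisconsin1} translates $x((A^*)_1)\subset A$ into the valuation constraints $w_{i,j}\in W'$ for $(i,j)\neq(3,4)$ together with $pw_{3,4}\in W'$, which cut out precisely $\frac{1}{p}\bigwedge_{W'}^2 A_1+\bigwedge_{W'}^2 A$. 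Observation (ii) follows by the same calculation with the roles of $\N'$ and $\N'^{*}$ interchanged, using the analog of Lemma~\ref{wisconsin2}, and (iii) is obtained by applying the Hodge star summand-wise: Lemma~\ref{wisconsin3} gives $(\bigwedge_{W'}^2 A)^\star=\bigwedge_{W'}^2 A^*$, while the third observation from the proof of Proposition~\ref{filtsum}, combined with the identification $(A^*)_1=p(A_1)^*$, gives $(\frac{1}{p}\bigwedge_{W'}^2 A_1)^\star=\frac{1}{p}\bigwedge_{W'}^2 (A^*)_1$.

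The only step with real computational content is observation (i), and even there the explicit Dieudonn\'e basis supplied by Lemma~\ref{isDD} reduces the argument to a routine elementary divisor bookkeeping, so I do not anticipate any substantive obstacle; the whole proof is essentially a formal repetition of Proposition~\ref{filtsum} with the filtered piece $(A\times A^*)_1$ replaced by the mixed source-to-target bound $(A\times A^*)_1\to A\times A^*$.
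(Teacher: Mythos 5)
Your proposal is correct and takes essentially the same route as the paper: the crux in both is observation (i), that $\frac{1}{p}\bigwedge_{W'}^2 A_1 + \bigwedge_{W'}^2 A = \{x \mid x((A^*)_1)\subset A\}$, which you verify explicitly with the Dieudonn\'e basis (the paper merely asserts it "by the elementary divisor theorem"), and the converse direction falls out immediately from $x((A^*)_1)\subset A$ alone. The only deviation is cosmetic: for the forward inclusion the paper avoids your observations (ii)--(iii) entirely by citing Propositions~\ref{filtsum} and~\ref{verystrange} together with the containment $(A\times A^*)_1\subset A\times A^*$ (valid since $A$ is a Dieudonn\'e lattice), whereas you rebuild the $\N'^*$-side analogues from scratch; both work, but the paper's path is a touch shorter.
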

\begin{proof}
Note that, because $A$ is a Dieudonn\'e lattice, $A_1 \subset A$ and $(A^*)_1 \subset A^*$.  Then, by Proposition \ref{filtsum} and Proposition~\ref{verystrange},
$$ \frac{1}{p} \bigwedge_{W'}^2 A_1 + \bigwedge_{W'}^2 A \subset \{x \in \bigwedge_{W'_{\Q}}^2 \N' \ | \  \widetilde{x}( (A \times A^*)_1)  \subset A \times A^* \}.$$

For the other inclusion, let $x \in \bigwedge_{W'_{\Q}}^2 \N'$ such that $ \widetilde{x}( (A \times A^*)_1)  \subset A \times A^*$. Then, in particular, $ x ( (A^*)_1 ) \subset A$. One can verify directly, using the elementary divisor theorem and the fact that $A$ is a ~\emph{ Dieudonn\'e} lattice, that:
$$ \frac{1}{p} \bigwedge_{W'}^2 A_ 1 + \bigwedge_{W'}^2 A = \{x \in \bigwedge_{W'_{\Q}}^2 \N' \  | \  x ( (A^*)_1 ) \subset A \}.$$

Therefore, $x \in  \frac{1}{p} \bigwedge_{W'}^2 A_1 + \bigwedge_{W'}^2 A$, and:
$$\{x \in \bigwedge_{W'_{\Q}}^2 \N' \ | \ \widetilde{x}( (A \times A^*)_1 ) \subset A \times A^* \} = \frac{1}{p} \bigwedge_{W'}^2 A_1 + \bigwedge_{W'}^2 A.$$

\end{proof}

In summary, given $A \subset \N'$ a Dieudonn\'e lattice of height 0, we have determined which $x \in \bigwedge_{W'_{\Q}}^2 \N'$ correspond to special endomorphisms $\widetilde{x}$ of $\underline{\N}'$ that take $A \times A^*$ to itself, take $(A \times A^*)_1$ to itself, or take $(A \times A^*)_1$ to $A \times A^*$. These observations will be very important to the arguments of  Section ~\ref{algebraicallydefined}. There is one remaining observation that will also be useful:

\begin{corollary}\label{filtrationsagain}
For any vertex lattice $\Lambda \subset V^{\Phi}$, we have defined:
$$\mathcal{M}_{\Lambda, 0} (k') = \{  A \in \mathcal{M}_0(k')  \ | \ \widetilde{x}(A \times A^*)  \subset A \times A^* \text{ for all } x \in \Lambda^\vee \}.$$
The set $\mathcal{M}_{\Lambda, 0} (k)$ can also be described as:
$$\mathcal{M}_{\Lambda, 0} (k') = \{  A \in \mathcal{M}_0(k') \ | \ \widetilde{x}( (A \times A^*)_1 )  \subset (A \times A^*)_1 \text{ for all } x \in \Lambda^\vee \}.$$
\end{corollary}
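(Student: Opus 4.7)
The strategy is to translate both conditions on the endomorphisms $\widetilde{x}$ into containment conditions for $\Lambda^\vee$ inside concrete $W'$-lattices in $\bigwedge^2_{W'_{\Q}} \N'$, and then to observe that these two containment conditions coincide because $\Lambda^\vee \subset V^{\Phi}$ and the lattice $\frac{1}{p}\bigwedge^2_{W'} A_1$ is special.

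First, I would apply Proposition~\ref{verystrange} to see that the first description is equivalent to the condition
\[
\Lambda^\vee \subset \bigwedge_{W'}^2 A.
\]
Next, I would apply Proposition~\ref{filtsum} (valid since $A$ is in particular a $W'$-lattice of height $0$) to see that the second description is equivalent to
\[
\Lambda^\vee \subset \frac{1}{p} \bigwedge_{W'}^2 A_1.
\]
So the corollary will reduce to the equivalence $\Lambda^\vee \subset \bigwedge^2_{W'} A$ if and only if $\Lambda^\vee \subset \frac{1}{p}\bigwedge^2_{W'} A_1$, for $A$ a Dieudonn\'e lattice of height $0$.

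For the final equivalence, I would use the key identity established in the proof of Proposition~\ref{bijection}, namely $\overline{\Phi}\bigl(\frac{1}{p}\bigwedge^2_{W'} A_1\bigr) = \bigwedge^2_{W'} A$, together with the fact that $L := \frac{1}{p}\bigwedge^2_{W'} A_1$ is a special lattice (by Lemma~\ref{isspecial}). Since every element $x \in \Lambda^\vee$ satisfies $\Phi(x) = x$, the containment $\Lambda^\vee \subset L$ is equivalent to the containment $\Lambda^\vee \subset \overline{\Phi}(L)$; this is exactly the observation already recorded in the two equivalent descriptions of $\mathcal{S}_{\Lambda}(k')$ after Proposition~\ref{vollaardprop}. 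Chaining these equivalences gives the corollary.

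The only nontrivial point is the last step, but it is not really an obstacle: the $\Phi$-invariance of $\Lambda^\vee$ makes containment in $L$ and in $\overline{\Phi}(L)$ logically identical for a special lattice, and this is exactly the content needed here. Everything else is just quoting Propositions~\ref{verystrange} and~\ref{filtsum}.
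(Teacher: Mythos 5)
Your proof is correct and follows essentially the same route as the paper's own proof: the paper invokes Proposition~\ref{MS} (whose proof internally reduces to Proposition~\ref{verystrange}) to translate the first description into the containment $\Lambda^\vee \subset \overline{\Phi}\bigl(\tfrac{1}{p}\bigwedge^2_{W'} A_1\bigr)$, uses the fact that $\tfrac{1}{p}\bigwedge^2_{W'} A_1$ is special together with $\Phi$-invariance of $\Lambda^\vee$ to pass to $\Lambda^\vee \subset \tfrac{1}{p}\bigwedge^2_{W'} A_1$, and then applies Proposition~\ref{filtsum} for the second description — the same three steps you chain together in the same order.
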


\begin{proof}

The key observation is, if $\Lambda$ is a vertex lattice and $L$ is a \emph{special} lattice, then:
$$\Lambda^\vee \subset \overline{\Phi}(L) \text{ if and only if } \Lambda^\vee \subset L.$$

Fix $\Lambda$ a vertex lattice and  let  $A$ be a Dieudonn\'e lattice of height 0. We have shown in Proposition ~\ref{MS} that $\widetilde{x}(A \times A^*)  \subset A \times A^*$   for all  $x \in \Lambda^\vee$ if and only if:
$$\Lambda^\vee \subset \overline{\Phi}( \frac{1}{p} \bigwedge_{W'}^2 A_1).$$

As $ \frac{1}{p} \bigwedge_{W'}^2 A_1$ is a special lattice, this is equivalent to the condition that:
$$\Lambda^\vee \subset  \frac{1}{p} \bigwedge_{W'}^2 A_1.$$

Finally, by Proposition ~\ref{filtsum}, the above condition holds if and only if  $\widetilde{x}( (A \times A^*)_1 )  \subset (A \times A^*)_1$   for all  $x \in \Lambda^\vee$. Therefore, the two sets above are equivalent descriptions of $\mathcal{M}_{\Lambda, 0} (k')$.

\end{proof}

\subsection{A Locus on Which Endomorphisms are Integral}

In this section we will define a closed subscheme $\mathcal{N}_{\Lambda, 0}$ of $\mathcal{N}_{0,red}$ with the property that $\mathcal{N}_{\Lambda,0}(k')$ is in bijection with $\mathcal{M}_{\Lambda, 0}(k') $ and therefore with $\mathcal{S}^+_{\Lambda}(k')$. To do this, we must first record a few observations concerning the slope-zero operator $\Phi$, the Hodge star operator, and the endomorphisms $\widetilde{x}$.

Recall that we have a slope-zero operator $\Phi$ on $V = \bigwedge_{W_{\Q}}^2 \N$ (and also on $\bigwedge_{W'_{\Q}}^2 \N'$) defined by:
$$\Phi(a \wedge b) = \frac{1}{p} F(a) \wedge F(b).$$

Using the Frobeius  operator on $\N^*$, we also have a slope zero operator on $\bigwedge_{W_{\Q}}^2 \N^*$ (and also on $\bigwedge_{W'_{\Q}}^2 \N'^*$) defined by:
$$\Phi(f \wedge g) = \frac{1}{p} F(f) \wedge F(g).$$

\begin{lemma}\label{phiandF}
With respect to the  injections $\bigwedge_{W_{\Q}}^2 \N \rightarrow \Hom(\N^*, \N)$ and $\bigwedge_{W_{\Q}}^2 \N^* \rightarrow \Hom(\N, \N^*)$, we have the following identities:
$$\Phi(a \wedge b) \circ F = F \circ a \wedge b $$
$$\Phi(f \wedge g) \circ F = F \circ f \wedge g $$
for all $a \wedge b \in \bigwedge_{W_{\Q}}^2 \N$ and for all $f \wedge g \in \bigwedge_{W_{\Q}}^2 \N^*$. Note that the first equality takes place in $\Hom(\N^*, \N)$ and the second equality takes place in $\Hom(\N, \N^*)$.
\end{lemma}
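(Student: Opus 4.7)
The plan is to prove each identity by evaluating both sides on an arbitrary vector (resp.\ functional), unwinding the definitions of $\Phi$, of the injection $\bigwedge^2_{W_\Q}\mathbb{N} \hookrightarrow \mathrm{Hom}(\mathbb{N}^*,\mathbb{N})$ (and its dual), and of the Frobenius on $\mathbb{N}^*$. The essential arithmetic input is the Dieudonn\'e relation $VF = FV = p$ on $\mathbb{N}$, which produces a factor of $p$ that exactly cancels the $1/p$ appearing in the definition of $\Phi$.

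For the first identity, fix $f \in \mathbb{N}^*$ and compute
\[
\Phi(a \wedge b)\bigl(F f\bigr) \;=\; \tfrac{1}{p}(Fa \wedge Fb)(Ff) \;=\; \tfrac{1}{p}\bigl[(Ff)(Fa)\cdot Fb \;-\; (Ff)(Fb)\cdot Fa\bigr].
\]
Using the definition $(F\varphi)(n) = \varphi(Vn)^\sigma$ on $\mathbb{N}^*$ together with $V\circ F = p$ on $\mathbb{N}$, one obtains $(Ff)(Fa) = f(Va\cdot F)^\sigma = f(pa)^\sigma = p\, f(a)^\sigma$, and similarly for $b$. The factor of $p$ cancels the $1/p$, leaving $f(a)^\sigma Fb - f(b)^\sigma Fa$. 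By the $\sigma$-semilinearity of $F$ on $\mathbb{N}$ this equals $F\bigl(f(a)\, b - f(b)\, a\bigr) = F\bigl((a\wedge b)(f)\bigr)$, which is $\bigl(F \circ (a\wedge b)\bigr)(f)$. Since $f$ was arbitrary, this proves the first equality in $\mathrm{Hom}(\mathbb{N}^*,\mathbb{N})$.

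The second identity is proved by the same mechanism: fix $c \in \mathbb{N}$ and compute
\[
\Phi(f \wedge g)(Fc) \;=\; \tfrac{1}{p}\bigl[(Fg)(Fc)\cdot Ff \;-\; (Ff)(Fc)\cdot Fg\bigr],
\]
apply $(Fg)(Fc) = g(pc)^\sigma = p\, g(c)^\sigma$ to cancel the $1/p$, and recognize the result as $F\bigl(g(c)\, f - f(c)\, g\bigr) = F\bigl((f \wedge g)(c)\bigr)$ using $\sigma$-semilinearity of $F$ on $\mathbb{N}^*$. The only real obstacle is bookkeeping conventions for $F$ on dual spaces and the sign in the action of a wedge via the evaluation pairing; once $VF = p$ is invoked correctly, both identities fall out immediately. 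Everything is linear in the wedge factors, so it suffices to verify the identities on simple tensors $a \wedge b$ and $f \wedge g$ as above.
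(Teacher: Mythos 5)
Your proof is correct and essentially mirrors the paper's: both rely on the relation $V \circ F = p$ on $\N$, the definition $(F\varphi)(n) = \varphi(Vn)^\sigma$ of Frobenius on $\N^*$, and $\sigma$-semilinearity of $F$, differing only in that you expand $\Phi(a\wedge b)\circ F$ and simplify, whereas the paper expands $F\circ(a\wedge b)$ and inserts $V\circ F = p$ to reach the same intermediate expression. One cosmetic note: the expression ``$f(Va\cdot F)^\sigma$'' should read $f(V(Fa))^\sigma$; the computation itself is fine.
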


\begin{proof}
Let $a \wedge b \in \bigwedge_{W_{\Q}}^2 \N$ and let $f \in \N^*$. Then,
\begin{align*}
F \circ a \wedge b (f) &= f(a)^\sigma F(b) - f(b)^\sigma F(a) \\
&= \frac{1}{p} ( f( V \circ F (a) )^\sigma F(b) - f( V \circ F (b) )^\sigma F(a) ) \\
&= \frac{1}{p}( (Ff)(F(a) ) F(b) - (F f)(F(b) ) F(a) ) \\
&= \Phi(a \wedge b) \circ F (f) 
\end{align*}



The proof of the second identity is very similar.

\end{proof}

\begin{lemma}\label{phiandstar}
For any $x \in \bigwedge_{W_{\Q}}^2 \N$,
$$(\Phi(x) )^\star = \Phi( x^\star )$$
\end{lemma}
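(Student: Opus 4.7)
The plan is to exploit the characterizing property of the Hodge star: for $y \in \bigwedge_{W_\Q}^2 \N$, $y^\star$ is the unique element of $\bigwedge_{W_\Q}^2 \N^*$ such that $\{y, s\} = [y^\star, s]_1$ for every $s \in \bigwedge_{W_\Q}^2 \N^*$. So to prove $(\Phi x)^\star = \Phi(x^\star)$, it suffices to verify that $\Phi(x^\star)$ satisfies the characterizing identity for $(\Phi x)^\star$, namely
$$\{\Phi x, s\} = [\Phi(x^\star), s]_1 \quad \text{for all } s \in \bigwedge\nolimits_{W_\Q}^2 \N^*.$$

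The key inputs will be two $\sigma$-compatibility identities for $\Phi$ with the pairings:
$$\{\Phi x, \Phi s\} = \{x, s\}^\sigma, \qquad [\Phi t, \Phi s]_1 = [t, s]_1^\sigma,$$
for $x \in \bigwedge^2 \N$ and $t, s \in \bigwedge^2 \N^*$. The first I would verify by direct computation on a wedge $x = a\wedge b$, $s = f\wedge g$: the definition $(Ff)(n) = f(Vn)^\sigma$ together with $VF = p$ gives $(Ff)(Fn) = p \cdot f(n)^\sigma$, and substituting this into $\{a\wedge b, f\wedge g\} = f(a)g(b) - f(b)g(a)$ allows the two factors of $p$ in $\Phi = p^{-1} F\wedge F$ to cancel, producing the claimed identity. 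The second identity is the analog for $[\cdot,\cdot]_1$ of the identity $[\Phi x, \Phi y] = [x,y]^\sigma$ already in force on $\bigwedge^2 \N$; it is established by extending $\Phi$ $\sigma$-semilinearly to $\bigwedge^4 \N^*$ via $\Phi(f_1\wedge\cdots\wedge f_4) = p^{-2} Ff_1\wedge\cdots\wedge Ff_4$, and tracking the action of $\Phi$ on $\omega_1$ (which is $\Phi$-invariant once the $p^{-2}$ cancels the four factors of $p$ coming from $F$ on the basis).

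With these identities in hand, bijectivity of $\Phi$ on $\bigwedge^2 \N^*$ (a consequence of the slope-zero isocrystal structure) lets me write any $s \in \bigwedge^2 \N^*$ as $s = \Phi(s')$. Chaining the two compatibilities with the defining equation of $x^\star$ yields
$$\{\Phi x, s\} = \{\Phi x, \Phi s'\} = \{x, s'\}^\sigma = [x^\star, s']_1^\sigma = [\Phi(x^\star), \Phi s']_1 = [\Phi(x^\star), s]_1,$$
which is exactly what was needed. Uniqueness of the Hodge star then forces $(\Phi x)^\star = \Phi(x^\star)$, as an identity in $\bigwedge^2 \N^*$.

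The main obstacle is a bookkeeping one: verifying the compatibility $[\Phi t, \Phi s]_1 = [t,s]_1^\sigma$ without accumulating an extraneous scalar. With $\omega = \alpha p^r e_1\wedge e_2\wedge e_3\wedge e_4$, a naive computation gives $\Phi(\omega) = (\alpha^\sigma/\alpha)\,\omega$, and a corresponding factor would enter $[\cdot,\cdot]_1$; one must either arrange the normalization $\alpha \in \Z_p^\times$ (consistent with the paper's implicit convention when asserting $[\Phi x, \Phi y] = [x,y]^\sigma$) or otherwise absorb this scalar. A purely computational alternative, which avoids this issue entirely, is to verify $(\Phi x_i)^\star = \Phi(x_i^\star)$ basis element by basis element on $\{x_1,\ldots,x_6\}$, using the matrix for $\Phi\circ\sigma^{-1}$ and the Hodge-star tables displayed in the proof of Proposition \ref{Pairingiscomp}; this is a finite check of six identities, in the same computational spirit as the preceding propositions.
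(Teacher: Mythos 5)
Your proof is essentially the paper's: the paper establishes the same three $\sigma$-compatibility identities (for $[\cdot,\cdot]$, $[\cdot,\cdot]_1$, and $\{\cdot,\cdot\}$), then computes $[\Phi(x)^\star, \Phi t]_1 = \{\Phi x, \Phi t\} = \{x,t\}^\sigma$ and $[\Phi(x^\star), \Phi t]_1 = [x^\star, t]_1^\sigma = \{x,t\}^\sigma$, concluding by surjectivity of $\Phi$ on $\bigwedge^2 \N^*$ and nondegeneracy of $[\cdot,\cdot]_1$ — this is exactly your one-line chain of equalities read in two halves. Your bookkeeping remark at the end is astute and worth noting: the identities $[\Phi x, \Phi y] = [x,y]^\sigma$ and $[\Phi t, \Phi s]_1 = [t,s]_1^\sigma$ do in general carry a hidden factor $\alpha^\sigma/\alpha$ when $\omega = \alpha p^r\, e_1 \wedge e_2 \wedge e_3 \wedge e_4$ with $\alpha \in W^\times$ arbitrary, and the paper invokes them as immediate consequences of slope-zero and $\sigma$-semilinearity without stating any normalization; rescaling $\omega$ by a unit so that $\alpha \in \Z_p^\times$ (which is always possible) makes the asserted identities literally true, and your alternative of a direct basis-by-basis check on $\{x_1,\ldots,x_6\}$ sidesteps the issue as well.
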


\begin{proof}

Recall that, because $\Phi$ is slope-zero and $\sigma$-semilinear:
$$[\Phi(x), \Phi(y)] = [x,y]^\sigma$$
for any $x, y \in \bigwedge_{W_{\Q}}^2 \N$. Similarly, for any $t,s \in \bigwedge_{W_{\Q}}^2 \N^*$,
$$[\Phi(t), \Phi(s)]_1 = [t,s]^\sigma_1.$$

We will also need the fact that $\{\Phi(x), \Phi(t) \}= \{x,t\}^\sigma$. (This may be checked directly for $x \in \bigwedge_{W_{\Q}}^2 \N$ of the form $a \wedge b$ and $t \in \bigwedge_{W_{\Q}}^2 \N^*$ of the form $f \wedge g$.)

With these observations, we can prove the lemma. Let $x \in \bigwedge_{W_{\Q}}^2 \N$, and consider any $t \in \bigwedge_{W_{\Q}}^2 \N^*$. Then,
$$[ \Phi(x)^\star, \Phi(t) ] _1 = \{ \Phi(x), \Phi(t) \} = \{x, t\}^\sigma.$$
And,
$$[\Phi(x^\star), \Phi(t)]_1 = [x^\star, t]^\sigma_1 = \{x,t \}^\sigma.$$
Then, as $\Phi$ is a bijection on $\bigwedge_{W_{\Q}}^2 \N^*$, and $[\cdot,\cdot]_1$ is nondegenerate, we have $(\Phi(x))^\star = \Phi(x^\star)$.

\end{proof}

\begin{proposition}\label{commutewithFrob}
Let $x \in \bigwedge_{W_{\Q}}^2 \N$. If $\Phi(x) = x$, then $F \circ \widetilde{x} = \widetilde{x} \circ F$ as elements of $\End(\underline{\N})$.\end{proposition}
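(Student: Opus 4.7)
The plan is to unpack the definition of $\widetilde{x}$ and reduce the identity $F \circ \widetilde{x} = \widetilde{x} \circ F$ to two separate identities on the factors of $\underline{\N} = \N \times \N^*$, each of which is controlled by Lemmas \ref{phiandF} and \ref{phiandstar}. Recall that under the injections $\bigwedge^2 \N \hookrightarrow \Hom(\N^*, \N)$ and $\bigwedge^2 \N^* \hookrightarrow \Hom(\N, \N^*)$, the endomorphism $\widetilde{x}$ acts on $(a,f) \in \N \times \N^*$ by $\widetilde{x}(a,f) = (x(f), x^\star(a))$ (after identifying $\N^* \times \N \cong \N \times \N^*$), while $F$ on $\underline{\N}$ acts diagonally.

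First I would compute both sides componentwise: $F \circ \widetilde{x}(a,f) = \bigl(F(x(f)),\, F(x^\star(a))\bigr)$ and $\widetilde{x} \circ F(a,f) = \bigl(x(F f),\, x^\star(F a)\bigr)$. Thus it suffices to show the two identities $F \circ x = x \circ F$ in $\Hom(\N^*, \N)$ and $F \circ x^\star = x^\star \circ F$ in $\Hom(\N, \N^*)$. By Lemma \ref{phiandF}, we already have $F \circ x = \Phi(x) \circ F$ and $F \circ x^\star = \Phi(x^\star) \circ F$ for \emph{any} $x$; the content of the hypothesis $\Phi(x) = x$ will enter precisely at this step.

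For the first identity, $\Phi(x) = x$ gives $F \circ x = x \circ F$ directly. For the second identity, I would invoke Lemma \ref{phiandstar}: the Hodge star operator commutes with $\Phi$, so $\Phi(x^\star) = (\Phi(x))^\star = x^\star$, and then Lemma \ref{phiandF} again yields $F \circ x^\star = x^\star \circ F$. Combining the two componentwise equalities gives $F \circ \widetilde{x} = \widetilde{x} \circ F$ as endomorphisms of $\underline{\N}$.

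There is no real obstacle here — everything is bookkeeping on top of the two lemmas already established. The only subtle point worth being explicit about is that the $\Phi$-invariance of $x$ is used \emph{twice}, once on $\bigwedge^2 \N$ and once (via the compatibility with the Hodge star) on $\bigwedge^2 \N^*$, which is exactly what is needed to handle the two components of $\underline{\N}$ in a symmetric way.
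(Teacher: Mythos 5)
Your proof is correct and follows essentially the same route as the paper's: reduce to the two componentwise identities $F \circ x = x \circ F$ on $\Hom(\N^*, \N)$ and $F \circ x^\star = x^\star \circ F$ on $\Hom(\N, \N^*)$, obtain the first from Lemma \ref{phiandF} together with $\Phi(x)=x$, and the second by first applying Lemma \ref{phiandstar} to get $\Phi(x^\star) = (\Phi(x))^\star = x^\star$ and then Lemma \ref{phiandF} again. The componentwise unpacking and the observation that $\Phi$-invariance is used on both $\bigwedge^2\N$ and (via the star) on $\bigwedge^2\N^*$ match the paper's argument step for step.
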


\begin{proof}
Let $x \in  \bigwedge_{W_{\Q}}^2 \N$ such that $\Phi(x) = x$. Then, by Lemma ~\ref{phiandF}, $F \circ x = x \circ F$ as elements of $\Hom(\N^*, \N)$. 

By Lemma ~\ref{phiandstar}, 
$$\Phi(x^\star) = (\Phi(x))^\star = x^\star.$$

Then, again  by Lemma ~\ref{phiandF}, $F \circ x^\star = x^\star \circ F$ as elements of $\Hom(\N, \N^*)$. 

Finally, because $\widetilde{x}$ as an endomorphism of $\N$ is defined as $x^\star$ and $x$ on the two components $\N$ and $\N^*$, the Frobenius operator on $\underline{\N}$ is also defined on these two components, and $x^\star$ and  $x$ commute with Frobenius as elements of $\Hom(\N, \N^*)$ and $\Hom(\N^*, \N)$, we have:
$$F \circ \widetilde{x} = \widetilde{x} \circ F$$ 
as elements of $\End(\underline{\N})$.

\end{proof}

We are now ready to define the closed subschemes $\mathcal{N}_{\Lambda, 0} \subset \mathcal{N}_{0,red}$. Given $G$ a p-divisible group, we will use the notation $G^*$ for the dual $p$-divisible group. Recall that, if $S$ is a $W$-scheme on which $p$ is locally nilpotent, $\mathcal{N}_0(S)$ parametrizes pairs $(G, \rho)$, where $G$ is a supersingular $p$-divisible group over $S$ of height 4 and dimension 2, and 
$$\rho: G_{S_0} \rightarrow \mathbb{G}_{S_0}$$
is a quasi-isogeny of height 0. (Here, $S_0 = S \times_{\Spec(W) } \Spec(k)$.) Given such an $S$-point $(G, \rho)$, we can also define a quasi-isogeny:
$$\underline{\rho}: (\rho, (\rho^*)^{-1}): G_{S_0} \times_{S_0} G_{S_0}^* \rightarrow \mathbb{G}_{S_0} \times_{S_0} \mathbb{G}_{S_0}^*.$$

Any element $x \in V^\Phi$ defines a $W_{\Q}$-linear map $\widetilde{x}: \underline{\N} \rightarrow \underline{\N}$ that commutes with Frobenius, so defines an element $\widetilde{x} \in \End^0(\underline{\G})$. So, for any $(G, \rho) \in \mathcal{N}_0(S)$, we have a quasi-endomorphism:
$$ \underline{\rho}^{-1} \circ \widetilde{x} \circ \underline{\rho} \in \End^0( G_{S_0} \times_{S_0} G_{S_0}^* ).$$

We can ask: for what $(G, \rho) \in \mathcal{N}_0(S)$ will $ \underline{\rho}^{-1} \circ \widetilde{x} \circ \underline{\rho}$ be  an endomorphism of $  G_{S_0} \times_{S_0} G_{S_0}^* $, as opposed to just a quasi-endomorphism? This is the definition of $\mathcal{N}_{\Lambda, 0}$:

\begin{definition}
Let $\Lambda \subset V^{\Phi}$ be a vertex lattice. Let $S$ be a $W$-scheme on which $p$ is locally nilpotent. Then, we define:
$$\widetilde{\mathcal{N}_{\Lambda, 0}}(S) = \{ (G, \rho) \in \mathcal{N}_0(S) \ | \  \underline{\rho}^{-1} \circ \widetilde{x} \circ \underline{\rho} \in \End( G_{S_0} \times_{S_0} G_{S_0}^* )  \text{ for all } x \in \Lambda^\vee \}.$$
(As before,  $S_0 = S \times_{\Spec(W) } \Spec(k)$.)
This defines a closed formal subscheme $\widetilde{\mathcal{N}_{\Lambda, 0}} \subset \mathcal{N}$.   We define $\mathcal{N}_{\Lambda,0}$ to be the reduced $k$-scheme underlying $\widetilde{\mathcal{N}_{\Lambda, 0}}$.
\end{definition}

\begin{proposition}\label{NM}
Let $\Lambda \subset V^\Phi$ be a vertex lattice. The bijection:
$$\mathcal{N}_{0,red}(k') \longleftrightarrow \mathcal{M}_0(k')$$
$$(G, \rho) \mapsto \rho(M) \subset \N'$$
restricts to a bijection:
$$\mathcal{N}_{\Lambda, 0}(k') \longleftrightarrow \mathcal{M}_{\Lambda,0}(k').$$
\end{proposition}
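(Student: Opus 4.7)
The plan is to unwind the definitions on both sides and reduce the statement, via Zink's window theory applied to $G\times G^*$, to a lattice–preservation condition in $\underline{\N}'=\N'\times\N'^*$, which is then exactly the integrality condition defining $\mathcal{M}_{\Lambda,0}(k')$.

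First I would set up the dictionary. Fix $(G,\rho)\in\mathcal{N}_0(k')$ with associated Dieudonn\'e window $(M,M_1,\Psi)$ and Dieudonn\'e lattice $A=\rho(M)\subset\N'$ as in Proposition~\ref{dd1}. Under the duality of $p$-divisible groups and their Dieudonn\'e windows, $G^*$ corresponds to the dual window whose underlying $W'$-lattice is $A^*\subset\N'^*$, and the product $\underline{G}=G\times G^*$ corresponds to the window with underlying lattice $A\times A^*$ in $\underline{\N}'=\N'\times\N'^*$, with Frobenius acting diagonally. Under the combined quasi-isogeny $\underline{\rho}=(\rho,(\rho^*)^{-1})$, this lattice is identified with the window of $\underline{\mathbb{G}}_{k'}=\mathbb{G}_{k'}\times\mathbb{G}^*_{k'}$ (up to the rational structure).

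Next, for any $x\in V^{\Phi}$, Proposition~\ref{commutewithFrob} shows that $\widetilde{x}$ commutes with $F$ as an operator on $\underline{\N}$, so $\widetilde{x}\in\End^0(\underline{\mathbb{G}})$. Conjugating by $\underline{\rho}$ gives a quasi-endomorphism $\underline{\rho}^{-1}\circ\widetilde{x}\circ\underline{\rho}$ of $\underline{G}$. By the equivalence of categories between $p$-divisible groups over $k'$ and Dieudonn\'e $W'$-windows (applied to $\underline{G}$), a quasi-endomorphism is a genuine endomorphism if and only if the corresponding $W'_{\Q}$-linear operator on the isocrystal preserves the window lattice. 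Since the operator in question is simply $\widetilde{x}$ acting on $\underline{\N}'$, this condition reads $\widetilde{x}(A\times A^*)\subset A\times A^*$.

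Assembling these two steps: $(G,\rho)\in\mathcal{N}_{\Lambda,0}(k')$ means the conjugated quasi-endomorphism is integral for every $x\in\Lambda^\vee$, which by the previous paragraph is equivalent to $\widetilde{x}(A\times A^*)\subset A\times A^*$ for all $x\in\Lambda^\vee$, which is exactly the definition of $A\in\mathcal{M}_{\Lambda,0}(k')$. Because the bijection $(G,\rho)\mapsto\rho(M)$ of Proposition~\ref{dd1} already restricts to height-zero objects on both sides, the asserted bijection follows.

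The main obstacle, and the only place where real care is needed, is the first step: verifying that the Dieudonn\'e window attached to the dual $p$-divisible group $G^*$ really is the lattice $A^*\subset\N'^*$, with diagonal Frobenius on $\underline{\N}'$ compatible with $\underline{\rho}$. This is a standard consequence of Dieudonn\'e duality in Zink's framework, but it has to be checked that the pairing used to define $A^*$ (the evaluation pairing $\N'\times\N'^*\to W'_{\Q}$, rather than the symmetric pairing $[\cdot,\cdot]$ on $V'$) is the one governing the window duality. Once this compatibility is in hand, the rest of the argument is a direct translation via Proposition~\ref{verystrange}, and no further computation is required.
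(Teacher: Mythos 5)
Your plan follows the same broad route as the paper (Zink's window theory applied to $\underline{G} = G \times_{k'} G^*$, conjugating $\widetilde{x}$ by $\underline{\rho}$, using Proposition~\ref{commutewithFrob} for the $\Psi$-commutation, and identifying the window lattice of $\underline{G}$ with $A \times A^*$ via $\underline{\rho}$). However, there is a genuine gap in the step where you pass from ``quasi-endomorphism of $\underline{G}$'' to ``$\widetilde{x}$ preserves $A \times A^*$.'' A morphism of Dieudonn\'e $W'$-windows $(M, M_1, \Psi)$ is required to preserve not just the lattice $M$ but also the submodule $M_1 \subset M$. Applied here, the criterion for $\underline{\rho}^{-1}\circ\widetilde{x}\circ\underline{\rho}$ to be a genuine endomorphism of $\underline{G}$ is \emph{three}-fold: commutation with $\Psi$, preservation of $M\times M^*$, and preservation of $(M \times M^*)_1$. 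After transporting by $\underline{\rho}$, the last condition becomes $\widetilde{x}\bigl((A\times A^*)_1\bigr)\subset (A\times A^*)_1$, which is not a priori the same as $\widetilde{x}(A\times A^*)\subset A\times A^*$, and which does not literally appear in the definition of $\mathcal{M}_{\Lambda,0}(k')$. Your proof simply asserts the criterion is preservation of ``the window lattice'' and then concludes, without addressing this extra condition on the filtration.

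The paper closes this gap by citing Corollary~\ref{filtrationsagain}, which shows (using Propositions~\ref{verystrange} and~\ref{filtsum}) that the two conditions $\widetilde{x}(A\times A^*)\subset A\times A^*$ and $\widetilde{x}\bigl((A\times A^*)_1\bigr)\subset (A\times A^*)_1$ are equivalent once one quantifies over all $x\in\Lambda^\vee$ and $A$ is a Dieudonn\'e lattice of height $0$. Alternatively, you could note that the condition on $(M\times M^*)_1 = \Psi^{-1}\bigl(p(M\times M^*)\bigr)$ follows automatically from the other two: if an operator commutes with $\Psi$ and preserves $M\times M^*$, then it preserves $\Psi^{-1}\bigl(p(M\times M^*)\bigr)$, since $\Psi$ applied to the image still lands in $p(M\times M^*)$. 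Either route repairs the argument; as written, though, the omission of the condition on the filtration is a real gap, not merely terse exposition.
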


\begin{proof}

Recall that:

$$\mathcal{M}_{\Lambda, 0} (k') = \{ A \in \mathcal{M}_0(k') \ | \ \widetilde{x}(A \times A^*)  \subset A \times A^* \text{ for all } x \in \Lambda^\vee \}$$
and
$$\mathcal{N}_{\Lambda, 0}(k') = \{ (G, \rho) \in \mathcal{N}_0(k') \ | \  \underline{\rho}^{-1} \circ \widetilde{x} \circ \underline{\rho} \in \End( G \times_{k'} G^* )  \text{ for all } x \in \Lambda^\vee \}.$$

Given any $(G, \rho) \in \mathcal{N}_0(k')$, let $(M, M_1, \Psi)$ be the associated $W'$-window over $k'$, and let $N = M_{\Q}$. In our situation, $M_1 = \Psi^{-1}(pM)$. The quasi-isogeny $\rho$ induces an isomorphism of $W'_{\Q}$ vector spaces $\rho: N \rightarrow \mathbb{N}$, and the Dieudonn\'e lattice in $\mathcal{M}_{\Lambda, 0} (k') $ associated to $(G, \rho)$ is $\rho(M)$.

Let $N^* = \Hom(N, W'_{\Q})$ and let $M^*$ be the dual $W'$ module to $M$. There is a dual operator, also denoted $\Psi$, on $N^*$. With this notation, the window associated to $G \times_{k'} G^*$ is 
$$(M \times M^*, (M \times M^*)_1, \Psi)$$
where $\Psi$ acts diagonally and  $(M \times M^*)_1 = \Psi^{-1}(p (M \times M^*) )$. Let $\Lambda \subset V^{\Phi}$ be a vertex lattice and let $x \in \Lambda^\vee$. Then we have $W'_{\Q}$-linear map:
$$ \underline{\rho}^{-1} \circ \widetilde{x} \circ \underline{\rho}: N \times N^* \rightarrow N \times N^*.$$

By Zink's theory of windows ~\cite{Windows}, this map will induce an endomorphism of the $p$-divisible group $G \times_{k'} G^*$ if and only if the following three conditions all hold:
\begin{enumerate}
\item{ $\underline{\rho}^{-1} \circ \widetilde{x} \circ \underline{\rho}$ commutes with $\Psi$}
\item{  $\underline{\rho}^{-1} \circ \widetilde{x} \circ \underline{\rho}(M \times M^*) \subset M \times M^*$ }
\item{  $\underline{\rho}^{-1} \circ \widetilde{x} \circ \underline{\rho}( (M \times M^*)_1 ) \subset (M \times M^*)_1$.}
\end{enumerate}

The first condition holds for any $x \in V^\Phi$, because by Proposition ~\ref{commutewithFrob} the fact that $x$ is $\Phi$-invariant causes $\widetilde{x}$ to commute with Frobenius, and $\underline{\rho}$ intertwines the actions of $F$ on $\underline{\N}'$ and $\Psi$ on $N \times N^*$. 

Note that $\underline{\rho}$ identifies $M \times M^*$ with $\rho(M) \times (\rho(M))^*$ and, because $\underline{\rho}$ intertwines the actions of $\Psi$ and $F$, $\underline{\rho}$ also identifies $(M \times M^*)_1$ with $(\rho(M) \times (\rho(M))^*)_1$. With these observations,  $x \in \Lambda^\vee$ will induce an endomorphism $ \underline{\rho}^{-1} \circ \widetilde{x} \circ \underline{\rho}$ of $G \times_{k'} G^*$ if and only if the following two conditions hold:
\begin{enumerate}
\item{  $\widetilde{x} (\rho(M) \times \rho(M)^*) \subset \rho(M) \times \rho(M)^*$ }
\item{  $\widetilde{x} ( (\rho(M) \times \rho(M)^*  )_1 ) \subset (\rho(M) \times \rho(M)^*)_1$.}
\end{enumerate}
   
And, by Corollary ~\ref{filtrationsagain}, these two conditions hold for all $x \in \Lambda^\vee$ if and only if $\rho(M) \in \mathcal{M}_{\Lambda, 0} (k')$.

\end{proof}

\begin{corollary}\label{NMS}
Let $\Lambda \subset V^{\Phi}$ be a vertex lattice. The bijections:
$$\mathcal{N}_{0,red}(k') \quad \quad \longleftrightarrow \quad \quad  \mathcal{M}_{0}(k') \quad \quad  \longleftrightarrow  \quad \quad \mathcal{S}^+(k')$$
$$(G, \rho) \mapsto \rho(M) \subset \N', \quad A \mapsto \frac{1}{p} \bigwedge_{W'}^2 A_1 \subset V'$$
Identify:
$$\mathcal{N}_{\Lambda, 0}(k') \longleftrightarrow \mathcal{M}_{\Lambda,0}(k') \longleftrightarrow \mathcal{S}_{\Lambda}^+(k').$$
\end{corollary}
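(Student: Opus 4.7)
The plan is very short: this corollary is essentially formal, obtained by composing the two bijections already established earlier in the section. First I would invoke Proposition \ref{NM}, which says that the bijection $\mathcal{N}_{0,red}(k') \leftrightarrow \mathcal{M}_0(k')$ given by $(G,\rho) \mapsto \rho(M)$ restricts to a bijection between the subset $\mathcal{N}_{\Lambda,0}(k')$ (defined by integrality of $\underline{\rho}^{-1}\circ \widetilde{x}\circ \underline{\rho}$ for all $x\in \Lambda^\vee$) and $\mathcal{M}_{\Lambda,0}(k')$ (defined by $\widetilde{x}(A\times A^*)\subset A\times A^*$ for all $x\in \Lambda^\vee$). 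Second, I would invoke Proposition \ref{MS}, which says that the bijection $\mathcal{M}_0(k') \leftrightarrow \mathcal{S}^+(k')$ given by $A\mapsto \tfrac{1}{p}\bigwedge^2_{W'} A_1$ restricts to a bijection $\mathcal{M}_{\Lambda,0}(k') \leftrightarrow \mathcal{S}^+_\Lambda(k')$.

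Composing these two restricted bijections gives the statement. There is no real obstacle; the substantive content (relating integrality of endomorphisms to the condition $\Lambda^\vee\subset L$, via the duality lemmas \ref{wisconsin1}--\ref{wisconsin3}, Proposition \ref{verystrange}, and Corollary \ref{filtrationsagain}) has already been absorbed into Propositions \ref{MS} and \ref{NM}. The only thing to check is that the composition of the two restrictions matches the composition of the unrestricted bijections, which is automatic because restriction of a map to a subset commutes with composition. So the proof is essentially a one-line citation of the two preceding propositions.
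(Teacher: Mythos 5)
Your proof is correct and matches the paper's own, which simply states that the corollary follows by combining Proposition~\ref{NM} and Proposition~\ref{MS}. Your elaboration of what each proposition provides is accurate and adds nothing beyond the paper's one-line citation.
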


\begin{proof}
This is the combination of Proposition ~\ref{NM} and Proposition ~\ref{MS}.
\end{proof}

Note that, by combing Proposition \ref{vollaardprop} with Corollary ~\ref{NMS}, we have:
$$\mathcal{N}_{0,red} = \bigcup_{\Lambda} \mathcal{N}_{\Lambda,0}$$
where the index is over all vertex lattices $\Lambda$. This is \emph{not} a disjoint union:  $\mathcal{N}_{\Lambda_1} \subset \mathcal{N}_{\Lambda_2}$ whenever $\Lambda_1 \subset \Lambda_2$. 


Our next objective is to understand the schemes $\mathcal{N}_{\Lambda,0}$. The conclusion (in Section ~\ref{Computingimage})  will be very nice: depending on if $\Lambda$ is a vertex lattice of type 2 or type 4, $\mathcal{N}_{\Lambda,0}$ will either be a single point or will be isomorphic to $\mathbb{P}^1$. As a first step towards this, we have the following:

\begin{proposition}\label{projective}
For any vertex lattice $\Lambda$, the $k$-scheme $\mathcal{N}_{\Lambda, 0}$ is projective
\end{proposition}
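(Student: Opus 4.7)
The plan is to exhibit $\mathcal{N}_{\Lambda, 0}$ as a closed subscheme of a bounded, and therefore proper, locus inside $\mathcal{N}_{0,red}$, and then to appeal to Rapoport--Zink's properness theorem. First, I would use Corollary~\ref{NMS} to identify $\mathcal{N}_{\Lambda, 0}(k')$ with the set $\mathcal{S}^+_\Lambda(k')$ of very special lattices $L \subset V'$ containing $\Lambda^\vee$. Since $L$ is self-dual, dualizing the inclusion $\Lambda^\vee \subset L$ gives $L = L^\vee \subset (\Lambda^\vee)^\vee \otimes_{\Z_p} W' = \Lambda \otimes_{\Z_p} W'$, so the uniform sandwich
\[
\Lambda^\vee \otimes_{\Z_p} W' \;\subset\; L \;\subset\; \Lambda \otimes_{\Z_p} W'
\]
holds for every $L \in \mathcal{S}^+_\Lambda(k')$ and every extension $k'/k$.

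Next, I would translate this into a bound on the Dieudonn\'e lattice $A = \rho(M)$ via the bijection $A \mapsto \tfrac{1}{p}\bigwedge_{W'}^2 A_1$ of Proposition~\ref{bijection}. The inclusion $\bigwedge_{W'}^2 A_1 \subset p \Lambda \otimes W'$ inside $\bigwedge^2_{W'_\Q} \N'$ controls all pairwise sums of elementary divisors of $A$ relative to $\M'$; combined with $A$ having height $0$ (so the elementary divisors sum to $0$) and with condition (1) of Definition~\ref{DDLattice} relating $A$ to $A_1$, a short elementary-divisor computation yields an integer $a$, depending only on $\Lambda$, with $p^{a} \M' \subset A \subset p^{-a} \M'$. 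Equivalently, for every field-valued point of $\mathcal{N}_{\Lambda, 0}$ the quasi-isogenies $p^{a}\rho$ and $p^{a}\rho^{-1}$ are genuine isogenies.

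Finally, I would invoke the standard Rapoport--Zink boundedness/properness result from~\cite{RZ}: the closed formal subscheme of $\mathcal{N}$ on which $p^{a}\rho$ and $p^{a}\rho^{-1}$ are both isogenies is proper over $\Spf(W)$. Since $\widetilde{\mathcal{N}_{\Lambda, 0}}$ is a closed formal subscheme of this bounded locus, it is proper; passing to the underlying reduced $k$-scheme, $\mathcal{N}_{\Lambda, 0}$ is proper and of finite type over $k$, hence projective.

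The main obstacle is the elementary-divisor step that converts a bound on $\bigwedge_{W'}^2 A_1$ into a bound on $A$ itself: one must rule out that elementary divisors of $A$ with extreme positive and extreme negative exponents (which still sum to zero) could produce a $\bigwedge^2 A_1$ fitting inside the required sandwich. This is essentially the same kind of bookkeeping as in Lemma~\ref{injective} and in the proof of Lemma~\ref{isDD}, and the remaining steps are formal applications of the Rapoport--Zink formalism.
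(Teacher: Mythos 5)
Your proposal is correct and follows essentially the same route as the paper's proof: bound the exterior square of the Dieudonn\'e lattice using the chain $\Lambda^\vee \subset L \subset \Lambda$ (via Corollary~\ref{NMS}), convert this into a uniform bound $p^a\M' \subset \rho(M) \subset p^{-a}\M'$ by an elementary-divisor argument, and conclude by the Rapoport--Zink boundedness result (their Corollary 2.29, which gives a closed embedding into a projective scheme rather than properness per se). The only cosmetic difference is that you bound $\bigwedge^2 A_1$ where the paper bounds $\bigwedge^2 \rho(M)$, but since $\overline{\Phi}\bigl(\tfrac{1}{p}\bigwedge^2 A_1\bigr) = \bigwedge^2 \rho(M)$ and $\Lambda^\vee$ is $\Phi$-fixed, these are equivalent.
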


\begin{proof}
Let $\Lambda$ be a vertex lattice. Since $\Lambda \otimes_{\Z_p} W \subset \bigwedge_{W_{\Q}}^2 \N$ is a $W$-lattice, there is some sufficiently large positive integer $a$ such that:
$$\Lambda \otimes_{\Z_p} W \subset p^{-a} \bigwedge_{W'}^2 \M.$$
By Corollary ~\ref{NMS}, for any $(G, \rho) \in \mathcal{N}_{\Lambda,0}(k)$, 
$$\Lambda^\vee \subset \bigwedge_{W'}^2 \rho(M).$$

Noting that $\bigwedge_{W'}^2 \M$ and $\bigwedge_{W'}^2 \rho(M)$ are self-dual, we can combine these to produce the following chain condition:
$$p^a \bigwedge_{W'}^2 \M \subset \Lambda^\vee \otimes_{\Z_p} W \subset \bigwedge_{W'}^2 \rho(M) \subset \Lambda \otimes_{\Z_p} W \subset p^{-a} \bigwedge_{W'}^2 \M.$$
Using the fact that $\rho(M) \subset \N$ is a lattice of height 0, an elementary computation based on the chain condition above produces the following bounds:
$$p^{3a} \M \subset \rho(M) \subset p^{-3a} \M.$$
This holds for any $(G, \rho) \in \mathcal{N}_{\Lambda,0}(k)$, and the integer $a$ is independent of the choice of $(G, \rho)$. Then, by ~\cite{RZ} Corollary 2.29, $\mathcal{N}_{\Lambda,0}$ is a closed subscheme of a projective scheme. 
\end{proof}

\section{A Subvariety of an Orthogonal Grasssmanian}\label{Section5}

Our objective is to understand the geometric structure of $\mathcal{N}_{\Lambda,0}$. So far, we only have a bijection of sets between $\mathcal{N}_{\Lambda,0}(k')$ and $\mathcal{S}^+_{\Lambda}(k')$. However, $\mathcal{S}^+_{\Lambda}(k')$ can naturally be identified with the set of $k'$ points of a subvariety of an orthogonal Grassmanain. In this section, we'll describe that subvariety and show that the resulting bijection between the $k'$ points of $\mathcal{N}_{\Lambda,0}$ and the $k'$-points of this subvariety is more than just a bijecton on points: it results from an isomorphism between these two schemes.

\subsection{Special Lattices and Grassmanians}

For this section, fix a vertex lattice $\Lambda \subset V^{\Phi}$ of type $2d$. (Recall that $d = 1$ or $2$.) Define:
$$\Omega_0 = \Lambda^\vee / \Lambda.$$

This is an  $\F_p$-vector space of dimension $2d$. Define an $\F_p$-valued symmetric bilinear form $[\cdot,\cdot]_{\Lambda}$ on $\Omega_0$ by:
$$[ [x] ,   [ y]  ]_{\Lambda} = p [ x , y ] \mod p$$
for any $[x], [y] \in \Omega_0$. Note that as $\Lambda^\vee \subset \Lambda \subset \frac{1}{p} \Lambda^\vee$, for any $x,y \in \Lambda$, we have that $p[x,y] \in \Z_p$, so the above description of $[ \cdot, \cdot]_{\Lambda}$ is valid. The nondegeneracy of $[ \cdot, \cdot ]$ implies the nondegeneracy of $[ \cdot, \cdot]_{\Lambda}$ and the fact that $V^{\Phi}$ has Hasse invariant -1 implies that $(\Omega_0, [\cdot, \cdot]_{\Lambda})$ is nonsplit.

 Let $\Omega = \Omega_0 \otimes_{\F_p} k$, and extend scalars to get a $k$-valued symmetric bilinear form, also denoted $[ \cdot, \cdot]_{\Lambda}$, on $\Omega$. Let the symbol $\Phi$ denote Frobenius acting on $\Omega$.

Let $\mathrm{OGr}(\Omega)(r)$ be the scheme such that, if $S$ is any  $k$-scheme, $\mathrm{OGr}(\Omega)(r)(S)$ parametrizes all totally isotropic local $\mathcal{O}_S$-module direct summands
$$\mathcal{L} \subset \Omega \otimes_k \mathcal{O}_s$$
  of rank  $r$. So, in particular, $\mathrm{OGr}(\Omega)(d)(k)$ is the moduli space of Lagrangian subspaces of $\Omega$.

  Note that, if $k'$ over $k$ is not algebraically closed and $\mathcal{L}$ is a $k'$-point of $\mathrm{OGr}(r)$, then $\Phi( \mathcal{L})$ does \emph{not} necessary correspond to a $k'$-point of $\mathrm{OGr}(r)$, but the $k'$-vector space generated by $\Phi( \mathcal{L})$, denoted $\overline{\Phi}(\mathcal{L})$, does define a $k'$-point of $\mathrm{OGr}(r)$.  Because $\Omega$ is defined by $\Omega_0$ over $\F_p$, there is a natural action of Frobenius on $\mathrm{OGr}(r)$, and on $k'$-points this action sends $\mathcal{L}$ to $\overline{\Phi}(\mathcal{L})$. To emphasize this, we'll denote the action on $S$-points also as $\mathcal{L} \mapsto \overline{\Phi}(\mathcal{L})$. 

Similarly, let $\mathrm{OGr}(\Omega)(d-1, d)$ be the scheme such that,  if $S$ is any  $k$-scheme, $\mathrm{OGr}(d-1, d)(S)$ parametrizes all  flags of totally isotropic local $\mathcal{O}_S$-module direct summands 
$$\mathcal{L}_{d-1} \subset \mathcal{L}_d \subset \Omega \otimes_k \mathcal{O}_s $$
where $\mathcal{L}_{d-1}$ is of rank $d-1$ and $\mathcal{L}_d$ is of rank $d$.

Then $\mathrm{OGr}(\Omega)(d-1, d)$ has two connected components: two flags are in the same component if and only if there is an orthogonal transformation of determinant 1 carrying the first flag to the second flag. These two connected components are interchanged by any orthogonal transformation of determinant -1. As in \cite{GU22}, one can check in coordinates that the action of Frobenius on $\mathrm{OGr}(\Omega)(d-1, d)$ interchanges the two components.  Label these two connected components as $\mathrm{OGr}(\Omega)^+(d-1, d)$ and $\mathrm{OGr}(\Omega)^-(d-1, d)$.

Now, let $\mathcal{X}_{\Lambda}$ be the reduced closed subscheme of $\mathrm{OGr}(\Omega)(d)$ with $k'$-points:
$$\mathcal{X}_{\Lambda}(k') = \{ \mathcal{L} \in \mathrm{OGr}(\Omega)(d)(k') \ | \ \dim_{k'}( \mathcal{L} + \overline{\Phi}(\mathcal{L} ) ) =  d+1 \}.$$

Just as in ~\cite{GU22}, there is a closed immersion 
$$\mathcal{X}_{\Lambda} \rightarrow \mathrm{OGr}(d-1, d)$$
$$\mathcal{L} \mapsto \mathcal{L} \cap \overline{\Phi}(\mathcal{L} ) \subset \mathcal{L}.$$

And so $\mathcal{X}_{\Lambda}$ decomposes into two open and closed subschemes, which we'll denote $\mathcal{X}_{\Lambda}^\pm$. The action of Frobenius also interchanges these two components. Using the fact that $\Omega$ contains no $\Phi$-invariant Lagrangian subspaces, it will be useful to consider $\mathcal{X}^{\pm}_{\Lambda}$ as the scheme with $k'$-points:
 $$\mathcal{X}^{\pm}_{\Lambda}(k') = \{ \mathcal{L}_{d-1} \subset \mathcal{L}_d  \in \mathrm{OGr}(\Omega)^{\pm}(d-1, d)(k')  \ |  \ \mathcal{L}_{d-1} \subset \overline{\Phi}(\mathcal{L}_d) \}.$$

\begin{proposition}\label{Xispt}
Let $\Lambda$ be a vertex lattice of type 2. Then  $\mathcal{X}^{+}_{\Lambda}$ and $\mathcal{X}^-_{\Lambda}$ are each a single point.
\end{proposition}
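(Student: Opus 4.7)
The plan is to use the fact that $d=1$ to reduce everything to a computation about isotropic lines in a $2$-dimensional quadratic space. When $\Lambda$ has type $2$, the quadratic space $\Omega_0$ over $\F_p$ has dimension $2d = 2$, is nondegenerate, and is nonsplit; since a nondegenerate $2$-dimensional nonsplit quadratic form over $\F_p$ is anisotropic, $\Omega_0$ contains no isotropic vectors at all. Extending scalars to the algebraically closed field $k$, the space $\Omega = \Omega_0 \otimes_{\F_p} k$ becomes split (every nondegenerate quadratic space of even dimension over $k$ is split), so $\Omega$ contains exactly two isotropic lines.

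Next I would analyze the action of Frobenius. The two isotropic lines of $\Omega$ cannot individually be $\F_p$-rational, since $\Omega_0$ contains no nonzero isotropic vectors. Hence $\Phi$ must interchange the two isotropic lines. In particular, for either isotropic line $\mathcal{L} \subset \Omega$, we have $\overline{\Phi}(\mathcal{L}) \neq \mathcal{L}$, so $\mathcal{L} + \overline{\Phi}(\mathcal{L}) = \Omega$, which has dimension $2 = d + 1$. Thus both isotropic lines satisfy the defining condition of $\mathcal{X}_\Lambda$, and $\mathcal{X}_\Lambda(k)$ has exactly $2$ points.

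Finally, since $d = 1$, the flag scheme $\mathrm{OGr}(\Omega)(d-1, d) = \mathrm{OGr}(\Omega)(0, 1)$ is just $\mathrm{OGr}(\Omega)(1)$, and the condition $\mathcal{L}_{d-1} \subset \overline{\Phi}(\mathcal{L}_d)$ reduces to the empty condition $0 \subset \overline{\Phi}(\mathcal{L}_1)$. So the closed immersion $\mathcal{X}_\Lambda \hookrightarrow \mathrm{OGr}(\Omega)(0,1)$ is in fact an equality of zero-dimensional reduced schemes with two $k$-points, and the decomposition $\mathcal{X}_\Lambda = \mathcal{X}^+_\Lambda \sqcup \mathcal{X}^-_\Lambda$ into the two connected components $\mathrm{OGr}^{\pm}(\Omega)(0,1)$ sends one isotropic line into each component. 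Each of $\mathcal{X}^\pm_\Lambda$ is therefore a single reduced point.

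There is no real obstacle here; the only subtle point is confirming that the two isotropic lines of $\Omega$ land in different components of $\mathrm{OGr}(0,1)$, which follows from the general fact (already invoked in the excerpt) that Frobenius interchanges the two components, combined with the observation that Frobenius interchanges the two isotropic lines.
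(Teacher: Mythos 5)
Your proof is correct and follows essentially the same approach as the paper: identify $\Omega$ as a hyperbolic plane with exactly two Lagrangian lines, observe that Frobenius must interchange them (you use that $\Omega_0$ is anisotropic; the paper cites the Hasse invariant of $V^\Phi$, which is the same underlying fact), check both lines satisfy the defining condition of $\mathcal{X}_\Lambda$, and conclude that they land one in each of $\mathcal{X}^\pm_\Lambda$. One small remark: the paper's proof contains a typo stating $\dim(\mathcal{L} + \overline{\Phi}(\mathcal{L})) = 3$; since $d=1$ the correct value is $d+1 = 2$, which is what you wrote.
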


\begin{proof}
Let $\Lambda$ be a vertex lattice of type 2, so $d=1$. Then $\Omega = \Lambda/\Lambda^\vee \otimes_{\F_p} k$ is a 2-dimensional quadratic space containing an isotropic vector, so is a hyperbolic plane. As $\Omega$ has exactly two Lagrangian subspaces, $\mathrm{OGr}(\Omega)(1)$ consists of two points. Since $V^{\Phi}$ has Hasse invariant -1, neither of these Lagrangians may be fixed by Frobenius, so $\Phi$ interchanges the two points of $\mathrm{OGr}(\Omega)(1)$. 

For either of these Lagrangian subspaces $\mathcal{L}$, because $\mathcal{L} \neq \overline{\Phi}(\mathcal{L})$ and the quadratic form on $\Omega$ is nondegenerate, the dimension of  $\mathcal{L} + \overline{\Phi}(\mathcal{L})$ is exactly 3. As  $\mathcal{X}_{\Lambda}$ is the subscheme of $\mathrm{OGr}(\Omega)(1)$ parametrizing Lagrangians $\mathcal{L}$ such that $\mathcal{L} + \overline{\Phi}(\mathcal{L})$ has dimension 3,  $\mathcal{X}_{\Lambda}$ consists of two points, interchanged by Frobenius, and so each $\mathcal{X}^{\pm}_{\Lambda}$ consists of a single point.

\end{proof}

\begin{proposition}\label{type4P1}
Let $\Lambda$ be a vertex lattice of type $4$. Then each $\mathcal{X}^{\pm}_{\Lambda}$ is isomorphic to $\mathbb{P}^1$, as a variety over $k$.
\end{proposition}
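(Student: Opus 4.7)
The plan is to show that when $\Lambda$ has type $4$ (so $d=2$), the condition $\dim(\mathcal{L} + \overline{\Phi}(\mathcal{L})) = d+1 = 3$ defining $\mathcal{X}_\Lambda$ inside $\mathrm{OGr}(\Omega)(2)$ holds automatically at every geometric point. This will force $\mathcal{X}_\Lambda$ to coincide with $\mathrm{OGr}(\Omega)(2)$ as reduced schemes, and each component $\mathcal{X}^\pm_\Lambda$ to be a connected component of the Grassmannian, hence isomorphic to $\mathbb{P}^1$.

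First I would recall the structure of the orthogonal Grassmannian of a split $4$-dimensional quadratic space. Since $\Lambda$ has type $4$, $\Omega = \Omega_0 \otimes_{\F_p} k$ has dimension $4$ and, because $k$ is algebraically closed, is necessarily split. Then $\mathrm{OGr}(\Omega)(2)$ is smooth and has exactly two connected components, each isomorphic to $\mathbb{P}^1$; moreover, two Lagrangians $\mathcal{L}_1, \mathcal{L}_2 \subset \Omega$ lie in the same component if and only if $\dim_k(\mathcal{L}_1 \cap \mathcal{L}_2)$ is even, i.e.\ equal to $0$ or $2$. This is the standard description of the $D_2$ spinor variety, and I would just quote it.

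The main step, which I expect to be the only genuinely nontrivial point, is to show that Frobenius swaps the two connected components of $\mathrm{OGr}(\Omega)(2)$. Since $\Omega$ has an $\F_p$-form $\Omega_0$, the Grassmannian descends to an $\F_p$-scheme and Frobenius acts on the two-element set of geometric components. If it fixed each, each component would descend to an $\F_p$-rational closed subscheme geometrically isomorphic to $\mathbb{P}^1$, hence admit an $\F_p$-point (as a smooth geometrically rational curve over a finite field; this follows from Chevalley--Warning applied to its anticanonical conic model). Such a point would give an $\F_p$-rational Lagrangian of $\Omega_0$. But because $V^\Phi$ has Hasse invariant $-1$, $\Omega_0$ is nonsplit of Witt index $1$ and admits no $\F_p$-rational Lagrangian, a contradiction. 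Therefore Frobenius must swap the two components.

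With this in hand, for any field extension $k'/k$ and any $\mathcal{L} \in \mathrm{OGr}(\Omega)(2)(k')$, the Lagrangians $\mathcal{L}$ and $\overline{\Phi}(\mathcal{L})$ lie in opposite components, so $\dim_{k'}(\mathcal{L} \cap \overline{\Phi}(\mathcal{L}))$ is odd and hence equal to $1$, yielding $\dim_{k'}(\mathcal{L} + \overline{\Phi}(\mathcal{L})) = 3$. Thus every $k'$-point of $\mathrm{OGr}(\Omega)(2)$ already lies in $\mathcal{X}_\Lambda$; since $\mathrm{OGr}(\Omega)(2)$ is smooth and hence reduced, the equality of reduced subschemes $\mathcal{X}_\Lambda = \mathrm{OGr}(\Omega)(2)$ follows. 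Consequently $\mathcal{X}^+_\Lambda$ and $\mathcal{X}^-_\Lambda$ are the two connected components of $\mathrm{OGr}(\Omega)(2)$, each isomorphic to $\mathbb{P}^1$ over $k$, completing the argument.
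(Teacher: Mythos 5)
Your proof is correct but takes a genuinely different and more structural route than the paper's. The paper writes down the explicit quadratic form $Q_\Lambda(x,y,z,w)=xy+z^2-Dw^2$ on $\Omega_0$, constructs a concrete isomorphism $\psi:\mathbb{P}^1\times\mathbb{P}^1\to\mathrm{OGr}(\Omega)(1)$, checks in these coordinates that Frobenius swaps the two rulings, and then identifies $\mathcal{X}^\pm_{\Lambda}$ with the graph $\{([a:b],\Phi[a:b])\}\subset\mathbb{P}^1\times\mathbb{P}^1$, hence with $\mathbb{P}^1$. You instead bypass coordinates entirely: you show Frobenius must swap the two families of Lagrangians because otherwise each family would descend to a smooth conic over $\F_p$, which always has an $\F_p$-point (Brauer group of a finite field is trivial, or Chevalley--Warning), giving an $\F_p$-rational Lagrangian in the nonsplit space $\Omega_0$ --- a contradiction. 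From this you deduce $\dim_{k'}(\mathcal{L}\cap\overline{\Phi}(\mathcal{L}))=1$ for every $\mathcal{L}$, so the defining condition of $\mathcal{X}_\Lambda$ is automatic and $\mathcal{X}_\Lambda=\mathrm{OGr}(\Omega)(2)$ on the nose. That is a clean observation the paper's coordinate computation does not make explicit.

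The tradeoffs: your argument is shorter, coordinate-free, and directly explains \emph{why} the type-4 case is so simple (the special condition is vacuous), whereas the paper's explicit parametrization of $\mathcal{X}^+_\Lambda$ as a graph of Frobenius inside $\mathbb{P}^1\times\mathbb{P}^1$ can be useful for downstream point counts. Two small points you should make explicit: (1) the paper defines $\mathcal{X}^\pm_\Lambda$ via the closed immersion $\mathcal{L}\mapsto(\mathcal{L}\cap\overline{\Phi}(\mathcal{L}),\mathcal{L})$ into $\mathrm{OGr}(1,2)$, so you should observe that the projection $\mathrm{OGr}(1,2)\to\mathrm{OGr}(2)$ has connected fibers and therefore the $\pm$-decomposition agrees with the component decomposition of $\mathrm{OGr}(\Omega)(2)$, justifying your identification of $\mathcal{X}^\pm_\Lambda$ with those components; (2) in passing from ``the Galois action on $\pi_0$ is nontrivial'' to ``$\mathcal{L}$ and $\overline{\Phi}(\mathcal{L})$ lie in opposite components for every $k'$-point,'' it is worth noting that the map $\mathcal{L}\mapsto\overline{\Phi}(\mathcal{L})$ is the $k$-morphism $\mathrm{Fr}_{X_0}\times\mathrm{id}_k$ (where $X_0=\mathrm{OGr}(\Omega_0)(2)$ over $\F_p$), which acts on $\pi_0$ as the inverse of arithmetic Frobenius, so it swaps the components as $k$-schemes and therefore on $k'$-points for every $k'$. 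Neither is hard, but both are currently implicit.
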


\begin{proof}
Let $\Lambda$ be a vertex lattice of type 4, so $d=2$. Then $\Omega_0 = \Lambda^\vee/\Lambda$ is a nondegenerate, nonsplit quadratic space over $\F_p$ of dimension 4.  If we let $Q_{\Lambda}$ denote the quadratic form on $\Omega_0$, then (up to isomorphism) there is some nonsquare $D \in \F_p$ such that:
$$Q_{\Lambda}(x,y,z,w) = xy + z^2 - Dw^2.$$

Then, $\mathrm{OGr}(\Omega)(1)$ (the moduli space of isotropic lines in $\Omega$) is isomorphic to the quadric defined by $Q_{\Lambda}$ inside $\mathbb{P}^3$, which is isomorphic over $k$ to $\mathbb{P}^1 \times \mathbb{P}^1$. Concretely, if $\Delta \in k$ is a square root of $D$, one possible isomorphism is:
$$\psi:  \mathbb{P}^1 \times \mathbb{P}^1 \xrightarrow{\sim} \mathrm{OGr}(\Omega)(1) $$
$$\psi([a:b], [c:d]) = [ac: bd: \frac{1}{2}(ad+bc) : \frac{1}{2\Delta} (ad-bc)]$$
when we view $\mathrm{OGr}(\Omega)(1) $ as a closed subvariety of $\mathbb{P}^3$. Note that, while both $\mathbb{P}^1 \times \mathbb{P}^1$ and $\mathrm{OGr}(\Omega)(1) $ have natural $\F_p$-structures, this morphism only descends to $\F_{p^2}$, not $\F_p$. We will make this morphism $\Phi$-equivariant by defining a twisted action of $\Phi$ on $\mathbb{P}^1 \times \mathbb{P}^1$ by:
$$\Phi([a:b], [c:d]) = (\Phi[c:d], \Phi[a:b]).$$

It is an elementary fact that every isotropic line in $\Omega$ is contained in exactly two isotropic planes. In terms of the isomorphism above, considering an isotropic line as $\psi([a:b], [c:d])$ for some $([a:b], [c:d]) \in \mathbb{P}^1 \times \mathbb{P}^1$, the two isotropic planes containing this line are $\psi([a:b], \mathbb{P}^1)$ and $\psi( \mathbb{P}^1, [c:d])$. Then (up to possibly relabelling the components) each of $\mathrm{OGr}(\Omega)^{\pm}(1,2)$ are isomorphic to $\mathbb{P}^1 \times \mathbb{P}^1$, by the maps:
$$\mathbb{P}^1 \times \mathbb{P}^1 \xrightarrow{\sim} \mathrm{OGr}^+(\Omega)(1,2) $$
$$([a:b], [c:d] ) \mapsto \psi([a:b], [c:d] ) \subset \psi([a:b], \mathbb{P}^1 )$$
$$\mathbb{P}^1 \times \mathbb{P}^1 \xrightarrow{\sim} \mathrm{OGr}^-(\Omega)(1,2) $$
$$([a:b], [c:d] ) \mapsto \psi([a:b], [c:d] ) \subset \psi( \mathbb{P}^1, [c:d] ).$$

Note that $\Phi( \psi([a:b], \mathbb{P}^1 ) ) = \psi( \mathbb{P}^1, \Phi[a:b])$, and so
$$\mathcal{X}^{+}_{\Lambda} = \{ \mathcal{L}_{1} \subset \mathcal{L}_2  \in \mathrm{OGr}(\Omega)^{+}(1, 2)  \ | \  \mathcal{L}_{1} \subset \overline{\Phi}(\mathcal{L}_2) \}$$
can be identified with $\{ ( [a:b], \Phi[a:b]) \} \subset  \mathbb{P}^1 \times \mathbb{P}^1 .$

Similarly, $\mathcal{X}^{-}_{\Lambda}$ can be identified with $\{ (  \Phi[c:d], [c:d] ) \} \subset \mathbb{P}^1 \times \mathbb{P}^1$. Therefore, both $\mathcal{X}^{+}_{\Lambda}$ and $\mathcal{X}^{-}_{\Lambda}$ are isomorphic (over $\F_{p^2}$, so also over $k$) to $\mathbb{P}^1$.

\end{proof}

\begin{proposition}\label{SX}
For any field $k'$ over $k$, there is a bijection:
$$\mathcal{S}_{\Lambda}(k') \longleftrightarrow \mathcal{X}_{\Lambda}(k').$$
Given by:
$$L \mapsto \mathcal{L} := [L] \in \Lambda/\Lambda^\vee \otimes_{\F_p} k' =  \Omega \otimes_{k} k '.$$
\end{proposition}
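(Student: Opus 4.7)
\textbf{Proof plan for Proposition \ref{SX}.}

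The approach is to show that the map $L \mapsto \mathcal{L}$ is well-defined, and that the inverse is given by pulling back to $\Lambda \otimes_{\Z_p} W'$: that is, for $\mathcal{L} \in \mathcal{X}_\Lambda(k')$, I will define $L$ to be the preimage of $\mathcal{L}$ under the projection $\Lambda \otimes_{\Z_p} W' \twoheadrightarrow \Omega \otimes_k k'$, and then show each property defining $\mathcal{S}_\Lambda(k')$ corresponds to the matching property defining $\mathcal{X}_\Lambda(k')$.

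First I would show the map is well-defined. Given $L \in \mathcal{S}_\Lambda(k')$, the condition $\Lambda^\vee \subset L$ (where I conflate $\Lambda^\vee$ with $\Lambda^\vee \otimes_{\Z_p} W'$) combined with the self-duality of $L$ with respect to $[\cdot,\cdot]$ gives
\[
\Lambda^\vee \otimes_{\Z_p} W' \;\subset\; L \;=\; L^\vee \;\subset\; (\Lambda^\vee \otimes_{\Z_p} W')^\vee \;=\; \Lambda \otimes_{\Z_p} W',
\]
so $\mathcal{L} := L/(\Lambda^\vee \otimes_{\Z_p} W')$ is naturally a subspace of $\Lambda/\Lambda^\vee \otimes_{\F_p} k' = \Omega \otimes_k k'$. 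The form $p[\cdot,\cdot]$ is well-defined and integer-valued on $\Lambda \otimes W'$ and descends to the form $[\cdot,\cdot]_\Lambda$ on the quotient $\Omega \otimes k'$; tracing through, the self-duality $L = L^\vee$ is precisely the condition that $\mathcal{L} = \mathcal{L}^\perp$ in $(\Omega \otimes k', [\cdot,\cdot]_\Lambda)$. Since $\Omega \otimes k'$ is a nondegenerate quadratic space of dimension $2d$, this forces $\mathcal{L}$ to be a totally isotropic subspace of dimension exactly $d$. For the remaining dimension condition, observe that because $\Phi$ preserves $\Lambda$ and $\Lambda^\vee$, applying $\overline{\Phi}$ to $\Lambda^\vee \otimes W' \subset L \subset \Lambda \otimes W'$ yields $\Lambda^\vee \otimes W' \subset \overline{\Phi}(L) \subset \Lambda \otimes W'$. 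Therefore $L + \overline{\Phi}(L)$ is also squeezed between $\Lambda^\vee \otimes W'$ and $\Lambda \otimes W'$, and the special-lattice condition that $(L + \overline{\Phi}(L))/L$ has length $1$ translates exactly to $\dim_{k'}(\mathcal{L} + \overline{\Phi}(\mathcal{L})) = d + 1$. Hence $\mathcal{L} \in \mathcal{X}_\Lambda(k')$.

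Next I would construct the inverse. Given $\mathcal{L} \in \mathcal{X}_\Lambda(k')$, define $L$ to be the preimage of $\mathcal{L}$ under $\Lambda \otimes_{\Z_p} W' \twoheadrightarrow \Omega \otimes_k k'$. Then $\Lambda^\vee \otimes W' \subset L \subset \Lambda \otimes W'$ by construction, so $L$ is a $W'$-lattice containing $\Lambda^\vee$. Reversing the dictionary of the previous paragraph, the fact that $\mathcal{L}$ is Lagrangian gives $L = L^\vee$, and the dimension condition $\dim(\mathcal{L} + \overline{\Phi}(\mathcal{L})) = d+1$ gives that $(L + \overline{\Phi}(L))/L$ has length $1$. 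Thus $L \in \mathcal{S}_\Lambda(k')$, and by construction the two maps are mutually inverse.

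The main (minor) obstacle is bookkeeping: verifying that self-duality of $L$ in $V'$ corresponds to the Lagrangian condition on $\mathcal{L}$ in $(\Omega \otimes k', [\cdot,\cdot]_\Lambda)$, which requires checking that the pairing $p[\cdot,\cdot]$ descends correctly to the quotient and that the $p$-scaling in Definition of $[\cdot,\cdot]_\Lambda$ matches the duality in $V'$. Everything else is a straightforward translation via the sandwich $\Lambda^\vee \otimes W' \subset L \subset \Lambda \otimes W'$.
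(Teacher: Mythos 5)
Your proof is correct and takes essentially the same approach as the paper: establish the sandwich $\Lambda^\vee \otimes_{\Z_p} W' \subset L \subset \Lambda \otimes_{\Z_p} W'$ from self-duality of $L$, use the resulting bijection between such lattices and subspaces of $\Omega \otimes_k k'$, and check that self-duality translates to the Lagrangian condition and the length-one condition to $\dim(\mathcal{L} + \overline{\Phi}(\mathcal{L})) = d+1$. You spell out the inverse map and the pairing bookkeeping slightly more explicitly than the paper does, which is fine but not a different argument.
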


\begin{proof}
Recall that $\mathcal{S}_{\Lambda}(k')$ is the set of special lattices $L \subset V'$ containing $\Lambda^\vee$. Because special lattices are self-dual, for any  $L \in \mathcal{S}_{\Lambda}(k')$, we have the following chain condition:

$$\Lambda^\vee \otimes_{Z_p} W' \subset L \subset \Lambda \otimes_{Z_p} W'.$$

As $\Omega \otimes_k k'  \cong (\Lambda \otimes_{\Z_p} W')/(\Lambda^\vee \otimes_{\Z_p} W')$, there is a bijection between subspaces $\mathcal{L}$ of $\Omega \otimes_k k'$ and $W'$-lattices $L$ satisfying the chain condition above. 

Note that, as suggested by the notation, this bijection is $\Phi$-equivariant for the action of the slope-zero operator $\Phi$ originally defined on $V'  = \bigwedge_{W'_{\Q}}^2 \N'$ and Frobenius $\Phi$ acting on $\Omega \otimes_{k} k'$. 

The two conditions for a $W'$-lattice $L \subset V$ to be a special lattice are:

\begin{enumerate}
\item{$L$ is self-dual with respect to $[\cdot, \cdot]$ }
\item{ $(L + \overline{\Phi}(L))/L$ has length 1. }
\end{enumerate}

The first condition is equivalent to $\mathcal{L}$ being a Lagrangian subspace of $\Omega$ (and so of dimension $d$) and the second condition is equivalent to $\dim_{k'}( \mathcal{L} + \overline{\Phi}(\mathcal{L}) ) = d+1$. 
\end{proof}

\begin{corollary}\label{mapsum}
For any field $k'$ over $k$ and vertex lattice $\Lambda$ of type $2d$, we have constructed a map on $k'$-points:
$$\mathcal{N}_{\Lambda, 0}(k')   \longrightarrow \mathcal{X}_{\Lambda}(k').$$
\end{corollary}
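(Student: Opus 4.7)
The plan is simply to compose the bijections already constructed in the preceding sections. By Corollary~\ref{NMS} there is a bijection $\mathcal{N}_{\Lambda, 0}(k') \longleftrightarrow \mathcal{S}^+_{\Lambda}(k')$ sending $(G,\rho)$ to $\frac{1}{p}\bigwedge_{W'}^2 \rho(M)_1$. By definition $\mathcal{S}^+_\Lambda(k') \subset \mathcal{S}_\Lambda(k')$, and Proposition~\ref{SX} provides a bijection $\mathcal{S}_{\Lambda}(k') \longleftrightarrow \mathcal{X}_{\Lambda}(k')$ sending a special lattice $L$ to its image $[L]$ in $\Omega \otimes_k k'$. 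Composing yields the desired map
$$
(G,\rho) \; \longmapsto \; \rho(M) \; \longmapsto \; \tfrac{1}{p}\bigwedge_{W'}^2 \rho(M)_1 \; \longmapsto \; \bigl[\tfrac{1}{p}\bigwedge_{W'}^2 \rho(M)_1\bigr] \in \mathcal{X}_\Lambda(k').
$$

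There is really nothing more to verify: each arrow is well-defined on $k'$-points by the cited results, and composing injections gives an injection, so I would simply state this as a one-line proof. It is worth remarking (though not required by the statement) that since the image of $\mathcal{N}_{\Lambda,0}(k')$ lands in the "very special" lattices $\mathcal{S}^+_\Lambda(k')$ rather than all of $\mathcal{S}_\Lambda(k')$, the map will in fact factor through one of the two connected components $\mathcal{X}^\pm_\Lambda$ introduced earlier in Section~\ref{Section5}; this observation is what sets up the refinement needed later to prove that the map is an isomorphism onto one component.

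The only genuine content here is bookkeeping rather than mathematics, so I expect no obstacle: all the work was done in Corollary~\ref{NMS} (identifying Rapoport--Zink points with very special lattices) and in Proposition~\ref{SX} (identifying special lattices with points of the orthogonal Grassmannian locus). The main task of the next subsection, not of this corollary, will be to upgrade this map on $k'$-points to a morphism of $k$-schemes and then to an isomorphism onto a connected component of $\mathcal{X}_\Lambda$.
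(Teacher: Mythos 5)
Your proposal is correct and matches the paper's proof exactly: the paper also constructs the map by composing the bijection of Corollary~\ref{NMS} (identifying $\mathcal{N}_{\Lambda,0}(k')$ with $\mathcal{S}^+_\Lambda(k')$) with the bijection of Proposition~\ref{SX} (identifying $\mathcal{S}_\Lambda(k')$ with $\mathcal{X}_\Lambda(k')$), noting as you do that the image lands in $\mathcal{S}^+_\Lambda(k')$.
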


\begin{proof}
Combine Corollary ~\ref{NMS} and Proposition ~\ref{SX}. Concretely, this map can be described as follows if $(G, \rho) \in \mathcal{N}_{\Lambda,0}$, let $L = \frac{1}{p} \bigwedge_{W'}^2 (\rho(M) )_1 \subset V'$. This is a (very) special lattice containing $\Lambda^\vee$, so let $\mathcal{L} \subset \Omega \otimes_k k'$ be the resulting subspace. The map is then:
$$(G, \rho) \mapsto \mathcal{L}.$$
Note that this map factors through the set $\mathcal{S}_{\Lambda}^+(k')$.
\end{proof}

Recall that we have the subset of very special lattices $\mathcal{S}^+_{\Lambda}(k') \subset \mathcal{S}_{\Lambda}(k')$, and we also have two components $\mathcal{X}^{\pm}_{\Lambda}(k') \subset \mathcal{X}_{\Lambda}(k')$. As suggested by the notation, the map described above in Corollary ~\ref{mapsum} will not be a bijection, but we will show that it is the result of an isomorphism between $\mathcal{N}_{\Lambda,0}$ and one of the components $\mathcal{X}^{\pm}_{\Lambda}$.

\subsection{Algebraically Defined}\label{algebraicallydefined}

The goal of this section is to show that the map on $k'$-points described above in Corollary ~\ref{mapsum} is the result of a morphism of varieties $\mathcal{N}_{\Lambda,0} \rightarrow \mathcal{X}_{\Lambda}$.

The statement and proof of the following proposition follows the statement and  proof of Theorem 3.9 of ~\cite{GU22} very closely, and are included here for completion. The key difference is that in ~\cite{GU22}, the authors construct a map from (the dual of) a vertex lattice to a set of endomorphisms of their basepoint p-divisible group, while in this case (the dual of) a vertex lattice determines a set of endomorphisms of $\G \times \G^*$. The background necessary to discuss endomorphisms of $\G \times \G^*$ has been built up throughout this paper, including the properties of the Hodge star operator and the conclusions of Section \ref{Lattices Stabilized by Ends}.

\begin{proposition}\label{morphism}
Let $\Lambda$ be a vertex lattice. There is a morphism of $k$-schemes 
$$\mathcal{N}_{\Lambda,0} \rightarrow \mathcal{X}_{\Lambda}$$ 
inducing the map described in Corollary ~\ref{mapsum} on $k'$-points, for any field $k'$ over $k$.
\end{proposition}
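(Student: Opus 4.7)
The plan is to adapt the proof of Theorem 3.9 in \cite{GU22} to our setting, using the crystalline Dieudonn\'e theory of the universal $p$-divisible group over $\mathcal{N}_{\Lambda,0}$. The essential modification is that a vertex lattice $\Lambda \subset V^{\Phi}$ here yields endomorphisms of $\underline{G} = G \times G^*$ rather than of $G$ alone, so the relevant crystal is that of $\underline{G}$, and the interpretation of $V$ as special endomorphisms via the Hodge star operators of Proposition \ref{compositionform} plays the central role.

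First I would set up the universal objects. Let $G$ be the universal $p$-divisible group over $\mathcal{N}_{\Lambda,0}$ with universal quasi-isogeny $\rho$, and let $\underline{G} = G \times G^*$ with induced quasi-isogeny $\underline{\rho}$. By the defining property of $\mathcal{N}_{\Lambda,0}$ together with the isomorphism $V \cong \End_{\star}(\underline{\N})$, every $x \in \Lambda^\vee$ gives an endomorphism $\underline{\rho}^{-1} \circ \widetilde{x} \circ \underline{\rho} \in \End(\underline{G})$, furnishing a canonical $\Z_p$-linear map $\Lambda^\vee \to \End(\underline{G})$. Applying the covariant crystalline Dieudonn\'e functor produces a locally free sheaf $\mathbb{D}(\underline{G})$ of rank $8$ on $\mathcal{N}_{\Lambda,0}$ together with a Hodge filtration $\mathrm{Fil}^1 \subset \mathbb{D}(\underline{G})$, along with an $\mathcal{O}$-linear action of $\Lambda^\vee$ on $\mathbb{D}(\underline{G})$ which preserves $\mathrm{Fil}^1$ by Corollary \ref{filtrationsagain} applied fiberwise and the reducedness of $\mathcal{N}_{\Lambda,0}$.

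The heart of the construction, and the step I expect to be the main obstacle, is to produce from these data a canonical totally isotropic rank-$d$ local direct summand $\mathcal{L} \subset \Omega \otimes_{k} \mathcal{O}_{\mathcal{N}_{\Lambda,0}}$ whose fiber at any $k'$-point $(G, \rho)$ is the subspace of Corollary \ref{mapsum}, namely the image of the very special lattice $L = \frac{1}{p}\bigwedge^2 (\rho(M))_1$ under the projection $\Lambda \otimes W' \twoheadrightarrow \Omega \otimes k'$. Following \cite{GU22}, I would upgrade the pointwise endomorphism-lattice identifications of Propositions \ref{verystrange}, \ref{filtsum}, and \ref{otherfiltsum} to a global statement about the universal crystal, defining $\mathcal{L}$ intrinsically in terms of the Hodge filtration on $\mathbb{D}(\underline{G})$ together with the induced action of $\Lambda$. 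Reducedness of $\mathcal{N}_{\Lambda,0}$ combined with the pointwise dimension count then ensures that the resulting subsheaf is a local direct summand of constant rank $d$.

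Finally, I would verify the image lies in $\mathcal{X}_\Lambda$: total isotropy with respect to $[\cdot,\cdot]_\Lambda$ follows from the self-duality of $L$ (part (1) of Definition \ref{defspecial}), and the incidence condition $\dim(\mathcal{L} + \overline{\Phi}(\mathcal{L})) = d+1$ follows from the length-one condition (part (2) of Definition \ref{defspecial}), using that $\Phi$ on $\mathbb{D}(\underline{G})$ is induced by the divided Frobenius on the crystal. Compatibility with Corollary \ref{mapsum} on $k'$-points is then built into the construction by design. As the author notes, the whole argument closely parallels \cite{GU22} Theorem 3.9, with the role played there by $G$ now played by $\underline{G}$ and the Hodge star operators on $V$.
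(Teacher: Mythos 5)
Your proposal correctly identifies the overall strategy---follow \cite{GU22} Theorem 3.9, apply the crystalline Dieudonn\'e functor to the universal $\underline{G} = G \times G^*$, use the special-endomorphism interpretation of $V$ via the Hodge star operators, and exploit Propositions \ref{verystrange}, \ref{filtsum}, \ref{otherfiltsum} together with reducedness to globalize. You are also right to flag the construction of $\mathcal{L}$ as the crux. But the proposal does not actually say how $\mathcal{L}$ is produced, and the naive attempt hidden behind the phrase ``defining $\mathcal{L}$ intrinsically in terms of the Hodge filtration together with the induced action of $\Lambda$'' would not work. The subspace you want, $\mathcal{L}$, which records $L = \frac{1}{p}\bigwedge^2 (\rho(M))_1$, is \emph{not} a kernel of the action of $\Lambda^\vee$ on either $\mathcal{D}$ or $\mathcal{D}_1$. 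The two things that arise functorially from the crystal are $\ker(\psi)$, the annihilator of $\mathcal{D}$, and $\ker(\psi_1)$, the annihilator of $\mathcal{D}_1$. Under the identification $\mathrm{rad}((\Lambda^\vee/p\Lambda^\vee)\otimes R) \cong \Omega \otimes_k R$ these correspond at field points, via Propositions \ref{verystrange} and \ref{otherfiltsum}, to $\mathcal{L}^\#$ (the image of $\bigwedge^2 \rho(M)$) and $\mathcal{K}$ (the image of $\frac{1}{p}\bigwedge^2 (\rho(M))_1 + \bigwedge^2 \rho(M)$)---neither of which is $\mathcal{L}$.

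The missing idea, and the main content of the paper's proof, is how to recover $\mathcal{L}$ from $\mathcal{L}^\#$ and $\mathcal{K}$ alone. One has $\mathcal{K}^\perp \subset \mathcal{L}^\# \subset \mathcal{K}$ with $\mathcal{K}/\mathcal{K}^\perp$ a rank-two quadratic space, and $\mathcal{L}^\#$ determines one of its two isotropic lines. The paper defines $\mathcal{L}$ as the unique \emph{other} totally isotropic rank-$d$ local direct summand of $\mathcal{K}$ (equivalently, the other isotropic line in $\mathcal{K}/\mathcal{K}^\perp$), and checks over a field that this agrees with the image of $L$. To make this work over a general reduced $R$ of finite type, the paper invokes the pointwise computations together with Exercise X.16 of \cite{Lang} to get that $\mathcal{L}^\#$ and $\mathcal{K}$ are local direct summands of the correct ranks, after which $\mathcal{L}$ is well-defined. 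Without this ``other isotropic line'' device your proposal has no functorial description of $\mathcal{L}$, so the globalization step you describe would stall precisely where you anticipated difficulty.
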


\begin{proof}
Let $R$ be either a reduced $k$-algebra of finite type or a field over $k$, and let $(G, \rho) \in \mathcal{N}_{\Lambda,0}(R)$. By definition of $\mathcal{N}_{\Lambda,0}$, there is a map of $\Z_p$-modules:
$$\Lambda^\vee \rightarrow \mathrm{End}(G \times_R G^*)$$
$$x \mapsto  \underline{\rho}^{-1} \circ \widetilde{x} \circ \underline{ \rho}.$$

Let $\mathcal{D}$ be the covariant Grothendieck-Messing crystal of $G \times_R G^*$ evaluated at the trivial divided-power thickening $\mathrm{Spec}(R) \rightarrow \mathrm{Spec}(R)$. Then $\mathcal{D}$ is a locally free $R$-module, with an exact sequence:
$$0 \rightarrow \mathcal{D}_1 \rightarrow \mathcal{D} \rightarrow \mathrm{Lie}(G \times_R G^*) \rightarrow 0.$$

By the functoriality of the pair $\mathcal{D}_1 \subset \mathcal{D}$, we have induced morphisms of $R$-modules:
$$\psi: (\Lambda^\vee / p \Lambda^\vee ) \otimes_{\F_p} R \rightarrow \mathrm{End}_R( \mathcal{D} ) \quad \text{and} \quad \psi_1: (\Lambda^\vee / p \Lambda^\vee ) \otimes_{\F_p} R \rightarrow \mathrm{End}_R( \mathcal{D}_1 ).$$
Note that $\mathrm{ker}(\psi) \subset \mathrm{ker}(\psi_1)$. 

Recall that the $W_{\Q}$-valued pairing $[\cdot, \cdot]$ on $V$ induces a $\Q_p$-valued pairing also denoted $[\cdot, \cdot]$ on $V^{\Phi}$. As $\Lambda^\vee \subset V^{\Phi}$, we may restrict this pairing to a pairing on $\Lambda^\vee$. Further, because $\Lambda^\vee \subset \Lambda$, the pairing is $\Z_p$-valued on $\Lambda^\vee$. So, we may reduce modulo $p$ and extend scalars to $R$ to get an $R$-valued pairing on $ (\Lambda^\vee / p \Lambda^\vee ) \otimes_{\F_p} R .$ This pairing will also be denoted $[\cdot, \cdot]$. 

Also recall that, by Proposition ~\ref{Pairingiscomp}, for any $x,y \in \bigwedge_{W_{\Q}}^2 \N$,
$$[x,y] = \widetilde{x} \circ \widetilde{y} + \widetilde{y} \circ \widetilde{x}$$
where the right hand side is considered as multiplication by an element of $W_{\Q}$ as an endomorphism of $\underline{\N}$. After extending scalars to $R$, the same equality holds for $x,y \in (\Lambda^\vee/  p\Lambda^\vee) \otimes_{\F_p} R$, with the right hand side interpreted as the induced endomorphism of~$\mathcal{D}$.

In particular, if $x \in \mathrm{ker}(\psi_1)$ and $y \in (\Lambda^\vee/  p\Lambda^\vee) \otimes_{\F_p} R$, then $\widetilde{x} \circ \widetilde{y} + \widetilde{y} \circ \widetilde{x}$ is multiplication by an element of $R$, that is zero when restricted to $\mathcal{D}_1 \subset \mathcal{D}$. So,
$$[x,y] = \widetilde{x} \circ \widetilde{y} + \widetilde{y} \circ \widetilde{x} = 0$$
and therefore $\mathrm{ker}(\psi_1) \subset \mathrm{rad} ( (\Lambda^\vee/  p\Lambda^\vee) \otimes_{\F_p} R)$.

Multiplication by $p^{-1}$ induces an isomorphism:
$$\mathrm{rad} ( (\Lambda^\vee/  p\Lambda^\vee) \otimes_{\F_p} R) = (p \Lambda/ p \Lambda^\vee) \otimes_{\F_p} R \cong (\Lambda/ \Lambda^\vee) \otimes_{\F_p} R \cong \Omega \otimes_{k} R.$$
Let $\mathcal{L}^\# \subset \mathcal{K}$ be the images of $\mathrm{ker}(\psi) \subset \mathrm{ker}(\psi_1)$ under this isomorphism.

We claim that there is another distinguished $\mathcal{L} \subset \mathcal{K}$. To see this, first consider the case that $R=k'$ is a field extension of $k$. Then the point $(G, \rho) \in \mathcal{N}_{\Lambda}(k')$ corresponds to a Dieudonn\'e lattice $\rho(M) \subset \N'$, and the pair $\mathcal{D}_1 \subset \mathcal{D}$ can be canonically identified with:
$$(\underline{\rho}( M \times M^* ) )_1/ p(\underline{\rho} (M \times M^* )  ) \subset \underline{\rho}( M \times M^* ) / p(\underline{\rho} (M \times M^* )  ). $$

Under these identifications,
$$\mathcal{L}^\# = \{ x \in ( \Lambda / \Lambda^\vee) \otimes_{\F_p} k' \  | \  \widetilde{x}(  \underline{\rho}( M \times M^* ) ) \subset  \underline{\rho}( M \times M^* ) \}$$
$$\mathcal{K} = \{ x \in ( \Lambda / \Lambda^\vee) \otimes_{\F_p} k' \  | \  \widetilde{x} ( (\underline{\rho}( M \times M^* ) )_1 ) \subset  \underline{\rho}( M \times M^* ) \}.$$

Let $L^\#$ be the lattice in $\bigwedge_{W'_{\Q}}^2 \N'$ corresponding to the subspace $\mathcal{L}^\#$ and let $K$ be the lattice corresponding to $\mathcal{K}$. Then,
$$L^\# = \{ x \in \bigwedge_{W'_{\Q}}^2 \N' \   | \  \widetilde{x} ( \underline{\rho}( M \times M^* ) ) \subset \underline{\rho}( M \times M^* ) \}$$
$$K = \{ x \in \bigwedge_{W'_{\Q}}^2 \N'  \ | \ \widetilde{x} ( (\underline{\rho}( M \times M^* ) )_1 ) \subset \underline{\rho}( M \times M^* ) \}.$$

Noting that $\underline{\rho}(M \times M^*) = \rho(M) \times \rho(M)^*$, by Propositions ~\ref{verystrange} and ~\ref{otherfiltsum} respectively, we have:
$$L^\# =  \bigwedge_{W'}^2 \rho(M) \quad \quad \text{and} \quad \quad K = \frac{1}{p} \bigwedge_{W'}^2 (\rho(M))_1 + \bigwedge_{W'}^2 \rho(M).$$

Let $L = \bigwedge_{W'}^2 \rho(M)_1$, and let $\mathcal{L}$ be the subspace corresponding to $L$. Our objective is to define $\mathcal{L}$ functorially from $\mathcal{L}^+$ and $\mathcal{K}$ without reference to Dieudonn\'e lattices (which we may only use over a field).

By Proposition ~\ref{SX}, $\mathcal{L}^\# \subset \Omega \otimes_k k'$ is totally isotropic of dimension $d$, and $\mathcal{K}$ has dimension $d+1$. Because $\mathcal{K}^\perp$ is $d-1$ dimensional, and $\mathcal{K}^\perp \subset \mathcal{L}^\# \subset \mathcal{K}$, we can view $\mathcal{K}/\mathcal{K}^\perp$ as a two-dimensional quadratic space. Since $\mathcal{L}^\#$ corresponds to one isotropic line in $\mathcal{K}/\mathcal{K}^\perp$, there is a unique subspace of $\Omega \otimes_k k'$ corresponding to the other isotropic line. This other subspace is $\mathcal{L}$.


Just as in ~\cite{GU22}, even when $R$ is not a field, there is a way to define  $\mathcal{L} \subset \mathcal{K} \subset \Omega \otimes_k R$. 
For a general $R$, reduced $k$-algebra of finite type, one may use the previous paragraph and Exercise X.16 of \cite{Lang}  to conclude that $\mathcal{L}^\#$ is a totally isotropic rank $d$ local direct summand of $\Omega \otimes_{k} R$, and $\mathcal{K}$ is a rank $d+1$ local direct summand. There is a unique totally isotropic rank $d$ local direct summand of $\Omega \otimes_k R$ contained in $\mathcal{K}$ that is not equal to   $ \mathcal{L}^\#$. Call this local direct summand $\mathcal{L}$. This defines the map:
$$\mathcal{N}_{\Lambda,0}(R) \rightarrow \mathrm{OGr}(d)(R)$$
$$(G, \rho) \mapsto \mathcal{L}$$
for any reduced $k$-algebra $R$ of finite type. As $\mathcal{N}_{\Lambda,0}$ is reduced and locally of finite type, this is enough to define a morphism of schemes:
$$\mathcal{N}_{\Lambda,0} \rightarrow \mathrm{OGr}(d).$$

Further, as  $\mathcal{X}_{\Lambda} \subset \mathrm{OGr}(d)$ is closed, and on field-valued points this morphism has image in $\mathcal{X}_{\Lambda}$, we have defined a morphism of schemes:
$$\mathcal{N}_{\Lambda,0} \rightarrow \mathcal{X}_{\Lambda}$$
inducing the map of Corollary ~\ref{mapsum} on $k'$-points.

\end{proof}

\subsection{Computing the Image}\label{Computingimage}

In the previous section, we showed that there is a morphism:
$$\mathcal{N}_{\Lambda,0} \rightarrow \mathcal{X}_{\Lambda}$$
inducing the map on $k'$-points $\mathcal{N}_{\Lambda,0}(k') \longrightarrow \mathcal{X}_{\Lambda}(k')$ described in Corollary ~\ref{mapsum}, for every field $k'$ over $k$. The goal of this section is to show that the morphism constructed in the previous section defines an isomorphism from $\mathcal{N}_{\Lambda,0}$ to $\mathcal{X}^+_{\Lambda}$.

\begin{proposition}\label{swaps}
For any vertex lattice $\Lambda$, the map $L \mapsto \overline{\Phi}(L)$ induces a map from $\mathcal{S}_{\Lambda}(k')$ to itself, which takes $\mathcal{S}^+_{\Lambda}(k')$  to $\mathcal{S}^-_{\Lambda}(k')$ and takes $\mathcal{S}^-_{\Lambda}(k')$  to $\mathcal{S}^+_{\Lambda}(k')$. Over $k$, this gives a bijection from $\mathcal{S}_{\Lambda}(k)$ to itself.

\end{proposition}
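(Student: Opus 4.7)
The plan is to verify three things in sequence: (i) that $\overline{\Phi}(L)\in\mathcal{S}_\Lambda(k')$ whenever $L\in\mathcal{S}_\Lambda(k')$, so that $\overline{\Phi}$ really gives a self-map; (ii) that this map interchanges the two components $\mathcal{S}^+_\Lambda$ and $\mathcal{S}^-_\Lambda$; and (iii) that over the algebraically closed field $k$ the operator $\overline{\Phi}$ is a bijection on $\mathcal{S}_\Lambda(k)$.

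For (i), self-duality of $\overline{\Phi}(L)$ with respect to $[\cdot,\cdot]$ is immediate from the identity $[\Phi(x),\Phi(y)]=[x,y]^\sigma$ used repeatedly in the paper. The condition that $(\overline{\Phi}(L)+\overline{\Phi}(\overline{\Phi}(L)))/\overline{\Phi}(L)$ has length $1$ is obtained by applying the $\sigma$-semilinear operator $\overline{\Phi}$ to the length-$1$ quotient $(L+\overline{\Phi}(L))/L$. Finally, because $\Lambda^\vee\subset V^\Phi$ consists of $\Phi$-fixed vectors, $\Lambda^\vee\subset L$ forces $\Lambda^\vee=\Phi(\Lambda^\vee)\subset\overline{\Phi}(L)$; equivalently, one invokes the second description of $\mathcal{S}_\Lambda(k')$, in which the containment $\Lambda^\vee\subset\overline{\Phi}(L)$ was built in from the start.

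For (ii), the computation is the key point. If $L=\frac{1}{p}\bigwedge_{W'}^2 A$ for a $W'$-lattice $A\subset\N'$, then using $\Phi(a\wedge b)=p^{-1}F(a)\wedge F(b)$ and passing to the $W'$-module generated by the image one gets
\[
\overline{\Phi}(L)=\tfrac{1}{p^2}\bigwedge\nolimits_{W'}^2\overline{F}(A)=\bigwedge\nolimits_{W'}^2\bigl(\tfrac{1}{p}\overline{F}(A)\bigr),
\]
using the rank-$2$ identity $\bigwedge^2(cB)=c^2\bigwedge^2 B$. This exhibits $\overline{\Phi}(L)$ as an element of $\mathcal{S}^-_\Lambda(k')$. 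Conversely, if $L=\bigwedge_{W'}^2 A\in\mathcal{S}^-_\Lambda(k')$, the same calculation yields $\overline{\Phi}(L)=\frac{1}{p}\bigwedge_{W'}^2\overline{F}(A)\in\mathcal{S}^+_\Lambda(k')$. By Lemma~\ref{twotypesoflattices} no special lattice belongs to both types, so the swap is unambiguous.

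For (iii), I use that over the perfect field $k$ the Frobenius $\sigma$ is an automorphism of $W$, and $F\colon\N\to\N$ is a $\sigma$-semilinear bijection of $W_\Q$-vector spaces. Hence $\Phi$ is a $\sigma$-semilinear bijection of $V$ carrying $W$-lattices to $W$-lattices (no closure needed), and $\Phi^{-1}$ is an honest $\sigma^{-1}$-semilinear operator. The arguments in (i) and (ii) apply verbatim to $\Phi^{-1}$, showing that it also preserves $\mathcal{S}_\Lambda(k)$ and swaps the two components; it is then a two-sided inverse for $\overline{\Phi}=\Phi$, giving the desired bijection. The only mildly delicate point is the type-swap computation, which is why I would carry it out explicitly and invoke the disjointness from Lemma~\ref{twotypesoflattices}; everything else is a direct consequence of properties of $\Phi$ already established.
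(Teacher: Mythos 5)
Your proof is correct, and it reaches the conclusion by a somewhat more direct route than the paper. The paper establishes the length-one condition for $\overline{\Phi}(L)$ by passing through lattices in $\N'$: it proves an auxiliary Lemma (its Lemma~\ref{lotsofords}) giving an equivalence between the length-one condition on a wedge-square and a chain condition $pC\subset\overline{F}(C)\subset C$ with prescribed quotient dimensions, then applies that lemma in both directions, sandwiching a step where the chain for $A\supset\overline{F}(A)$ is converted to the analogous chain for $\overline{F}(A)\supset\overline{F}^2(A)$. You instead apply $\overline{\Phi}$ once, directly to the length-one quotient $(L+\overline{\Phi}(L))/L$ inside $V'$. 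This is cleaner and in fact obviates the paper's Lemma~\ref{lotsofords} entirely. The only elision is the assertion that $\overline{\Phi}$ preserves lengths of quotients of lattices, i.e.\ that $\overline{\Phi}(L)\subset\overline{\Phi}(L)+\overline{\Phi}^2(L)$ has the same colength as $L\subset L+\overline{\Phi}(L)$. This does deserve a line: $\Phi$ is $\sigma$-semilinear and carries any $W'_\Q$-basis of $V'$ to a $W'_\Q$-basis (since the matrix of a basis in terms of a $\Phi$-fixed basis has nonzero determinant, and $\sigma$ is injective), so $\overline{\Phi}$ carries a $W'$-basis of any lattice to a $W'$-basis of its image, hence preserves elementary divisors. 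The same implicit step occurs in the paper's argument when $\overline{F}$ is applied to the chain, so you are not introducing any new risk---only localizing where the elementary-divisor bookkeeping happens. Your treatment of the type swap via the identity $\overline{\Phi}(\frac{1}{p}\bigwedge^2 A)=\bigwedge^2(\frac{1}{p}\overline{F}(A))$ and of the bijectivity over $k$ via $\Phi^{-1}$ being an honest $\sigma^{-1}$-semilinear operator both match the paper's intent, with the latter spelled out slightly more fully than in the paper.
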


The proof of this proposition will require a lemma.

\begin{lemma}\label{lotsofords}
Let $C \subset \N'$ be a $W'$-lattice. The following two conditions are equivalent:
\begin{enumerate}
\item{ $$pC \subset \overline{F}(C) \subset C$$
$$\text{where   } \dim_{k'}(C/\overline{F}(C) ) = \dim_{k'}( \overline{F}(C)/pc) = 2$$ } 

\item{$$(\bigwedge_{W'}^2 C + \frac{1}{p} \bigwedge_{W'}^2 \overline{F}(C) )/ \bigwedge_{W'}^2 C \quad \text{has length 1}.$$}
\end{enumerate}
\end{lemma}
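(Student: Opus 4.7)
The plan is to reduce everything to a computation with elementary divisors, using the fact that, regardless of $C$, the lattice $\overline{F}(C)$ has height exactly $\mathrm{ht}(C)+2$. To see this last point, write $C = g\mathbb{M}'$ for some $g \in \GL(\N')$; then $\overline{F}(C) = \sigma(g)\cdot\overline{F}(\mathbb{M}')$, and since $\overline{F}(\mathbb{M}')$ has index $p^2$ in $\mathbb{M}'$ while $\sigma$ preserves $p$-adic valuation of determinants, the claim follows.

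Next, by the elementary divisor theorem choose a $W'$-basis $\{c_i\}_1^4$ of $C$ such that $\{d_ic_i\}_1^4$ is a $W'$-basis of $\overline{F}(C)$, with $d_i \in W'_{\mathbb{Q}}$, and set $a_i = \ord_p(d_i) \in \Z$. The height computation above gives
\[
a_1 + a_2 + a_3 + a_4 = 2.
\]
The chain condition $pC \subset \overline{F}(C) \subset C$ is equivalent to $0 \le a_i \le 1$ for all $i$, and combined with $\sum a_i = 2$, this forces exactly two of the $a_i$ to equal $1$ and two to equal $0$, which automatically gives both dimension equalities in condition~(1). Hence condition~(1) is equivalent to $a_i \in \{0,1\}$ for every $i$.

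On the other hand, $\bigwedge_{W'}^2 C$ has basis $\{c_i \wedge c_j\}_{i<j}$, while $\frac{1}{p}\bigwedge_{W'}^2 \overline{F}(C)$ has basis $\{p^{a_i+a_j-1}\, c_i\wedge c_j\}_{i<j}$. Thus the sum has basis with $p$-adic valuations $\min(0,\, a_i+a_j-1)$ on each generator, and the length of the quotient by $\bigwedge_{W'}^2 C$ is
\[
\sum_{i<j} \max\bigl(0,\; 1 - a_i - a_j\bigr).
\]
Condition~(2) is therefore the statement that this sum equals $1$.

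For (1)$\Rightarrow$(2), after relabeling so that $a_1=a_2=0$ and $a_3=a_4=1$, the only nonzero summand occurs at $(i,j)=(1,2)$, contributing $1$. For (2)$\Rightarrow$(1), since each summand is a nonnegative integer, exactly one pair, say $(1,2)$ after relabeling, has $a_1+a_2 \le 0$ with $1-(a_1+a_2)=1$, and all other pairs satisfy $a_i+a_j\ge 1$. From $\sum a_i=2$ one then has $a_3+a_4=2$, and the inequalities $a_3,a_4 \ge 1-a_1$ and $a_3,a_4 \ge 1-a_2$ together with $a_3+a_4=2$ force $a_1=a_2=0$ (any nonzero value of $a_1$ or $a_2$ pushes the lower bound on $a_3+a_4$ strictly above $2$), and then $a_3=a_4=1$. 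This yields condition~(1). The only mildly delicate point is this last case analysis, ruling out configurations such as $(a_1,a_2)=(-1,1)$; that is where the global constraint $\sum a_i = 2$ (equivalently, the slope-zero behaviour of $\overline{F}$ on heights) is essential.
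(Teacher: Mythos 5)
Your proof is correct and follows essentially the same route as the paper's: pick an elementary-divisor basis $\{c_i\}$ of $C$ with $\overline{F}(C)$ spanned by $\{d_ic_i\}$, exploit the slope-zero constraint to pin down $\sum_i \ord_p(d_i) = 2$, and then run a case analysis on the integers $a_i = \ord_p(d_i)$ via the formula $\sum_{i<j}\max(0,1-a_i-a_j)$ for the length. You do make explicit two steps that the paper compresses: the change-of-basis derivation of $\mathrm{ht}(\overline{F}(C)) = \mathrm{ht}(C)+2$ (the paper just cites ``$\Phi$ is slope-zero'' and writes the equivalent statement $3\sum \ord_p(d_i) = 6$), and the inequality-chasing that rules out configurations like $(a_1,a_2) = (-1,1)$, which the paper leaves at ``Combining the above equations, we can conclude\ldots''.
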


\begin{proof}
Let $C \subset \N'$ be a $W'$-lattice, and assume that:
$$pC \subset \overline{F}(C) \subset C$$
where  $\dim_{k'}(C/ \overline{F} (C) ) = \dim_{k'}( \overline{F} (C)/pc) = 2$.
Then there is a basis $\{c_i\}_1^4$ of $C$ as a free $W'$-module such that, without loss of generality, $\overline{F}(C)$ has free $W'$-basis  $\{pc_1, pc_2, c_3, c_4 \}$. Using this basis, it is easy to see that $(\bigwedge_{W'}^2 C + \frac{1}{p} \bigwedge_{W'}^2 \overline{F} (C) )/ \bigwedge_{W'}^2 C$ has length 1.


On the other hand,  assume that $(\bigwedge_{W'}^2 C + \frac{1}{p} \bigwedge_{W'}^2 \overline{F} (C) )/ \bigwedge_{W'}^2 C$ has length 1. Then there is a free basis $\{c_i\}_1^4$ for  $C$ as a $W'$-module such that $\{d_i c_i \}$ is a free basis for $\overline{F} (C)$, for some $d_i \in W'_{\Q}$. Without loss of generality, the condition that $(\bigwedge_{W'}^2 C + \frac{1}{p} \bigwedge_{W'}^2 \overline{F} (C) )/ \bigwedge_{W'}^2 C$ has length 1 produces the conditions:
$$\ord_p(c_1) + \ord_p(c_2) = 0$$
$$\ord_p(c_i) + \ord_p(c_j) \leq 1 \quad \text{for any pair   } i < j, \ \ (i,j) \neq (1,2).$$

Because $\Phi$ is slope-zero, we also have the condition:
$$3\ord_p(c_1) + 3\ord_p(c_2) + 3\ord_p(c_3) + 3\ord_p(c_4) = 6.$$

Combining the above equations, we can conclude that:
$$pC \subset \overline{F} (C) \subset C$$
where $\dim_{k'}(C/\overline{F} (C) ) = \dim_{k'}( \overline{F} (C)/pc) = 2$.

\end{proof}

Now we can prove the proposition.

\begin{proof}(Of Proposition ~\ref{swaps}) Let $\Lambda$ be a vertex lattice, and let $L \in \mathcal{S}_{\Lambda}(k')$. As $\Phi$ is slope zero and $[\Phi(x), \Phi(y)] = [x,y]^\sigma$, the self-duality of $L$ implies the self-duality of $\overline{\Phi}(L)$. Because $\Lambda \subset V^{\Phi}$, the fact that $L$ contains  $\Lambda^\vee$ implies that $\overline{\Phi}(L)$ contains $\Lambda^\vee$. So, to show that action by $\Phi$ gives a map from $\mathcal{S}_{\Lambda}(k')$ to itself, we only need to show that, if $L$ is a special lattice, then $\overline{\Phi}(L)$ meets the length 1 condition.

By Lemma ~\ref{twotypesoflattices}, we have two cases: $L = \bigwedge_{W'}^2 A$ for $A \subset \N'$ a $W'$-lattice, or $L = \frac{1}{p} \bigwedge_{W'}^2 B$ for $B \subset \N'$ a $W'$-lattice. Consider the first case, so $L = \bigwedge_{W'}^2 A$ for $A \subset \N'$ a $W'$-lattice. Using  Lemma ~\ref{lotsofords}, the length one condition for $L$:
$$(\bigwedge_{W'}^2 A + \frac{1}{p} \bigwedge_{W'}^2 \overline{F}(A)  )/ \bigwedge_{W'}^2 A \quad \text{ has length 1}$$
implies that:
$$p\overline{F}(A) \subset \overline{F}^2(A) \subset \overline{F}(A)$$
$$\text{with    } \dim_{k'}( \overline{F}(A)/\overline{F}^2(A) ) = \dim_{k'}(\overline{F}^2(A)/p\overline{F}(A) ) = 2$$
which implies that:
$$(\frac{1}{p} \bigwedge_{W'}^2 \overline{F}(A) + \frac{1}{p^2} \bigwedge_{W'}^2 \overline{F}^2(A) )/ \frac{1}{p} \bigwedge_{W'}^2 \overline{F}(A) \quad  \text{has length 1}.$$
This is precisely the length 1 condition for $\overline{\Phi}(L)$. The other case, where $L$ is of the form $\frac{1}{p} \bigwedge_{W'}^2 B$, similarly follows from two applications of  Lemma ~\ref{lotsofords}.

So, the map $L \mapsto \overline{\Phi}(L)$ gives a map from $\mathcal{S}_{\Lambda}(k')$ to itself, for any vertex lattice $\Lambda$. The fact that $L \mapsto \overline{\Phi}(L)$ takes $\mathcal{S}^{\pm}_{\Lambda}(k')$ to $\mathcal{S}^{\mp}(k')$  is clear from the definition of $\Phi$: for any $x \wedge y \in \bigwedge_{W'_{\Q}}^2 \N'$, by definition $\Phi(x \wedge y) = \frac{1}{p} \Phi(x) \wedge \Phi(y)$.

In the case that $k' = k$ is algebraically closed, one can use the fact that $F$ is a bijection from $\N$ to itself to show that action by $\Phi$ defines a bijection from $\mathcal{S}_{\Lambda}(k)$ to itself.

\end{proof}

\begin{proposition}\label{maptoonetype2}
Let $\Lambda$ be a vertex lattice. After possibly relabelling the components $\mathcal{X}_{\Lambda}^+$ and $\mathcal{X}_{\Lambda}^-$, the morphism:
$$\mathcal{N}_{\Lambda,0} \rightarrow \mathcal{X}_{\Lambda}$$ 
factors through an isomorphism over $k$: 
$$\mathcal{N}_{\Lambda,0} \rightarrow \mathcal{X}^+_{\Lambda}.$$
\end{proposition}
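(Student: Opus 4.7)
The plan is to first verify that the morphism factors through one component $\mathcal{X}^+_{\Lambda}$ after a suitable relabeling, and then to show the resulting map is an isomorphism, handling the type 2 and type 4 cases separately.

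For the factorization, note that by the construction in Proposition \ref{morphism}, on $k'$-points the morphism sends $(G, \rho)$ to the subspace of $\Omega \otimes_k k'$ corresponding (under Proposition \ref{SX}) to the very special lattice $L = \frac{1}{p} \bigwedge^2_{W'} (\rho(M))_1 \in \mathcal{S}^+_{\Lambda}(k')$. By Proposition \ref{swaps}, the slope-zero operator $\overline{\Phi}$ swaps $\mathcal{S}^+_{\Lambda}(k)$ with $\mathcal{S}^-_{\Lambda}(k)$; the bijection of Proposition \ref{SX} is $\Phi$-equivariant; and Frobenius swaps the two components $\mathcal{X}^{\pm}_{\Lambda}$. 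Consequently $\mathcal{S}^+_{\Lambda}(k)$ is mapped bijectively into exactly one of the two open-closed components of $\mathcal{X}_{\Lambda}$, which after possible relabeling we take to be $\mathcal{X}^+_{\Lambda}$. Since $\mathcal{N}_{\Lambda,0}$ is reduced of finite type over $k$ and the preimage of $\mathcal{X}^+_{\Lambda}$ is an open-closed subscheme containing every $k$-point, it equals $\mathcal{N}_{\Lambda,0}$, and the morphism factors through $\mathcal{X}^+_{\Lambda}$.

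The type 2 case is essentially immediate: by Proposition \ref{Xispt}, $\mathcal{X}^+_{\Lambda}$ is a single reduced point, and by the $k$-point bijection so is $\mathcal{N}_{\Lambda,0}(k)$. Since $\mathcal{N}_{\Lambda,0}$ is reduced and projective (Proposition \ref{projective}) with a unique $k$-point, it equals $\Spec(k)$, and the map is trivially an isomorphism.

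The main obstacle lies in the type 4 case, where $\mathcal{X}^+_{\Lambda} \cong \mathbb{P}^1$ by Proposition \ref{type4P1} and we have a proper morphism $f: \mathcal{N}_{\Lambda,0} \to \mathbb{P}^1$ that is bijective on $k$-points by Corollary \ref{NMS} combined with Proposition \ref{SX}. To upgrade this bijection on points to a scheme-theoretic isomorphism, my plan is to apply Grothendieck-Messing deformation theory at each closed point $(G,\rho) \in \mathcal{N}_{\Lambda,0}(k)$: the tangent space to $\mathcal{N}_{\Lambda,0}$ there parametrizes lifts of the Hodge filtration $\mathcal{D}_1 \subset \mathcal{D}$ of the crystal of $G \times G^*$ to $\mathcal{D} \otimes k[\epsilon]$ that remain stable under the crystal endomorphisms coming from $\Lambda^\vee$, while the tangent space to $\mathcal{X}^+_{\Lambda}$ at the image point parametrizes isotropic lifts of $\mathcal{L}$ inside $\Omega \otimes_k k[\epsilon]$ satisfying the incidence condition with $\overline{\Phi}(\mathcal{L})$. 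Using the explicit identification of $\mathcal{L}$ as the ``other'' isotropic line in $\mathcal{K}/\mathcal{K}^\perp$ from the proof of Proposition \ref{morphism}, one checks, exactly as in the analogous argument in \cite{GU22}, that the differential of $f$ gives an isomorphism between these one-dimensional tangent spaces. Combined with properness and bijectivity on $k$-points, this shows $f$ is finite, radicial, and unramified, hence a closed immersion; since it is also surjective onto the reduced target $\mathbb{P}^1$, it is an isomorphism. The hardest step is this tangent-space computation, which is where the careful bookkeeping from Section \ref{Lattices Stabilized by Ends} around the endomorphisms $\widetilde{x}$ acting on the filtration $(A\times A^*)_1 \subset A \times A^*$ does the real work.
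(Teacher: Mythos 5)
Your overall architecture—establish the factorization, then upgrade the field-point bijection to a scheme isomorphism—matches the paper's, but there are two places where your argument diverges from the paper's and where the details are not filled in.

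\textbf{Gap in the factorization step (type~4 case).} You assert that because $\overline{\Phi}$ swaps $\mathcal{S}^{\pm}_{\Lambda}(k)$, the bijection of Proposition~\ref{SX} is $\Phi$-equivariant, and Frobenius swaps $\mathcal{X}^{\pm}_{\Lambda}$, it follows ``consequently'' that $\mathcal{S}^+_{\Lambda}(k)$ lands entirely inside one component. That inference does not hold. The image of $\mathcal{S}^+_{\Lambda}(k)$ is some subset $Y \subset \mathcal{X}_{\Lambda}(k)$ whose complement is $\overline{\Phi}(Y)$; this constraint by itself is compatible with $Y$ having infinitely many points in both components (for instance, $Y$ could consist of $A \subset \mathcal{X}^+_{\Lambda}(k)$ together with the complement of $\overline{\Phi}(A)$ in $\mathcal{X}^-_{\Lambda}(k)$ for an arbitrary $\Phi$-generic $A$). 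The paper's proof closes this gap by invoking Proposition~\ref{projective}: since $\mathcal{N}_{\Lambda,0}$ is proper, its image in each $\mathbb{P}^1$-component is closed, hence either finite or everything; combined with the fact that the image is infinite but not all of $\mathcal{X}_{\Lambda}(k)$, one reduces to two cases, and then the $\Phi$-equivariance argument you cite rules out the second. Without the properness input your ``consequently'' is not justified, and the conclusion ``the preimage of $\mathcal{X}^+_{\Lambda}$ contains every $k$-point'' that you then feed into the open-closed argument is unproved.

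\textbf{Different route for the isomorphism in type~4, with deferred verification.} The paper finishes by observing that the morphism is bijective on $k'$-points for \emph{every} field $k'$ over $k$ (this uses Proposition~\ref{swaps} again to get surjectivity onto $\mathcal{X}^+_{\Lambda}(k')$), hence is quasi-finite and birational; together with properness and normality of $\mathcal{X}^+_{\Lambda}$, Zariski's main theorem gives the isomorphism directly. You instead propose a Grothendieck--Messing tangent-space computation to show the morphism is unramified and then conclude it is a finite radicial unramified surjection onto a reduced target, hence an isomorphism. This strategy is sound in outline—finite plus radicial plus unramified does give a monomorphism, a proper monomorphism is a closed immersion, and a surjective closed immersion onto a reduced scheme is an isomorphism—but the tangent-space computation you hand off to ``the analogous argument in \cite{GU22}'' is the entire content of that step, and it is not verified here. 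The paper does not carry out any deformation-theoretic tangent computation for this proposition; whatever deformation theory appears in \cite{GU22} is used for different purposes, and you would need to work out carefully how the filtration $(A \times A^*)_1 \subset A \times A^*$, the ``other isotropic line'' construction of $\mathcal{L}$, and the quadratic form on $\Omega$ interact at the level of first-order deformations before this route is complete. By contrast, the paper's route goes through with exactly the facts already on the table (Proposition~\ref{swaps}, Proposition~\ref{SX}, Proposition~\ref{projective}, normality of $\mathbb{P}^1$), and in particular exploits the field-point bijection over \emph{all} extensions $k'/k$—not just algebraically closed ones—which is what makes the birationality-plus-ZMT argument watertight.

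The type~2 case in your write-up is fine.
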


\begin{proof} We will begin by showing that the morphism $\mathcal{N}_{\Lambda,0} \rightarrow \mathcal{X}_{\Lambda}$ factors through $\mathcal{X}_{\Lambda}^+$. First consider the case where the vertex lattice $\Lambda$ is of type 2. By Proposition ~\ref{Xispt}, $\mathcal{X}_{\Lambda}(k)$ consists of two points, and each of $\mathcal{X}_{\Lambda}^{\pm}(k)$ consists of a single point. It follows from Proposition ~\ref{swaps} that each of   $\mathcal{S}_{\Lambda}^{\pm}(k)$ is also a single point.

Since $\mathcal{N}_{\Lambda,0}(k)$ is in bijection with $\mathcal{S}^+_{\Lambda}(k)$, both $\mathcal{N}_{\Lambda,0}(k)$ and its image in $\mathcal{X}_{\Lambda}(k)$ is a single point. After possibly relabelling the two components, the morphism $\mathcal{N}_{\Lambda,0} \rightarrow \mathcal{X}_{\Lambda}$ induces a map: 
$$\mathcal{N}_{\Lambda,0}(k) \rightarrow \mathcal{X}^+_{\Lambda}(k).$$ 
As the algebraically closed field $k$ is arbitrary, $\mathcal{N}_{\Lambda,0}$ is reduced, and $\mathcal{X}^+_{\Lambda}$ is closed in $\mathcal{X}_{\Lambda}$, the morphism $\mathcal{N}_{\Lambda,0} \rightarrow \mathcal{X}_{\Lambda}$ must factor through $\mathcal{X}_{\Lambda}^+$.

Now, consider the case that $\Lambda$ is of type 4.  By Proposition ~\ref{type4P1}, $\mathcal{X}_{\Lambda} = \mathcal{X}^+_{\Lambda} \sqcup \mathcal{X}^-_{\Lambda}$ is a disjoint union of two $\mathbb{P}^1$s. As $\mathcal{X}_{\Lambda}(k)$ is in bijection with $\mathcal{S}_{\Lambda}(k)$, the set $\mathcal{S}_{\Lambda}(k)$ is infinite, and as applying $\Phi$ induces a bijection from $\mathcal{S}_{\Lambda}^+(k)$ to $\mathcal{S}_{\Lambda}^-(k)$, both subsets must be infinite, and neither is equal to all of $\mathcal{S}_{\Lambda}(k)$. 

Since $\mathcal{N}_{\Lambda,0}(k)$ is in bijection with $\mathcal{S}^+_{\Lambda}(k)$, the image of $\mathcal{N}_{\Lambda,0}(k)$ in $\mathcal{X}_{\Lambda}(k)$ is infinite, but not all of $\mathcal{X}_{\Lambda}(k)$. Further, by Proposition ~\ref{projective}, $\mathcal{N}_{\Lambda,0}$ is proper. So the image of $\mathcal{N}_{\Lambda,0}(k)$ in each component of $\mathcal{X}_{\Lambda} = \mathcal{X}^+_{\Lambda} \sqcup \mathcal{X}^+_{\Lambda}$ must either be all of the $k$-points in that component, a finite set of points, or empty.

In summary, after possibly relabelling the two components, the only remaining possibilities for the image of $\mathcal{N}_{\Lambda,0}(k)$ in $\mathcal{X}_{\Lambda}(k)$ are:

\begin{enumerate}
\item{All of $\mathcal{X}^+_{\Lambda}(k)$, and no points of $\mathcal{X}^-_{\Lambda}(k)$ }
\item{All of $\mathcal{X}^+_{\Lambda}(k)$, and finitely many points of $\mathcal{X}^-_{\Lambda}(k)$.}
\end{enumerate}

For the sake of contradiction, assume that the second possibility holds. Let $\mathcal{L} \in \mathcal{X}^-_{\Lambda}(k)$ be in the image of $\mathcal{N}_{\Lambda,0}(k)$. Because $\mathcal{N}_{\Lambda,0}(k)$ is in bijection with $\mathcal{S}^+_{\Lambda}(k)$, there must be some very special lattice $L$ mapping to $\mathcal{L}$. Since applying $\Phi$ commutes with the map $\mathcal{S}_{\Lambda}(k) \rightarrow \mathcal{X}_{\Lambda}(k)$, the special lattice $\Phi(L)$ maps to $\Phi(\mathcal{L})$.

Recall that applying $\Phi$ interchanges both the pair of sets $\mathcal{X}^+_{\Lambda}(k)$ and $\mathcal{X}^-_{\Lambda}(k)$, and the pair of sets $\mathcal{S}^+_{\Lambda}(k)$ and $\mathcal{S}^-_{\Lambda}(k)$. Since  $\mathcal{L} \in \mathcal{X}^-_{\Lambda}(k)$, $\overline{\Phi}(\mathcal{L}) \in  \mathcal{X}^+_{\Lambda}(k)$. By the assumption that possibility two holds, there must be some very special lattice $K \in \mathcal{S}^+(k)$ mapping to $\Phi(\mathcal{L})$. But the unique special lattice mapping to $\Phi(\mathcal{L})$ is $\Phi(L)$, which is in $\mathcal{S}^-(k)$, not $\mathcal{S}^+(k)$. This is a contradiction.

Therefore, the first situation must hold, and the morphism $\mathcal{N}_{\Lambda,0} \rightarrow \mathcal{X}_{\Lambda}$ induces a map:
$$\mathcal{N}_{\Lambda,0}(k) \rightarrow \mathcal{X}_{\Lambda}^+(k).$$
Then,  because $\mathcal{N}_{\Lambda,0}$ is reduced and $\mathcal{X}^+_{\Lambda}$ is closed in $\mathcal{X}_{\Lambda}$, the morphism $\mathcal{N}_{\Lambda,0} \rightarrow \mathcal{X}_{\Lambda}$ must factor through $\mathcal{X}_{\Lambda}^+$.

Now, whether $\Lambda$ is of type 2 or type 4, we have a morphism of schemes:
$$\mathcal{N}_{\Lambda,0} \rightarrow \mathcal{X}^+_{\Lambda}$$ 
which, for any field $k'$ over $k$, induces the map on $k'$ points $\mathcal{N}_{\Lambda, 0}(k') \longrightarrow \mathcal{X}^+_{\Lambda}(k')$ described in Corollary ~\ref{mapsum}.

This map is in fact a bijection: the map $\mathcal{N}_{\Lambda,0}(k') \rightarrow \mathcal{X}^+_{\Lambda}(k')$ factors through the bijection between  $\mathcal{N}_{\Lambda,0}(k')$ and  $\mathcal{S}_{\Lambda}^+(k')$ so the bijection described in Proposition ~\ref{SX}:
$$\mathcal{S}_{\Lambda}(k') \longleftrightarrow \mathcal{X}_{\Lambda}(k')$$
$$L \mapsto \mathcal{L}$$
restricts to an injection: $\mathcal{S}^+_{\Lambda}(k') \rightarrow \mathcal{X}^+_{\Lambda}(k').$

As the map $L \mapsto \mathcal{L}$ is $\Phi$-equivariant, and by Proposition ~\ref{swaps} action by $\Phi$ interchanges both the subsets $\mathcal{S}^{\pm}_{\Lambda}(k')$ of $\mathcal{S}_{\Lambda}(k')$ and the subsets $\mathcal{X}^{\pm}_{\Lambda}(k')$ of $\mathcal{X}_{\Lambda}(k')$, the map $\mathcal{S}^+_{\Lambda}(k') \rightarrow \mathcal{X}^+_{\Lambda}(k')$ must be a bijection. As $\mathcal{N}_{\Lambda,0}(k')$ is in bijection with $\mathcal{S}^+_{\Lambda}(k')$, we have a morphism of schemes over $k$:
$$\mathcal{N}_{\Lambda,0} \rightarrow \mathcal{X}^+_{\Lambda}$$
inducing a bijection on $k'$-points for any field $k'$ over $k$.

It follows that the morphism $\mathcal{N}_{\Lambda,0} \rightarrow \mathcal{X}_{\Lambda}$ is quasi-finite and birational. By  Proposition ~\ref{projective}, it is also proper. Since $\mathcal{X}_{\Lambda}$ is normal, Zariski's main theorem implies that this morphism is an isomorphism.

\end{proof}

\section{Summary of the $\GL_4$ Rapoport-Zink Space and Further Observations}

\subsection{Intersection Behavior}\label{intersectionsection}

The conclusion of the previous section was that each $\mathcal{N}_{\Lambda,0}$ is isomorphic over $k$ to either a single point or to $\mathbb{P}^1$. We have that: 
$$\mathcal{N}_{0,red} = \bigcup_{\Lambda} \mathcal{N}_{\Lambda,0}$$
(where the index set is over all vertex lattices) and again this is \emph{not} a disjoint union:  $\mathcal{N}_{\Lambda_1} \subset \mathcal{N}_{\Lambda_2}$ whenever $\Lambda_1 \subset \Lambda_2$. In fact, from ~\cite{RTW} Proposition 4.3, we have:

$$
\mathcal{N}_{\Lambda_1} \cap \mathcal{N}_{\Lambda_2} =
\begin{cases}
\mathcal{N}_{\Lambda_1 \cap \Lambda_2} \quad \hfill \text{   if $\Lambda_1 \cap \Lambda_2$ is a vertex lattice} \\
\emptyset \hfill \text{otherwise.}
\end{cases}
$$

In the statement above, the right-hand side is by definition reduced. The left-hand side, defined to be the scheme-theoretic intersection, is reduced following a result of Li ~\cite{Li}.

It follows immediately from the definition of the type of a vertex lattice that the only nontrivial intersection behavior occurs when $\Lambda_1$ and $\Lambda_2$ are both vertex lattices of type 4, and then their intersection $\Lambda_1 \cap \Lambda_2$ must necessarily be of type 2.

The two natural questions are: on a single $\mathbb{P}^1$, how many intersection points are there? How many $\mathbb{P}^1$'s pass through a single intersection point? In terms of vertex lattices, these questions reduce to: how many type 2 vertex lattices are contained in a fixed vertex lattice of type 4? How many vertex lattices of type 4 contain a fixed vertex lattice of type 2? These questions are answered by the lemmas below.

\begin{lemma}\label{int1}
Let $\Lambda_b \subset V^{\Phi}$ be a fixed vertex lattice of type 4. There are exactly $p^2 +1 $ distinct vertex lattices $\Lambda_a$ of type 2 such that:
$$\Lambda_a \subset \Lambda_b.$$
\end{lemma}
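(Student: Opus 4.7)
The plan is to reduce the question to counting isotropic lines in the $4$-dimensional nondegenerate nonsplit quadratic space $\Omega_0 = \Lambda_b/\Lambda_b^\vee$ over $\F_p$ (which has this description because $V^\Phi$ has Hasse invariant $-1$), and then to carry out that point count.

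First I would observe that if $\Lambda_a \subset \Lambda_b$ is a vertex lattice, then dualizing reverses inclusions, so $\Lambda_b^\vee \subset \Lambda_a^\vee$, giving the chain $\Lambda_b^\vee \subset \Lambda_a^\vee \subset \Lambda_a \subset \Lambda_b$. The remaining vertex condition $p\Lambda_a \subset \Lambda_a^\vee$ is then automatic, since $p\Lambda_a \subset p\Lambda_b \subset \Lambda_b^\vee \subset \Lambda_a^\vee$. Passing to $\Omega_0$, taking images sets up a dimension-preserving bijection between $\Z_p$-sublattices $L$ with $\Lambda_b^\vee \subset L \subset \Lambda_b$ and $\F_p$-subspaces $\bar L \subset \Omega_0$, and under this bijection $L^\vee$ (which also lies between $\Lambda_b^\vee$ and $\Lambda_b$) corresponds to the orthogonal complement $\bar L^\perp$ with respect to $[\cdot,\cdot]_{\Lambda_b}$: this is immediate from the definition of the induced pairing and the fact that $\Lambda_b^\vee$ is the radical of $[\cdot,\cdot]$ restricted to $\Lambda_b$.

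Next, applying this to $\Lambda_a$, the condition $\Lambda_a^\vee \subset \Lambda_a$ becomes $\bar\Lambda_a^\perp \subset \bar\Lambda_a$, and the type-$2$ condition becomes $\dim_{\F_p}\bar\Lambda_a - \dim_{\F_p}\bar\Lambda_a^\perp = 2$. Combined with $\dim_{\F_p}\bar\Lambda_a + \dim_{\F_p}\bar\Lambda_a^\perp = 4$ from nondegeneracy, this forces $\dim_{\F_p}\bar\Lambda_a = 3$ and $\bar\Lambda_a^\perp$ to be a $1$-dimensional isotropic subspace (the inclusion $\bar\Lambda_a^\perp \subset \bar\Lambda_a = (\bar\Lambda_a^\perp)^\perp$ says exactly that $\bar\Lambda_a^\perp$ is totally isotropic). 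Conversely, any isotropic line $\ell \subset \Omega_0$ yields a valid type-$2$ vertex lattice $\Lambda_a \subset \Lambda_b$ by taking $\Lambda_a$ to be the preimage of $\ell^\perp$ in $\Lambda_b$, whose dual in $V^\Phi$ is then the preimage of $\ell$. So the map $\Lambda_a \mapsto \bar\Lambda_a^\perp$ is a bijection between vertex lattices of type $2$ contained in $\Lambda_b$ and isotropic lines in $\Omega_0$.

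The remaining step is to count isotropic lines in $\Omega_0$. The Grassmannian of isotropic lines is the smooth quadric in $\mathbb{P}^3$ cut out by $Q_{\Lambda_b}$; as described in the proof of Proposition~\ref{type4P1}, this quadric becomes $\mathbb{P}^1 \times \mathbb{P}^1$ after base change to $\F_{p^2}$, with Frobenius swapping the two factors, so its $\F_p$-points are identified with $\mathbb{P}^1(\F_{p^2})$, of cardinality $p^2+1$. Equivalently, writing $\Omega_0 \cong \mathbb{H} \oplus W_0$ with $W_0$ the $2$-dimensional anisotropic space, a direct count gives $(p-1)(p^2+1)$ nonzero isotropic vectors and hence $p^2+1$ isotropic lines. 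The only real subtlety is the duality bookkeeping in the second step; once that is in place the counting is routine, so I expect no significant obstacle.
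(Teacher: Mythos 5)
Your proposal is correct and takes essentially the same route as the paper: both pass to the $4$-dimensional nondegenerate nonsplit quadratic space $\Lambda_b/\Lambda_b^\vee$ over $\F_p$, establish a bijection between type-$2$ vertex sublattices $\Lambda_a \subset \Lambda_b$ and isotropic lines (sending $\Lambda_a$ to the image of $\Lambda_a^\vee$, which is exactly your $\bar\Lambda_a^\perp$), and then count those lines as $p^2+1$. Your treatment of the duality bookkeeping is slightly more explicit than the paper's, but the strategy is identical.
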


\begin{proof}

Let $\Lambda_b$ be a fixed vertex lattice of type 4. Then we have the following chain condition:
$$p \Lambda_b \subset \Lambda_b^\vee \subset \Lambda_b \subset p^{-1} \Lambda_b^\vee.$$

Let $V_b = \Lambda_b / \Lambda_b^\vee$, which is of dimension 4 over $\F_p$. If $Q(x)$ denotes the quadratic form associated with the bilinear form $[\cdot, \cdot]$ on $V^{\Phi}$, give $V_b$ the quadratic form $q(x) = p Q(x) \mod p$. Note that this is well-defined, because $\Lambda_b \subset p^{-1} \Lambda_b^\vee$, and is also nondegerate. Further, because $V^\Phi$ does not have any self-dual lattices, $V_b$ is nonsplit. 

Let $\Lambda_a$ be a vertex lattice of type 2 contained in $\Lambda_b$. Then we have the following chain condition:
$$p \Lambda_a \subset p\Lambda_b \subset \Lambda_b^\vee \subset \Lambda_a^\vee \subset \Lambda_a \subset \Lambda_b$$
and so (by comparing dimensions) $\Lambda_a^\vee$ defines an isotropic line in $V_b$.

On the other hand, let $L \subset V_b$ be an isotropic line. We have the chain condition: 
$$\Lambda_b^\vee \subset L \subset L^\vee \subset \Lambda_b.$$ 
Define $\Lambda_a = L^\vee$. Then we have the chain condition:
$$p \Lambda_a \subset p\Lambda_b \subset \Lambda_b^\vee \subset \Lambda_a^\vee \subset \Lambda_a \subset \Lambda_b$$
and so (by comparing dimensions) $\Lambda_a$ would be a vertex lattice of type 2 contained in $\Lambda_b$.

Therefore, vertex lattices of type 2 contained in $\Lambda_b$ are in bijection with isotropic lines in $V_b$. Since $V_b$ is a nondegenerate, nonsplit quadratic space of dimension 4 over $\F_p$, there are $p^2 +1$ isotropic lines in $V_b$, so there are exactly $p^2 + 1$ vertex lattices of type 2 contained in a fixed vertex lattice of type 4.

\end{proof}

\begin{lemma}\label{int2}
Let $\Lambda_a \subset V^{\Phi}$ be a fixed vertex lattice of type 2. There are exactly $p^2 +1 $ distinct vertex lattices $\Lambda_b$ of type 4 such that:
$$\Lambda_a \subset \Lambda_b.$$
\end{lemma}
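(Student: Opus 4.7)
The plan is to parameterize type-4 vertex lattices $\Lambda_b$ containing $\Lambda_a$ by isotropic lines in a suitable 4-dimensional $\F_p$-quadratic space built from $\Lambda_a$, show this space is nonsplit, and conclude the count equals $p^2 + 1$, in direct analogy with Lemma \ref{int1}.

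First I would determine the index: for any type-4 vertex lattice $\Lambda_b$ with $\Lambda_a \subset \Lambda_b$, duality of lattices gives $\dim_{\F_p}(\Lambda_a^\vee/\Lambda_b^\vee) = \dim_{\F_p}(\Lambda_b/\Lambda_a)$, so the filtration $\Lambda_b^\vee \subset \Lambda_a^\vee \subset \Lambda_a \subset \Lambda_b$ yields $4 = \dim_{\F_p}(\Lambda_b/\Lambda_b^\vee) = 2\dim_{\F_p}(\Lambda_b/\Lambda_a) + 2$, forcing $\dim_{\F_p}(\Lambda_b/\Lambda_a) = 1$. Hence $\Lambda_b = \Lambda_a + \Z_p v$ with $pv \in \Lambda_a$ and $v \notin \Lambda_a$, and the inclusion $p\Lambda_b \subset \Lambda_b^\vee \subset \Lambda_a^\vee$ further forces $pv \in \Lambda_a^\vee$. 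Now introduce the $\F_p$-quadratic space $W_a := \Lambda_a^\vee/p\Lambda_a$, of dimension $4$, with form $\langle[x],[y]\rangle := [x,y]\bmod p$, which is well-defined (since $\Lambda_a^\vee \subset \Lambda_a$) and nondegenerate by the same radical argument as in Lemma \ref{int1}. Expanding $[v_1,v_2]$ for $v_i = a_i + c_i v$, the condition $p\Lambda_b \subset \Lambda_b^\vee$ reduces to $[v,v] \in p^{-1}\Z_p$, i.e.\ $\langle[pv],[pv]\rangle = 0$, and one checks that the $\F_p$-line $\F_p \cdot [pv] \subset W_a$ depends only on $\Lambda_b$. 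Conversely, any isotropic line produces such a $\Lambda_b$, yielding a bijection between type-4 vertex lattices containing $\Lambda_a$ and isotropic $\F_p$-lines in $W_a$.

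The main obstacle is showing that $W_a$ is nonsplit. My approach is by contradiction: if $W_a$ contained a Lagrangian plane $L$, let $L' \subset \Lambda_a^\vee$ be its preimage and set $M := p^{-1}L'$. Then $\Lambda_a \subset M \subset p^{-1}\Lambda_a^\vee$, and using $L = L^\perp$ one computes $M^\vee = L' = pM$; so $M$ would satisfy $pM \subset M^\vee \subset M$ with $\dim_{\F_p}(M/M^\vee) = 6$, i.e.\ a vertex lattice of type 6, contradicting Proposition \ref{ignoreme}. Granted $W_a$ is nondegenerate and nonsplit of dimension 4, a standard count yields $(p-1)(p^2+1)$ nonzero isotropic vectors, hence exactly $p^2+1$ isotropic lines, as desired.
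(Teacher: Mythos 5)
Your proof is correct and follows the same route as the paper: both pass to the $4$-dimensional $\F_p$-quadratic space $\Lambda_a^\vee/p\Lambda_a$, observe that type-4 vertex lattices containing $\Lambda_a$ correspond to isotropic lines in it, argue nonsplitness from the nonexistence of type-6 vertex lattices, and conclude by the standard $p^2+1$ count. The paper compresses the bijection to ``similar to the previous argument,'' whereas you supply the index computation, the explicit map $\Lambda_b \mapsto \F_p\cdot[pv]$, and the explicit nonsplit contradiction; these details are all sound. One small slip: the label \verb|\ref{ignoreme}| in the source points to the \emph{definition} of a vertex lattice, not the (unlabeled) proposition stating that the type is always $2$ or $4$, so your citation lands on the wrong result even though the intended fact is exactly the one used.
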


\begin{proof}

Let $\Lambda_a$ be a fixed vertex lattice of type 2. Then we have the following chain condition:
$$ p \Lambda_a \subset \Lambda_a^\vee \subset \Lambda_a.$$

Let $Q(x)$ denote the quadratic form associated with the bilinear form $[\cdot, \cdot]$ on $V^{\Phi}$. Because $\Lambda^\vee_a \subset \Lambda_a$, the quadratic form on $V^\Phi$ is $\Z_p$-valued on $\Lambda^\vee_a$, so we can define $V_a = \Lambda_a^\vee/ p \Lambda_a$ with the induced quadratic form $Q(x) \mod p$. Note that $V_a$ is a nondegenerate quadratic space of dimension 4 over $\F_p$ with this form, and is nonsplit, because $V^\Phi$ does not have any vertex lattices of type 6.

Then, similar to the previous argument, vertex lattices of type 4 containing $\Lambda_a$ are in bijection with isotropic lines in   $V_a$. As $V_a$ is a nondegenerate, nonsplit quadratic space over $\F_p$ of dimension 4, there are $p^2 +1$ such lines. Therefore, there are exactly  $p^2 +1$   distinct vertex lattices of type 4 containing $\Lambda_a$.

\end{proof}

\subsection{Superspecial points}

A $k$-point of $\mathcal{N}$ is called \emph{superspecial} if the corresponding $p$-divisible group is isomorphic to (not only isogenous to) the $p$-divisible group of a product of supersingular elliptic curves. (A $k'$-point will also be called superspecial if the corresponding $p$-divisible group is geometrically isomorphic to the $p$-divisible group of a product of supersingular elliptic curves.) As we've seen, $\mathcal{N}_{0,red}$ has irreducible components all isomorphic to projective lines,  indexed by vertex lattices of type 4, with intersection points indexed by vertex lattices  of type 2. These intersection points are exactly the superspecial points:

\begin{proposition}
A $k$-point of $\mathcal{N}_{red}$ lies at the intersection of multiple irreducible connected components if and only if it is a superspecial point.
\end{proposition}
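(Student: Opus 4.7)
The plan is to reduce to the height-$0$ component $\mathcal{N}_{0,red}$, translate both conditions into algebraic conditions on the corresponding very special lattice, and verify they coincide. The isomorphisms $\mathcal{N}_i\cong\mathcal{N}_0$ of Proposition~\ref{allheights} are induced by isogenies of the basepoint $\mathbb{G}$, so they preserve both the isomorphism class of the underlying $p$-divisible group and the labelling of irreducible components by vertex lattices; I may therefore work entirely inside $\mathcal{N}_{0,red}$.

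For the intersection side: by the formula recalled in Section~\ref{intersectionsection}, two distinct type-$4$ vertex lattices $\Lambda_1,\Lambda_2$ satisfy $\mathcal{N}_{\Lambda_1,0}\cap\mathcal{N}_{\Lambda_2,0}=\mathcal{N}_{\Lambda_1\cap\Lambda_2,0}$ precisely when $\Lambda_1\cap\Lambda_2$ is again a vertex lattice, which is then necessarily of type $2$ (since $V^\Phi$ admits no self-dual lattices). So, via Corollary~\ref{NMS}, a $k$-point is an intersection point iff the corresponding very special lattice $L\in\mathcal{S}^+(k)$ satisfies $\Lambda^\vee\subset L$ for some type-$2$ vertex lattice $\Lambda$. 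Since $\Lambda^\vee\subset V^\Phi$ consists of $\Phi$-fixed vectors, the identity $\Lambda_L^\vee=\{x\in L:\Phi(x)=x\}$ from Proposition~\ref{vollaardprop} gives $\Lambda^\vee\subset\Lambda_L^\vee$; dualizing yields $\Lambda_L\subset\Lambda$, and a dimension comparison in $\Lambda/\Lambda^\vee$ then forces $\Lambda=\Lambda_L$. Hence the intersection condition is equivalent to $\Lambda_L$ having type $2$, i.e., to Vollaard's filtration stabilizing at $d=1$, equivalently to $L^{(1)}=L+\overline{\Phi}(L)$ being $\overline{\Phi}$-stable.

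For the superspecial side: under $\mathcal{N}_{0,red}(k)\leftrightarrow\mathcal{M}_0(k)$, the point corresponds to a Dieudonn\'e lattice $A=\rho(M)$ with $A_1=F^{-1}(pA)=VA$. The point is superspecial iff $F^2A=pA$, equivalently $FA=VA=A_1$. A direct computation using $\Phi(x\wedge y)=p^{-1}Fx\wedge Fy$ and $F(VA)=pA$ yields
$$L=\tfrac{1}{p}\bigwedge\nolimits^2 VA,\qquad \overline{\Phi}(L)=\bigwedge\nolimits^2 A,\qquad \overline{\Phi}^2(L)=\tfrac{1}{p}\bigwedge\nolimits^2 FA,$$
so $\overline{\Phi}$-stability of $L^{(1)}$ is equivalent to the lattice inclusion $\bigwedge^2 FA\subset\bigwedge^2 VA+p\bigwedge^2 A$.

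The crux of the argument is then a Pl\"ucker comparison modulo $p$. Reducing the above inclusion modulo $p\bigwedge^2 A$, and using $pA\subset FA\cap VA$, the images $\bigwedge^2\overline{FA}$ and $\bigwedge^2\overline{VA}$ are $1$-dimensional subspaces of $\bigwedge^2(A/pA)$, so the inclusion forces equality, and injectivity of the Pl\"ucker embedding $\mathrm{Gr}(2,4)\hookrightarrow\mathbb{P}(\bigwedge^2(A/pA))$ then gives $\overline{FA}=\overline{VA}$ in $A/pA$. Combined with $pA\subset FA\cap VA$, this lifts to $FA=VA$, as desired, and the converse implication is immediate. This Pl\"ucker comparison at the reduced level is the only substantive step; the rest is bookkeeping with the bijections already established in the paper.
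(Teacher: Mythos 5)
Your proof is correct, and it follows the same overall strategy as the paper's: reduce to $\mathcal{N}_{0,red}$, translate both ``superspecial'' and ``intersection point'' into conditions on the very special lattice $L = \frac{1}{p}\bigwedge^2 A_1$, and compare. The difference is in the pivot condition and the final step. The paper characterizes the intersection condition by $L=\Phi^2(L)$, obtained from the observation that $\mathcal{S}_\Lambda(k)$ for $\Lambda$ of type $2$ consists of two points swapped by Frobenius, and then asserts without detail that $L=\Phi^2(L)$ implies $FM=VM$. You instead pivot on the weaker (but, for special lattices, equivalent) condition that $L^{(1)}$ is $\overline{\Phi}$-stable, which you derive cleanly from Vollaard's filtration and a dimension count forcing $\Lambda=\Lambda_L$, and you then supply the substantive step the paper elides: the reduction of $\bigwedge^2 FA\subset\bigwedge^2 VA+p\bigwedge^2 A$ modulo $p$ and the appeal to injectivity of the Pl\"ucker embedding to conclude $\overline{FA}=\overline{VA}$, hence $FA=VA$. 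Your version is thus slightly more self-contained; the paper's is slightly slicker in the forward direction because $L=\Phi^2(L)$ falls out immediately from $FM=VM$. Both ultimately rest on the same Pl\"ucker-type comparison of rank-$2$ sublattices of $A$, and both are correct.
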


\begin{proof}
Note that the isomorphisms given in Proposition ~\ref{allheights} send superspecial points to superspecial points, so it is enough to consider $\mathcal{N}_{0,red}$. Given a superspecial point $(G, \rho) \in \mathcal{N}_{0,red}(k)$, let $M \subset \N$ be the corresponding Dieudonn\'e lattice. Because $G$ is superspecial, $FM = VM$. Then the very special lattice $L = \frac{1}{p} \bigwedge_{W}^2 M_1$ satisfies:
$$L = \Phi^2(L).$$
By Proposition ~\ref{vollaardprop}, $\Lambda_L$ is then a vertex lattice of type 2. As $L \in \mathcal{S}^+_{\Lambda_L}(k)$, we have $(G, \rho) \in \mathcal{N}_{\Lambda_L,0}(k)$, where $\mathcal{N}_{\Lambda_L,0}$ is the intersection of $p^2 + 1$ irreducible components isomorphic to $\mathbb{P}^1$. 

On the other hand, let $(G, \rho) \in \mathcal{N}_{0,red}(k)$ be an intersection point, with corresponding Dieudonn\'e lattice $M$. Then there is a vertex lattice $\Lambda$ of type 2 such that the  very special lattice $L = \frac{1}{p} \bigwedge_{W}^2 M_1$ is contained in $\mathcal{S}^+_{\Lambda}(k)$. Because $\Lambda$ is a vertex lattice of type 2, $\mathcal{S}_{\Lambda}(k)$ consists of two points, interchanged by Frobenius. So,
$$L = \Phi^2(L).$$
From this it follows that $FM = VM$, and so $(G, \rho)$ is a superspecial point.
\end{proof}

\subsection{Ekedahl-Oort Stratification}

The $\GL_4$ Rapoport-Zink space $\mathcal{N}$ is a moduli space of $p$-divisible groups. There is a natural stratification of $\mathcal{N}_{red}$ (the reduced $k$-scheme underling $\mathcal{N}$) by the following rule: two points $(G_1, \rho_1)$ and $(G_2, \rho_2)$ are in the same stratum if and only if $G_1[p]$ and $G_2[p]$ are geometrically isomorphic. This is called the Ekedahl-Oort stratification (see Oort ~\cite{EO}). In this section, we will describe the Ekedahl-Oort stratification of $\mathcal{N}_{red}$. 

If $G$ is a $p$-divisible group over $k$,  then $G[p]$ is a finite, commutative group scheme over $k$ which is killed by $p$. By Dieudonn\'e theory, there is an equivalence of categories between the category of finite, commutative group schemes $X$ over $k$ that are killed by $p$ and the category of triples $(M, F, V)$ where:
\begin{itemize}
\item{$M$ is a finite dimensional $k$-vector space}
\item{$F: M \rightarrow M$ is a $\sigma$-linear map}
\item{$V: M \rightarrow M$ is a $\sigma^{-1}$-linear map}
\end{itemize}
with the property that $F \circ V = 0 = V \circ F$.  (See, for example, ~\cite{Demazure} for details.) 


A finite, commutative group scheme over $k$ that is killed by $p$ and that arises as the $p$-torsion of a $p$-divisible group over $k$ is called a $BT_1$ group scheme. Following a result of Illusie ~\cite{Ill}, the category of $BT_1$ group schemes $X$ over $k$ is equivalent to the category of triples $(M, F, V)$ described above, with the additional condition that:
$$\mathrm{Ker}(F) = \mathrm{Im}(V) \quad \quad \text{and} \quad \quad \mathrm{Ker}(V) = \mathrm{Im}(F).$$

 The first combinatorial analysis of such triples was done in an unpublished work of Kraft ~\cite{Kraft} in 1975. His results are summarized by Moonen in ~\cite{Moo} and Oort in ~\cite{EO}  and are briefly recalled here:
 
 Consider the set of finite words in the symbols $F$ and $V$. Two words will be considered the same if they are equal under some cyclic rotation (e.g. VVF = VFV = FVV) and a word will be called simple if it is not periodic under this action of period larger than one. Let $W$ be the set of finite, simple words up to cyclic rotation. The set $W$ classifies indecomposable $BT_1$ group schemes according to the following construction:
 
 Given a word $w = S_1S_2\cdots S_n$, let $M_w$ be a $k$ vector space with basis $\{e_i\}_1^n$. The basis vector $e_1$ may also be denoted by $e_{n+1}$. If $S_i = F$, let $F(e_i) = e_{i+1}$ and $V(e_{i+1}) = 0$. If $S_i = V$, let $V(e_{i+1}) = e_i$ and $F(e_i) = 0$. This gives a triple $(M_w, F, V)$, which determines a $BT_1$ group scheme $X_w$ over $k$.

\begin{lemma}
Let $X$ be a  $BT_1$ group scheme over $k$ that arises as the $p$-torsion of a supersingular $p$-divisible group $G$ of height 4 and dimension 2. With the notation described above, $X$ is isomorphic to either $(X_{FV})^2$ or $X_{FFVV}$.
\end{lemma}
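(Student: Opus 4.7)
The plan is to combine Kraft's classification of indecomposable $BT_1$ group schemes with the numerical invariants forced by the height and dimension of $G$, together with the biconnectedness that follows from $G$ being supersingular.

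First I would apply Kraft's classification to write $X = G[p] \cong \bigoplus_i X_{w_i}$ as a direct sum of indecomposables, indexed by simple cyclic words $w_i$ in $F$ and $V$. Under the construction recalled just above the lemma, the $k$-dimension of $X_{w_i}$ equals the length $|w_i|$, the number of $V$s in $w_i$ equals $\dim_k \mathrm{Ker}(F)$ restricted to $X_{w_i}$, and the number of $F$s equals $\dim_k \mathrm{Ker}(V)$ restricted to $X_{w_i}$. Since $G$ has height $4$ and dimension $2$, one has $\dim_k M(X) = 4$, and combining $\dim M/VM = \dim G = 2$ with the Illusie relations $\mathrm{Ker}(F) = \mathrm{Im}(V)$ and $\mathrm{Ker}(V) = \mathrm{Im}(F)$ yields $\dim_k \mathrm{Ker}(F) = \dim_k \mathrm{Ker}(V) = 2$. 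Hence any Kraft decomposition of $X$ must satisfy
$$\sum_i |w_i| = 4, \qquad \sum_i (\#F)_i = 2, \qquad \sum_i (\#V)_i = 2.$$

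Next I would invoke supersingularity. The isocrystal of $G$ is isoclinic of slope $1/2$, so has no slope-$0$ (\'etale) or slope-$1$ (multiplicative) component. This means $G$ admits neither an \'etale quotient nor a multiplicative subgroup scheme, and consequently the same holds for $X = G[p]$. Since every indecomposable Kraft summand is both a subgroup and a quotient of $X$, and since $X_F$ is \'etale (Dieudonn\'e module with $F$ invertible and $V = 0$) while $X_V$ is multiplicative ($V$ invertible and $F=0$), no $X_F$ or $X_V$ summand can occur. The main subtlety of the proof is this biconnectedness step: one must be careful that ``supersingular'' really does force all Newton slopes to equal $1/2$, and that the absence of \'etale quotients or multiplicative subgroups of $G$ propagates through the equivalence of Section~8 to the Kraft decomposition of $G[p]$.

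It then remains to enumerate. The simple cyclic words of length at most $4$ other than $F$ and $V$ are $FV$, $FFV$, $FVV$, $FFFV$, $FFVV$, and $FVVV$; the word $FVFV$ is periodic of period $2$ and hence not simple. A summand $X_{FFV}$ (length $3$, counts $(2,1)$) would force a complementary summand of length $1$ with counts $(0,1)$, namely $X_V$, which is forbidden; symmetrically $X_{FVV}$ would force an $X_F$ factor. A single $X_{FFFV}$ or $X_{FVVV}$ would violate $\sum (\#F)_i = \sum (\#V)_i = 2$. The only surviving possibilities are $X \cong X_{FFVV}$ (one length-$4$ summand) or $X \cong X_{FV} \oplus X_{FV} = (X_{FV})^2$ (two length-$2$ summands), as claimed.
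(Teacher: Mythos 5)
Your proof is correct and takes essentially the same route as the paper: both rely on Kraft's classification, derive the numerical constraints $\sum |w_i| = 4$ and $\sum(\#F)_i = \sum(\#V)_i = 2$ from the height and dimension, and use the biconnectedness forced by supersingularity (equivalently, the nilpotency of $F$ and $V$ on the Dieudonn\'e module) to exclude $X_F$ and $X_V$ summands before enumerating. The paper organizes the enumeration by first treating the indecomposable case and then the decomposable cases by rank, while you organize it directly by the letter-count constraints, but the substance and the key ingredients are identical.
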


\begin{proof}
As discussed above, the category of $BT_1$ group schemes over $k$ is equivalent to the category of triples $(M, F, V)$ with some constraints. A triple $(M,F,V)$ corresponds to a $BT_1$ group scheme $X$ which is the $p$-torsion of a supersingular $p$-divisible group $G$ of height 4 and dimension 2 if and only if the following additional conditions are also met:
\begin{enumerate}
\item{$\dim_k(M) = 4$}
\item{$\dim_k(V(M)) = 2 = \dim_k(F(M))$}
\item{$F$ and $V$ act nilpotently on $M$}
\end{enumerate}
The dimension conditions (1) and (2) on $M$ are equivalent to the requirement that $G$ is of height 4 and dimension 2. The third condition ensures that the isocrystal of $G$ will have all slopes equal to $\frac{1}{2}$.

If $X$ is indecomposble, and we require $M$ to be of dimension 4, then by the classification of Kraft described above $G$ is isomorphic to one of:
$$X_{FFFF}, \quad X_{FFFV}, \quad X_{FFVV}, \quad X_{VVVF}, \quad \text{or} \quad X_{VVVV}.$$

Of these, only $X_{FFVV}$ meets conditions (2) and (3).

If $X$ is decomposable as a $BT_1$ group scheme, and we require $M$ to be of dimension 4, then $X$ must be a product of one of the following forms:
\begin{itemize}
\item{$X_1 \times X_2$, where $X_1$ is rank $p$ and $X_2$ is rank $p^3$}
\item{$X_1 \times X_2 \times X_3 \times X_4$ where all $X_i$ are rank $p$}
\item{$X_1 \times X_2$ where both are rank $p^2$}
\end{itemize}

The first two cases may be ruled about by observing that the only $BT_1$ group schemes of rank $p$ are $X_F = \underline{\Z / p\Z }$ and $X_V = \mu_p$, and neither of these have the property that both $F$ and $V$ act nilpotently. The only $BT_1$ group schemes of rank $p^2$ are $X_{FF}$, $X_{FV}$, and $X_{VV}$. Of these, only $X_{FV}$ has the property that $F$ and $V$ act nilpotently. So the only remaining possibility is that $X \cong (X_{FV})^2$.
\end{proof}

It is worth noting that $X_{FFVV}$ and $X_{FV}$ have another common description. The group $X_{FV}$ can be described as the $p$-torsion of a supersingular elliptic curve over $k$, and is sometimes denoted as $I_{1,1}$. The group $X_{FFVV}$ can be described as the $p$-torsion of a supersingular, but not superspecial, abelian surface, and is sometimes denoted $I_{2,1}$. See Pries ~\cite{Pries} for details.

In Sections 3.1-3.3 of ~\cite{EO}, Oort defines the Ekedahl-Oort strata of a Siegel modular variety and shows that they are locally closed. By similar reasoning, if we let $\mathcal{N}_{FFVV,red}$ denote the locus in $\mathcal{N}_{red}$ where the $p$-torsion groups are geometrically isomorphic to  $X_{FFVV}$ and let  $\mathcal{N}_{FVFV,red}$ be the locus in $\mathcal{N}_{red}$ where the $p$-torsion groups are geometrically isomorphic to  $(X_{FV})^2$, then $\mathcal{N}_{FVFV,red}$ defines a closed subscheme of $\mathcal{N}_{red}$, and its complement $\mathcal{N}_{FFVV,red}$ is locally closed. These are the two Ekedahl-Oort strata of $\mathcal{N}_{red}$.

\begin{proposition}\label{EO}
The Ekedahl-Oort stratum $\mathcal{N}_{FVFV,red}$ is exactly the collection of superspecial points of $\mathcal{N}_{red}$. The Ekedahl-Oort stratum $\mathcal{N}_{FFVV,red}$ is then the complement of the  superspecial points.
\end{proposition}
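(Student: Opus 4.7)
The plan is to show that $\mathcal{N}_{FVFV, red}$ coincides with the locus of superspecial points; since the two Ekedahl-Oort strata partition $\mathcal{N}_{red}$ by the discussion preceding the proposition, this immediately yields the complementary description of $\mathcal{N}_{FFVV, red}$. By the isomorphism $\mathcal{N}_i \cong \mathcal{N}_0$ of Proposition~\ref{allheights} (which fixes $G$ and only modifies the quasi-isogeny, hence preserves both $G[p]$ and the superspecial condition), it suffices to work in $\mathcal{N}_{0, red}$.

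The forward inclusion is essentially a matter of unpacking definitions: if $(G, \rho)$ is superspecial, then $G \cong E_1[p^\infty] \times E_2[p^\infty]$ for supersingular elliptic curves $E_1, E_2$, so $G[p] \cong E_1[p] \times E_2[p]$, and a direct Dieudonn\'e-module calculation identifies $E_i[p]$ with $X_{FV}$, yielding $G[p] \cong (X_{FV})^2$.

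For the reverse inclusion, I will invoke the classical criterion that a $p$-divisible group $G$ over $k$ is superspecial if and only if $FM = VM$, where $M$ is the Dieudonn\'e module of $G$. Suppose $G[p] \cong (X_{FV})^2$, and let $\bar M = M/pM$, so that $(\bar M, \bar F, \bar V)$ is the Dieudonn\'e triple of $G[p]$. The explicit construction of $X_w$ recalled above shows that in $(X_{FV})^2$ the images of $\bar F$ and $\bar V$ coincide (both equal the unique two-dimensional subspace annihilated by $\bar F$ and $\bar V$). Lifting this equality gives $FM + pM = VM + pM$, and the containments $pM = VFM \subset VM$ and $pM = FVM \subset FM$ then force $FM = VM$; hence $G$ is superspecial.

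The main obstacle is effectively absent here: once one has the classical characterization $FM = VM$ of superspecial $p$-divisible groups, everything reduces to a brief Dieudonn\'e-module comparison. An alternative route, avoiding direct appeal to that classical fact, would combine the preceding proposition (superspecial points are exactly the intersections of irreducible components) with a comparison via the very special lattice $L = \frac{1}{p}\bigwedge^2 M_1$: the condition $L = \Phi^2(L)$ used in that proof is easily seen to be equivalent to $FM = VM$, which in turn is equivalent to $G[p] \cong (X_{FV})^2$ by the $BT_1$-classification recalled in this section.
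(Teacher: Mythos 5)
Your proof is correct but proceeds by a genuinely different route from the paper's. The paper handles the reverse inclusion by invoking Oort's theory of \emph{minimal} $BT_1$ group schemes: since $(X_{FV})^2$ is minimal, any $p$-divisible group with that $p$-torsion is determined up to geometric isomorphism, and hence must be the $p$-divisible group of a product of two supersingular elliptic curves. You instead appeal to the classical criterion that $G$ is superspecial if and only if $FM = VM$, and then bridge the gap between the $BT_1$-level statement and this Dieudonn\'e-module statement by a concrete lifting argument: reading off $\mathrm{im}(\bar F)=\mathrm{im}(\bar V)$ in $(X_{FV})^2$ from Kraft's explicit description, passing to $FM+pM=VM+pM$, and then using $pM=FVM\subset FM$ and $pM=VFM\subset VM$ to absorb the $pM$ terms. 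This is a correct and self-contained reduction. The two approaches have comparable depth in the end (the $FM=VM$ criterion and the minimality of $(X_{FV})^2$ both encode the fact that the isomorphism class of a superspecial $p$-divisible group is rigid), but your version makes the lift from $p$-torsion to the full Dieudonn\'e module visibly elementary, while the paper's version is shorter at the cost of citing the minimality machinery. It's also worth noting that your alternative route via the preceding superspecial-points proposition and the condition $L=\Phi^2(L)$ does not actually circumvent the $FM=VM$ criterion, since that criterion is already used in the proof of that proposition; so the two routes you sketch are not logically independent.
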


\begin{proof}
The key observation is that $(\mathcal{X}_{FV})^2$ is a \emph{minimal} $BT_1$ group scheme, in the language of Oort ~\cite{Min}.  Let $(G, \rho)$ be a $k'$-point of $\mathcal{N}_{red}$ that is superspecial. Then, $G$ is geometrically isomorphic to the $p$-divisible group of a product of two supersingular elliptic curves, so $G[p]$ is geometrically isomorphic to $(X_{FV})^2$, and $(G,\rho)$ is in the Ekedahl-Oort stratum $\mathcal{N}_{FVFV,red}$.

On the other hand, let $(G, \rho)$ be a $k'$-point of $\mathcal{N}_{FVFV,red}$. Then, $G[p]$ is geometrically isomorphic to $(X_{FV})^2$. By Oort ~\cite{Min}, because  $(X_{FV})^2$ is minimal, $G$ is geometrically isomorphic to the $p$-divisible group of a product of two supersingular elliptic curves. So, $(G,\rho)$ defines a superspecial point of $\mathcal{N}_{red}$.
\end{proof}



\subsection{Summary of Results}

We will now summarize our results on the geometry of the $\GL_4$ Rapoport-Zink space:

\begin{theorem}\label{gl4thm}
The $\GL_4$ Rapoport-Zink space $\mathcal{N}$ decomposes as a disjoint union:
$$\mathcal{N} = \bigsqcup_{i \in \Z} \mathcal{N}_i$$
where $\mathcal{N}_i$ is the locus of points $(G, \rho)$ where the quasi-isogeny $\rho$ is of height $i$. The components $\mathcal{N}_i$ are all isomorphic, as formal schemes over $\mathrm{Spf}(W )$.

Let $\mathcal{N}_{0,red}$ be the reduced $k$-scheme underlying $\mathcal{N}_0$. Then $\mathcal{N}_{0,red}$ is connected, and decomposes as:
$$\mathcal{N}_{0,red} = \bigcup_{\Lambda} \mathcal{N}_{\Lambda}$$
where the index set is the collection of all vertex lattices $\Lambda$ in $V^{\Phi}$ of type 4. These $\mathcal{N}_{\Lambda}$ are precisely the irreducible components of $\mathcal{N}_{0,red}$, and each is isomorphic, over $k$, to $\mathbb{P}^1$.

Two irreducible components are either disjoint or intersect in a single point.  Each  irreducible component contains $p^2 + 1$ intersection points, and each intersection point is the intersection of $p^2 + 1$ irreducible components. The intersection points are exactly the superspecial points of $\mathcal{N}_{0,red}$, and they are parametrized by the vertex lattices $\Lambda$ in $V^{\Phi}$ of type 2.

Further, $\mathcal{N}_{red}$ (the reduced $k$-scheme underlying $\mathcal{N}$) has two Ekedahl-Oort strata: one consisting of the superspecial points, and the other the complement of the superspecial points.
 \end{theorem}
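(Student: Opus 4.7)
The proof is essentially a compilation of results proved throughout the paper, and there is very little new work to do here beyond citing the correct proposition for each clause of the theorem.

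For the first paragraph of the theorem statement, the disjoint-union decomposition $\mathcal{N}_{red} = \bigsqcup_{i \in \Z} \mathcal{N}_{i,red}$ is Viehmann's result cited in Section 2.3, and that same result provides that each $\mathcal{N}_{i,red}$ is connected (so in particular $\mathcal{N}_{0,red}$ is connected, addressing one clause of the second paragraph). The isomorphisms $\mathcal{N}_i \cong \mathcal{N}_0$ of formal schemes over $\mathrm{Spf}(W)$ are Proposition \ref{allheights}, built from the explicit automorphisms $\varphi$ and $\psi$ of the basepoint isocrystal $\N$.

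For the decomposition into irreducible components, I would combine Corollary \ref{NMS} with Proposition \ref{vollaardprop} to see that every very special lattice is contained in some $\mathcal{S}^+_\Lambda(k')$, which gives the covering $\mathcal{N}_{0,red} = \bigcup_\Lambda \mathcal{N}_{\Lambda,0}$ over all vertex lattices. Proposition \ref{maptoonetype2} identifies $\mathcal{N}_{\Lambda,0}$ with $\mathcal{X}^+_\Lambda$, and Propositions \ref{Xispt} and \ref{type4P1} then show that $\mathcal{X}^+_\Lambda$ is a single point when $\Lambda$ has type $2$ and is isomorphic to $\mathbb{P}^1$ when $\Lambda$ has type $4$. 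Since by Lemma \ref{int2} every type-$2$ vertex lattice sits inside some type-$4$ vertex lattice, and since $\mathcal{N}_{\Lambda_1,0} \subset \mathcal{N}_{\Lambda_2,0}$ whenever $\Lambda_1 \subset \Lambda_2$, the type-$4$ loci already cover $\mathcal{N}_{0,red}$; being irreducible one-dimensional closed subschemes, they are precisely the irreducible components.

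For the intersection pattern, I would invoke the formula from \cite{RTW} recalled at the start of Section \ref{intersectionsection}: $\mathcal{N}_{\Lambda_1,0} \cap \mathcal{N}_{\Lambda_2,0}$ equals $\mathcal{N}_{\Lambda_1 \cap \Lambda_2,0}$ when $\Lambda_1 \cap \Lambda_2$ is a vertex lattice and is empty otherwise. Since $V^\Phi$ has no self-dual lattice (its Hasse invariant is $-1$), two distinct type-$4$ vertex lattices can intersect only in a type-$2$ vertex lattice, so two distinct irreducible components are either disjoint or meet in a single superspecial point $\mathcal{N}_{\Lambda_1 \cap \Lambda_2,0}$. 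Lemmas \ref{int1} and \ref{int2} then supply the counts of $p^2+1$ in each direction. The identification of these intersection points with the superspecial points is the proposition in Section 6.2, and the two-stratum description of the Ekedahl-Oort stratification is exactly Proposition \ref{EO}.

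The only place where a genuine new argument beyond quotation is required is the observation that two type-$4$ vertex lattices cannot intersect in a lattice of type $0$, which follows immediately from the nonexistence of self-dual lattices in $V^\Phi$ established at the start of Section \ref{Vertexsection}; everything else is a direct appeal to a prior result, so no single step presents a serious obstacle.
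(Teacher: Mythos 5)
Your proposal is correct and follows essentially the same route as the paper, whose Theorem~\ref{gl4thm} is by design a summary of results already established in earlier sections; you have located the right source for each clause and assembled them in the same order.

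One small imprecision worth noting: in justifying that two distinct type-$4$ vertex lattices $\Lambda_1,\Lambda_2$ with nontrivial intersection must meet in a type-$2$ vertex lattice, you single out the non-existence of self-dual lattices (type $0$) as the thing to rule out. But the proposition in Section~\ref{Vertexsection} already restricts vertex lattices to type $2$ or $4$, so type $0$ is off the table before this argument begins. What actually needs to be ruled out is the possibility that $\Lambda_1 \cap \Lambda_2$ is a type-$4$ vertex lattice, which would make $\mathcal{N}_{\Lambda_1\cap\Lambda_2}$ a $\mathbb{P}^1$ rather than a point. The correct observation is that a proper inclusion of vertex lattices $\Lambda' \subsetneq \Lambda$ forces $t_{\Lambda'} \leq t_\Lambda - 2$ (since $\Lambda^\vee \subsetneq \Lambda'^\vee \subset \Lambda' \subsetneq \Lambda$ gives $\dim(\Lambda/\Lambda^\vee) \geq \dim(\Lambda'/\Lambda'^\vee) + 2$), so two distinct type-$4$ lattices are incomparable and their intersection, if a vertex lattice, must be strictly smaller than each, hence of type $2$. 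This same observation is also what underlies your earlier claim that the type-$4$ loci $\mathcal{N}_{\Lambda,0}$ are pairwise non-nested and therefore genuinely the irreducible components, so it is worth making explicit; it is the content behind the paper's remark that the intersection type follows ``immediately from the definition.''
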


\section{The $\GU(2,2)$ Rapoport-Zink Space}

The definition of the $\GU(2,2)$ Rapoport-Zink space relies on an imaginary quadratic field $E$. When the prime $p$ is split in $E$, the $\GU(2,2)$ Rapoport-Zink space decomposes into a disjoint union of copies of the $\GL_4$ Rapoport-Zink space. In this section, we will introduce the $\GU(2,2)$ Rapoport-Zink space, and explain this decomposition in the case that the prime $p$ is split. We will then use the results of the previous section to give a description of the $\GU(2,2)$ Rapoport-Zink space.

Recall that $k$ is an algebraically closed field of characteristic $p$, where $p \neq 2$,  $W = W(k)$ is the ring of Witt vectors of $k$, with fraction field denoted $W_{\Q}$, and  $\Nilp_W$ is the category of $W$-schemes on which $p$ is locally nilpotent. Let $E$ be an imaginary quadratic field in which the prime $p$ is split. The $GU(2,2)$ Rapoport-Zink space will parametrize isomorphism classes of supersingular $p$-divisible groups of height 8 and dimension 4, with an action of $\mathcal{O}_E \otimes_{\Z} \Z_p$, a principal polarization, and an isogeny to a fixed basepoint. So we will begin by constructing a particular choice of basepoint.

Let $\mathbb{G}$ continue to denote the basepoint for the $\GL_4$ Rapoport-Zink space. We will also continue to use the notation: $\underline{\mathbb{G}} = \mathbb{G} \times_k \mathbb{G}^*.$

Observe that, as $p$ is split in $E$, $\mathcal{O}_E \otimes_{\Z} \Z_p \cong \Z_p \times \Z_p$. Note that the nontrivial automorphism of $E \otimes_{\Q} \Q_p$ over $\Q_p$, denoted $\alpha \mapsto \overline{\alpha}$,  interchanges the two $\Z_p$ factors in $ \mathcal{O}_E \otimes_{\Z} \Z_p$. There is naturally an action of $\Z_p$ on both $\G$ and $\G^*$, so define:
$$\iota_{\underline{\G} }: \mathcal{O}_E \otimes_{\Z} \Z_p \rightarrow \End(\underline{\G})$$
by having $\mathcal{O}_E \otimes_{\Z} \Z_p  \cong \Z_p \times \Z_p$ act on $\G$ through the first factor of $\Z_p$ and act on $\G^*$ through the second factor of $\Z_p$. 

We also need a principal polarization of the basepoint. Let $a \mapsto e_a$ denote the canonical isomorphism $\mathbb{G} \rightarrow (\mathbb{G}^*)^*$. 

Define:
$$\lambda_{\underline{\G}}: \underline{\mathbb{G}} =  \G \times_k \G^* \rightarrow \G^* \times_k (\G^*)^* \cong \underline{\mathbb{G} }^*$$
$$(a,f) \mapsto (f, e_{a^{-1}}).$$
This is a principal polarization. 

The following hold by construction: as $\mathbb{G}$ is  supersingular of height 4 and dimension 2, we have that $\underline{\G}$ is supersingular of height 8 and dimension 4. Further, the action $\iota_{\underline{\G}}$ and polarization $\lambda_{\underline{\G}}$ satisfy the \emph{Rosati involution condition}:
$$\lambda_{\underline{\G}} \circ  \iota_{\underline{\G}}  (\overline{\alpha}) =  \iota_{\underline{\G}} (\alpha)^* \circ \lambda_{\underline{\G}}$$
for every $a \in \mathcal{O}_E \otimes_{\Z} \Z_p$. The action $\iota_{\underline{G}}$ also satisfies the \emph{signature $(2,2)$ condition}:
$$\det(T - \iota_{\underline{\G}}(a); \mathrm{Lie}(\underline{\G}) ) = (T - \psi_0(a))^2(T - \psi_1(a)) \in k[T]$$
for any $a \in \mathcal{O}_E \otimes_{\Z} \Z_p$, if $\psi_0$ and $\psi_1$ are the two $\Z_p$-morphisms of $\mathcal{O}_E \otimes_{\Z} \Z_p$ to $k$ (given by the maps $\Z_p \rightarrow \F_p$ on each $\Z_p$-factor).

Using the triple $(\underline{\G}, \iota_{\underline{\G}}, \lambda_{\underline{\G}})$ as a basepoint, we can define the $\GU(2,2)$ Rapoport-Zink space (when the prime $p$ is split in $E$). Define a functor:
$$\mathcal{N}_{GU(2,2)}: \mathrm{Nilp}_W \rightarrow Sets$$
that sends  a $W$-scheme $S$ on which $p$ is locally nilpotent to the set $\mathcal{N}_{GU(2,2)}(S)$ of quadruples $(G, \iota, \lambda, \rho)$, where:
\begin{itemize}
\item{$G$ is a supersingular $p$-divisible group over $S$ of height 8 and dimension 4}
\item{$\iota$ is an action of $\mathcal{O}_E \otimes_{\Z} \Z_p$ on $G$}
\item{$\lambda$ is a principal polarization of $G$}
\item{$\rho$ is an $\mathcal{O}_E \otimes_{\Z} \Z_p$-linear  quasi-isogeny from $G_{S_0}$ to $\underline{\G}_{S_0}$ }
 \end{itemize}
 Such that:
 \begin{itemize}
 \item{$\iota$ and $\lambda$ satisfy the \emph{Rosati involution condition} }
 \item{$\iota$ satisfies the  \emph{signature (2,2) condition}, which is formulated over $S$ by taking $\psi_0$ and $\psi_1$ to be the two embeddings of $\mathcal{O}_E \otimes_{Z} \Z_p$ into $W$, and considering the determinant in $\mathcal{O}_S[T]$ }
 \item{Locally on $S_0$, $\rho^*( \lambda_{\underline{\G}}  ) = c \lambda$ for some $c \in \Q_p^\times$. }
 \end{itemize}
In the above, $\mathcal{S}_0 = S \times_{\Spec(W) } \Spec(k)$.

Two quadruples $(G_1, \iota_1, \lambda_1, \rho_1)$ and $(G_2, \iota_2, \lambda_2, \rho_2)$ are isomorphic if there is an isomorphism  of $p$-divisible groups over $S$ from $G_1$ to $G_2$  that is $\mathcal{O}_E \otimes_{\Z} \Z_p$-linear, that carries $\rho_2$ to $\rho_1$, and such that $\lambda_2$ pulls back to a $\Z_p^\times$-multiple of $\lambda_1$. This definition of $\mathcal{N}_{GU(2,2)}$ does reference a particular choice of basepoint. However, any possible basepoint is isogenous (by an isogeny respecting to the extra structure) to the one constructed above, so this choice does not affect the geometry of the Rapoport-Zink space $\mathcal{N}_{GU(2,2)}$.

Given an $S$-point $(G, \iota, \lambda, \rho)$ of  $\mathcal{N}_{GU(2,2)}$ for which $S_0$ is connected, there is by definition some $c \in \Q_p^\times$ such that $\rho^*( \underline{\lambda}) = c \lambda$. This $c \in \Q_p^\times$ is \emph{not} an invariant of the isomorphism class of $(G, \iota, \lambda, \rho)$, but $\ord_p(c)$ \emph{is} an invariant. Then, $\mathcal{N}_{GU(2,2)}$ is represented by a formal scheme over $W$ that admits a decomposition into open and closed formal subschemes:
$$\mathcal{N}_{GU(2,2) } = \bigsqcup_{i \in \Z} \mathcal{N}^i_{GU(2,2)}$$
where $\mathcal{N}^i_{GU(2,2)}$ is the locus on which $\ord_p(c) = i$.

\begin{proposition}\label{label}
Let $\mathcal{N}_{GU(2,2)}$ be the $\GU(2,2)$ Rapoport Zink space, and assume that the prime $p$ is split in the imaginary quadratic field $E$. Let $\mathcal{N}$ be the $\GL_4$ Rapoport-Zink space. Then, for any integer $i \in \Z$, there is an isomorphism:
$$\mathcal{N}^i_{GU(2,2)} \cong \mathcal{N}.$$
\end{proposition}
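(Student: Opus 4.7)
\emph{Proof plan.}
The plan is to exploit the splitting $\mathcal{O}_E \otimes_{\Z} \Z_p \cong \Z_p \times \Z_p$ and decompose any $S$-point of $\mathcal{N}^i_{GU(2,2)}$ via the two primitive idempotents $e_1, e_2 \in \mathcal{O}_E \otimes_{\Z} \Z_p$, which are interchanged by complex conjugation. Given $(G, \iota, \lambda, \rho) \in \mathcal{N}^i_{GU(2,2)}(S)$, the action $\iota$ decomposes $G = G_1 \times G_2$ with $G_a := \iota(e_a) G$; the signature $(2,2)$ condition forces $\dim \mathrm{Lie}(G_a) = 2$, which combined with $\mathrm{ht}(G) = 8$ implies each $G_a$ is supersingular of height $4$ and dimension $2$. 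Likewise, the $\mathcal{O}_E$-linear quasi-isogeny $\rho$ decomposes as $\rho = (\rho_1, \rho_2)$ with $\rho_1 \colon G_1 \to \mathbb{G}$ and $\rho_2 \colon G_2 \to \mathbb{G}^*$, these being precisely the idempotent components of $\underline{\mathbb{G}}$.

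The Rosati condition $\lambda \circ \iota(\overline{\alpha}) = \iota(\alpha)^* \circ \lambda$, applied to $\alpha = e_1, e_2$, forces $\lambda$ to be ``off-diagonal'': it restricts to $\mathcal{O}_E$-linear isomorphisms $\lambda_{12} \colon G_2 \xrightarrow{\sim} G_1^*$ and $\lambda_{21} \colon G_1 \xrightarrow{\sim} G_2^*$ related by $\lambda_{21} = -\lambda_{12}^*$ under the canonical $G_1 \cong G_1^{**}$. The basepoint polarization $\lambda_{\underline{\mathbb{G}}}$ has the same off-diagonal shape by construction, so the compatibility $\rho^*(\lambda_{\underline{\mathbb{G}}}) = c\lambda$ reduces to the single relation $\rho_1^* \circ \rho_2 = c \lambda_{12}$, exhibiting $\rho_2$ as determined by $\rho_1$, $\lambda_{12}$, and $c \in \Q_p^\times$ (with $\ord_p(c) = i$).

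This suggests the pair of mutually inverse morphisms: $\mathcal{N}^i_{GU(2,2)} \to \mathcal{N}$ by $(G, \iota, \lambda, \rho) \mapsto (G_1, \rho_1)$, and $\mathcal{N} \to \mathcal{N}^i_{GU(2,2)}$ by $(H, \eta) \mapsto (H \times H^*, \iota_{std}, \lambda_{std}, (\eta, p^i (\eta^*)^{-1}))$, where $\iota_{std}$ and $\lambda_{std}$ are modeled on the basepoint $\underline{\mathbb{G}}$. A direct computation shows $\rho^*(\lambda_{\underline{\mathbb{G}}}) = p^i \lambda_{std}$ on the constructed datum, so it indeed lies in $\mathcal{N}^i_{GU(2,2)}$. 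Both constructions are manifestly functorial in $S$ (idempotent decomposition commutes with base change, and forming $H \times H^*$ is functorial), hence induce morphisms of the representing formal schemes. The composition $\mathcal{N} \to \mathcal{N}^i_{GU(2,2)} \to \mathcal{N}$ is the identity on the nose. For the other composition, I would exhibit the equivalence $\phi := \mathrm{id}_{G_1} \times (u \lambda_{12}) \colon G_1 \times G_2 \xrightarrow{\sim} G_1 \times G_1^*$ with $u := c/p^i \in \Z_p^\times$, and verify that it is $\mathcal{O}_E$-linear, intertwines $\rho$ with $(\rho_1, p^i (\rho_1^*)^{-1})$ by the relation $\rho_2 = c(\rho_1^*)^{-1} \lambda_{12}$, and pulls $\lambda_{std}$ back to $u\lambda$, a $\Z_p^\times$-multiple of $\lambda$. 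There is no real conceptual obstacle here; the main difficulty is bookkeeping with sign conventions in the polarizations and carefully tracking the $\Z_p^\times$-ambiguity in $\lambda$ so that $\phi$ realizes the paper's equivalence relation on quadruples exactly.
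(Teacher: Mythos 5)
Your approach coincides with the paper's: both use the idempotent decomposition $G \cong \epsilon_0 G \times \epsilon_1 G$ coming from $\mathcal{O}_E \otimes_{\Z}\Z_p \cong \Z_p \times \Z_p$, use the Rosati condition to identify the second factor with the dual of the first via $\lambda$, write $\rho$ as $(\rho_0, c(\rho_0^*)^{-1})$ on components, and define the inverse morphism by $(H,\eta) \mapsto (H \times H^*, \iota_{\mathrm{std}}, \lambda_{\mathrm{std}}, (\eta, c(\eta^*)^{-1}))$ with $\ord_p(c)=i$. If anything your sketch is a bit more careful than the paper's: you spell out the equivalence $\phi = \mathrm{id}\times u\lambda_{12}$ witnessing that the round-trip $\mathcal{N}^i_{GU(2,2)} \to \mathcal{N} \to \mathcal{N}^i_{GU(2,2)}$ lands in the same isomorphism class, tracking the $\Z_p^\times$-ambiguity explicitly, whereas the paper simply asserts the maps are inverse.
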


\begin{proof}
Fix an integer $i$, and consider a quadruple $(G, \iota, \lambda, \rho) \in \mathcal{N}^i_{GU(2,2)}(S)$, for $S \in \mathrm{Nilp}_W$ such that $S_0$ is connected. The key observation is that, because $\mathcal{O}_E \otimes_{\Z} \Z_p \cong \Z_p \times \Z_p$ acts on $G$, we have:
$$G \cong \epsilon_0 G \times_S \epsilon_1 G,$$
where $\epsilon_0$ and $\epsilon_1$ in  $\mathcal{O}_E \otimes_{\Z} \Z_p$ correspond to $(1,0)$ and $(0,1)$ in $\Z_p \times \Z_p$, respectively. 
Because $\rho: G_{S_0} \rightarrow \underline{\G}_{S_0}$ is  $\mathcal{O}_E \otimes_{\Z} \Z_p$-linear, it restricts to a quasi-isogeny:
$$\rho_0: (\epsilon_0 G)_{S_0} \rightarrow \mathbb{G}_{S_0}.$$

Then, as $\lambda$ is a principal polarization satisfying the Rosati involution condition, it defines an isomorphism:
$$\lambda: \epsilon_1G \xrightarrow{\sim} (\epsilon_0 G)^*.$$
Using the fact that $\rho^*( \lambda_{\underline{\G}}   ) = c \lambda$, under the isomorphism $ \epsilon_1G \xrightarrow{\sim} (\epsilon_0 G)^*$, this quasi-isogeny can be considered on components as:
$$\rho : G_{S_0} \cong (\epsilon_0 G)_{S_0} \times_{S_0} (\epsilon_0 G)_{S_0}^* \xrightarrow{ (\rho_0, c (\rho_0^*)^{-1}) } \G_{S_0} \times_{S_0} \mathbb{G}_{S_0}^* = \underline{\G}_{S_0} .$$

Finally, because $\lambda$ is a polarization, we have:
$$\lambda: G \cong \epsilon_0 G \times_S (\epsilon_0 G)^* \rightarrow (\epsilon_0 G)^* \times_S ((\epsilon_0 G)^*)^* \cong G^*$$
$$(a, f) \mapsto (f, e_{a^{-1}}).$$
Note that because $G$ is supersingular of height 8 and dimension 4, $\epsilon_0 G$ must be supersingular of height 4 and dimension 2.

After these observations, it is easy to define the bijection between $\mathcal{N}^i_{GU(2,2)}(S)$ and $\mathcal{N}(S)$. It suffices to consider $S$ for which $S_0$ is connected. Given a  quadruple $(G, \iota, \lambda, \rho) \in \mathcal{N}^i_{GU(2,2)}(S)$, the corresponding element of $\mathcal{N}(S)$ is $(\epsilon_0 G, \rho_0).$

Given an element $(G, \rho) \in \mathcal{N}(S)$, we can construct the corresponding element of $\mathcal{N}^i_{GU(2,2)}(S)$ as follows. Let $\underline{G} = G \times_S G^*$, choose $c \in \Q_p^\times$ with $\ord_p(c) = i$, and define:
$$\underline{\rho}: \underline{G}_{S_0} = G_{S_0} \times_{S_0} G_{S_0}^* \xrightarrow{ (\rho, c (\rho^*)^{-1}) }  \G_{S_0} \times_{S_0} \mathbb{G}_{S_0}^* = \underline{\G}_{S_0}.$$
Define $\iota: \mathcal{O}_E \otimes_{Z} \Z_p \rightarrow \End(\underline{G})$ by acting on $G$ by the first factor of $\Z_p$ and acting on $G^*$ through the second factor. This action satisfies the signature (2,2) condition and causes $\underline{\rho}$ to be $ \mathcal{O}_E \otimes_{Z} \Z_p $-linear.  Finally, construct a principal polarization of $\underline{G}$ by:
$$\lambda: \underline{G} =  G \times_S G^* \rightarrow G^* \times_S (G^*)^* = \underline{G }^*$$
$$(a, f) \mapsto (f, e_{a^{-1}})$$
and note that  $\underline{\rho}^*( \lambda_{\underline{\G}}   ) = c \lambda$. Taken together, this defines an element $(\underline{G}, \iota, \lambda, \underline{\rho})$ of $\mathcal{N}^i_{GU(2,2)}(S)$. 

As the maps between $\mathcal{N}^i_{GU(2,2)}(S)$ and  $\mathcal{N}(S)$ just described are inverses of each other, and are functorial, this defines an isomorphism:
$$\mathcal{N}^i_{GU(2,2)} \cong \mathcal{N}.$$

\end{proof}

The isomorphism $\mathcal{N}^i_{GU(2,2)} \cong \mathcal{N}$ combined with the description in Proposition ~\ref{EO} of the Ekedahl-Oort stratification of $\mathcal{N}_{red}$ immediately produces a description of the Ekedahl-Oort stratification of $\mathcal{N}^i_{GU(2,2),red}$ and therefore of $\mathcal{N}_{GU(2,2),red}$. To explain this, we must first fix some notation for the relevant $BT_1$ group schemes.

Let $X_{1} = (X_{FV})^2 \times ((X_{FV})^2)^*$. (The group scheme $X_{FV}$ is self-dual, but it is convenient to define $X_{1}$ this way.) Let $\mathcal{O}_E \cong \Z_p \times \Z_p$ act on the factor $(X_{FV})^2$ by first $\Z_p$-factor and on the factor $((X_{FV})^2)^*$ through the second $\Z_p$-factor. Define a polarization of $X_{1}$ by:
$$\lambda: X_{1} =(X_{FV})^2 \times ((X_{FV})^2)^* \rightarrow ((X_{FV})^2)^* \times (((X_{FV})^2)^*)^* \cong X_{1}^*$$
$$(a,f) \mapsto (f, e_{a^{-1}} ).$$

Let $X_{2} = X_{FFVV} \times X^*_{FFVV}$.  Let $\mathcal{O}_E \cong \Z_p \times \Z_p$ act on the factor $X_{FFVV}$ through the first $\Z_p$-factor and act on the factor $X_{FFVV}^*$ through the second $\Z_p$-factor. Define a polarization of $X_{2}$ by:
$$\lambda: X_{2} =X_{FFVV} \times X^*_{FFVV} \rightarrow X^*_{FFVV} \times (X^*_{FFVV})^* \cong X_{2}^*$$
$$(a,f) \mapsto (f, e_{a^{-1}} ).$$

Now we can describe the Ekedahl-Oort stratification of $\mathcal{N}_{GU(2,2),red}$: two points are in the same stratum if the corresponding $p$-torsion subgroups, equipped with $\mathcal{O}_E$ action and polarization, are geometrically isomorphic. A $k$-point of the $\GU(2,2)$ Rapoport-Zink space is said to be \emph{superspecial} if the corresponding $p$-divisible group is isomorphic to the $p$-divisible group of a product of supersingular elliptic curves. 

Let $\mathcal{N}^i_{GU(2,2),red}(1)$ be the image of $\mathcal{N}_{FVFV,red}$ under the isomorphism of Proposition ~\ref{label} and let $\mathcal{N}_{GU(2,2),red}(1) = \sqcup_{i \in \Z} \mathcal{N}^i_{GU(2,2),red}(1) \subset \mathcal{N}_{GU(2,2),red}$. Define $\mathcal{N}_{GU(2,2),red}(2)$ similarly, with $\mathcal{N}_{FVFV,red}$ replaced by $\mathcal{N}_{FFVV,red}$.

\begin{proposition}
The reduced $k$-scheme $\mathcal{N}_{GU(2,2),red}$ underlying the $\GU(2,2)$ Rapoport-Zink space has two Ekedahl-Oort strata: $\mathcal{N}_{GU(2,2),red}(1)$ which coincides with the locus of superspecial points, and $\mathcal{N}_{GU(2,2),red}(2)$, the complement of the superspecial points.
\end{proposition}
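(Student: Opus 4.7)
The plan is to reduce the statement to the corresponding result for the $\GL_4$ Rapoport-Zink space (Proposition~\ref{EO}) via the isomorphisms $\mathcal{N}^i_{GU(2,2)} \cong \mathcal{N}$ provided by Proposition~\ref{label}. The essential point is that for a quadruple $(G,\iota,\lambda,\rho) \in \mathcal{N}_{GU(2,2)}(k)$, the idempotents $\epsilon_0,\epsilon_1 \in \mathcal{O}_E\otimes\Z_p \cong \Z_p\times\Z_p$ yield a decomposition $G \cong \epsilon_0G \times \epsilon_1G$, the polarization identifies $\epsilon_1G \cong (\epsilon_0G)^*$, and hence $G[p]$ together with its $\mathcal{O}_E$-action and polarization is determined up to isomorphism by the height-$4$, dimension-$2$ group $\epsilon_0G[p]$.

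First I would check that the isomorphism $(G,\iota,\lambda,\rho) \mapsto (\epsilon_0G,\rho_0)$ of Proposition~\ref{label} respects the $p$-torsion: since $\epsilon_1G[p] \cong (\epsilon_0G)^*[p] \cong \epsilon_0G[p]^*$, the polarized $\mathcal{O}_E$-module scheme $G[p]$ is isomorphic to $\epsilon_0G[p] \times \epsilon_0G[p]^*$ with the $\mathcal{O}_E$-action given by the two idempotents acting on the two factors and the polarization $(a,f)\mapsto(f,e_{a^{-1}})$. In particular, two quadruples give geometrically isomorphic polarized $\mathcal{O}_E$-stable $p$-torsion subgroups if and only if the underlying $\GL_4$ points have geometrically isomorphic $p$-torsion. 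By Proposition~\ref{EO} the $\GL_4$ case has exactly two possibilities, $\epsilon_0G[p]\cong (X_{FV})^2$ or $\epsilon_0G[p]\cong X_{FFVV}$, which produce precisely the group schemes $X_1$ and $X_2$ defined just above the statement. This yields the existence of (at most) two Ekedahl--Oort strata, $\mathcal{N}_{GU(2,2),red}(1)$ and $\mathcal{N}_{GU(2,2),red}(2)$, and both are nonempty because both strata of $\mathcal{N}_{red}$ are nonempty.

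Next I would identify the superspecial locus. A quadruple is superspecial exactly when $G$ is isomorphic to the $p$-divisible group of a product of four supersingular elliptic curves; since such a $p$-divisible group is self-dual and splits under the decomposition $G \cong \epsilon_0G \times (\epsilon_0 G)^*$, this happens if and only if $\epsilon_0 G$ is the $p$-divisible group of a product of two supersingular elliptic curves, i.e.\ if and only if the image point of $(G,\iota,\lambda,\rho)$ in $\mathcal{N}_{red}$ under Proposition~\ref{label} is superspecial. Combining this observation with Proposition~\ref{EO}, the superspecial points of $\mathcal{N}_{GU(2,2),red}$ correspond precisely to the stratum where $\epsilon_0G[p]\cong (X_{FV})^2$, which is the definition of $\mathcal{N}_{GU(2,2),red}(1)$.

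The main obstacle is verifying that the two Ekedahl--Oort strata really are locally closed subschemes of $\mathcal{N}_{GU(2,2),red}$ (and not just a set-theoretic partition), but this follows formally: since the isomorphisms of Proposition~\ref{label} are isomorphisms of formal schemes and the Ekedahl--Oort strata in $\mathcal{N}_{red}$ are locally closed (with $\mathcal{N}_{FVFV,red}$ closed), the same holds for their images in each $\mathcal{N}^i_{GU(2,2),red}$, hence for $\mathcal{N}_{GU(2,2),red}(1)$ and $\mathcal{N}_{GU(2,2),red}(2)$ after taking the disjoint union over $i \in \Z$. This completes the identification of the Ekedahl--Oort stratification.
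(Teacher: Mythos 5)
Your proposal is correct and takes essentially the same route as the paper, which simply asserts that the claim ``follows immediately'' from Proposition~\ref{label} and Proposition~\ref{EO}. You have usefully spelled out the details the paper leaves implicit: that the isomorphism $(G,\iota,\lambda,\rho)\mapsto(\epsilon_0G,\rho_0)$ identifies $G[p]$ with its $\mathcal{O}_E$-action and polarization as the pair $\epsilon_0G[p]\times\epsilon_0G[p]^*$ determined by $\epsilon_0G[p]$, that the superspecial condition on $G$ is equivalent to the superspecial condition on $\epsilon_0G$, and that the local closedness of the strata is inherited componentwise from $\mathcal{N}_{red}$.
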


\begin{proof}
This follows immediately from the construction of the isomorphism in Proposition ~\ref{label} and the description of the Ekedahl-Oort stratification of $\mathcal{N}_{red}$ in Proposition ~\ref{EO}: $\mathcal{N}_{GU(2,2),red}(1)$ is the locus on which the $p$-torsion subgroups are geometrically isomorphic to $X_1$, and $\mathcal{N}_{GU(2,2),red}(2)$ is the locus on which the $p$-torsion subgroups are geometrically isomorphic to $X_2$. 
\end{proof}

We can use our analysis of the $\GL_4$ Rapoport-Zink space and the above proposition to produce the following description of the $\GU(2,2)$ Rapoport-Zink space.

\begin{theorem}\label{RZ22}
Let $\mathcal{N}_{GU(2,2)}$ be the $\GU(2,2)$ Rapoport Zink space, and assume that the prime $p$ is split in the imaginary quadratic field $E$. The underlying reduced $k$-scheme $\mathcal{N}_{GU(2,2), red}$ has connected components naturally indexed by $\Z \times \Z$, and the connected components are all isomorphic. 

Every connected component of $\mathcal{N}_{GU(2,2), red}$  is isomorphic, over $k$, to a union of projective lines. Every pair of projective lines is either disjoint or intersects in a single point. Each projective line contains $p^2 + 1$ intersection points, and each intersection point is the intersection of $p^2 + 1$ projective lines. These intersection points are precisely the superspecial points.

Further, $\mathcal{N}_{GU(2,2),red}$ has two Ekedahl-Oort strata: one consisting of the superspecial points, and the other the complement of the superspecial points.
\end{theorem}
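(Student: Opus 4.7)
The plan is to deduce this theorem directly from Proposition \ref{label} together with the main result Theorem \ref{gl4thm} on the $\GL_4$ Rapoport-Zink space. First I would combine the open-and-closed decomposition $\mathcal{N}_{GU(2,2)} = \bigsqcup_{i \in \Z} \mathcal{N}^i_{GU(2,2)}$ (indexed by $\ord_p(c)$, as described just before Proposition \ref{label}) with Proposition \ref{label} itself, which provides an isomorphism $\mathcal{N}^i_{GU(2,2)} \cong \mathcal{N}$ for every $i \in \Z$. Theorem \ref{gl4thm} further decomposes $\mathcal{N} = \bigsqcup_{j \in \Z} \mathcal{N}_j$ with each $\mathcal{N}_j \cong \mathcal{N}_0$ and $\mathcal{N}_{0,red}$ connected. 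Taking reduced underlying schemes and combining these two decompositions yields $\mathcal{N}_{GU(2,2),red} = \bigsqcup_{(i,j) \in \Z \times \Z} \mathcal{N}^i_{GU(2,2),j,red}$, whose pieces are precisely the connected components, all isomorphic over $k$ to $\mathcal{N}_{0,red}$. This establishes the $\Z \times \Z$ indexing and the fact that all connected components are isomorphic.

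The detailed geometric description of each connected component --- namely the decomposition into projective lines indexed by type $4$ vertex lattices, the fact that two irreducible components either are disjoint or meet at a single point, and the combinatorics that each $\mathbb{P}^1$ contains $p^2+1$ intersection points while each intersection point lies on $p^2+1$ components --- is transported directly from Theorem \ref{gl4thm} by this isomorphism. Similarly, the two Ekedahl-Oort strata, and the identification of the first with the locus of superspecial points, are the content of the proposition proved immediately before the theorem statement, so nothing additional is required there.

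The only step needing nontrivial verification is the compatibility of the notion of \emph{superspecial} on the two sides: I need to confirm that the intersection points of the components of $\mathcal{N}_{GU(2,2),red}$ are exactly its superspecial points. Under Proposition \ref{label} a $k$-point $(G,\iota,\lambda,\rho) \in \mathcal{N}^i_{GU(2,2)}(k)$ corresponds to $(\epsilon_0 G, \rho_0) \in \mathcal{N}(k)$, and the polarization together with the $\mathcal{O}_E$-action give $G \cong \epsilon_0 G \times_k (\epsilon_0 G)^*$. Hence $G$ is geometrically isomorphic to the $p$-divisible group of a product of supersingular elliptic curves if and only if $\epsilon_0 G$ is, which can be checked at the level of Dieudonn\'e modules via the criterion $FM = VM$. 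Combined with the identification of the superspecial points of $\mathcal{N}_{0,red}$ with the intersection points of irreducible components from Section \ref{intersectionsection}, this gives the required description.

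The anticipated main obstacle is really just this bookkeeping --- verifying that under the isomorphism of Proposition \ref{label} the notions of connected component, irreducible component, superspecial point, and Ekedahl-Oort stratum all transfer as claimed. Once that is checked, the theorem follows formally from the description of $\mathcal{N}$ already established in Theorem \ref{gl4thm}.
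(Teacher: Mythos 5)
Your proposal is correct and follows the paper's own proof essentially verbatim: both decompose $\mathcal{N}_{GU(2,2),red}$ via $\mathcal{N}_{GU(2,2)} = \bigsqcup_i \mathcal{N}^i_{GU(2,2)}$, apply Proposition~\ref{label} to reduce to $\mathcal{N}_{red}$, and then invoke Theorem~\ref{gl4thm} together with the observation that the isomorphism of Proposition~\ref{label} respects superspecial points. Your slightly more detailed check of the superspecial compatibility (via $G \cong \epsilon_0 G \times (\epsilon_0 G)^*$ and the $FM = VM$ criterion) is a correct expansion of what the paper asserts in a single sentence.
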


\begin{proof}
The $\GU(2,2)$ Rapoport-Zink space decomposes as:
$$\mathcal{N}_{GU(2,2) } = \bigsqcup_{i \in \Z} \mathcal{N}^i_{GU(2,2)}.$$

Using Proposition ~\ref{label} and Theorem ~\ref{gl4thm}, each $ \mathcal{N}^i_{GU(2,2),red}$ is isomorphic to $\mathcal{N}_{red}$, and $\mathcal{N}_{red}$ decomposes into connected components as $\mathcal{N}_{red} = \sqcup_{j \in \Z} \mathcal{N}_{j,red}$. So we have a decomposition of $ \mathcal{N}^i_{GU(2,2),red}$ into connected components as:
$$\mathcal{N}_{GU(2,2),red} =  \bigsqcup_{(i,j) \in \Z \times \Z } \mathcal{N}^{(i,j)}_{GU(2,2),red}$$
where $\mathcal{N}^{(i,j)}_{GU(2,2)}$ is the locus of points where the scalar $c \in \Q_p^\times$ from the condition $\rho^*(\underline{\lambda}) = c \lambda$ satisfies $\ord_p(c) = i$, and the restricted quasi-isogeny $\rho_0: \epsilon_0 G \rightarrow \mathbb{G}$ is of height $j$. By Proposition ~\ref{label}, each $\mathcal{N}^{(i,j)}_{GU(2,2)}$ is isomorphic to $\mathcal{N}_{0,red}$. The description of the irreducible components and intersection behavior of $\mathcal{N}_{GU(2,2),red}$ follows from the similar description for $\mathcal{N}_{0,red}$, and the fact that the intersection points are precisely the superspecial points is due to the fact that the isomorphism in Proposition ~\ref{label} respects superspecial points.

\end{proof}

\section{The $\GU(2,2)$ Shimura Variety}\label{SV}

We will now introduce the $\GU(2,2)$ Shimura variety and use the Rapoport-Zink uniformization theorem, combined with the results of the previous section, to describe its supersingular locus at a prime $p$ split in the relevant field.

Let $E$ continue to be an imaginary quadratic field, let $p$ continue to be an odd prime split in $E$, and let $\mathcal{O}$ be the integral closure of $\mathbb{Z}_{(p)}$ in $E$. Fix a free $\mathcal{O}$-module $V$ of rank 4,  with a perfect $\mathcal{O}$-valued Hermitian form of signature $(2,2)$. Let $G = \GU(V)$, fix a compact open subgroup $K^p$ of $\G(\A_f^p)$, and let $K_p = \G(\Z_p)$ and $K = K_pK^p$. 

For sufficiently small $K^p$, there is a smooth complex manifold $\mathcal{M}_K(\C)$ that is a moduli space of isomorphism classes of quadruples $(A, \iota, \lambda, \eta^p K^p)$, where:
\begin{itemize}
\item{ $A$ is an abelian variety over $\C$ of dimension 4}
\item{ $\iota: \mathcal{O} \rightarrow \mathrm{End}(A) \otimes_{\Z} \Z_{(p)}$ is an action of $\mathcal{O}$ on $A$, satisfying the \emph{signature $(2,2)$ condition}: For all $a \in \mathcal{O}$,
$$\det(T - \iota(a); \mathrm{Lie}(A) ) = (T - a)^2(T-\overline{a})^2.$$}
\item{$\lambda \in \Hom(A, A^*) \otimes_{\Z} \Z_{(p)}$ is a prime-to-$p$ quasi-polarization of $A$, which satisfies the \emph{Rosati involution condition}: For all $a \in \mathcal{O}$,
$$\lambda \circ \iota(\overline{a}) = \iota(a)^* \circ \lambda.$$}
\item{The level structure $\eta^pK^p$ is the $K^p$ orbit of an $\mathcal{O} \otimes \A_f^p$-linear isomorphism:
$$\eta^p: \widehat{T(A)}^p \otimes \A_f^p \rightarrow V \otimes \A_f^p$$
that respects the hermitian forms on either side, up to scaling by $(\A_f^p)^\times$.}
\end{itemize}

Two quadruples $(A_1, \iota_1, \lambda_1, \eta_1^p K^p)$ and $(A_2, \iota_2, \lambda_2, \eta_2^p K^p)$ are said to be isomorphic if there is an $\mathcal{O}$-linear quasi-isogeny in $\Hom(A_1, A_2) \otimes_{\Z} \Z_{(p)}$ of degree prime to $p$, respecting the level structures, such that $\lambda_2$ pulls back to a $\Z_{(p)}^\times$-multiple of $\lambda_1$.

For sufficiently small $K^p$, there is a scheme $\mathcal{M}_{K}$ over $\Z_{(p)}$,  such that, as suggested by the notation, $\mathcal{M}_K(\C)$ is the complex manifold described above, and $\mathcal{M}_K$ is the moduli space for the similarly-defined moduli problem over $\Z_{(p)}$ (see ~\cite{Ko} for details). This is (an integral model of) the $\GU(2,2)$ Shimura variety. It is smooth and of relative dimension 4 over $\Z_{(p)}$. We will denote by $\mathcal{M}_K^{ss}$ the reduced locus of $\mathcal{M}_K \otimes_{\Z_{(p)}} k$ parametrizing quadruples where the abelian variety $A$ is supersingular. This is called the supersingular locus.

Choose a geometric point $(\underline{A}, \underline{\iota}, \underline{\lambda}, \underline{\eta}^p K^p) \in \mathcal{M}^{ss}_{K}(k)$, and let $\underline{\G} = \underline{A}[p^\infty]$. This is a supersingular $p$-divisible group of height 8 and dimension 4, because $\underline{A}$ is a supersingular abelian variety of dimension 4. The action 
$\underline{\iota}$ determines an action denoted  $\iota_{\underline{\G}}$ on $\G$, and the quasi-polarization $\underline{\lambda}$ determines a principal polarization denoted $\lambda_{\underline{\G}}$ of $\G$. We can use the triple $(\underline{\G}, \iota_{\underline{\G}}, \lambda_{\underline{\G}})$ as a basepoint to define a $\GU(2,2)$ Rapoport-Zink space. This is isomorphic to the one constructed in the previous section, so will also be denoted $\mathcal{N}_{GU(2,2)}$.

Let $J$ be the subgroup of $(\End(\G) \otimes_{\Z} \Q)^\times$ of quasi-endomorphisms $g$ of $\G$ that are $\mathcal{O}_E \otimes_{\Z} \Z_p$-linear and respect the polarization: $g^*(\underline{\lambda}) = \nu(g) \underline{\lambda}$, for some $\nu(g) \in \Q_p^\times$. There is an algebraic group $I$ over $\Q$ such that $I(\Q_p) \cong J$ and $I(\Q)$ is the subgroup of $(\End(\underline{A}) \otimes_{\Z} \Q)^\times$ of quasi-endomorphisms $g$ of $\underline{A}$ that are $\mathcal{O}$ linear and respect the polarization:  $g^*(\underline{\lambda}) = \nu(g) \underline{\lambda}$ for some $\nu(g) \in \Q^\times$. The level structure $\eta^p K^p$ determines a right $K^p$-orbit of isomorphisms $I(\A_f^p) \cong G(\A_f^p)$, and so $I(\Q)$ acts on both $\mathcal{N}_{GU(2,2)}$ and $G(\A_f^p)/K^p$.  Now we may state the Rapoport-Zink uniformization theorem:

\begin{theorem}(Rapoport-Zink)
There is an isomorphism of $k$-schemes:
$$\mathcal{M}_K^{ss} \cong I(\Q) \setminus ( \mathcal{N}_{GU(2,2), red} \times G(\A_f^p) / K^p).$$

\end{theorem}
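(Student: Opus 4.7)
The plan is to deduce this statement by directly invoking the Rapoport-Zink uniformization theorem in the form given in \cite{RZ}, Theorem 6.30, applied to the PEL datum underlying the $\GU(2,2)$ Shimura variety. The $\GU(2,2)$ Shimura variety $\mathcal{M}_K$ is of PEL type: the datum consists of the imaginary quadratic field $E$, the $\mathcal{O}$-module $V$ with its Hermitian form, the reflex data determined by the signature $(2,2)$ condition, and the level structure $K = K_p K^p$ with $K_p = G(\Z_p)$ hyperspecial (since $p$ is split in $E$, so $G_{\Q_p}$ is unramified and split). These are exactly the assumptions under which the uniformization theorem applies.

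First I would verify that the basepoint $p$-divisible group $(\underline{\G}, \iota_{\underline{\G}}, \lambda_{\underline{\G}})$ attached to the chosen geometric point $(\underline{A}, \underline{\iota}, \underline{\lambda}, \underline{\eta}^pK^p) \in \mathcal{M}_K^{ss}(k)$ satisfies the compatibility with the PEL datum used to define $\mathcal{N}_{GU(2,2)}$ in the previous section: the signature condition on $\underline{\iota}$ and the Rosati involution condition on $\underline{\lambda}$ descend to the corresponding conditions for $\iota_{\underline{\G}}$ and $\lambda_{\underline{\G}}$. This identifies the Rapoport-Zink space attached to $(\underline{A}, \underline{\iota}, \underline{\lambda})$ with $\mathcal{N}_{GU(2,2)}$ as constructed above.

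Next I would identify the group-theoretic input. The algebraic group $I/\Q$ (the group of $\mathcal{O}$-linear self-quasi-isogenies of $\underline{A}$ preserving the polarization up to scalar) is precisely the group denoted $I$ in \cite{RZ}; its $\Q_p$-points $I(\Q_p) \cong J$ are the group of self-quasi-isogenies of $(\underline{\G}, \iota_{\underline{\G}}, \lambda_{\underline{\G}})$ preserving the polarization up to $\Q_p^\times$-scalar, and $I(\A_f^p)$ is identified with $G(\A_f^p)$ via the level structure $\underline{\eta}^p$. This is exactly the data needed for the uniformization morphism
$$\mathcal{N}_{GU(2,2), red} \times G(\A_f^p)/K^p \longrightarrow \mathcal{M}_K^{ss}$$
defined on points by sending $((G, \iota, \lambda, \rho), gK^p)$ to the quadruple obtained from $(\underline{A}, \underline{\iota}, \underline{\lambda}, \underline{\eta}^p K^p)$ by modifying the $p$-divisible group along $\rho^{-1}$ and the prime-to-$p$ level structure by $g$.

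Finally, one invokes the uniformization theorem of \cite{RZ} (Theorem 6.30), which asserts that the above morphism is surjective on $k$-points, factors through the diagonal $I(\Q)$-action, and induces an isomorphism of $k$-schemes
$$I(\Q) \setminus ( \mathcal{N}_{GU(2,2), red} \times G(\A_f^p) / K^p ) \xrightarrow{\sim} \mathcal{M}_K^{ss}.$$
The hardest step conceptually is verifying that this uniformization is surjective onto the entire supersingular locus, which relies on the fact that any two supersingular points of $\mathcal{M}_K^{ss}$ have isogenous associated $p$-divisible groups (with their PEL structures). Since in our setting all abelian varieties in $\mathcal{M}_K^{ss}$ are isogenous to a power of a supersingular elliptic curve, compatibly with the $\mathcal{O}$-action after adjusting by a quasi-isogeny, this is classical. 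Everything else in the proof is formal and follows directly from the definitions and the general theorem in \cite{RZ}.
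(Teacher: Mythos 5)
Your proposal is correct and takes essentially the same approach as the paper: the paper states this as a direct citation of Rapoport and Zink's general uniformization theorem for PEL Rapoport-Zink spaces, with no proof given beyond the implicit claim that the hypotheses are satisfied in this $\GU(2,2)$ setting. Your write-up merely spells out the verification of hypotheses (hyperspecial level at $p$, compatibility of the basepoint $p$-divisible group with the PEL datum, identification of the group $I$ and the level-structure orbit, and uniqueness of the supersingular isogeny class) that the paper leaves implicit, so there is no substantive difference in method.
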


Now, as in ~\cite{GU22} and ~\cite{V}, we may use the Rapoport-Zink uniformization theorem combined with Theorem ~\ref{RZ22}   to describe the supersingular locus of the $\GU(2,2)$ Shimura variety. Note that this requires the use of Theorem 6.5 of ~\cite{V}, which continues to hold.

 \begin{theorem}
 Let $\mathcal{M}_K^{ss}$ be the supersingular locus of the $\GU(2,2)$ Shimura variety at a prime $p$ split in the relevant imaginary quadratic field, with level structure given by $K = K_pK^p$. The $k$-scheme $\mathcal{M}_K^{ss}$ has pure dimension 1. 
 
 For $K^p$ sufficiently small, all irreducible components of $\mathcal{M}_K^{ss}$ are isomorphic, over $k$, to $\mathbb{P}^1$. Any two irreducible components either intersect trivially or intersect in a single point.
  
Each irreducible component contains $p^2 + 1$ intersection points, and each intersection point is the intersection of $p^2 + 1$ irreducible components. These intersection points are precisely the superspecial points.

Further, $\mathcal{M}_K^{ss}$ has two Ekedahl-Oort strata: one consisting of the superspecial points, and the other the complement of the superspecial points.
 \end{theorem}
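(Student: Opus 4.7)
The plan is to combine the Rapoport--Zink uniformization isomorphism
$$\mathcal{M}_K^{ss} \cong I(\Q) \backslash \bigl( \mathcal{N}_{GU(2,2), red} \times G(\A_f^p) / K^p \bigr)$$
with the geometric description of $\mathcal{N}_{GU(2,2),red}$ obtained in Theorem~\ref{RZ22}. First I would decompose the right-hand side as a finite disjoint union of quotients of connected components: since $\mathcal{N}_{GU(2,2),red}$ breaks up into components indexed by $\Z\times\Z$ and $G(\A_f^p)/K^p$ is a discrete set, one chooses a set of representatives for the (finitely many) double cosets under $I(\Q)$ and writes $\mathcal{M}_K^{ss}$ as a disjoint union of quotients $\Gamma_j\backslash \mathcal{N}^{(i_j,j_j)}_{GU(2,2),red}$, where $\Gamma_j \subset I(\Q)$ is the stabilizer of the chosen component and coset. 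Each $\mathcal{N}^{(i,j)}_{GU(2,2),red}$ is a connected component isomorphic to $\mathcal{N}_{0,red}$, hence pure of dimension $1$ with the explicit $\mathbb{P}^1$-structure from Theorem~\ref{gl4thm}; this immediately gives purity of dimension for $\mathcal{M}_K^{ss}$.

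The main obstacle is controlling the action of $\Gamma_j$ on the component so that the quotient inherits the irreducible-component and intersection description rather than introducing identifications or singularities. Concretely, I would invoke the analogue of Theorem~6.5 of~\cite{V} (cited in the excerpt): for $K^p$ sufficiently small, the stabilizer $\Gamma_j$ acts freely on the set of irreducible components of $\mathcal{N}^{(i_j,j_j)}_{GU(2,2),red}$, does not identify any pair of distinct components, and acts freely on the set of superspecial (intersection) points. The key input here is that $I(\Q)$ is discrete in $I(\A_f^p)$ and that torsion elements can be avoided by shrinking $K^p$; this is where the level hypothesis is used in an essential way.

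Granting this freeness, the quotient $\Gamma_j \backslash \mathcal{N}^{(i_j,j_j)}_{GU(2,2),red}$ is \'etale-locally isomorphic to $\mathcal{N}_{0,red}$, so every irreducible component descends to a $\mathbb{P}^1$, the intersection pattern is preserved, each component still contains exactly $p^2+1$ intersection points, and each intersection point still lies on exactly $p^2+1$ components. Moreover the superspecial locus maps bijectively to the set of intersection points in each piece, because the isomorphism $\mathcal{N}^i_{GU(2,2)} \cong \mathcal{N}$ from Proposition~\ref{label} identifies superspecial points on both sides, and $I(\Q)$-translation preserves superspeciality.

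Finally, the Ekedahl--Oort stratification on $\mathcal{M}_K^{ss}$ is induced by the geometric isomorphism class of $A[p]$ with its $\mathcal{O}$-action and polarization, which via the $p$-divisible group is exactly the stratification on $\mathcal{N}_{GU(2,2),red}$ we have already described. Since the two strata on $\mathcal{N}_{GU(2,2),red}$ (superspecial versus its complement) are stable under the $I(\Q)$-action and under prime-to-$p$ quasi-isogeny, they descend to two strata on $\mathcal{M}_K^{ss}$, consisting respectively of the superspecial points (the intersection points) and their complement. Taken together, these steps package the $\GL_4$ Rapoport--Zink result into the stated description of $\mathcal{M}_K^{ss}$.
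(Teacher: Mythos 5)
Your proposal follows the same route as the paper's (extremely terse) proof: apply the Rapoport--Zink uniformization, decompose via finiteness of the relevant double cosets, use the explicit description of $\mathcal{N}_{GU(2,2),red}$ from Theorem~\ref{RZ22}, and appeal to the analogue of Theorem~6.5 of \cite{V} to control the $I(\Q)$-quotient for small $K^p$; the descent of the Ekedahl--Oort strata and of superspeciality is handled exactly as you say.

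One clause should be corrected. You assert that for small $K^p$ the stabilizer $\Gamma_j$ ``acts freely on the set of irreducible components'' \emph{and} ``does not identify any pair of distinct components.'' Read literally, those two conditions together force $\Gamma_j$ to be trivial, and then the quotient $\Gamma_j \backslash \mathcal{N}^{(i_j,j_j)}_{GU(2,2),red}$ would have infinitely many irreducible components---contradicting the fact that $\mathcal{M}_K^{ss}$ is a closed subscheme of the (finite-type) special fiber. The group $\Gamma_j$ is infinite and necessarily identifies infinitely many components. What you actually need, and what the cited result provides, is that $\Gamma_j$ acts freely on the closed points of $\mathcal{N}^{(i_j,j_j)}_{GU(2,2),red}$ (for which it suffices that no nontrivial element fixes an irreducible component or a superspecial point, since a fixed non-superspecial point lies on a unique component which would then be fixed). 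Freeness on points makes the quotient map a local isomorphism onto its image, so each component still maps isomorphically to a $\mathbb{P}^1$ in the quotient, each still contains $p^2+1$ intersection points, and each intersection point still lies on $p^2+1$ components; this is the content you want, stated correctly.
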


 \nocite{*}

	\bibliographystyle{plain}

\bibliography{testbib2}

\end{document}